\newtheorem{theorem}{Theorem}[section]
\newtheorem{corollary}[theorem]{Corollary}
\newtheorem{lemma}[theorem]{Lemma}
\newenvironment{customthm}[1]
  {\innercustomthm}
  {\endinnercustomthm}
\theoremstyle{remark}
\newtheorem*{remark}{Remark}
\numberwithin{equation}{section}
\DeclareMathOperator{\crit}{crit}
\DeclareMathOperator{\inn}{Inn}
\DeclareMathOperator{\out}{Out}
\DeclareMathOperator{\sing}{Sing}
\DeclareMathOperator{\Mod}{Mod}
\DeclareMathOperator{\supp}{supp }
\DeclareMathOperator{\diam}{diam}
\DeclareMathOperator{\aut}{Aut}
\DeclareMathOperator{\dist}{dist}
\DeclareMathOperator{\escaping}{escaping}
\DeclareMathOperator{\converging}{converging}
\DeclareMathOperator{\escapingto}{escaping\, to}
\DeclareMathOperator{\thick}{thick}
\DeclareMathOperator{\thin}{thin}
\DeclareMathOperator{\re}{Re}
\DeclareMathOperator{\im}{Im}
\DeclareMathOperator{\IT}{IT}
\DeclareMathOperator{\osc}{osc}
\DeclareMathOperator{\interior}{Int}
\DeclareMathOperator{\hull}{fill}
\title{Critical values of inner functions}
\author{Oleg Ivrii and Uri Kreitner}
\date{October 29, 2023}                                           
\begin{document}

\maketitle

\abstract{Let $\mathscr J$ be the space of inner functions of finite entropy endowed with the topology of stable convergence. We prove that an inner function $F \in \mathscr J$ possesses a radial limit (and in fact, a minimal fine limit) in the unit disk at
$\sigma(F')$ a.e.~point on the unit circle. We use this to show that the singular value measure $\nu(F) = \sum_{c \in \crit F} (1-|c|) \cdot  \delta_{F(c)} + F_*(\sigma(F'))$ varies continuously in $F$. Our analysis involves a surprising connection between Beurling-Carleson sets and angular derivatives.}

\section{Introduction}

Let $\mathbb{D}$ be the unit disk and $\partial \mathbb{D}$ be the unit circle. An inner function is a holomorphic self-map of the unit disk such that for almost every $\theta \in [0,2\pi)$, the radial limit $\lim_{r \to 1} F(re^{i\theta})$ exists and has absolute value 1. In this paper, we study the distribution of the critical values of inner functions.

\subsection{A motivating example}

We open the discussion with a motivating example. Consider the inner function
$$
F(z) = \exp \biggl ( \frac{z-1}{z+1} \biggr ),
$$
which is a universal covering map of the punctured disk. Since $F$ has no critical points in the unit disk in the traditional sense, it also has no critical values. 
Nevertheless, one can assign $F$ ``boundary critical structure'' and a non-trivial ``singular value measure.''

For this purpose, we approximate $F$ 
by a sequence of finite Blaschke products $F_n$ which have a single critical point at $c_n = -1+1/n$ with multiplicity $n$\,:
$$
F_n(z) = \biggl ( \frac{z - c_n}{1-c_n z} \biggr )^{n+1}.
$$
 If we
encode the set of critical points of $F_n$ by the measure
$$
\mu_{F_n} = \sum_{c \in \crit F_n} (1-|c|) \cdot \delta_c = \delta_{c_n},
$$
 then as $n \to \infty$, the critical structures $\mu_{F_n}$ tend weakly to $\mu = \delta_{-1}$.
Similarly, if one encodes the critical values of $F_n$ using the measure
$$
\nu_{F_n} = \sum_{c \in \crit F_n} (1-|c|) \cdot \delta_{F_n(c)} = \delta_0,
$$
then as $n \to \infty$, the critical value measures $\nu_{F_n}$ converge weakly to $\nu = \delta_0$.

\subsection{Inner functions of finite entropy}

In this paper, we assign singular value measures to inner functions of finite entropy, i.e.~to inner functions which satisfy
\begin{equation}
\label{eq:finite-entropy}
 \int_{|z|=1} \log |F'| dm < \infty.
\end{equation}
For the above definition to make sense, we require that $F'$ has a non-tangential limit at almost every point on the unit circle. The  condition (\ref{eq:finite-entropy}) says that $F'$ belongs to the Nevanlinna class $\mathcal N$ and therefore has a $BSO$ decomposition into a Blaschke factor, a singular factor and an outer factor. The astute reader may recall that Nevanlinna functions have $B(S_1/S_2)O$ decompositions; however, the work of Ahern and Clark \cite{ahern-clark} rules out the need for using a singular inner function in the denominator.

In this decomposition, the Blaschke factor $B$ encodes the critical points of $F$, while the singular factor $S$ encodes the boundary critical structure. Putting the $B$ and the $S$ together, we obtain the full critical structure $\inn F' = BS$ of $F$. Alternatively, we can encode the critical structure as a measure on the closed unit disk:
$$
\mu_F = \sum_{c \in \crit F} (1-|c|) \cdot  \delta_{c} + \sigma(F').
$$

\paragraph*{Parametrization of inner functions by critical structures.} The main result from \cite{inner} states that up to post-compositions with M\"obius transformations in $\aut(\mathbb{D})$, inner functions of finite entropy are uniquely determined by their critical structures, and describes which critical structures occur:

\begin{customthm}{A}
\label{main-thm-inner}
The natural map $$F \to \inn(F') \quad : \quad \mathscr J/\aut(\mathbb{D}) \to \inn/ \, \mathbb{S}^1$$
is injective. The image consists of all inner functions of the form $BS_\sigma$ where $B$ is a Blaschke product and $S_\sigma$ is the singular factor associated to a measure $\sigma$ whose support is contained in a countable union of Beurling-Carleson sets.
\end{customthm}

Above, a {\em Beurling-Carleson set} $E \subset \partial \mathbb{D}$ is a closed subset of the unit circle of zero Lebesgue measure whose complement is a union of arcs $\bigcup_k I_k$ with 
\begin{equation}
\label{eq:beurling-carleson}
\sum |I_k| \log \frac{1}{|I_k|}  < \infty.
\end{equation}

\paragraph*{Topology of stable convergence.} The space $\mathscr J$ of inner functions of finite entropy is equipped with the topology of {\em stable convergence} where $F_n \to F$ if $F_n$ converges uniformly on compact subsets to $F$ and the Nevanlinna splitting
$$
\inn F'_n \to \inn F', 
\qquad
\out F'_n \to \out F'
$$
 is preserved in the limit. As explained in \cite[Section 4]{inner}, for general sequences of inner functions, one may lose critical points, i.e.~
\begin{equation}
\label{eq:lose-cps}
\liminf_{n \to \infty} \, \mu_{F_n} \ge \mu_F,
\end{equation}
as well as entropy:
\begin{equation}
\label{eq:lose-entropy}
\liminf_{n \to \infty} \, \int_{\partial \mathbb{D}} \log|F'_n|dm \ge \int_{\partial \mathbb{D}} \log|F'|dm.
\end{equation}
While the last equation resembles Fatou's lemma from real analysis, the proof is subtle since we are dealing with boundary values of analytic functions. The rigorous argument involves Julia's lemma \cite[Corollary 4.10]{mashreghi}. The convergence $F_n \to F$ is stable if and only if $$\lim_{n \to \infty} \, \mu_{F_n} = \mu_F,$$ or equivalently, if
$$
\lim_{n \to \infty} \, \int_{\partial \mathbb{D}} \log|F'_n|dm = \int_{\partial \mathbb{D}} \log|F'|dm.
$$
In this topology, finite Blaschke products are dense in $\mathscr J$.  A particular stable approximating sequence $F_n \to F$ is given in \cite[Theorem 5.4]{craizer}, see also \cite[Lemma 4.3]{inner}. 

\subsection{Singular values}

We may encode the critical values of a finite Blaschke product $F$ using the measure
$$
\nu_F := \sum_{c \in \crit F} (1-|c|) \cdot  \delta_{F(c)}.
$$
For a general inner function $F \in \mathscr J$ of finite entropy, we approximate it by a stable sequence of finite Blaschke products $F_n$ and define
 $\nu_F$ as the limit of $\nu_{F_n}$.
Our main objective is to show that the singular value measure $\nu_F$ is well-defined, i.e.~does not depend on the stable approximating sequence $F_n$\,:

\begin{theorem}
\label{main-thm}
For almost every point $\zeta \in \partial \mathbb{D}$ with respect to the singular measure $\sigma(F')$, the radial limit
$
\lim_{r \to 1} F(r\zeta)
$
exists and lies in the open unit disk. 
Let 
\begin{equation}
\label{eq:nuF-def}
\nu_F \, = \, F_*(\mu_F) \, = \,  \sum_{c \in \crit F} (1-|c|) \cdot  \delta_{F(c)} + F_*(\sigma(F')),
\end{equation}
where we take the pushforward with respect to the radial extension of $F$ to the unit circle.
If $F_n \to F$ is Nevanlinna stable, then the measures $\nu_{F_n}$ converge weakly to $\nu_F$.
\end{theorem}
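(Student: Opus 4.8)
The plan is to split $\nu_F$ into two pieces according to the decomposition $\mu_F = \mu_F^{\mathrm{ac/Bl}} + \sigma(F')$, where the first term is the atomic measure $\sum_{c\in\crit F}(1-|c|)\delta_c$ coming from the Blaschke factor of $F'$ and the second is the singular part, and to handle each piece by a different mechanism. For the Blaschke part, the key point is that if $F_n\to F$ is Nevanlinna stable then $\inn F_n'\to\inn F'$ with no loss of mass; in particular the zeros of the Blaschke factors converge (counting multiplicity, weighted by $1-|c|$) to those of $F$, and since $F_n\to F$ locally uniformly on $\mathbb D$ the critical values $F_n(c_n)$ converge to $F(c)$ for zeros $c$ staying in a compact subset of $\mathbb D$. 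The delicate case is a clump of critical points of $F_n$ escaping to a boundary point $\zeta\in\partial\mathbb D$; their total weight $\sum(1-|c_n|)$ is bounded (by finite entropy), and in the limit this mass is absorbed into $\sigma(F')$, so it must be matched against the pushforward of $\sigma(F')$ at $\zeta$. This is exactly why the theorem must assert that $F$ has a radial limit in the \emph{open} disk at $\sigma(F')$-a.e.\ point: the ``boundary critical values'' are genuine points of $\mathbb D$.

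First, then, I would establish the radial-limit statement. Since $F'\in\mathcal N$ has $BSO$ decomposition, $\log|F'|$ has a well-defined harmonic (Poisson-plus-singular) representation, and the singular factor $S=S_\sigma$ contributes $-P[\sigma]$ to $\log|F'|$; so along radii approaching $\sigma$-a.e.\ $\zeta$ we have $|F'(r\zeta)|\to 0$ fast. The surprising input advertised in the abstract — the connection between Beurling--Carleson sets and angular derivatives — should be invoked here: $\supp\sigma$ lies in a countable union of Beurling--Carleson sets (Theorem~A), and on such sets the summability condition~\eqref{eq:beurling-carleson} is precisely what lets one integrate the radial growth of $|F'|$ and conclude that $F$ itself has a finite radial (indeed minimal-fine) limit $\sigma$-a.e., and that this limit cannot have modulus $1$ — if it did, Julia's lemma would force a finite angular derivative and hence $\log|F'|$ would be locally integrable against $\sigma$ near $\zeta$, contradicting that $\zeta$ carries singular mass. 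I expect this to be the technical heart of the paper; making ``minimal fine limit'' precise and showing the exceptional set is $\sigma$-null is where the Beurling--Carleson hypothesis is spent.

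With the radial limit in hand, $F_*(\sigma(F'))$ is well-defined, and $\nu_F$ as in~\eqref{eq:nuF-def} makes sense. To prove weak convergence $\nu_{F_n}\to\nu_F$, I would test against an arbitrary $\phi\in C(\overline{\mathbb D})$ (in fact $C(\sph)$, but the values lie in $\overline{\mathbb D}$) and write $\int\phi\,d\nu_{F_n}=\int\phi\circ F_n\,d\mu_{F_n}$. Stable convergence gives $\mu_{F_n}\to\mu_F$ weakly with convergent total mass, so it suffices to show $\phi\circ F_n\to\phi\circ F$ in a sense strong enough to pass to the limit against this weakly convergent sequence of measures with no concentration loss — concretely, that for any $\varepsilon>0$ one can find a compact $K\subset\mathbb D$ carrying all but $\varepsilon$ of the mass of every $\mu_{F_n}$ \emph{except} near finitely many boundary points, on $K$ use local uniform convergence $F_n\to F$, and near each bad boundary point $\zeta$ use the radial-limit analysis of Step~1 together with Julia's lemma to see that $F_n$ maps a whole horoball at $\zeta$ into a small neighborhood of the limit value $F(\zeta)\in\mathbb D$, so that $\phi\circ F_n$ is nearly constant there and matches $\phi(F(\zeta))$. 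Summing the contributions and letting $\varepsilon\to0$ yields $\int\phi\,d\nu_{F_n}\to\int\phi\,d\nu_F$. The main obstacle is the uniformity in Step~1: controlling, simultaneously in $n$, the location and modulus of the (boundary) critical values near a limiting singular atom, which is precisely where the interplay between the Beurling--Carleson geometry of $\supp\sigma$ and Julia's lemma must be made quantitative.
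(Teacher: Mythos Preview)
Your sketch of the radial-limit part is in the right spirit but misses the actual mechanism. The paper does not argue by contradiction via Julia's lemma; instead it uses the Fundamental Lemma (Lemma~\ref{fundamental-lemma}) to bound $\lambda_F \le \lambda_{F_{\mu_i^L}}$ for a sub-measure $\mu_i^L$ supported on one side of $x$, and then proves a quantitative contraction estimate $\lambda_{F_{\mu_i^L}}((1-\varepsilon)x) \lesssim \varepsilon^N$ (Lemma~\ref{contraction-bound}). This gives that the hyperbolic length of $F([0,x])$ is finite, which directly places the radial limit in the open disk. The Beurling--Carleson hypothesis enters through a maximal-function estimate (Corollary~\ref{Shapiro bound}) comparing $\mu$ against the auxiliary measure $\nu_E = \log^+(1/d(\zeta,E))\,dm$, not through an integrability argument for $\log|F'|$ against $\sigma$.

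The continuity argument has a genuine gap. Your proposed mechanism --- that Julia's lemma forces $F_n$ to map a horoball at $\zeta$ into a small neighbourhood of $F(\zeta)\in\mathbb D$ --- does not apply: Julia's lemma controls images of horoballs only at boundary points where the map has a \emph{finite} angular derivative and a boundary value on $\partial\mathbb D$, whereas here $F(\zeta)$ lies in the open disk and the angular derivative is infinite (that is precisely what it means for $\zeta$ to carry singular mass of $F'$). Moreover, you need control on $F_n$, not $F$, near $\partial\mathbb D$, and stable convergence gives you nothing directly there. There is no evident way to show $F_n(c_n)\to F(\zeta)$ for escaping critical points $c_n\to\zeta$ by soft arguments of this kind.

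The paper's route is structurally different. It fixes a round disk $V\Subset\mathbb D$ and looks at the connected components $U_k$ of $F^{-1}(V)$; any critical point of $F_n$ lying in the corresponding component $U_{n,k}$ of $F_n^{-1}(V)$ automatically has critical value in $V$. The work is then to show that enough escaping mass is trapped in these components. This requires two substantial ingredients you do not mention: the Radon--Nikodym formula of Theorem~\ref{radon-nikodym}, which identifies $\varphi_*\sigma(F'_{U_k})$ with $|(\varphi^{-1})'(\zeta)|\,d\sigma(F')$ on the thick part of $\partial U_k\cap\partial\mathbb D$, and the thick-limit Theorem~\ref{main-thm2}, which guarantees that $\sigma(F')$-a.e.\ $\zeta$ is thick for some $U_k$. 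Green's function ratios (Lemmas~\ref{green-quotient2}, \ref{green-quotient3}) convert between the $G_{\mathbb D}$-weighted and $G_{U_{n,k}}$-weighted critical-point sums, and a $\liminf\ge$ inequality together with conservation of total mass finishes the proof. Your direct test-function approach does not see this component structure and would need an independent substitute for it.
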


Theorem \ref{main-thm} is rather surprising since in complex analysis, one often deals with the topology of uniform convergence on compact subsets. When the singular measure $\sigma(F')$ is non-trivial, some of the critical points of $F_n$ must escape to the unit circle. At first glance, it seems rather strange that we are able to say anything about the locations of the critical values even if we know the limiting distribution of the critical points. It turns out that stable convergence is a lot stronger than uniform convergence on compact subsets, see Appendix \ref{sec:compactness}.

We say that a simply-connected domain $\Omega \subset \mathbb{D}$ is {\em thick} at  $\zeta \in \partial \Omega \cap \partial \mathbb{D}$ if the Riemann map $\varphi: \mathbb{D} \to \Omega$ has a non-zero angular derivative at $\zeta$. This condition implies that $\Omega$ contains a truncated Stolz angle of any opening at $\zeta$, but is strictly stronger, see Section \ref{sec:angular-derivatives} for details.
 We say that a function $F$ has a {\em thick limit} $L$ at $\zeta$ if for any $\delta > 0$, the set
$\bigl \{ z \in \mathbb{D}: |F(z) - L| < \delta \bigr \}$
has a connected component $\Omega_\zeta$ that is thick at $\zeta$. Thick limits are also called minimal fine limits, e.g.~see
\cite[Chapter 9.3]{AG01} and \cite[Chapter V.5]{Bas95}.

\begin{theorem}
\label{main-thm2}
For almost every point $\zeta \in \partial \mathbb{D}$ with respect to the singular measure $\sigma(F')$, $F$ has a thick limit at $\zeta$.
\end{theorem}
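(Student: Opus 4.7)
My plan is to combine the radial-limit result of Theorem~\ref{main-thm} with a construction of a simply connected subdomain $\Omega_E \subset \mathbb{D}$ on which $F$ stays close to its radial limit, and to show that $\Omega_E$ is geometrically thick at $\sigma$-a.e.\ boundary point. By Theorem~A, $\supp \sigma$ is contained in a countable union of Beurling--Carleson sets, so it suffices to fix one such set $E$ and prove the claim at $\sigma$-a.e.\ $\zeta \in E$.

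First I would construct a Carleson-type sawtooth $\Omega_E$: for each complementary arc $I_k$ of $E$, excise a tent $T_k$ (for instance the hyperbolic convex hull of $I_k$), and set $\Omega_E = \mathbb{D} \setminus \bigcup_k \overline{T_k}$. This is a simply connected subdomain of $\mathbb{D}$ with $\partial \Omega_E \cap \partial \mathbb{D} = E$. The ``surprising connection'' advertised in the abstract should be that the Beurling--Carleson condition $\sum |I_k| \log(1/|I_k|) < \infty$---equivalent to $\log(1/\dist(\cdot, E)) \in L^1(\partial \mathbb{D})$---is precisely the quantitative geometric condition forcing the Riemann map $\varphi \colon \mathbb{D} \to \Omega_E$ to have non-zero angular derivative at $\sigma$-a.e.\ point of $\varphi^{-1}(E)$. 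I would establish this through a Pommerenke/McMillan-type integral criterion for angular derivatives, translating the combinatorial Beurling--Carleson sum into a Carleson-integral estimate that controls the conformal distortion of $\Omega_E$ near the relevant boundary points.

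Granted this, the rest is fairly formal. Since $F$ is bounded, the radial limit $L := F(\zeta) \in \mathbb{D}$ supplied by Theorem~\ref{main-thm} exists as a non-tangential limit at $\zeta$ in $\mathbb{D}$. For $\xi = \varphi^{-1}(\zeta)$ with $|\varphi'(\xi)| \neq 0$, the conformality of $\varphi$ at $\xi$ sends any truncated Stolz angle $S_\alpha$ at $\xi$ into $\Omega_E$ as a region whose points approach $\zeta$ non-tangentially in $\mathbb{D}$; thus for any $\delta > 0$, an appropriate choice of $\alpha$ and of the truncation yields $|F - L| < \delta$ on $\varphi(S_\alpha)$. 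Since $S_\alpha$ is itself thick at its apex $\xi$, the chain rule for angular derivatives applied to the composition of Riemann maps shows that $\varphi(S_\alpha)$ is a simply connected subdomain of $\{z \in \mathbb{D} : |F(z) - L| < \delta\}$ thick at $\zeta$, providing (inside the connected component containing it) the required thick component.

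The principal obstacle is the Beurling--Carleson/angular-derivative correspondence of the second paragraph, and specifically obtaining it $\sigma$-almost everywhere rather than just Lebesgue-almost everywhere: since $\sigma$ is singular and concentrated on the Lebesgue-null set $E$, the classical conformal-distortion estimates keyed to Lebesgue measure are not directly applicable, and one will need a finer scale-by-scale analysis of how $\sigma$ distributes mass on $E$ in relation to the local hyperbolic geometry of the sawtooth $\Omega_E$.
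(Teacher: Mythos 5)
Your plan contains a fatal error in the last step, and it is exactly the distinction the paper flags when defining ``thick'': containing a Stolz angle is \emph{not} the same as being thick. Concretely, a truncated Stolz angle $S_\alpha$ of opening $\alpha<\pi$ is \emph{not} thick at its apex $\xi$: the Riemann map from $\mathbb{D}$ onto a sector of opening $\alpha$ has, near the vertex, the form $z\mapsto z^{\alpha/\pi}$, whose derivative blows up, so the angular derivative in the sense required here does not exist. Hence the sentence ``Since $S_\alpha$ is itself thick at its apex $\xi$, \dots\ $\varphi(S_\alpha)$ is \dots\ thick at $\zeta$'' is false at its premise, and the chain rule for angular derivatives cannot rescue it: even if $\varphi$ has a nonzero angular derivative at $\xi$, the image $\varphi(S_\alpha)$ is, up to perturbation, another Stolz angle at $\zeta$ and therefore remains thin. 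Your argument, as written, proves a non-tangential limit but not a thick (minimal fine) limit, which is exactly the part of Theorem~\ref{main-thm2} that goes beyond Theorem~\ref{main-thm}.

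There is also a second, more structural problem that the sawtooth framework does not address even if the Stolz-angle confusion is repaired. Knowing that $\Omega_E$ is thick at $\zeta$ and that $F(r\zeta)\to L$ radially does not imply that the sublevel set $\{z : |F(z)-L|<\delta\}$ has a thick component at $\zeta$: $F$ could oscillate wildly off the radius, and a thick region must occupy essentially all of the approach to $\zeta$, not just a Stolz cone around the radius. One genuinely needs pointwise control of $\lambda_F$ (equivalently of $|F'|$ scaled hyperbolically) on a region of the right geometric size, which is precisely what the paper's Lemma~\ref{refined-bound}, Corollary~\ref{contraction at thick point}, and Lemma~\ref{fundamental-lemma} provide. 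In the paper, the thick approach region is not the fixed sawtooth $\Omega_E$ but a region adapted to the local mass distribution of $\mu$, via Theorem~\ref{local-behaviour} ($\int_0^1\mu(x,\varepsilon)^{-1}\,d\varepsilon<\infty$ for $\mu$-a.e.\ $x$) and Theorem~\ref{H-thickness-condition} (a thickness criterion for regions whose height is $\asymp x^2/\mu(0,x/2)$). You would need to replace the geometry-only sawtooth, and the Stolz-angle transport, with estimates of this type in order to obtain a thick region inside the sublevel set.
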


Intuitively, our proof of continuity of the measure $\nu_F$ in Theorem \ref{main-thm} suggests that thick approach regions act as ``traps'' for critical points.

\subsection{Components}

Central to our argument is the notion of a {\em component} of an inner function, which was used by Pommerenke \cite{pommerenke} to study Green's functions of Fuchsian groups.
Suppose $V$ is a Jordan domain compactly contained in the unit disk and $U$ is a connected component of the pre-image $F^{-1}(V)$. 

\begin{lemma}
\label{component-is-jordan}
Any connected component $U$ of $F^{-1}(V)$ is a Jordan domain.
\end{lemma}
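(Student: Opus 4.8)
The plan is to show that $U$, a connected component of $F^{-1}(V)$, is a Jordan domain by verifying that it is simply connected and that its boundary is a simple closed curve.

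First I would argue that $U$ is simply connected. Since $V$ is simply connected and $F \colon U \to V$ is a proper holomorphic map (it is surjective onto $V$, and $\overline{V} \subset \mathbb{D}$ together with the fact that $V$ is a Jordan domain force the closure of $U$ in $\mathbb{D}$ to stay away from any boundary issues: if $z_n \in U$ accumulates at a point $z_0 \in \mathbb{D}$, then $F(z_0) \in \overline{V}$, and if $F(z_0) \in \partial V$ then $z_0 \notin U$, so the restriction is proper as a map to $V$). Properness of $F|_U$ implies that any loop in $U$ maps to a loop in $V$, which is null-homotopic in $V$; lifting the homotopy (using that $F$ is a covering away from critical points, or more robustly using the argument principle to count preimages) shows the loop was null-homotopic in $U$. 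Alternatively, and more cleanly, I would invoke the maximum principle: if $U$ had a hole, the bounded complementary component would be mapped by $F$ into $\overline{V}$, but its boundary lies in $\partial U \subset F^{-1}(\partial V)$, so $|F|$ on that boundary equals the (constant, or at least bounded away appropriately) modulus... actually the cleanest route is: the complement of $U$ relative to its filled version would be a compact set on whose boundary $F$ takes values in $\partial V$, forcing $F$ to map it into $\overline{V}$; but then such points would lie in $F^{-1}(V) $'s closure and connectedness considerations show they must lie in $U$, a contradiction. So $U$ is simply connected.

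Next, knowing $U$ is simply connected and bounded, I would show $\partial U$ is locally connected, which by Carathéodory's theorem is equivalent to the Riemann map $\varphi \colon \mathbb{D} \to U$ extending continuously to $\overline{\mathbb{D}}$. The key observation is that $F \circ \varphi \colon \mathbb{D} \to V$ is a bounded holomorphic map whose image lies in $V$, and moreover it is proper: as $|z| \to 1$, $\varphi(z) \to \partial U \subset F^{-1}(\partial V)$, so $F(\varphi(z)) \to \partial V$. A proper holomorphic map from $\mathbb{D}$ to the Jordan domain $V$ is a finite Blaschke-type map (composition of a finite Blaschke product with the Riemann map $\psi\colon \mathbb{D} \to V$, which extends to a homeomorphism $\overline{\mathbb{D}} \to \overline{V}$ by Carathéodory since $V$ is Jordan), hence extends continuously to $\overline{\mathbb{D}}$. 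From the continuous extension of $F \circ \varphi$ one deduces that $\varphi$ itself extends continuously: if $\varphi$ had two distinct radial limits along sequences $z_n, w_n \to \zeta \in \partial\mathbb{D}$, the values $F(\varphi(z_n))$ and $F(\varphi(w_n))$ would both converge to the same point of $\partial V$, and one uses the fact that $F$ restricted to a neighborhood in $U$ is finite-to-one together with a prime-end / cluster-set argument to rule out the bad behavior. Finally, to upgrade continuity of $\varphi$ to injectivity on $\partial \mathbb{D}$ (so that $\partial U$ is a Jordan curve), I would suppose $\varphi(\zeta_1) = \varphi(\zeta_2) = p$ with $\zeta_1 \neq \zeta_2$; then $F(p) \in \partial V$, and near $p$ the set $U$ looks like it is "pinched," but the two prime ends at $p$ would give $F\circ\varphi$ two approaches to $\partial V$ at $\zeta_1, \zeta_2$ — tracking this through the boundary correspondence of the finite-Blaschke-type map $F \circ \varphi$ and the homeomorphism $\psi^{-1} \circ F$ near $p$ yields a contradiction.

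The main obstacle I anticipate is the boundary regularity: proving that $\varphi$ extends continuously and injectively to $\partial \mathbb{D}$. The properness of $F \circ \varphi$ is the crucial leverage, but converting "the composition behaves nicely at the boundary" into "$\varphi$ itself behaves nicely at the boundary" requires care, because $F$ is only an arbitrary inner function and a priori could have wild boundary behavior on $\partial\mathbb{D}$ — though here we are saved by the fact that $\partial U$ lies in the interior $\mathbb{D}$, away from $\partial \mathbb{D}$, where $F$ is a genuine holomorphic (and locally finite-to-one) map. I would handle this by working with $\Phi := F \circ \varphi$, noting $\Phi = B \circ (\psi^{-1} \circ F \circ \varphi)$ is forced to be a finite Blaschke product composed with the Carathéodory homeomorphism $\psi$, and then showing the fibers of $\Phi$ are finite and the local structure of $F$ at each boundary point $p \in \partial U \subset \mathbb{D}$ (where $F$ is a finite branched cover) pins down the prime-end structure of $U$ at $p$ uniquely.
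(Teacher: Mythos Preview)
Your proposal rests on the assumption that $\partial U \subset \mathbb{D}$, stated explicitly when you write ``we are saved by the fact that $\partial U$ lies in the interior $\mathbb{D}$, away from $\partial \mathbb{D}$.'' This assumption is false in general, and its failure is in fact central to the paper. For a general inner function $F$, a component $U$ of $F^{-1}(V)$ can reach all the way out to $\partial \mathbb{D}$; the set $\partial U \cap \partial \mathbb{D}$ can be nonempty (and carries the singular measure $\sigma(F')$ via Theorem~\ref{radon-nikodym}). Already the motivating example $F(z) = \exp\bigl((z-1)/(z+1)\bigr)$ exhibits this: $F$ has no critical points, so for most small disks $V$ each component $U$ is a conformal copy of $V$, but these components accumulate at $-1 \in \partial \mathbb{D}$ and some touch the circle.

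Once $\partial U \cap \partial \mathbb{D} \neq \emptyset$, your argument collapses at several points. The restriction $F|_U : U \to V$ is \emph{not} proper: if $z_n \in U$ tends to $\zeta \in \partial U \cap \partial \mathbb{D}$, there is no reason for $F(z_n)$ to approach $\partial V$ (indeed, at ``thick'' boundary points the paper shows $F$ has a limit inside $V$). Consequently $F \circ \varphi : \mathbb{D} \to V$ is not proper either, and $\psi^{-1} \circ F \circ \varphi$ is not a finite Blaschke product---it is merely an inner function (Lemma~\ref{component-is-inner}), possibly of infinite degree. Your entire boundary-regularity step, which hinges on the finite-Blaschke structure, therefore does not apply. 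The paper's proof confronts the boundary set $\partial U \cap \partial \mathbb{D}$ head-on: local connectivity of $\partial U$ at a point $\zeta \in \partial \mathbb{D}$ is established by a crossing-counting argument using circular arcs $C_r = \partial B(\zeta,r) \cap \mathbb{D}$ and the fact that $F$ is inner, and the Jordan property is then deduced from a converse to the Jordan curve theorem rather than from injectivity of a Riemann map.
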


By the above lemma, we can define
$$F_{U} \, = \, F_{U \to V} \, := \, \psi^{-1} \circ F \circ \varphi,$$ where $\varphi, \psi$ are Riemann maps from $\mathbb{D}$ to $U$ and $V$ respectively.

\begin{lemma}
\label{component-is-inner}
The component function $F_{U}$ is an inner function.
  \end{lemma}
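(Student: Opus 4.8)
The plan is to prove that $F_U$ is inner by showing that it has unimodular radial limits at almost every point of $\partial\mathbb{D}$. Since $F_U = \psi^{-1}\circ F\circ\varphi$ where $\varphi:\mathbb{D}\to U$ and $\psi:\mathbb{D}\to V$ are Riemann maps, and since $\psi^{-1}$ is a homeomorphism of $\overline{V}$ onto $\overline{\mathbb{D}}$ (by Carathéodory, using Lemma \ref{component-is-jordan} which guarantees $V$ — and $U$ — are Jordan domains), the unimodularity of $F_U$ on $\partial\mathbb{D}$ will follow once I understand the boundary behavior of $F\circ\varphi$. Concretely, $F_U$ maps $\mathbb{D}$ holomorphically into $\mathbb{D}$, so it is automatically a bounded holomorphic function and has radial limits a.e.; the issue is to show these limits have modulus one, equivalently that $F\circ\varphi$ sends a.e.\ point of $\partial\mathbb{D}$ to $\partial V$.

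First I would record the relevant boundary correspondence for $\varphi:\mathbb{D}\to U$. The boundary $\partial U$ decomposes into the part lying on $\partial\mathbb{D}$ and the part lying in the interior of $\mathbb{D}$, where the latter is contained in $F^{-1}(\partial V)$ (since $U$ is a connected component of $F^{-1}(V)$, points of $\partial U\cap\mathbb{D}$ are limits of points in $U$ mapping into $V$, hence map into $\overline{V}$, and being outside the open component they map to $\partial V$). I would argue that $\varphi$ carries almost every point of $\partial\mathbb{D}$ to $\partial U\cap\partial\mathbb{D}$: this is the key measure-theoretic input. The cleanest route is harmonic measure — the portion of $\partial U$ inside $\mathbb{D}$ is a countable union of (pieces of) analytic arcs, and I want to show the preimage of $\partial U\cap\mathbb{D}$ under the boundary map of $\varphi$ has zero length in $\partial\mathbb{D}$. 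Equivalently, harmonic measure on $\partial U$ from an interior point, restricted to $\partial U\cap\partial\mathbb{D}$, is mutually absolutely continuous with arc length on $\partial\mathbb{D}$, while the arc length of $\partial U\cap\partial\mathbb{D}$ itself is positive — this last point is where I expect to lean on the finite-entropy hypothesis or on the structure of the covering-type behavior of inner functions (an inner function restricted to $U$ behaves like a proper map onto $V$, so the ``interior boundary'' is thin relative to the circular boundary). Having established this, at a.e.\ $\zeta\in\partial\mathbb{D}$, $\varphi(\zeta)\in\partial U\cap\partial\mathbb{D}$, so $F$ has a unimodular radial limit there; but that limit must also lie in $\overline{V}\cap\partial\mathbb{D}$. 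Since $V$ is compactly contained in $\mathbb{D}$, this set is empty — contradiction — which forces me to be more careful: the correct statement is that $\varphi(\zeta)$ lands on $\partial U\cap\partial\mathbb{D}$ only on a set of measure zero, and a.e.\ $\varphi(\zeta)$ lands on $\partial U\cap\mathbb{D}\subset F^{-1}(\partial V)$, at which $F$ is continuous (being holomorphic there), so $F(\varphi(\zeta))\in\partial V$ and hence $F_U(\zeta)=\psi^{-1}(F(\varphi(\zeta)))\in\partial\mathbb{D}$.

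So the structure is: (i) use Lemma \ref{component-is-jordan} and Carathéodory to get homeomorphic boundary extensions of $\varphi$ and $\psi$; (ii) identify $\partial U\cap\mathbb{D}\subset F^{-1}(\partial V)$ and note $\partial U\cap\partial\mathbb{D}$ has zero Lebesgue measure — this is the crux, and I would prove it by showing that if $\varphi(\zeta)\in\partial\mathbb{D}$ for a positive-measure set of $\zeta$, then $F$ would have a unimodular radial limit on a positive-measure subset of $\partial U\cap\partial\mathbb{D}$, while simultaneously $F$ restricted to $U$ extends continuously across $\partial U\cap\mathbb{D}$ with values in $\partial V\subset\mathbb{D}$, and patch these to contradict the maximum principle / the fact that $F$ cannot take boundary values both in $\partial V$ and on $\partial\mathbb{D}$ near the same boundary arc structure — alternatively, and more robustly, invoke that an inner function maps sets of positive measure on $\partial\mathbb{D}$ to sets of positive measure, combined with a localization argument showing $U$ reaches $\partial\mathbb{D}$ only on a null set because $V$ is interior; (iii) conclude $F_U$ has unimodular radial limits a.e., hence is inner. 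The main obstacle is step (ii): controlling the harmonic measure / arc-length of $\partial U\cap\partial\mathbb{D}$ and ruling out that a positive-length piece of the unit circle is ``absorbed'' into the boundary of a single component $U$ whose image $V$ sits strictly inside the disk. I expect this to require either a direct argument via the Löwner-type lemma on boundary behavior of inner functions or an appeal to the fact that $F$, being inner, is a measure-preserving-in-the-Borel-sense map of $\partial\mathbb{D}$ to $\partial\mathbb{D}$, so no positive-measure boundary set can map into the interior.
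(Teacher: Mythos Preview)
Your overall strategy matches the paper's: reduce to showing that the set $E=\{\zeta\in\partial\mathbb{D}:\varphi(\zeta)\in\partial\mathbb{D}\}$ has measure zero, so that for a.e.~$\zeta$ the boundary value $\varphi(\zeta)$ lies on $\partial U\cap\mathbb{D}\subset F^{-1}(\partial V)$ and hence $F_U(\zeta)\in\partial\mathbb{D}$. (Your initial paragraph had the direction reversed, but you caught and corrected this.)

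The genuine gap is in step (ii). You correctly isolate the crux --- ruling out $m(E)>0$ --- but none of your proposed arguments actually closes it. The issue is that you need to relate two different boundary behaviours of $F$: along the curve $\varphi([0,\zeta))$, the function $F$ stays in $V$ and so any limit at the endpoint $\varphi(\zeta)\in\partial\mathbb{D}$ lies in $\overline{V}\subset\mathbb{D}$; meanwhile, the \emph{radial} limit of $F$ at a.e.~point of $\partial\mathbb{D}$ is unimodular. These are limits along different approach curves, and you never explain why they must agree. The missing ingredient is Lindel\"of's theorem: for a bounded analytic function, an asymptotic value along any curve terminating at a boundary point coincides with the radial (non-tangential) limit there, whenever the latter exists. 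With this in hand, the argument runs cleanly: $F$ has a non-unimodular radial limit at each $\varphi(\zeta)$ with $\zeta\in E$, so $\varphi(E)$ is contained in a Lebesgue-null subset of $\partial\mathbb{D}$; then L\"owner's lemma (harmonic-measure subordination for $U\subset\mathbb{D}$) gives $m(E)=0$.

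Two further remarks. First, your appeal to the finite-entropy hypothesis is misplaced: the lemma is stated and proved for arbitrary inner functions. Second, your alternative suggestion --- that $\partial U\cap\partial\mathbb{D}$ has Lebesgue measure zero, or that inner functions preserve positive-measure sets --- does not by itself control $m(E)$; even if $\partial U\cap\partial\mathbb{D}$ had positive Lebesgue measure, L\"owner only gives an upper bound on its harmonic measure from inside $U$, and the direction you need (pushing $m(E)>0$ forward to $m(\varphi(E))>0$) is not what L\"owner provides. The paper's route --- first show $\varphi(E)$ is null via Lindel\"of, then pull back via L\"owner --- avoids this.
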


The proofs of Lemmas \ref{component-is-jordan} and \ref{component-is-inner} will be given in Appendix \ref{sec:preimage}.

\begin{lemma}
\label{stable-exhaustion}
If  $F \in \mathscr J$ is an inner function of finite entropy, then any component inner function $F_U \in \mathscr J$ also has finite entropy.
 \end{lemma}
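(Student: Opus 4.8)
The plan is to relate the entropy of the component function $F_U$ to the entropy of $F$ itself, using the change of variables given by the Riemann maps $\varphi:\mathbb D\to U$ and $\psi:\mathbb D\to V$. Writing $F_U=\psi^{-1}\circ F\circ\varphi$, we have $F_U'=(\psi^{-1})'(F(\varphi))\cdot F'(\varphi)\cdot\varphi'$, so that on the unit circle
\begin{equation*}
\log|F_U'|=\log|(\psi^{-1})'\circ F\circ\varphi|+\log|F'\circ\varphi|+\log|\varphi'|.
\end{equation*}
The first term is harmless: since $V\Subset\mathbb D$ is a Jordan domain, $\psi^{-1}$ extends analytically across $\partial V$ (or at least $(\psi^{-1})'$ is bounded above and below on $\overline V$ because $\overline V\subset\mathbb D$ and $\psi$ is a conformal map onto a domain with nice boundary — more carefully, $|(\psi^{-1})'|$ is comparable to $1$ on a neighborhood of $\overline V$, or one uses that $\log|(\psi^{-1})'|$ is bounded on $\overline V$), and $F(\varphi(e^{i\theta}))\in\partial V$ for a.e.\ $\theta$ since $F_U$ is inner by Lemma~\ref{component-is-inner}; hence this term contributes a bounded quantity to the integral. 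So the finite-entropy condition for $F_U$ reduces to showing
\begin{equation*}
\int_{|z|=1}\Bigl(\log^+|F'\circ\varphi|+\log^+|\varphi'|+\log^-|F'\circ\varphi|+\log^-|\varphi'|\Bigr)\,dm<\infty,
\end{equation*}
i.e.\ that $\log|F'\circ\varphi|$ and $\log|\varphi'|$ are both integrable against $dm$ on $\partial\mathbb D$.

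For the $\log|\varphi'|$ term, the key point is that $U$ is a Jordan domain (Lemma~\ref{component-is-jordan}), so $\varphi$ extends to a homeomorphism $\overline{\mathbb D}\to\overline U$; but more is needed, namely that $\varphi$ itself is an inner-type map with $\varphi'\in\mathcal N$. Here I would invoke the fact that $U\subset\mathbb D$, so $\varphi$ is a bounded holomorphic function, hence in the Nevanlinna class, and by the Ahern--Clark-type result (or directly, since $\varphi$ is univalent and bounded) $\varphi'\in\mathcal N$ as well, giving $\int\log|\varphi'|\,dm>-\infty$ automatically; the finiteness of $\int\log^+|\varphi'|\,dm$ follows since $\varphi'\in\mathcal N$ means $\log^+|\varphi'|$ is integrable. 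Actually the cleaner route is: $\varphi$ is univalent on $\mathbb D$ with image contained in $\mathbb D$, so $\log|\varphi'|$ is harmonic and its boundary integral equals $\log|\varphi'(0)|$ minus a sum of positive terms coming from zeros of $\varphi'$ (there are none, $\varphi$ univalent) — wait, $\varphi'$ can vanish? No, $\varphi$ univalent implies $\varphi'\neq 0$. So $\log|\varphi'|$ is harmonic, and since $\varphi$ is Lipschitz up to the boundary would give boundedness, but that need not hold. Instead one argues $\varphi'\in H^p$ for some $p>0$ when $U$ has rectifiable-ish boundary; in general for a univalent function $\varphi:\mathbb D\to\mathbb D$ one has $\int\log^+|\varphi'|\,dm<\infty$ because $\varphi'$ is in the Nevanlinna class (a univalent map of the disk into itself has derivative in $\bigcap_{p<1/2}H^p$ by the classical area/distortion estimates). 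Combined with the mean value property controlling the negative part, we get $\int|\log|\varphi'||\,dm<\infty$.

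For the $\log|F'\circ\varphi|$ term, I would use that $\varphi$ pushes Lebesgue measure $m$ on $\partial\mathbb D$ to harmonic measure $\omega_U$ on $\partial U$ as seen from $\varphi(0)\in U$, and $\partial U$ consists of two parts: the portion inside $\mathbb D$ (the free boundary, where $F$ maps into $\partial V$) and the portion on $\partial\mathbb D$. On the free boundary part, $\log|F'|$ is comparable to a bounded quantity plus $\log|\varphi'|^{-1}\cdot(\text{something})$ — more precisely, by the chain rule read on the free boundary, $\log|F'|$ there is controlled. The essential contribution comes from $\partial U\cap\partial\mathbb D$, and here harmonic measure $\omega_U$ restricted to $\partial U\cap\partial\mathbb D$ is absolutely continuous with bounded-above density with respect to $m|_{\partial\mathbb D}$ (by the boundary Harnack principle / Carleson's estimate, since $U\subset\mathbb D$), so
\begin{equation*}
\int_{\partial\mathbb D}\log^+|F'\circ\varphi|\,dm\ \lesssim\ \int_{\partial U}\log^+|F'|\,d\omega_U\ \lesssim\ \int_{\partial\mathbb D}\log^+|F'|\,dm+C\ <\ \infty,
\end{equation*}
using \eqref{eq:finite-entropy} for $F$. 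For the negative part $\log^-|F'\circ\varphi|$, note $F\circ\varphi$ maps $\mathbb D$ into $V\Subset\mathbb D$, composed with $\psi^{-1}$ gives an inner function $F_U$, and one shows directly that $\log|F_U'|$ has integrable negative part because $F_U'$ is in the Nevanlinna class (being the derivative of an inner function, which always lies in $\mathcal N$ by Ahern--Clark \cite{ahern-clark} as recalled in the introduction) — so $\int\log^-|F_U'|\,dm<\infty$ for free, and we only ever needed the $\log^+$ bound, which is what the harmonic-measure comparison supplies.

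The main obstacle is the harmonic-measure comparison on $\partial U\cap\partial\mathbb D$: one must make sure that $\omega_U|_{\partial U\cap\partial\mathbb D}$ has density bounded above by an absolute constant (or at least an $L^1$-controllable amount) relative to arclength on $\partial\mathbb D$, uniformly enough to transport the finite-entropy integral of $F$. This is where the hypothesis $V\Subset\mathbb D$ (so that on the free boundary $F$ stays a definite distance from $\partial\mathbb D$) is used, together with a Beurling-type projection or boundary Harnack estimate comparing $\omega_U$ to $\omega_{\mathbb D}=m$. Once that comparison is in hand, splitting the integral of $\log|F_U'|$ into the three chain-rule pieces and bounding each — the $(\psi^{-1})'$ piece by boundedness on $\overline V$, the $\varphi'$ piece by univalence/Nevanlinna-class membership, and the $F'\circ\varphi$ piece by the harmonic-measure transport of \eqref{eq:finite-entropy} — completes the proof that $F_U\in\mathscr J$.
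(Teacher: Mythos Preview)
Your approach via the chain rule is natural, but as written it contains a circular step and an unjustified claim, and it is quite different from the paper's argument.

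The circular step is in your treatment of the negative part: you write that ``$F_U'$ is in the Nevanlinna class (being the derivative of an inner function, which always lies in $\mathcal N$ by Ahern--Clark).'' This is false. The derivative of an inner function is \emph{not} automatically in $\mathcal N$; the class $\mathscr J$ is \emph{defined} to consist of those inner functions for which this happens, and establishing $F_U\in\mathscr J$ is exactly what the lemma asks. Ahern--Clark shows that \emph{if} $F'\in\mathcal N$ then its $BSO$ decomposition needs no singular denominator; it does not show $F'\in\mathcal N$. (In fact, after normalizing $F_U(0)=0$ one has $|F_U'|\ge 1$ on $\partial\mathbb D$, so the negative part vanishes and only $\int\log^+|F_U'|\,dm<\infty$ is needed---but you did not observe this, and your stated justification is wrong.)

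The unjustified claim is that $(\psi^{-1})'$ is bounded on $\overline V$, or that $\psi^{-1}$ extends analytically across $\partial V$. For a general Jordan domain $V$ (which is all the lemma assumes), neither holds. Your harmonic-measure comparison for the $F'\circ\varphi$ term is also only a sketch; the density of $\omega_U$ with respect to $m$ on $\partial U\cap\partial\mathbb D$ being bounded requires a careful argument.

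The paper sidesteps all of this. It normalizes $F(0)=0$, $\varphi(0)=\psi(0)=0$, recalls that for inner $G$ with $G(0)=0$ the entropy equals the total mass $\mu_G(\overline{\mathbb D})$, and then shows $\mu_{F_U}(\overline{\mathbb D})\le\mu_F(\overline{\mathbb D})$. For a finite Blaschke product this is immediate: $F_U$ has no more critical points than $F$, and by Schwarz applied to $\varphi$ each critical point of $F_U$ is farther from the origin than the corresponding critical point of $F$. The general case follows by taking a stable approximation $F_n\to F$ by finite Blaschke products, noting that $F_{n,U_n}\to F_U$ uniformly on compacta, and invoking the lower semicontinuity of $\mu$ under such limits. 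This avoids any boundary estimates on $\varphi'$, $\psi'$, or harmonic measure.
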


\begin{proof}
Since composing a function with M\"obius transformations in $\aut(\mathbb{D})$ does not change whether its derivative is in the Nevanlinna class, we may assume that (i) $F(0) = 0$, (ii) $0 \in U, V$ and (iii) $\varphi(0) = \psi(0) = 0$. With these normalizations, we show that $F_U \in \mathscr J$ with $\mu_{F_U} (\overline{ \mathbb{D}}) \le \mu_F (\overline{ \mathbb{D}})$.

We first consider the case when $F$ is a finite Blaschke product. Since $U$ is a subset of the unit disk, $F_U$ can only have less critical points than $F$, and by the Schwarz lemma, the critical points of $F_U$ are farther from the origin than the corresponding critical points of $F$.

In the general case, we approximate $F$ by a stable sequence of finite Blaschke products $F_n$ with $F_n(0) = 0$. Let $U_n$ be the connected component of $F_n^{-1}(V)$ containing 0. As the domains $U_{n}$ converge in Carath\'eodory sense to $U$, the maps $F_{n, U_n} \to F$ converge uniformly on compact subsets of the disk. Hence,
$$
 \mu_{F_U} (\overline{ \mathbb{D}}) \, \le \, \liminf_{n \to \infty} \mu_{F_{n,U_n}} (\overline{ \mathbb{D}})
 \, \le \, \liminf_{n \to \infty} \mu_{F_{n}} (\overline{ \mathbb{D}})
 \, = \, \mu_F(\overline{ \mathbb{D}})
$$
as desired.
\end{proof}

In light of the above lemma, we may compare the singular measures $\sigma(F')$ and $\sigma(F'_U)$.
Since the measure $\sigma(F'_U)$ is supported on the set $\varphi^{-1}(\partial U \cap \partial \mathbb{D})$, it is more natural to compare $\sigma(F')$ with its pushfoward
$\varphi_*\, \sigma(F'_U)$, which is supported on $\partial U \cap \partial \mathbb{D}$.

\begin{theorem}
\label{radon-nikodym}
Let $(\partial U \cap \partial \mathbb{D})_{\thick}$ denote the set of points on the unit circle where $U$ is thick. Then,
\begin{equation}
\varphi_*\, \sigma(F'_U) = |(\varphi^{-1})'(\zeta)| \, d\sigma(F')|_{(\partial U \cap \partial \mathbb{D})_{\thick}},
\end{equation}
where $|(\varphi^{-1})'(\zeta)|$ is interpreted as the inverse of the angular derivative of $\varphi$ at $\varphi^{-1}(\zeta) \in \partial \mathbb{D}$.
\end{theorem}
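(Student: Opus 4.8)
The plan is to analyze the boundary behavior of the component function $F_U = \psi^{-1}\circ F\circ\varphi$ by relating its singular measure to that of $F$ through the change of variables induced by the Riemann map $\varphi$. The key conceptual point is that $\sigma(F'_U)$ and $\sigma(F')$ are both singular measures attached to critical structure via Theorem A, so the statement amounts to showing that the Nevanlinna splitting of $F'_U$ is governed by the splitting of $F'$ precisely on the thick part of $\partial U \cap \partial\mathbb{D}$, and that the density between them is the Jacobian $|(\varphi^{-1})'|$ of the boundary correspondence.

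First I would record the chain-rule identity $F'_U(z) = \dfrac{F'(\varphi(z))\,\varphi'(z)}{\psi'(F_U(z))}$ valid on $\mathbb{D}$, and take radial/non-tangential boundary limits. Since $V$ is compactly contained in $\mathbb{D}$, the Riemann map $\psi:\mathbb{D}\to V$ extends analytically across $\partial\mathbb{D}$ with $|\psi'|$ bounded above and below, so the factor $\psi'(F_U(z))$ is, up to bounded multiplicative error, inert: it contributes nothing to the singular inner part. Hence the singular inner factor of $F'_U$ equals the singular inner factor of the product $(F'\circ\varphi)\cdot\varphi'$. The factor $\varphi'$ is the derivative of a Riemann map onto a subdomain of $\mathbb{D}$; by Lemma \ref{stable-exhaustion} the component $F_U$ has finite entropy, so $F'_U\in\mathcal N$ and all these factors have honest Nevanlinna decompositions, and I can split $\sigma(F'_U) = \sigma\bigl((F'\circ\varphi)\bigr) + \sigma(\varphi')$. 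The plan for the first term is a composition formula for singular measures under the inner-type boundary map $\varphi$ restricted to $\varphi^{-1}(\partial U\cap\partial\mathbb{D})$: where $\varphi$ has a finite nonzero angular derivative, $F$ has a non-tangential limit on $\partial\mathbb{D}$ at $\varphi(\zeta)$ through $U$, and $F'$ has one too, and the singular mass of $F'\circ\varphi$ at $\zeta$ is the singular mass of $F'$ at $\varphi(\zeta)$ scaled by $|\varphi'(\zeta)|$ in the angular-derivative sense — this is exactly the Julia–Wolff–Carathéodory bookkeeping, and it explains both the density $|(\varphi^{-1})'(\zeta)|$ and why only the thick part of $\partial U\cap\partial\mathbb{D}$ survives: where $\varphi$ has angular derivative $\infty$ (the non-thick part), the chain rule kills the contribution. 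For the second term $\sigma(\varphi')$, the content is that the Riemann map onto a component $U$ of a preimage $F^{-1}(V)$ has no singular inner factor in its derivative beyond what is already accounted for — more precisely, $\partial U\cap\partial\mathbb{D}$ has measure zero (as $U$ is a Jordan domain by Lemma \ref{component-is-jordan} whose boundary meets the circle in a set where $F$ has unimodular radial limits in $\overline V$, which happens only on a null set since $V\Subset\mathbb{D}$), and one checks that $\sigma(\varphi')$ is absorbed into the change-of-variables density rather than adding new mass off the support of $\sigma(F')$.

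The cleanest route to assemble these pieces is probably the approximation argument already used in Lemma \ref{stable-exhaustion}: take a Nevanlinna-stable sequence of finite Blaschke products $F_n\to F$, let $U_n\to U$ in Carathéodory sense with Riemann maps $\varphi_n\to\varphi$, and verify the identity for finite Blaschke products — where $\sigma\equiv 0$ on both sides, so the statement is the (true, but non-vacuous in the limit) bookkeeping that the escaping critical points of $F_n$ that land in $U_n$ are exactly the ones feeding $\sigma(F'_U)$, weighted by the angular derivative of $\varphi$ at their limiting boundary point. Then pass to the limit using Theorem \ref{main-thm} / Theorem \ref{main-thm2}: the escaping critical points accumulate to $\sigma(F')$, and those whose limit point $\zeta$ lies where $U$ is thick are precisely the ones captured by $U$ (the ``thick regions as traps'' heuristic stated after Theorem \ref{main-thm2}), with mass rescaled by $|(\varphi^{-1})'(\zeta)|$ because distances in $U$ near $\zeta$ are comparable to distances in $\mathbb{D}$ near $\varphi^{-1}(\zeta)$ up to the factor $|\varphi'|$.

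I expect the main obstacle to be the ``only the thick part contributes'' direction, i.e.~ruling out singular mass of $F'_U$ at points $\zeta\in\partial U\cap\partial\mathbb{D}$ where $\varphi$ has angular derivative $\infty$. On the thick part the Julia–Wolff–Carathéodory lemma gives a clean comparison, but on the non-thick part one must show that even though $U$ contains truncated Stolz angles at $\zeta$, the derivative $\varphi'$ has enough singular inner mass at $\varphi^{-1}(\zeta)$ to exactly cancel the pullback of $\sigma(F')$ — equivalently, that $F_U$ genuinely ``does not see'' that boundary point. This is where the surprising link between Beurling–Carleson sets and angular derivatives advertised in the abstract should enter: the set of non-thick points carries a Beurling–Carleson-type obstruction, and controlling it requires the angular-derivative machinery of Section \ref{sec:angular-derivatives} together with a careful harmonic-measure estimate comparing $\omega_U(\cdot,\partial U\cap\partial\mathbb{D})$ with $\omega_{\mathbb{D}}$. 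Everything else — the chain rule, the harmlessness of $\psi'$, the finite-entropy bookkeeping — I view as routine given the lemmas already established.
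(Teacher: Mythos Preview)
Your outline has the right ingredients --- chain rule, composition formula for $\sigma(F'\circ\varphi)$, approximation by finite Blaschke products --- and these are what the paper uses. But two points are off. First, for a general Jordan domain $V$ the Riemann map $\psi$ need not have $|\psi'|$ bounded above and below on $\partial\mathbb{D}$; you only get that for smooth domains. The paper handles this by proving the theorem first for smooth $V$ (where $\psi'$ is outer, so $\psi'(F_U(z))$ is outer by Corollary~\ref{psiF-outer} and contributes nothing singular) and then reaching general Jordan $V$ by approximating from inside.

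Second, and more importantly, you have the main obstacle backwards. The composition formula (the paper's Theorem~\ref{radon-nikodym2}, your Julia--Wolff--Carath\'eodory bookkeeping) already gives $\sigma(F'\circ\varphi)$ supported only on the preimage of the thick part, with density $|(\varphi^{-1})'|$; at non-thick points there is no pullback mass to cancel. The chain rule reads $\sigma(F'_U)=\sigma(F'\circ\varphi)+\sigma(\varphi')$, and the real difficulty is showing $\sigma(\varphi')=0$. Your proposed mechanism --- singular inner mass of $\varphi'$ exactly cancelling the pullback at non-thick points --- is not what happens. Instead, the paper runs the Blaschke-product approximation \emph{independently} to prove the upper bound $\varphi_*\sigma(F'_U)\le |(\varphi^{-1})'|\,d\sigma(F')|_{(\partial U\cap\partial\mathbb{D})_{\thick}}$ via Green's function comparisons (Lemmas~\ref{green-quotient2} and~\ref{green-quotient3}); combining this inequality with the chain-rule identity forces $\sigma(\varphi')\le 0$. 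Since $\sigma(F'_U)$ and $\sigma(F'\circ\varphi)$ are both supported on the thick set, so is $\sigma(\varphi')$ --- but at thick points $\varphi'$ has a finite nonzero radial limit, which by Lemma~\ref{SSO-radial-limits} forbids singular mass there. Hence $\sigma(\varphi')=0$. The two routes you sketched are not alternatives: both are needed, and their interaction is the heart of the argument. (Also note that invoking Theorem~\ref{main-thm} here would be circular, since its continuity statement is proved using Theorem~\ref{radon-nikodym}.)
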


\subsection{Exceptional set of an inner function}
\label{sec:exceptional}

It is not uncommon for inner functions to have radial limits in the unit disk.
A point $a \in \mathbb{D}$ is called an {\em exceptional point} for an inner function $F$ if the Frostman shift $F_a(z) = \frac{F(z)-a}{1-\overline{a}F(z)}$ has a non-trivial singular factor $S_{\mu_a} = \sing F_a$. As $\mu_a$ is supported on the set
$$
\mathscr R_a = \Bigl \{ \zeta \in \partial \mathbb{D} : \lim_{r \to 1} F(r\zeta) = a \Bigr \},
$$ 
where $F$ has radial limit $a$, the singular masses of different Frostman shifts $F_a$ are mutually singular.

A classical theorem of Frostman \cite[Theorem 2.5]{mashreghi} says that the exceptional set $\mathscr E$  of an inner function has logarithmic capacity zero. Ahern and Clark observed that in the case of inner functions of finite entropy, the exceptional set is at most countable: as $\sing F_a$ divides $F'_a$, it also divides its inner part  $\inn F_a' = \inn F'$, which shows the inequality
\begin{equation}
\label{eq:svm-frostman}
\sigma(F') \ge \sum_{a \in \mathscr E} \sigma(F_a).
\end{equation}
In particular, $\sigma(F')$ records at least as much information than the collection of Frostman shifts.

 In a private communication, C.~Bishop informed us that there is a non-trivial inner function $F \in \mathscr J$ which has no critical points and possesses an empty exceptional set, i.e.~is indestructible. The construction is similar to the one in \cite{bishop}, but is less intricate. According to \cite{dyakonov-mobius}, $\inn F' = S_\mu$ for some non-zero measure $\mu$ on the unit circle. This example shows that the inequality (\ref{eq:svm-frostman}) could be strict.
 
  In view of  Theorem \ref{main-thm}, $F$ has lots of interesting radial limits in the unit disk, not seen by the exceptional Frostman shifts. Lemma \ref{island-lemma} below suggests that $\sigma(F')$ sees the full collection of radial limits of $F$ in the unit disk.

 \begin{remark}
 Incidentally, Bishop's construction gives an example of an indestructible Blaschke product which is not maximal, answering a question posed in \cite{KR-composition}. 
\end{remark}

\subsection{Island structure}
\label{sec:islands}

Following \cite[Section 25]{heins}, we say that $F$ is of {\em island type} over $V$ if  every connected component of $F^{-1}(V)$ is compactly contained in the unit disk. We say that $F^{-1}(V)$ consists of {\em simple islands} if in addition, $F$ maps each connected component $U \subset F^{-1}(V)$ conformally onto $V$.

\begin{lemma}
\label{island-lemma}
Suppose $F \in \mathscr J$ is an inner function of finite entropy. If $V$ is a Jordan domain compactly contained in the unit disk which has positive distance to the support of $\nu_F$, then $F^{-1}(V)$ consists of simple islands.
\end{lemma}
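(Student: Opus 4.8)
The plan is to argue by contradiction, combining the component machinery (Lemmas \ref{component-is-jordan}--\ref{stable-exhaustion} and Theorem \ref{radon-nikodym}) with the continuity statement of Theorem \ref{main-thm}. Suppose some connected component $U$ of $F^{-1}(V)$ is \emph{not} compactly contained in $\mathbb{D}$, i.e.\ $\partial U \cap \partial \mathbb{D} \neq \emptyset$, or that some component $U$ is not mapped conformally onto $V$, i.e.\ the component inner function $F_U = \psi^{-1}\circ F \circ \varphi$ is not a M\"obius transformation. Either way I claim $F_U$ carries some ``critical mass'': by the Riemann--Hurwitz / argument principle, a proper holomorphic self-map of $\mathbb{D}$ that is not a M\"obius transformation has degree $\geq 2$ and hence at least one critical point, so in the second case $\crit F_U \neq \emptyset$; in the first case, by Lemma \ref{stable-exhaustion} $F_U \in \mathscr J$, and I would argue that a non-compactly-contained component forces either $\crit F_U \neq \emptyset$ or $\sigma(F_U') \neq 0$ — indeed if $F_U$ were M\"obius with no boundary singular mass then $U$ would be mapped conformally onto $V$ with $\varphi$ extending analytically across $\partial U\cap\partial\mathbb{D}$, and one checks $U$ cannot then touch $\partial\mathbb{D}$ since $V$ is compactly contained in $\mathbb{D}$ (the boundary of $U$ near $\partial\mathbb{D}$ would have to map into $\partial V \subset \mathbb{D}$, a contradiction). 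So in all bad cases $\mu_{F_U}(\overline{\mathbb{D}}) > 0$.

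Next I transfer this mass back to $F$. A critical point $c \in U$ of $F$ (with $F(c) \in V$) contributes to $\mu_F$ near $U$, and by the Schwarz-lemma argument already used in the proof of Lemma \ref{stable-exhaustion}, $(1-|c|) \le C\cdot(1-|\varphi^{-1}(c)|)$ up to the conformal distortion, so critical points of $F_U$ correspond to genuine critical values of $F$ lying in $V$; these show $\supp \nu_F \cap V \neq \emptyset$, contradicting the hypothesis $\dist(V,\supp\nu_F) > 0$. For the boundary singular part, Theorem \ref{radon-nikodym} says $\varphi_*\sigma(F_U') = |(\varphi^{-1})'|\, d\sigma(F')|_{(\partial U\cap\partial\mathbb{D})_{\thick}}$, so $\sigma(F_U') \neq 0$ forces $\sigma(F')$ to have positive mass on a set of points $\zeta \in \partial U \cap \partial\mathbb{D}$ where $U$ is thick; at such a $\zeta$, since $U \subset F^{-1}(V)$ is thick at $\zeta$, the inner function $F$ has a thick (hence radial) limit at $\zeta$ lying in $\overline{V} \subset \mathbb{D}$. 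By the definition \eqref{eq:nuF-def} of $\nu_F$ via the pushforward of $\sigma(F')$ under radial boundary values, this puts positive $\nu_F$-mass on $\overline{V}$ — again contradicting $\dist(V,\supp\nu_F)>0$ (after shrinking $V$ slightly, or noting $\overline V\subset\mathbb D$ can be thickened a hair without meeting $\supp\nu_F$).

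The main obstacle I anticipate is the first step: making rigorous the dichotomy that a non-compactly-contained component $U$ \emph{necessarily} produces either an interior critical point of $F_U$ or a nonzero $\sigma(F_U')$, and in particular ruling out that $F_U$ is a conformal (M\"obius) map while $\partial U$ still reaches $\partial\mathbb{D}$. The clean way is via the boundary behaviour of the Riemann map $\varphi:\mathbb{D}\to U$ and $\psi:\mathbb{D}\to V$: if $F_U$ is M\"obius then $F = \psi \circ F_U \circ \varphi^{-1}$ sends $U$ biholomorphically to $V\Subset\mathbb{D}$, so for $\zeta\in\partial U\cap\partial\mathbb{D}$ the cluster set of $F$ at $\zeta$ along $U$ lies in $\overline V\subset\mathbb D$; since $F$ is inner it has radial limits of modulus $1$ at a.e.\ boundary point, and a Lindel\"of-type / cluster-set argument shows the set of such $\zeta$ is small — but to get a genuine contradiction with positive $\sigma(F')$-mass I need the quantitative input of Theorem \ref{radon-nikodym}, which already presupposes thickness. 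So the honest logical order is: (1) if \emph{no} component is bad we are done; (2) if some component has an interior critical point of $F_U$, push forward to get $\nu_F$-mass in $V$; (3) if some component has $\sigma(F_U')\neq 0$, apply Theorem \ref{radon-nikodym} to locate thick boundary points of $U$, then Theorem \ref{main-thm2}/\ref{main-thm} to produce $\nu_F$-mass in $\overline V$; (4) check that a component with $F_U$ M\"obius and $\sigma(F_U')=0$ is automatically compactly contained, using that $V\Subset\mathbb D$ and basic cluster-set theory for inner functions. Step (4), though morally obvious, is where I would spend the most care.
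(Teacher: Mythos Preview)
Your strategy matches the paper's: rule out interior critical points of $F$ in $U$, rule out $\sigma(F'_U)\neq 0$ via Theorem~\ref{radon-nikodym}, deduce $F_U$ is M\"obius from Dyakonov's characterization, and then argue $U\Subset\mathbb{D}$. Two comments.

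First, a minor slip: your appeal to Riemann--Hurwitz (``a proper holomorphic self-map of $\mathbb{D}$ that is not M\"obius has degree $\ge 2$ and hence a critical point'') is not available here, since $F_U$ is merely an inner function in $\mathscr J$, not a priori a finite Blaschke product. The correct dichotomy is the one you implicitly use anyway: if $F_U$ is not M\"obius then $\inn F'_U$ is nontrivial, so either $\crit F_U\neq\emptyset$ or $\sigma(F'_U)\neq 0$.

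Second, and more substantively, you correctly flag step~(4) as the crux and do not resolve it. Your cluster-set sketch runs into exactly the problem you name: $\partial U\cap\partial\mathbb{D}$ has Lebesgue measure zero, so ``$F$ has unimodular radial limits a.e.'' yields nothing, and there is no $\sigma(F')$-mass to invoke once you already know $\sigma(F'_U)=0$. The paper closes this gap with a clean enlargement trick that you are missing. Having shown $F_U$ is M\"obius, Carath\'eodory gives that $F|_U$ extends continuously to $\overline U$ and sends any $\zeta\in\partial U\cap\partial\mathbb{D}$ into $\partial V$. Now choose a slightly larger Jordan domain $V'$ with $V\Subset V'\Subset\mathbb{D}$ and still $\dist(V',\supp\nu_F)>0$, and let $U'$ be the component of $F^{-1}(V')$ containing $U$. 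Rerunning the entire argument for $(U',V')$ shows $F|_{U'}$ also extends continuously to $\overline{U'}$; since $\zeta\in\partial U'\cap\partial\mathbb{D}$ as well, this forces $F(\zeta)\in\partial V'$. But $\partial V\cap\partial V'=\emptyset$, a contradiction. This is the missing idea in your step~(4).
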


\begin{proof}
Let $U$ be a connected component of $F^{-1}(V)$. Clearly, $U$ does not contain any critical points of $F$. Theorem \ref{main-thm} tells us that $\sigma(F'_U) = 0$, so that $\inn F'_U$ is trivial. By \cite{dyakonov-mobius}, $F_U$ is a M\"obius transformation, which means that $F$ maps $U$ conformally onto $V$.

Can $\partial U$ touch the unit circle at a point $\zeta \in \partial \mathbb{D}$? Since $U$ is a Jordan domain, by Carath\'eodory's theorem, $F|_U$ extends continuously to $\overline{U}$ and maps $\zeta$ to a point in $\partial V$.
Let $V'$ be a slightly larger Jordan domain, which compactly contains $V$, but still has positive distance to $\supp \nu$, and $U'$ be the connected component of $F^{-1}(V')$ which contains $U$. The above reasoning implies that $F|_{U'}$ extends continuously to $\overline{U'}$ and maps $\zeta$ to a point in $\partial V'$, which is a contradiction.
\end{proof}

\section{Background on angular derivatives}
\label{sec:angular-derivatives}


Suppose $\Omega \subset \mathbb{D}$ is a domain in the unit disk bounded by a Jordan curve. We say that $\Omega$ has an {\em inner tangent} at a point $p \in \partial \Omega \cap \partial \mathbb{D}$ if for any $0 < \theta < \pi$, $\Omega$ contains a truncated Stolz angle of opening $\theta$ with vertex at $p$.

In terms of the conformal map $\varphi: \mathbb{D} \to \Omega$, the domain $\Omega$ possesses an inner tangent at $p$ if and only if 
$$
\lim_{z \to q} \ \arg \frac{\varphi(z) - \varphi(q)}{z-q}
$$
exists, where $q = \varphi^{-1}(p)$. Geometrically, this says that the image of a non-tangential ray ending at $q$ is
 asymptotic to a non-tangential ray ending at $p$ and that angles between different non-tangential rays are preserved. In the literature, this property is known as {\em boundary conformality}\/ or {\em semi-conformality}.

 We say that $\varphi$ has a (non-zero) {\em angular derivative} at $q = \varphi^{-1}(p)$ if
the non-tangential limit
$$
\lim_{z \to q} |\varphi'(z)| = A,
$$
for some real number $A > 0$. One can avoid dealing with the point $q$ by saying that the inverse conformal map $\psi: \Omega \to \mathbb{D}$ has  angular derivative $|\psi'(p)| = A^{-1}$.

While the number $A$ depends on the choice of Riemann map $\varphi$, the existence of the angular derivative does not.
In other words, possessing an angular derivative is an intrinsic property of $(\Omega, p)$, which we record by saying that $\Omega$ is {\em thick} at $p$.

We summarize some basic properties of angular derivatives:

\begin{enumerate}

\item If $\varphi$ has an angular derivative at $p$, then $\Omega$ possesses an inner tangent at $p$.

\item Thickness is a local property: modifying $\Omega$ away from $p$ does not change whether $\Omega$ is thick at $p$.

\item Suppose $\Omega_1 \subset \Omega_2 \subset \mathbb{D}$. If $\Omega_1$ is thick at $p$, then so is $\Omega_2$.
\end{enumerate}

In this section, we describe an if and only if condition for $\Omega$ to be thick at a boundary point
$p \in \partial \Omega \cap \partial \mathbb{D}$ due to Rodin and Warschawski.
As the Rodin-Warshawksi condition involves moduli of curve families, it is not easy to verify directly. To rectify this, we also provide a  geometric characterization for the existence of an angular derivative.

For a discussion of inner tangents and angular derivatives in a more general setting, we refer the reader to \cite[Theorem V.5.7]{garnett-marshall} or \cite{BK}.

\subsection{Angular derivatives in the strip}
\label{sec:angular-derivatives-strip}

It is convenient to analyze the above notions in an infinite strip
$$
\mathcal S = \{-1/2 < \im z < 1/2 \}$$ of width 1.
Let  $\mathcal U \subset \mathcal S$ be a Jordan domain with $0 \in \mathcal U$ and $+\infty \in \partial \mathcal U$ and  $\phi: \mathcal S \to \mathcal U$ be a conformal map which takes $0 \to 0$ and $+\infty \to +\infty$. 

\begin{itemize}
\item 
The {\em inner tangent} condition at $+\infty$ says that for any $\delta > 0$, there exists an $x_{\IT} =  x_{\IT}(\delta) \in \mathbb{R}$ such that
\begin{equation}
\label{eq:modulus-inner-tangency}
[x_{\IT}, \infty) \times (-1/2+\delta,\, 1/2-\delta) \subset \mathcal U.
\end{equation}
We refer to a function $x_{\IT}(\delta)$ for which (\ref{eq:modulus-inner-tangency}) holds as a {\em modulus of inner tangency} for $\mathcal U$. Equivalently,
$$
\im \phi(x+iy) - y \to 0, \qquad \text{as }x \to +\infty.
$$

\item The map $\phi$ has an {\em angular derivative} at $+\infty$ if and only if
$$
\lim_{x \to +\infty} x - \phi(x) = C,
$$
as $x$ tends to $+\infty$ along the real axis. We may think of $C$ as the angular derivative in the strip model. 

By the Schwarz lemma, the hyperbolic distance $d_{\mathcal S}(0, \phi(x)) \le d_{\mathcal S}(0, x)$, which implies that $C \ge 0$.
In fact, the constant $C$ measures how much $\phi(x)$ lags behind $x$\,:
\begin{equation}
\label{eq:angular-derivative-in_S}
C = \frac{1}{\pi} \cdot \lim_{x \to +\infty} \Bigl ( d_{\mathcal S}(0, x) - d_{\mathcal S}(0, \phi(x)) \Bigr ).
\end{equation}
Note that the quantity $d_{\mathcal S}(0, x) - d_{\mathcal S}(0, \phi(x))$ is non-decreasing in $x$. If $\phi$ does not possess an angular derivative, then the limit in (\ref{eq:angular-derivative-in_S}) is infinite.
\end{itemize}

In order to state the Rodin-Warschawksi condition for the existence of angular derivative, we make serveral definitions. We denote the vertical line segments that foliate $\mathcal S$ by $\mathcal S(x) = \mathcal S \cap \{ \re z = x \}$ and write $\mathcal S(x_1,x_2) =  \mathcal S \cap \{ x_1 < \re z < x_2 \}$ for the rectangle  bounded by $\mathcal S(x_1)$ and $\mathcal S(x_2)$.
For $x > 0$, let $\mathcal U(x)$ denote the connected component of $\mathcal U \cap \mathcal S(x)$ which separates $0$ from $+\infty$. Define $\mathcal U(x_1,x_2) \subset \mathcal U$ as the subdomain bounded by $\mathcal U(x_1)$ and $\mathcal U(x_2)$. Note that $\mathcal U(x_1,x_2)$ may stick out of $\mathcal S(x_1,x_2)$. 
We view $\mathcal U(x_1,x_2)$ as a conformal rectangle whose {\em vertical sides} are $\mathcal U(x_1)$ and $\mathcal U(x_2)$, with the remainder of $\partial \mathcal U(x_1,x_2)$ forming the {\em horizontal sides}\/.

\begin{theorem}[Rodin-Warschawksi] 
\label{RW}
The conformal map $\phi: \mathcal S \to \mathcal U$ possesses an angular derivative at $+\infty$ if and only if
\begin{equation}
\label{eq:RW}
\Mod \, \mathcal U(x_1,x_2) - \Mod \, \mathcal S(x_1,x_2) \, = \,  \Mod \, \mathcal U(x_1,x_2) - (x_2 - x_1) \,\to\, 0,
\end{equation}
as $x_1,x_2 \to +\infty$.
\end{theorem}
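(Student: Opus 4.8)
The proof is naturally phrased in the language of extremal length: the modulus in (\ref{eq:RW}) is conformally invariant, and by the identity (\ref{eq:angular-derivative-in_S}) the angular derivative is encoded in hyperbolic distances, which are themselves extremal quantities. By conformal invariance, $\Mod\mathcal U(x_1,x_2)=\Mod R(x_1,x_2)$, where $R(x_1,x_2):=\phi^{-1}\bigl(\mathcal U(x_1,x_2)\bigr)\subset\mathcal S$ is the conformal rectangle whose vertical sides are the crosscuts $\phi^{-1}(\mathcal U(x_1))$ and $\phi^{-1}(\mathcal U(x_2))$; these separate $0$ from $+\infty$ and recede to $+\infty$ as $x_1,x_2\to+\infty$. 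I would then proceed in three steps: (i) extract the structural properties of the \emph{modulus defect} $m(x_1,x_2):=\Mod\mathcal U(x_1,x_2)-(x_2-x_1)$ from soft extremal-length inequalities; (ii) identify $\lim m$ with the angular-derivative defect appearing in (\ref{eq:angular-derivative-in_S}); (iii) conclude.

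For (i), domain monotonicity of extremal distance shows $m\ge 0$: passing from the strip $\mathcal S$ to its subdomain $\mathcal U$ and from the full cross-sections $\mathcal S(x_i)$ to the sub-segments $\mathcal U(x_i)$ can only increase the extremal distance between the vertical sides, and for $\mathcal S$ this extremal distance equals $x_2-x_1$. The serial (Gr\"otzsch) rule, applied to the splitting of the conformal rectangle $\mathcal U(x_1,x_3)$ by the crosscut $\mathcal U(x_2)$, gives superadditivity $m(x_1,x_3)\ge m(x_1,x_2)+m(x_2,x_3)$. Together these imply that for each fixed $x_0$ the function $x\mapsto m(x_0,x)$ is non-decreasing, so $M(x_0):=\lim_{x\to+\infty}m(x_0,x)$ exists in $[0,+\infty]$, and that $m(x_1,x_2)\le m(x_0,x_2)-m(x_0,x_1)$ whenever $x_0\le x_1\le x_2$. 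Consequently (\ref{eq:RW}) holds if and only if $M(x_0)<+\infty$ for one, equivalently every, $x_0$.

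For (ii), the plan is to trap $\Mod\mathcal U(x_1,x_2)$ between two explicit extremal-length bounds: a lower bound from the Euclidean test metric on $\mathcal U(x_1,x_2)$ (length--area on the connecting family), and an upper bound
$$\Mod\mathcal U(x_1,x_2)\;\le\;\int_I\frac{dt}{\theta(t)}\;=\;|I|+\int_I\frac{1-\theta(t)}{\theta(t)}\,dt$$
from the Gr\"otzsch metric $\rho(z)=\theta(\re z)^{-1}$ on the conjugate family, where $\theta(t)$ is the width of the cross-section of $\mathcal U(x_1,x_2)$ over $\{\re z=t\}$ and $I$ is its real projection. Thus $M(x_0)<+\infty$ precisely when the total pinching deficit $\int_{x_0}^{\infty}\frac{1-\theta(t)}{\theta(t)}\,dt$ is finite and the protrusions of the rectangles beyond their vertical slabs are summable. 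On the other hand, the Schwarz-lemma identity $d_{\mathcal U}(0,\phi(x))=d_{\mathcal S}(0,x)$ rewrites the quantity in (\ref{eq:angular-derivative-in_S}) as $d_{\mathcal U}(0,\phi(x))-d_{\mathcal S}(0,\phi(x))$, the excess of the hyperbolic metric of $\mathcal U$ over that of $\mathcal S$ accumulated along the real axis out to $\phi(x)$; this excess is governed by the same pinching deficit, integrated now out to $\re\phi(x)$, which differs from the range $[x_0,x]$ only by the bounded lag already accounted for in step (i). Matching the two expressions identifies $M(x_0)<+\infty$ with finiteness of the limit in (\ref{eq:angular-derivative-in_S}), that is, with the existence of an angular derivative, and with step (i) this proves the theorem.

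The main obstacle lies entirely in step (ii) and is geometric: the crosscuts $\mathcal U(x)$ need not be straight vertical segments, so $\mathcal U(x_1,x_2)$ may protrude out of the slab $\{x_1<\re z<x_2\}$ and the cross-sections $\mathcal U\cap\{\re z=t\}$ may be disconnected. The width function $\theta(t)$ and the protrusion corrections must therefore be set up so that the Euclidean lower bound and the Gr\"otzsch upper bound actually trap $\Mod\mathcal U(x_1,x_2)$ with an error that is $o(1)$ as $x_1,x_2\to+\infty$, not merely $O(1)$; and one must check that the pinching deficit controls $d_{\mathcal S}(0,x)-d_{\mathcal S}(0,\phi(x))$ from both sides. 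This is exactly where the sharp Rodin--Warschawski estimates are needed in place of the soft monotonicity of step (i).
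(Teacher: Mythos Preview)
The paper does not prove Theorem~\ref{RW}; it merely cites \cite[Theorem V.5.7]{garnett-marshall}. So there is no in-paper argument against which to compare your proposal.

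As to the proposal itself: step (i) is sound. Monotonicity gives $m\ge 0$, the serial rule gives superadditivity, and together these reduce (\ref{eq:RW}) to finiteness of $M(x_0)$ for some $x_0$. (A small quibble: superadditivity only propagates finiteness of $M$ forward in $x_0$, not backward, so ``equivalently every $x_0$'' would need a separate argument; this is harmless for what follows.)

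Step (ii), however, is a plan rather than a proof, and you say so yourself. The Gr\"otzsch upper bound $\int_I dt/\theta(t)$ and the length--area lower bound do not, without substantial further work, pinch $\Mod\,\mathcal U(x_1,x_2)$ to within $o(1)$: the protrusions and disconnected cross-sections you flag can contribute at the same order as the deficit $\int(1-\theta)/\theta$. More seriously, the identification of $M(x_0)<\infty$ with finiteness of the hyperbolic lag in (\ref{eq:angular-derivative-in_S}) is asserted, not demonstrated; your closing sentence says the gap is closed by ``the sharp Rodin--Warschawski estimates,'' which is circular, since those estimates are exactly the content of the theorem. The argument in \cite{garnett-marshall} proceeds instead through Ahlfors-type distortion estimates and a careful analysis of how the crosscuts $\phi^{-1}(\mathcal U(x))$ straighten out (this is what underlies Lemma~\ref{ostrowski} in the present paper), rather than through a direct matching of modulus defect and hyperbolic defect of the kind you outline.
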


For a proof, see \cite[Theorem V.5.7]{garnett-marshall}.

\begin{remark}
 When we discuss moduli of conformal rectangles, we refer to the moduli of the vertical curve families which connect the horizontal sides.
 \end{remark}

\subsection{A geometric characterization of thickness}

Let $\mathcal U \subset \mathcal S$ be a simply-connected domain which contains the middle strip
 $\mathbb{R} \times (-1/3,1/3)$ of width $2/3$. To estimate the moduli of the sections $\mathcal U(x_1,x_2)$, we 
 define a family of auxiliary domains $\mathcal U_k$, parameterized by $1/2 \le k \le 2$, although
 we will only use $\mathcal U^- = \mathcal U_{1/2}$ and $\mathcal U^+ =  \mathcal U_2$.
  
For a point $(x,y) = (x, -1/2 + h) \in \mathbb{R} \times (-1/2, -1/3)$, set 
$$
Q(x, y, k) = \biggl [ x - \frac{kh}{2}, \, x + \frac{kh}{2}\biggr ] \times \biggl [-\frac{1}{2}, \, -\frac{1}{2} + kh \biggr ],
$$
while for $(x,y) = (x, 1/2 - h) \in \mathbb{R} \times (1/3, 1/2)$,
we define
$$
Q(x, y, k)  = \biggl [ x - \frac{kh}{2}, \, x + \frac{kh}{2}\biggr ] \times \biggl [\frac{1}{2} - kh, \, \frac{1}{2} \biggr ].
$$
Thus, $Q(x, y, k)$ is a square with side length $kh$ which rests either on the top or the bottom side of $\mathcal S$.
The auxiliary domains are given by
$$
\mathcal U_k := \mathcal S \setminus \bigcup_{x+iy \in \partial \mathcal U} Q(x, y, k) .
$$

\begin{theorem}
\label{RW-geometric-form}
A simply-connected domain $\mathcal U \subset \mathcal S$  which contains the strip
 $\mathbb{R} \times (-1/3,1/3)$ has an angular derivative at $+\infty$ if and only if the Euclidean areas
 \begin{equation}
 \label{eq:h-finite2}
 | \mathcal S(x_1, x_2) \setminus \mathcal U^+(x_1,x_2)| \to 0, \qquad \text{as }x_1,x_2 \to +\infty.
 \end{equation}
  \end{theorem}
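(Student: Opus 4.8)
The plan is to deduce Theorem~\ref{RW-geometric-form} from the Rodin--Warschawski criterion (Theorem~\ref{RW}) by showing that the modulus deficit $\Mod \mathcal U(x_1,x_2) - (x_2-x_1)$ is controlled, from above and below, by the lost area $|\mathcal S(x_1,x_2) \setminus \mathcal U^{\pm}(x_1,x_2)|$. The two auxiliary domains $\mathcal U^- = \mathcal U_{1/2}$ and $\mathcal U^+ = \mathcal U_2$ are designed so that $\mathcal U^+ \subset \mathcal U \subset \mathcal U^-$ on the relevant range of $x$: the squares $Q(x,y,2)$ erected on a boundary point of $\mathcal U$ are large enough to swallow a genuine neighborhood of that boundary point inside $\mathcal S$ (so removing all of them produces a domain contained in $\mathcal U$), while the squares $Q(x,y,1/2)$ are small enough that their removal cannot disconnect $0$ from $+\infty$ and $\mathcal U_{1/2}$ still contains $\mathcal U$ up to a controlled set. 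By monotonicity of the modulus of the vertical curve family under inclusion of conformal rectangles, it then suffices to prove the two-sided estimate
\begin{equation}
\label{eq:sandwich-plan}
c_1 \cdot |\mathcal S(x_1,x_2) \setminus \mathcal U^+(x_1,x_2)| \ \le \ \Mod \mathcal U(x_1,x_2) - (x_2-x_1) \ \le \ c_2 \cdot |\mathcal S(x_1,x_2) \setminus \mathcal U^-(x_1,x_2)|,
\end{equation}
together with a comparison $|\mathcal S \setminus \mathcal U^-| \lesssim |\mathcal S \setminus \mathcal U^+|$ on tails, which follows because a missing square of $\mathcal U^-$ of size $h/2$ is subordinate to a missing square of $\mathcal U^+$ of size $2h$ centered nearby, so the $\mathcal U^+$-deficit dominates the $\mathcal U^-$-deficit up to a fixed multiplicative constant.

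For the upper bound in \eqref{eq:sandwich-plan} I would use the standard overflowing/length-area (Grötzsch) argument: take the Euclidean metric $\rho \equiv 1$ on $\mathcal U^-(x_1,x_2)$ as a competitor for the extremal length of the vertical curve family. Every vertical curve joining the horizontal sides has $\rho$-length at least $x_2-x_1$, while the $\rho$-area is $|\mathcal U^-(x_1,x_2)| = (x_2-x_1) - |\mathcal S(x_1,x_2)\setminus \mathcal U^-(x_1,x_2)| + (\text{area sticking out of the rectangle})$; one must check the ``sticking out'' term is itself controlled by the lost-area term because the horizontal sides of $\mathcal U^-$ sit within $Q$-squares of comparable size. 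This gives $\Mod \mathcal U^-(x_1,x_2) \le (x_2-x_1) + O(|\mathcal S(x_1,x_2)\setminus\mathcal U^-(x_1,x_2)|)$, and since $\mathcal U \subset \mathcal U^-$ the modulus only decreases, yielding the right inequality. For the lower bound I would argue in the reciprocal direction: since $\mathcal U^+ \subset \mathcal U$, it suffices to bound $\Mod \mathcal U^+(x_1,x_2)$ from below by $(x_2-x_1) + c_1|\mathcal S(x_1,x_2)\setminus\mathcal U^+(x_1,x_2)|$. Here I would dualize — the vertical modulus is the reciprocal of the horizontal modulus — and estimate the extremal length of the \emph{horizontal} family in $\mathcal U^+$ using the same flat metric, now noting that horizontal curves must detour around the removed squares, which forces their length to exceed $1$ by an amount proportional to the local depth of the excised region; integrating over $x$ converts this pointwise excess into the area term. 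The self-similar (square) shape of the obstacles $Q(x,y,k)$ is exactly what makes this detour estimate clean, since a horizontal segment at height passing through a removed square of side $s$ must lengthen by a definite fraction of $s$.

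The main obstacle I anticipate is the lower bound, specifically making the ``horizontal detour'' estimate rigorous: one needs that the removed squares near the top and bottom of the strip genuinely block horizontal passage in a quantitative way, and one has to handle the possibility that many small squares overlap or chain together along $x$ (in which case one should not double-count, but the union still forces a detour at least as large as that for the largest square in the chain). A clean way around this is to replace the union of squares by a Whitney-type decomposition of $\mathcal S \setminus \mathcal U^+$ into essentially disjoint dyadic squares of comparable total area, and estimate the detour contribution of each Whitney square separately; summing gives the area bound. A secondary technical point is verifying the inclusions $\mathcal U^+ \subset \mathcal U \subset \mathcal U^-$ on $\{\re z > x_0\}$ for $x_0$ large — this uses that $\mathcal U$ contains the middle strip $\mathbb{R}\times(-1/3,1/3)$, so every boundary point of $\mathcal U$ lies in $\mathbb{R}\times((1/3,1/2)\cup(-1/2,-1/3))$ and the corresponding squares $Q$ behave as claimed — and checking that removing the small squares for $\mathcal U^-$ never separates $0$ from $+\infty$, which is why $\mathcal U^-(x_1,x_2)$ is a genuine conformal rectangle with the stated modulus interpretation. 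Once \eqref{eq:sandwich-plan} and the tail comparison $|\mathcal S\setminus\mathcal U^-|\lesssim|\mathcal S\setminus\mathcal U^+|$ are in hand, Theorem~\ref{RW} converts the condition $\Mod\mathcal U(x_1,x_2)-(x_2-x_1)\to 0$ into \eqref{eq:h-finite2}, completing the proof.
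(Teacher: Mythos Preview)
Your overall plan---reduce to Rodin--Warschawski, prove a two-sided estimate between the modulus deficit and a lost-area term, and compare the $\mathcal U^-$ and $\mathcal U^+$ deficits via a Hardy--Littlewood/Vitali argument---is exactly the paper's strategy. The area comparison (your ``tail comparison'') is Lemma~\ref{RW-preliminaries}, and the two-sided estimate is Lemma~\ref{RW-moduli-estimate}.

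Where your execution diverges and runs into trouble is in how you use the auxiliary domains. You try to sandwich $\mathcal U^+ \subset \mathcal U \subset \mathcal U^-$ and then invoke monotonicity of the modulus under inclusion. The first inclusion is fine, but the second is false in general: a small square $Q(x,y,1/2)$ can eat into $\mathcal U$ (for instance when $\partial\mathcal U$ is a graph over $x$, the strip just below the graph lies in $\mathcal U$ but may meet the square). Even if the inclusion held, the conformal rectangles $\mathcal U(x_1,x_2)$ and $\mathcal U^-(x_1,x_2)$ have different vertical sides, so the naive monotonicity you cite does not apply without further work. There is also a slip in your upper-bound paragraph: vertical curves joining the horizontal sides have $\rho$-length close to $1$, not $x_2-x_1$; the flat metric on $\mathcal U^-$ is \emph{not} admissible for the vertical family, since short vertical arcs near removed squares have length $<1$.

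The paper circumvents all of this by never comparing moduli of different domains. It works directly on $\mathcal U(x_1,x_2)$ and uses the auxiliary regions only to manufacture test metrics. For the upper bound it takes
\[
\rho_2 \;=\; \chi_{\mathcal S(x_1,x_2)}\bigl(1 + \chi_{\mathcal U(x_1,x_2)\setminus \widetilde{\mathcal U}^+}\bigr),
\]
which is admissible for the \emph{vertical} family in $\mathcal U(x_1,x_2)$: a curve from a horizontal side at height $-\tfrac12+\eta_1$ to one at $\tfrac12-\eta_2$ has vertical extent $1-\eta_1-\eta_2$, and the doubling of the metric near the endpoints recovers the missing $\eta_1+\eta_2$ because the squares $Q(\cdot,\cdot,2)$ swallow those ends. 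For the lower bound it takes $\rho_1 = (x_2-x_1)^{-1}\chi_{\widetilde{\mathcal U}^-}$ on the \emph{horizontal} family and reads off the area saving. Your ``detour'' picture and Whitney decomposition are unnecessary once you set up the metrics this way; the admissibility checks are one-line.
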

  
A similar result was obtained by Rohde and Wong \cite{rohde-wong} for half-plane capacity, where the authors considered a number of other auxiliary regions such as the unit hyperbolic neighbourhood of $\mathcal U$ in $\mathcal S$ and the largest domain contained in $\mathcal U$ bounded by Lipschitz graphs with slope at most 1.

\begin{lemma}
\label{RW-preliminaries}
Let $$
\widetilde{\mathcal U}_k(x_1, x_2) := \mathcal S(x_1, x_2) \setminus \bigcup_{\substack{x+iy \in \partial \mathcal U \\ x_1 < x < x_2}} Q(x, y, k).
$$

{\em (i)} For $1/2 \le k \le 2$, the areas
$| \mathcal S(x_1, x_2) \setminus \widetilde{\mathcal U}_k(x_1,x_2)|$
are comparable.

{\em (ii)} One has
$$
 | \mathcal S(x_1, x_2) \setminus \mathcal U_k(x_1,x_2)| \to 0, \qquad \text{as }x_1,x_2 \to +\infty,
$$
if and only if
$$
 | \mathcal S(x_1, x_2) \setminus \widetilde{\mathcal U}_k(x_1,x_2)| \to 0, \qquad \text{as }x_1,x_2 \to +\infty.
$$
\end{lemma}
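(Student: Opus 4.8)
\emph{Proof plan.} For part (i), note that $Q(x,y,k)$ increases with $k$, so $\mathcal S(x_1,x_2)\setminus\widetilde{\mathcal U}_k(x_1,x_2)$ is increasing in $k$, and only the reverse estimate $|\mathcal S(x_1,x_2)\setminus\widetilde{\mathcal U}_2(x_1,x_2)| \le C\,|\mathcal S(x_1,x_2)\setminus\widetilde{\mathcal U}_{1/2}(x_1,x_2)|$ with a universal $C$ requires an argument. Since a boundary point of $\mathcal U$ in the top collar $\mathbb{R}\times[1/3,1/2]$ has $h := 1/2-y\le 1/6$, every square $Q(p,k)$ built from it lies in $\{\im z\ge 1/6\}$, and symmetrically in the bottom collar, so the two collars contribute to disjoint subsets of $\mathcal S(x_1,x_2)\setminus\widetilde{\mathcal U}_k(x_1,x_2)$ and may be treated separately; I will describe the top. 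The key observation is that $Q(p,2)$ is \emph{exactly} the Carleson box over the top-edge interval $I_p$ of length $2h$ centred at $\re p$, while $Q(p,1/2)$ is the Carleson box over the concentric interval $\tfrac14 I_p$; moreover, in each nonempty column one may keep only the deepest boundary point.

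I would then apply the Vitali covering lemma to the family $\{\tfrac14 I_p : x_1<\re p<x_2\}$, whose members have length $\le 1/12$, to obtain a pairwise disjoint subfamily $\{\tfrac14 I_{p_j}\}$ such that every $\tfrac14 I_p$ is contained in $\tfrac54 I_{p_j}$ for some $j$, and hence $I_p\subset 5I_{p_j}$. As the Carleson box over an interval contained in $5I_{p_j}$ is itself contained in the Carleson box over $5I_{p_j}$, this gives $\bigcup_p Q(p,2)\subset\bigcup_j \mathrm{CB}(5I_{p_j})$, so that $|\mathcal S(x_1,x_2)\setminus\widetilde{\mathcal U}_2(x_1,x_2)|\le\sum_j|5I_{p_j}|^2 = 25\sum_j|I_{p_j}|^2 = 400\sum_j|Q(p_j,1/2)|$, using $|I_{p_j}|=2h_j$ and $|Q(p_j,1/2)|=h_j^2/4$ with $h_j:=1/2-\im p_j$. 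The boxes $Q(p_j,1/2)$ have pairwise disjoint shadows $\tfrac14 I_{p_j}$, hence are pairwise disjoint; each satisfies $x_1<\re p_j<x_2$; and since disjoint intervals can contain the point $x_1$ (resp.\ $x_2$) for at most one index, all but at most two of the boxes lie entirely inside $\mathcal S(x_1,x_2)$, while those two still have at least half of their area inside $\mathcal S(x_1,x_2)$. Therefore $\sum_j|Q(p_j,1/2)|\le 2\,|\mathcal S(x_1,x_2)\setminus\widetilde{\mathcal U}_{1/2}(x_1,x_2)|$, and part (i) follows.

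For part (ii), the implication ``$\mathcal U_k\Rightarrow\widetilde{\mathcal U}_k$'' is immediate: $\mathcal S(x_1,x_2)\setminus\widetilde{\mathcal U}_k(x_1,x_2)$ is a union of squares $Q(p,k)$ with $x_1<\re p<x_2$, each disjoint from $\mathcal U_k\supset\mathcal U_k(x_1,x_2)$, so it is contained in $\mathcal S(x_1,x_2)\setminus\mathcal U_k(x_1,x_2)$. For the converse I would first check that $\mathcal U_k$ is again a simply connected domain containing $\mathbb{R}\times(-1/6,1/6)$ (so that $\mathcal U_k(x_1)$, $\mathcal U_k(x_2)$ and $\mathcal U_k(x_1,x_2)$ are defined), and then that $\mathcal U_k\cap\mathcal S(x_1,x_2)\subset\mathcal U_k(x_1,x_2)$: each removed square rests on the top or bottom edge of $\mathcal S$ and has height $kh\le 1/3<1$, so from any point of $\mathcal U_k$ one can move vertically to the mid-line $\{\im z=0\}$ without leaving $\mathcal U_k$; hence $\mathcal U_k\cap\mathcal S(x_1,x_2)$ is connected, avoids the cross-sections $\mathcal U_k(x_1)$ and $\mathcal U_k(x_2)$, and thus lies in the subdomain $\mathcal U_k(x_1,x_2)$ they bound. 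Consequently $\mathcal S(x_1,x_2)\setminus\mathcal U_k(x_1,x_2)\subset\mathcal S(x_1,x_2)\setminus\mathcal U_k=\mathcal S(x_1,x_2)\cap\bigcup_p Q(p,k)$; and since any $Q(p,k)$ that meets $\mathcal S(x_1,x_2)$ has half-width $kh/2\le 1/6$, forcing $\re p\in(x_1-\tfrac16,x_2+\tfrac16)$, this last set is contained in $\mathcal S(x_1-\tfrac16,x_2+\tfrac16)\setminus\widetilde{\mathcal U}_k(x_1-\tfrac16,x_2+\tfrac16)$. Since widening the window by $1/6$ at each end is harmless in the limit $x_1,x_2\to+\infty$, the implication ``$\widetilde{\mathcal U}_k\Rightarrow\mathcal U_k$'' follows.

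The step I expect to be the main obstacle, common to both parts, is the accounting near the vertical walls $\re z=x_1$ and $\re z=x_2$ --- the straddling boxes in part (i) and the squares centred just outside $(x_1,x_2)$ in part (ii). These are dealt with, respectively, by the disjointness of the selected shadows (at most one of them contains each wall) and by enlarging the window by the harmless amount $1/6$; everything else reduces to the Carleson-box/Vitali comparison and to the elementary vertical connectivity of the slit strips $\mathcal U_k$.
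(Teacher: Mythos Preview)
Your proposal is correct and follows essentially the same route as the paper. For part (i) the paper simply invokes the weak-$L^1$ bound for the Hardy--Littlewood maximal function to compare the areas of the unions of $Q(\cdot,\cdot,1/2)$ and $Q(\cdot,\cdot,2)$; your explicit Vitali/Carleson-box argument is exactly the standard way one proves that bound, so the two arguments are the same in substance, with yours being more detailed (including the careful boundary accounting, which the paper omits). For part (ii) the paper gives precisely your two containments---$\widetilde{\mathcal U}_k(x_1,x_2)\supseteq \mathcal U_k(x_1,x_2)$ for the easy direction, and $|\mathcal S(x_1,x_2)\setminus \mathcal U_k(x_1,x_2)|\le |\mathcal S(x_1-1,x_2+1)\setminus\widetilde{\mathcal U}_k(x_1-1,x_2+1)|$ for the other---using the bound $1/3$ on the sidelengths where you use the sharper $1/6$; your extra verification that $\mathcal U_k\cap\mathcal S(x_1,x_2)=\mathcal U_k(x_1,x_2)$ via vertical connectivity is a point the paper leaves implicit.
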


\begin{proof} A quick way to see (i) is to use the fact that the Hardy-Littlewood maximal function of an $L^1$ function lies in weak-$L^1$, which tells us that the
area of a union of the squares $Q(x, y, 1/2)$ controls the area of the union of the larger squares $Q(x,y,2)$.

The ``$\Rightarrow$'' direction in (ii) is trivial since $$\widetilde{\mathcal U}_k(x_1,x_2) \supseteq \mathcal U_k(x_1,x_2),$$
 as less squares $Q(x,y,k)$ are removed from $\mathcal S(x_1, x_2)$.
Since all the squares that appear in the construction of the domains $\mathcal U_k(x_1,x_2)$ and 
$\widetilde{\mathcal U}_k(x_1,x_2)$ have sidelengths less than $1/3$,
$$| \mathcal S(x_1, x_2) \setminus \mathcal U_k(x_1,x_2)| \le  | \mathcal S(x_1-1, x_2+1) \setminus \widetilde{\mathcal U}_k(x_1-1,x_2+1)|,$$
which shows the ``$\Leftarrow$'' direction.
 \end{proof}
 
 \begin{remark}
The Euclidean areas
 of $$\mathcal S(x_1, x_2) \setminus \mathcal U_k(x_1,x_2), \qquad k \in [1/2,2],$$ may not be comparable. For instance, if 
 $$\mathcal U = \mathcal S \setminus \bigl ( \{x_2 + \varepsilon \} \times [-1/2, -1/2+2\varepsilon] \bigr ),
 $$
 then $|\mathcal S(x_1,x_2) \setminus \mathcal U^-(x_1,x_2)| = 0$ but $|\mathcal S(x_1,x_2) \setminus \mathcal U^+(x_1,x_2)| > 0$.
\end{remark}

\begin{lemma}
\label{RW-moduli-estimate}
For any $1/2 \le k \le 2$,
$$
 \Mod\, \mathcal U(x_1,x_2) - (x_2-x_1) \asymp | \mathcal S(x_1, x_2) \setminus \widetilde{\mathcal U}_k(x_1,x_2)|.
 $$
\end{lemma}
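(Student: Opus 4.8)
The plan is to estimate $\Mod\, \mathcal U(x_1,x_2)$ from both sides against the area deficit $|\mathcal S(x_1,x_2)\setminus\widetilde{\mathcal U}_k(x_1,x_2)|$; by Lemma \ref{RW-preliminaries}(i) the choice of $k\in[1/2,2]$ is immaterial, so I would fix $k$ conveniently (say $k=1$ for the lower bound and $k=2$ for the upper bound) and then invoke comparability at the end. For the \emph{upper bound} $\Mod\,\mathcal U(x_1,x_2)-(x_2-x_1)\lesssim|\mathcal S(x_1,x_2)\setminus\widetilde{\mathcal U}_k|$, I would use the extremal-length (Beurling) definition of modulus via an admissible metric: take $\rho\equiv 1$ on $\widetilde{\mathcal U}_k(x_1,x_2)$ and inflate $\rho$ on the bad set $\mathcal S(x_1,x_2)\setminus\widetilde{\mathcal U}_k$ so that every vertical curve in $\mathcal U(x_1,x_2)$ connecting the horizontal sides still has $\rho$-length at least $1$ after normalization. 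The point is that a curve crossing $\mathcal U(x_1,x_2)$ either stays inside $\mathbb R\times(-1/3,1/3)$, where it has Euclidean length $\geq x_2-x_1$ already, or it must dip into one of the removed corner squares $Q(x,y,k)$; because those squares rest on $\partial\mathcal S$ and the curve must come back, the extra Euclidean length it accrues is controlled by the square's side, hence by the local area deficit. Integrating $\rho^2$ gives $\Mod \le (x_2-x_1) + O(|\mathcal S(x_1,x_2)\setminus\widetilde{\mathcal U}_k|)$ after adjusting $\rho$ to be admissible; one has to be slightly careful that the mass added is genuinely proportional to area and not merely to the number of squares, which is where the geometric shape of $Q$ (side $\asymp$ distance to the removed boundary point) is used.

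For the \emph{lower bound} $\Mod\,\mathcal U(x_1,x_2)-(x_2-x_1)\gtrsim|\mathcal S(x_1,x_2)\setminus\widetilde{\mathcal U}_k|$, I would instead use the conjugate (length-area) characterization: $\Mod\,\mathcal U(x_1,x_2)$ equals the supremum over admissible metrics, and it suffices to exhibit one metric doing better than the Euclidean one by the right amount, or dually to test the \emph{conjugate} family of horizontal curves. The cleanest route is: embed $\widetilde{\mathcal U}_k(x_1,x_2)\subseteq\mathcal U(x_1,x_2)$ (this containment, up to the harmless boundary shifts in Lemma \ref{RW-preliminaries}(ii), holds because removing the open corner squares $Q(x,y,k)$ for $x\in(x_1,x_2)$ removes at least as much as intersecting $\mathcal U$ with the strip does), so $\Mod\,\mathcal U(x_1,x_2)\ge\Mod\,\widetilde{\mathcal U}_k(x_1,x_2)$ by monotonicity of the vertical modulus under increasing the domain. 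Then I reduce to showing $\Mod\,\widetilde{\mathcal U}_k(x_1,x_2)-(x_2-x_1)\asymp|\mathcal S(x_1,x_2)\setminus\widetilde{\mathcal U}_k(x_1,x_2)|$, which is a self-contained planar estimate: $\widetilde{\mathcal U}_k$ is obtained from the rectangle $\mathcal S(x_1,x_2)$ by carving notches that rest on the two horizontal sides, and for such a "bilaterally notched" rectangle the modulus gain over $x_2-x_1$ is comparable to the carved-out area. I would prove this last fact by slicing: for each horizontal level $y$, the removed set is a union of intervals whose total length $\ell(y)$ satisfies $\int\ell(y)\,dy=|\mathcal S(x_1,x_2)\setminus\widetilde{\mathcal U}_k|$, and a one-dimensional comparison (the vertical extremal length of a rectangle with a slit pattern) shows the modulus picks up an amount $\asymp\int\ell(y)\,dy$ provided all notch widths are $\le 1/3$, which holds since $kh\le 2\cdot\frac16=\frac13$.

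The main obstacle I anticipate is the lower bound, specifically making the "notched rectangle" estimate honest: one must rule out the possibility that many tiny notches contribute only second-order (area-squared) corrections while a few large ones dominate, i.e., one needs the gain to be linear in area uniformly. This is exactly where the hypothesis that $\mathcal U$ contains $\mathbb R\times(-1/3,1/3)$ — equivalently every notch has height $\le 1/6$ and thus the notches attached to the top never meet those attached to the bottom — is essential: it decouples the two families and localizes each notch's effect to a thin collar of fixed width $1/6$, inside which the extremal metric can be taken to blow up like (collar width)$/($distance to the horizontal side$)$, yielding a contribution proportional to the notch's area rather than its perimeter. The upper bound is comparatively routine once the admissible metric is set up, modulo the bookkeeping that overlapping squares $Q(x,y,k)$ are handled by the weak-$(1,1)$ bound already invoked in Lemma \ref{RW-preliminaries}(i). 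Finally, I assemble the two inequalities and apply Lemma \ref{RW-preliminaries}(i) to pass from the specific $k$'s used to arbitrary $k\in[1/2,2]$.
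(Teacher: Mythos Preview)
Your upper-bound scheme is essentially the paper's: the paper takes $\rho_2 = 1$ on $\widetilde{\mathcal U}^+(x_1,x_2)$ and $\rho_2 = 2$ on $(\mathcal U(x_1,x_2)\setminus\widetilde{\mathcal U}^+(x_1,x_2))\cap\mathcal S(x_1,x_2)$, and checks admissibility for the vertical family by noting that a curve with endpoints at heights $-\tfrac12+\eta_1$ and $\tfrac12-\eta_2$ must spend length at least $\eta_1+\eta_2$ inside the removed squares $Q(\cdot,\cdot,2)$ centred on those endpoints, so its $\rho_2$-length is at least $(1-\eta_1-\eta_2)+(\eta_1+\eta_2)=1$. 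Your justification of admissibility is garbled, however: you speak of a vertical curve that ``stays inside $\mathbb{R}\times(-1/3,1/3)$'' and has ``Euclidean length $\ge x_2-x_1$'', which is the behaviour of a \emph{horizontal} curve, not a vertical one.

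Your lower bound has a genuine gap: the monotonicity runs the wrong way. Suppose $\widetilde{\mathcal U}_k(x_1,x_2)\subseteq\mathcal U(x_1,x_2)$ with the vertical sides nested. Then every horizontal curve in the smaller quadrilateral is also a horizontal curve in the larger one, so $\Mod\,\Gamma_{\leftrightarrow}(\widetilde{\mathcal U}_k)\le\Mod\,\Gamma_{\leftrightarrow}(\mathcal U)$, and hence the \emph{vertical} moduli (their reciprocals) satisfy $\Mod\,\widetilde{\mathcal U}_k(x_1,x_2)\ge\Mod\,\mathcal U(x_1,x_2)$, the reverse of what you assert. Concretely: notching a rectangle from its horizontal sides \emph{raises} the vertical modulus, since vertical crossings become shorter (a $1\times 1$ square has vertical modulus $1$, a $1\times\tfrac12$ sub-rectangle has vertical modulus $2$). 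So your inequality $\Mod\,\mathcal U(x_1,x_2)\ge\Mod\,\widetilde{\mathcal U}_k(x_1,x_2)$ is false in general, and the reduction to a ``bilaterally notched rectangle'' never gets started. Moreover, neither containment $\widetilde{\mathcal U}_k\subseteq\mathcal U$ nor $\mathcal U\subseteq\widetilde{\mathcal U}_k$ holds robustly for general simply-connected $\mathcal U$ (think of $\mathcal U=\mathcal S$ minus a vertical slit), so the approach cannot be rescued by reversing the inclusion.

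The paper's lower bound avoids containment entirely and instead tests the conjugate horizontal family directly with the metric $\rho_1=\tfrac{1}{x_2-x_1}\,\chi_{\mathcal S(x_1,x_2)\cap\widetilde{\mathcal U}^-(x_1,x_2)}$. Admissibility of $\rho_1$ for $\Gamma_{\leftrightarrow}(\mathcal U(x_1,x_2))$ gives
\[
\Mod\,\mathcal U(x_1,x_2)\ \ge\ \frac{1}{A(\rho_1)}\ \ge\ (x_2-x_1)+c\,\bigl|\mathcal S(x_1,x_2)\setminus\widetilde{\mathcal U}^-(x_1,x_2)\bigr|,
\]
which is exactly the desired lower bound. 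You actually mention this option (``dually to test the conjugate family of horizontal curves'') before abandoning it for the containment route; that was the path to take.
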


\begin{proof}
{\em Lower bound.} Since the metric
 $$
\rho_1(z) = \frac{1}{x_2-x_1} \cdot \chi_{\mathcal S(x_1,x_2) \cap  \widetilde{\mathcal U}^-(x_1,x_2)},
 $$
  is admissible for the horizontal path family $\Gamma_{\leftrightarrow} \bigl (\mathcal U(x_1,x_2) \bigr )$, we have
$$\Mod\, \mathcal U(x_1,x_2) \ge 1/A(\rho_1).$$ A short computation shows
\begin{align*}
A(\rho_1) 
& \le \frac{1}{(x_2-x_1)^2} \cdot \mathcal |\mathcal S(x_1,x_2) \cap  \widetilde{\mathcal U}^-(x_1, x_2)|   \\
& = \frac{1}{(x_2-x_1)^2} \cdot \Bigl [ (x_2 - x_1) - |\mathcal S(x_1, x_2) \setminus  \widetilde{\mathcal U}^-(x_1, x_2)|  \Bigr ] \\
& = \frac{1}{(x_2-x_1)} \cdot \biggl [ 1- \frac{1}{x_2-x_1}\cdot  |\mathcal S(x_1, x_2) \setminus  \widetilde{\mathcal U}^-(x_1, x_2)| \biggr ]
\end{align*}
so that
$$
\frac{1}{A(\rho_1)} \lesssim (x_2 - x_1) \cdot \biggl [ 1 + \frac{1}{x_2-x_1}\cdot |\mathcal S(x_1, x_2) \setminus  \widetilde{\mathcal U}^-(x_1, x_2)|\biggr ],
$$
as desired.

\medskip

{\em Upper bound.} To see the upper bound, it is enough to check that the metric
 $$
\rho_2(z) = \chi_{\mathcal S(x_1,x_2)} \cdot
\begin{cases} 
1, & z \in \widetilde{\mathcal U}^+(x_1,x_2), \\
2, &z \in \mathcal U(x_1,x_2) \setminus \widetilde{\mathcal U}^+(x_1,x_2),
\end{cases}
 $$
 is admissible for the vertical path family $\Gamma_{\updownarrow} \bigl (\mathcal U(x_1,x_2) \bigr )$. Suppose a path $\gamma \in \Gamma_{\updownarrow} \bigl (\mathcal U(x_1,x_2) \bigr )$  connects
 $P = (\xi_1, -1/2 + \eta_1)$ and $Q = (\xi_2, 1/2 - \eta_2)$ in $\mathcal U(x_1,x_2)$. As the vertical distance between $P$ and $Q$ is $1 - \eta_1-\eta_2$,
 it is clear that $\ell_{\rho_2}(\gamma) \ge 1-\eta_1-\eta_2$. The deficit is made up by the fact that $\gamma \cap (\mathcal U(x_1,x_2) \setminus \widetilde{\mathcal U}^+(x_1,x_2))$ has length at least $\eta_1 + \eta_2$.
   \end{proof}

Putting Lemmas \ref{RW-preliminaries} and \ref{RW-moduli-estimate} together, we get Theorem \ref{RW-geometric-form}.

\subsection{Some special cases}

For continuous functions $h_1, h_2: \mathbb{R} \to [0,1/6)$, consider the strip domain
$$
\mathcal U \, = \, \mathcal U_{h_1, h_2} \, = \, \bigl \{ (x, y) \, : -1/2 + h_1(x) \, \le \,  y \, \le \, 1/2 - h_2(x) \bigr  \}. 
$$
In this case, the auxiliary domain $\mathcal U^+$ is also a strip domain:
\begin{align*}
\mathcal U^+ \, = \, \mathcal U_{h_1^+,h_2^+} \, = \, \mathcal S
&  \setminus \bigcup_{x_0 \in \mathbb{R}} \bigl [x_0 - h_1(x_0), x_0 + h_1(x_0) \bigr ] \times \bigl [-1/2, -1/2 + 2h_1(x_0) \bigr ] \\
& \setminus \bigcup_{x_0 \in \mathbb{R}} \bigl [x_0 - h_2(x_0), x_0 + h_2(x_0) \bigr ] \times \bigl [1/2 - 2h_2(x_0), 1/2 \bigr ].
\end{align*}

As a consequence of Theorem \ref{RW-geometric-form}, we have:

\begin{corollary}
\label{doubling-strip}
The finiteness of the integral
 \begin{equation}
 \label{eq:h-finite}
  \int_{0}^{\infty} [ h_1(x) + h_2(x) ] dx < \infty
 \end{equation}
  is necessary for $\mathcal U(h_1, h_2)$ to have an angular derivative at $+\infty$. If the functions $h_i$, $i=1,2$ satisfy the doubling condition
 \begin{equation}
 \label{eq:doubling-condition}
 h_i(x) \ge c \cdot h_i(x_0), \qquad |x - x_0| < c \cdot h_i(x_0),
 \end{equation}
 for some $c > 0$, then
it is also sufficient.
\end{corollary}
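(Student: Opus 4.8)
The plan is to read the characterization off Theorem~\ref{RW-geometric-form}. Since $h_i < 1/6$, the strip domain $\mathcal U = \mathcal U_{h_1,h_2}$ contains the middle strip $\mathbb R\times(-1/3,1/3)$, so Theorem~\ref{RW-geometric-form} applies, and by the discussion preceding the corollary the auxiliary domain $\mathcal U^+ = \mathcal U_{h_1^+,h_2^+}$ is again a graph strip, with
$$
h_i^+(x) \;=\; \sup\bigl\{\, 2h_i(x_0) \,:\, |x-x_0| \le h_i(x_0) \,\bigr\}, \qquad i=1,2,
$$
and $h_i^+ \le 1/3$ because any admissible $x_0$ obeys $|x-x_0| < 1/6$. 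Hence $\mathcal U^+(x_1,x_2) = \mathcal U^+\cap\{x_1<\re z<x_2\}$ does not stick out, and by Fubini $|\mathcal S(x_1,x_2)\setminus\mathcal U^+(x_1,x_2)| = \int_{x_1}^{x_2}(h_1^+ + h_2^+)\,dx$. So, by Theorem~\ref{RW-geometric-form}, $\mathcal U$ has an angular derivative at $+\infty$ if and only if $\int^{\infty}(h_1^+ + h_2^+)\,dx < \infty$.

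Necessity of~(\ref{eq:h-finite}) is then immediate, with no hypothesis on the $h_i$: taking $x_0 = x$ in the supremum gives $h_i^+ \ge 2h_i$, so finiteness of $\int^\infty(h_1^+ + h_2^+)$ forces finiteness of $\int^\infty(h_1+h_2)$ (and hence of $\int_0^\infty(h_1+h_2)$, as the $h_i$ are continuous).

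For sufficiency it remains to show that the doubling condition~(\ref{eq:doubling-condition}) implies $\int_{\mathbb R} h_i^+ \,dx \le C(c)\int_{\mathbb R} h_i\,dx$ for $i=1,2$; a routine localization argument (near $+\infty$ only the restriction of $h_i$ to a right half-line is relevant to $h_i^+$) then completes the proof. Fix $i$. The integral $\int_{\mathbb R} h_i^+$ is the planar area of $A = \bigcup_{x_0} R_{x_0}$, where $R_{x_0}$ is the square $[x_0 - h_i(x_0), x_0 + h_i(x_0)]\times[0, 2h_i(x_0)]$ of area $4h_i(x_0)^2$, while $\int_{\mathbb R} h_i$ is the area of the region $B$ under the graph of $h_i$. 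By~(\ref{eq:doubling-condition}) one has $h_i \ge c\,h_i(x_0)$ on $[x_0 - c h_i(x_0), x_0 + c h_i(x_0)]$, so $B$ contains the sub-rectangle $[x_0 - c h_i(x_0), x_0 + c h_i(x_0)]\times[0, c h_i(x_0)] \subset R_{x_0}$, of area $2c^2 h_i(x_0)^2$. I will then apply the $5r$-covering lemma to the family $\{R_{x_0}\}$ — these are precisely the $\sup$-norm balls centred on the graph of $h_i$, of uniformly bounded radius — to extract a pairwise disjoint subfamily $\{R_{x_j}\}$ with $A \subset \bigcup_j 5R_{x_j}$. Since the sub-rectangles above lie inside the disjoint squares $R_{x_j}$ and inside $B$, they are pairwise disjoint, so $\sum_j 2c^2 h_i(x_j)^2 \le |B|$; combining, $|A| \le \sum_j 25|R_{x_j}| = 25\sum_j 4h_i(x_j)^2 \le \tfrac{50}{c^2}|B|$, as required.

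The heart of the matter — and the only step using the doubling hypothesis — is this covering estimate: $h\mapsto h^+$ is a maximal-type operator, for which one cannot expect the pointwise bound $h^+ \lesssim h$ (it fails just outside an isolated bump of $h$, where $h = 0$ but $h^+ > 0$), so the comparison has to be carried out at the level of areas, and the doubling condition is exactly what guarantees that each square $R_{x_0}$ captures a fixed fraction of the mass of $h_i$ lying below it.
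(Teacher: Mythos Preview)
Your argument is correct and follows precisely the route the paper intends: the corollary is stated in the paper as an immediate consequence of Theorem~\ref{RW-geometric-form}, with no further proof given, and you have supplied the details. Your covering estimate for the sufficiency direction is the natural elaboration of the Hardy--Littlewood maximal function remark the paper makes in the proof of Lemma~\ref{RW-preliminaries}(i); the two are essentially the same mechanism. One small point you leave implicit: the doubling constant may be assumed to satisfy $c\le 1$ (otherwise the condition forces $h_i\equiv 0$), so the sub-rectangle does sit inside $R_{x_0}$ as you claim.
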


The following theorem is an analogue of Corollary \ref{doubling-strip} in the unit disk setting:

\begin{theorem}
\label{rodin-warschawski}
Suppose $\Omega = \bigl \{ r\zeta : \zeta \in \partial \mathbb{D}, \, 0 \le r < 1 - h(\zeta) \bigr \}$,
where $h: \partial \mathbb{D} \to [0, 1/2]$ is a continuous function. Assume that $h$ satisfies the doubling condition
 $$
 h(\zeta_1) \ge c \cdot h(\zeta_2), \qquad \text{whenever }|\zeta_2 - \zeta_1| < c \cdot h(\zeta_1),
 $$
 for some $c > 0$. Then, $\Omega$ is thick at $p \in \partial \Omega \cap \partial \mathbb{D}$ if and only if
\begin{equation}
\label{eq:rodin-warschawski}
\int_{\partial \mathbb{D}} \frac{h(\zeta)}{|\zeta-p|^2} |d\zeta| < \infty.
\end{equation}
 \end{theorem}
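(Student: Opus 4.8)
The plan is to transport the problem to the strip model via the standard conformal change of coordinates and then apply Corollary \ref{doubling-strip}. By rotating, we may assume $p = 1$. The map $z \mapsto \frac{1+z}{1-z}$ sends $\mathbb D$ to the right half-plane with $p=1 \mapsto +\infty$, and then the logarithm sends the right half-plane to the strip $\mathcal S$; composing, we get a conformal map $T$ from $\mathbb D$ onto $\mathcal S$ (after an affine normalization of the strip width) taking $1 \mapsto +\infty$. Thickness of $\Omega$ at $p=1$ is, by definition, an intrinsic property of the pair $(\Omega, p)$, so $\Omega$ is thick at $1$ if and only if $T(\Omega)$ has an angular derivative at $+\infty$ in the sense of Section \ref{sec:angular-derivatives-strip}. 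The first step is therefore to compute the image $\mathcal U = T(\Omega)$ and check that it is (comparable to) a strip domain $\mathcal U_{h_1,h_2}$ with boundary-height functions $h_i$ obtained from $h$ via the change of variables. Since $\Omega$ is a radial slit domain symmetric under nothing in particular, $\mathcal U$ will in general have two boundary graphs; writing $\zeta = e^{i\theta}$, the modulus of $T'$ near $\theta$ is of order $1/|\zeta - 1| \asymp 1/|\theta|$, so a radial thickness $h(\zeta)$ at $\zeta$ — which is a hyperbolic-scale displacement of order $h(\zeta)/(1-|z|) \asymp h(\zeta)/h(\zeta)$... more precisely, a point at Euclidean distance $h(\zeta)$ from $\partial\mathbb D$ is at hyperbolic (hence strip-) distance $\asymp \log(1/\text{stuff})$ — here one must be careful: the correct statement is that the Euclidean boundary-height in the strip at the image point $x(\theta)$ is $h_{\mathcal U}(x) \asymp \frac{h(\zeta)}{|\zeta-1|}$ when $h(\zeta) \lesssim |\zeta - 1|$, and is bounded (the slit reaches near the core of the strip) otherwise.

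The second step is to verify the hypotheses of Corollary \ref{doubling-strip} for these transplanted functions. The doubling condition on $h$ with respect to the metric $|\zeta_2 - \zeta_1|$ on $\partial \mathbb D$ should translate, under the bi-Lipschitz-on-dyadic-scales change of variables $\theta \mapsto x$, into the doubling condition \eqref{eq:doubling-condition} for $h_i$ in the strip: a $\theta$-interval of length $c\, h(\zeta)$ maps to an $x$-interval of length $\asymp c\, h(\zeta)/|\zeta-1| \asymp c\, h_{\mathcal U}(x)$, which is exactly the scale appearing in \eqref{eq:doubling-condition}. One also needs the cosmetic normalization $h_i < 1/6$, which fails only where the slit penetrates deep into the disk; but this happens on a set bounded away from $p$, and by locality of thickness (property 2 in Section \ref{sec:angular-derivatives}) we may truncate $h$ there without affecting the conclusion, at the cost of possibly enlarging or shrinking $\Omega$ away from $p$ — using the monotonicity property 3 to sandwich. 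Then Corollary \ref{doubling-strip} says $\mathcal U$ has an angular derivative at $+\infty$ iff $\int_0^\infty [h_1(x) + h_2(x)]\, dx < \infty$.

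The third and final step is to unwind this integral condition back to \eqref{eq:rodin-warschawski}. Using $h_{\mathcal U}(x(\theta)) \asymp h(\zeta)/|\zeta - 1|$ and $dx \asymp |d\zeta|/|\zeta - 1| = d\theta/|\zeta-1|$ (again valid on the region where $h(\zeta) \lesssim |\zeta-1|$, i.e.\ where $h_{\mathcal U} \lesssim 1$, which after the truncation above is all of $\theta$ near $0$), we get
$$
\int_0^\infty [h_1(x)+h_2(x)]\, dx \ \asymp \ \int_{\partial \mathbb D} \frac{h(\zeta)}{|\zeta - 1|^2}\, |d\zeta|,
$$
which is \eqref{eq:rodin-warschawski} with $p=1$. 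The remaining contribution from the region where $h(\zeta) \gtrsim |\zeta - 1|$ is a set of positive distance... no: it can accumulate at $p$. There the integrand $h(\zeta)/|\zeta-1|^2 \gtrsim 1/|\zeta - 1|$ is non-integrable, and correspondingly in the strip the domain $\mathcal U$ fails to even contain $\mathbb R \times (-1/3,1/3)$ eventually, so it cannot be thick; thus both sides of the claimed equivalence are simultaneously ``false/infinite'' and the statement still holds. I expect the main obstacle to be precisely this bookkeeping: making the comparisons $h_{\mathcal U}(x) \asymp h(\zeta)/|\zeta-1|$ and the change-of-variables $dx \asymp d\theta/|\zeta-1|$ rigorous and uniform, handling the dichotomy between the ``shallow slit'' regime (where the strip picture is a genuine graph domain and Corollary \ref{doubling-strip} applies verbatim) and the ``deep slit'' regime (where one argues directly that thickness fails and the integral diverges), and checking that the doubling condition is genuinely preserved under the coordinate change rather than merely plausibly so.
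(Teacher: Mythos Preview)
The paper does not give an explicit proof of this theorem; it simply introduces it as ``an analogue of Corollary \ref{doubling-strip} in the unit disk setting'' and moves on, so the intended argument is precisely the one you outline---transport $\Omega$ to the strip by the conformal map sending $p \mapsto +\infty$ and invoke Corollary \ref{doubling-strip}. Your computations of the transplanted height $h_{\mathcal U}(x) \asymp h(\zeta)/|\zeta-p|$, the Jacobian $dx \asymp |d\zeta|/|\zeta-p|$, and the preservation of the doubling condition are correct, and your identification of the bookkeeping (graph-domain sandwiching, the deep-slit dichotomy) as the only real work is accurate.
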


\section{Stability of angular derivatives}
\label{sec:stability-ad}

The following two lemmas are instances of the following principle: the angular derivative does not change much if we do not significantly alter the global shape of $\Omega$ or the local geometry of $\partial \Omega$ near $\zeta$\,:

\begin{lemma}
\label{slight-modification-of-omega}
Suppose $\Omega \subset \mathbb{D}$ is a Jordan domain and $\zeta \in \partial \Omega \cap \partial \mathbb{D}$. For $\delta > 0$, let 
$\Omega_\delta = \hull \bigl ( \Omega \cup (B(\zeta, \delta) \cap \mathbb{D}) \bigr ) $. Let $\varphi_\delta: \mathbb{D} \to \Omega_\delta$ be a sequence of conformal mappings which converge uniformly on compact sets to a conformal mapping $\varphi: \mathbb{D} \to \Omega$. Then, the angular derivatives $|(\varphi_\delta^{-1})'(\zeta)|$ converge to $|(\varphi^{-1})'(\zeta)|$.
\end{lemma}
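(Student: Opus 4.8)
The plan is to transport the picture to the infinite strip $\mathcal S$, where, by (\ref{eq:angular-derivative-in_S}), the angular derivative at the distinguished boundary point is the \emph{supremum} of a quantity that depends continuously on the uniformizing map; lower semicontinuity of such a supremum gives the nontrivial half of the statement, and the monotonicity of the angular derivative under enlargement of the domain gives the other half.

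First I would normalize. Fix conformal maps $T,\widetilde T:\mathbb D\to\mathcal S$ with $T(\varphi^{-1}(\zeta))=+\infty$, $T(0)=0$, and $\widetilde T(\zeta)=+\infty$, $\widetilde T(\varphi(0))=0$. Since $\Omega$ and $\Omega_\delta$ are Jordan domains and $\varphi_\delta\to\varphi$ locally uniformly, the boundary correspondences converge, so after precomposing each $\varphi_\delta$ with a disk automorphism tending to the identity — which alters $|(\varphi_\delta^{-1})'(\zeta)|$ only by a factor tending to $1$, hence is harmless — we may assume $\varphi_\delta(0)=\varphi(0)$ and $\varphi_\delta^{-1}(\zeta)=\varphi^{-1}(\zeta)$ for all $\delta$. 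Put $\Phi=\widetilde T\circ\varphi\circ T^{-1}:\mathcal S\to\widetilde T(\Omega)$ and $\Phi_\delta=\widetilde T\circ\varphi_\delta\circ T^{-1}:\mathcal S\to\widetilde T(\Omega_\delta)$. Then $\Phi$ and all $\Phi_\delta$ fix $0$ and $+\infty$; the ranges satisfy $\widetilde T(\Omega)\subseteq\widetilde T(\Omega_\delta)\subseteq\mathcal S$ with $+\infty$ on each boundary; and $\Phi_\delta\to\Phi$ locally uniformly on $\mathcal S$. Writing $C,C_\delta\in[0,\infty]$ for the strip angular derivatives of $\Phi,\Phi_\delta$ at $+\infty$ (Section \ref{sec:angular-derivatives-strip}), the chain rule for angular derivatives gives $|(\varphi^{-1})'(\zeta)|=\kappa\,e^{-\pi C}$ and $|(\varphi_\delta^{-1})'(\zeta)|=\kappa\,e^{-\pi C_\delta}$ for a constant $\kappa>0$ depending only on $T,\widetilde T$; hence it suffices to prove $C_\delta\to C$ in $[0,\infty]$.

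By (\ref{eq:angular-derivative-in_S}) together with the fact that $x\mapsto d_{\mathcal S}(0,x)-d_{\mathcal S}(0,\Phi_\delta(x))$ is non-decreasing, $\pi C_\delta=\sup_{x>0}\bigl(d_{\mathcal S}(0,x)-d_{\mathcal S}(0,\Phi_\delta(x))\bigr)$, and likewise for $C$. For fixed real $x>0$, local uniform convergence gives $\Phi_\delta(x)\to\Phi(x)$, hence $d_{\mathcal S}(0,\Phi_\delta(x))\to d_{\mathcal S}(0,\Phi(x))$ by continuity of the hyperbolic metric, so
\[
\liminf_{\delta\to 0}\pi C_\delta\ \ge\ d_{\mathcal S}(0,x)-d_{\mathcal S}(0,\Phi(x)),
\]
and taking the supremum over $x>0$ yields $\liminf_{\delta\to 0}C_\delta\ge C$. (If $\Omega$ is not thick at $\zeta$, i.e.\ $C=+\infty$, this already forces $C_\delta\to+\infty$, i.e.\ $|(\varphi_\delta^{-1})'(\zeta)|\to 0=|(\varphi^{-1})'(\zeta)|$.) For the reverse inequality, $\widetilde T(\Omega)\subseteq\widetilde T(\Omega_\delta)$ makes $\sigma_\delta:=\Phi_\delta^{-1}\circ\Phi:\mathcal S\to\mathcal S$ a holomorphic self-map fixing $0$ and $+\infty$; since $\sigma_\delta$ fixes $0$, the Schwarz--Pick lemma gives $d_{\mathcal S}(0,\sigma_\delta(x))\le d_{\mathcal S}(0,x)$, so the strip angular derivative of $\sigma_\delta$ at $+\infty$ is $\ge 0$, and by the chain rule $C=C_\delta+(\text{that angular derivative})\ge C_\delta$. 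Combining, $C_\delta\to C$, which is the claim.

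The step I expect to require the most care is the bookkeeping behind $\pi C_\bullet=\sup_{x>0}(d_{\mathcal S}(0,x)-d_{\mathcal S}(0,\Phi_\bullet(x)))$: it must be applied with the \emph{same} normalization for $\Phi$ and all $\Phi_\delta$ (arranged via the fixed $T,\widetilde T$ and the automorphism adjustment), and one must verify that the prime end of $\Omega_\delta$ carried to $+\infty$ by $\widetilde T$ really is $\zeta$. This is precisely where the hypothesis $\varphi_\delta\to\varphi$ locally uniformly (equivalently, Carath\'eodory kernel convergence $\Omega_\delta\to\Omega$) is essential, since a priori the $\hull$-operation in the definition of $\Omega_\delta$ could introduce extra boundary structure away from $\zeta$. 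The remaining ingredients — continuity of $d_{\mathcal S}(0,\cdot)$, the chain rule for angular derivatives at a common boundary fixed point, and the Schwarz--Pick sign — are standard (see, e.g., \cite{garnett-marshall}).
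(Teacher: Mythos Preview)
Your argument is correct and takes a genuinely different, more economical route than the paper's. Both proofs transport to the strip and both use the Schwarz lemma to get the monotonicity $C_\delta\le C$ (the ``easy half''). The difference lies in the other direction, $\liminf_{\delta\to 0} C_\delta\ge C$. The paper treats the case $C<\infty$ by invoking the Rodin--Warschawski modulus criterion together with the Ostrowski-type Lemma~\ref{ostrowski}, which says that preimages of vertical cross-cuts are nearly vertical; it then compares the positions of $\phi^{-1}(\mathcal U(x_0))$ and $\phi_n^{-1}(\mathcal U(x_0))$. Your observation that $\pi C_\bullet=\sup_{x>0}\bigl(d_{\mathcal S}(0,x)-d_{\mathcal S}(0,\Phi_\bullet(x))\bigr)$, being a supremum of quantities converging pointwise, is automatically lower semicontinuous, handles both cases $C<\infty$ and $C=\infty$ at once and dispenses with Lemma~\ref{ostrowski} entirely. (Indeed, the paper's treatment of the case $C=\infty$ is essentially your argument specialized to that case.)

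Two minor points of care, which you already flag: the chain-rule identity $C=C_\delta+C_\sigma$ needs $\sigma_\delta(x)$ to approach $+\infty$ non-tangentially in $\mathcal S$ (it does, since $\sigma_\delta:\mathcal S\to\mathcal S$ fixes $+\infty$ and $\mathcal S$ has an inner tangent there); alternatively one can bypass the chain rule and get $C_\delta\le C$ directly from Julia's lemma in the disk, exactly as the paper does when it asserts that the $C_n$ are increasing. Second, your normalization step requires $\varphi_\delta^{-1}(\zeta)\to\varphi^{-1}(\zeta)$; this holds here because $\Omega_\delta$ and $\Omega$ agree outside $B(\zeta,\delta)$, so the boundary correspondences agree away from $\zeta$ for small $\delta$, forcing convergence at $\zeta$ as well.
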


In the lemma above, the {\em filling} of a planar set $A \subset \mathbb{C}$ refers to the union of $A$ and the bounded connected components of $\mathbb{C} \setminus A$. Taking the filling ensures that the domain $\Omega_\delta$ is simply-connected, so that the conformal mapping $\varphi_\delta$ is well-defined.

\begin{lemma}
\label{omega-increasing}
Suppose $\Omega_n$ is an increasing sequence of domains in the unit disk, whose union is $\Omega$. Let $\varphi_n: \mathbb{D} \to \Omega_n$ be a sequence of conformal mappings which converge uniformly on compact sets to a conformal mapping $\varphi: \mathbb{D} \to \Omega$. 
If $\Omega_1$ is thick at $\zeta \in \partial \mathbb{D}$, then
$$
|(\varphi_n^{-1})'(\zeta)| \to |(\varphi^{-1})'(\zeta)|.
$$
\end{lemma}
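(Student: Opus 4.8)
I would pass to the strip model and reduce the statement to a dominated-convergence argument. Fix a conformal map $\Theta\colon\mathbb D\to\mathcal S$ onto the strip $\mathcal S=\{\,|\im z|<1/2\,\}$ with $\Theta(\zeta)=+\infty$ and $\Theta(p_*)=0$ for a fixed $p_*\in\Omega_1$, and set $\mathcal U_n=\Theta(\Omega_n)$ and $\mathcal U=\Theta(\Omega)$. Then the $\mathcal U_n$ are simply connected subdomains of $\mathcal S$ increasing to $\mathcal U$, each with $+\infty$ on its boundary, and $\mathcal U_1$ is thick at $+\infty$ because $\Omega_1$ is thick at $\zeta$. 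Let $\phi_{\mathcal V}\colon\mathcal S\to\mathcal V$ denote the conformal map normalized by $\phi_{\mathcal V}(0)=0$, $\phi_{\mathcal V}(+\infty)=+\infty$, and $C(\mathcal V)=\lim_{x\to+\infty}(x-\phi_{\mathcal V}(x))$ its angular derivative at $+\infty$, as in \eqref{eq:angular-derivative-in_S}. Unwinding the chain rule for angular derivatives, together with the continuity of the boundary correspondence $\varphi_n^{-1}(\zeta)\to\varphi^{-1}(\zeta)$ (which follows from $\varphi_n\to\varphi$ and the thickness of $\Omega_1$), shows that $|(\varphi_n^{-1})'(\zeta)|\to|(\varphi^{-1})'(\zeta)|$ is equivalent to $C(\mathcal U_n)\to C(\mathcal U)$. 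Comparing $\phi_{\mathcal U_n}$ with $\phi_{\mathcal U_{n+1}}$ (and with $\phi_{\mathcal U}$) through the self-map $\phi_{\mathcal U_{n+1}}^{-1}\circ\phi_{\mathcal U_n}$ of $\mathcal S$, which fixes $0$ and $+\infty$ and so, by the Schwarz lemma, lags behind the identity at $+\infty$ by a non-negative amount, shows that $n\mapsto C(\mathcal U_n)$ is non-increasing and bounded below by $C(\mathcal U)$; since $C(\mathcal U_1)<\infty$ by thickness, it therefore suffices to prove $\kappa_n:=C(\mathcal U_n)-C(\mathcal U)\ge0$ tends to $0$.

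Write $\beta_n:=\phi_{\mathcal U}^{-1}\circ\phi_{\mathcal U_n}$; this is a conformal map of $\mathcal S$ onto $\mathcal W_n:=\phi_{\mathcal U}^{-1}(\mathcal U_n)$ fixing $0$ and $+\infty$, with angular derivative $\kappa_n$ at $+\infty$. The domains $\mathcal W_n$ again increase to $\mathcal S$, and $\mathcal W_1$ is thick at $+\infty$ because $\phi_{\mathcal U}$ is semiconformal with a non-zero angular derivative there and hence distorts the germ of $\mathcal U_1$ at $+\infty$ only boundedly. Being thick, $\mathcal W_1$ contains a truncated half-strip $\{\re z>R\}\times(-1/3,1/3)$, and hence so does every $\mathcal W_n$; put $p_0=R+1$. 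Moving the base point in \eqref{eq:angular-derivative-in_S} from $0$ to $p_0$ changes $\pi\kappa_n$ by a difference of Busemann functions at $+\infty$ that tends to $0$ as $\mathcal W_n\nearrow\mathcal S$, so $\pi\kappa_n=\lim_{T\to+\infty}\bigl(d_{\mathcal W_n}(p_0,T)-d_{\mathcal S}(p_0,T)\bigr)+o(1)$. Using the real segment $[p_0,T]\subset\mathcal W_n$ as a competitor for the $\mathcal W_n$-geodesic (it is itself the $\mathcal S$-geodesic from $p_0$ to $T$), one gets
$$
\lim_{T\to+\infty}\bigl(d_{\mathcal W_n}(p_0,T)-d_{\mathcal S}(p_0,T)\bigr)\ \le\ \int_{p_0}^{\infty}\bigl(\rho_{\mathcal W_n}(x)-\rho_{\mathcal S}(x)\bigr)\,dx\ =:\ b_n,
$$
where $\rho_{\mathcal V}$ is the hyperbolic density of $\mathcal V$.

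It remains to see that $b_n\to0$, and this is a dominated-convergence argument. The integrand $\rho_{\mathcal W_n}-\rho_{\mathcal S}\ge0$ decreases to $0$ at every point, since $\mathcal W_n\nearrow\mathcal S$, and is dominated by $\rho_{\mathcal W_1}-\rho_{\mathcal S}$, since $\mathcal W_1\subset\mathcal W_n$. The only substantive point is the finiteness $\int_{p_0}^{\infty}(\rho_{\mathcal W_1}-\rho_{\mathcal S})\,dx<\infty$, and this is exactly where the thickness of $\Omega_1$ is used: a pinch of $\partial\mathcal W_1$ of depth $h$ and width $w$ near the axis raises $\rho_{\mathcal W_1}-\rho_{\mathcal S}$ by an amount whose integral over the axis is comparable to $h\max(h,w)$, which in turn is comparable to the area it removes in the square construction of Section~\ref{sec:angular-derivatives}; summing over all pinches, $\int_{p_0}^{\infty}(\rho_{\mathcal W_1}-\rho_{\mathcal S})\,dx$ is comparable to $\bigl|(\mathcal S\cap\{\re z>p_0\})\setminus\mathcal W_1^{+}\bigr|$, which is finite because $\mathcal W_1$ is thick at $+\infty$, by Theorem~\ref{RW-geometric-form} and Lemmas~\ref{RW-preliminaries} and \ref{RW-moduli-estimate}. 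Hence $\kappa_n\to0$.

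The heart of the matter is the identification in the last two paragraphs: one must repackage the angular-derivative defect $\kappa_n$ so that the \emph{increasing} hypothesis becomes a genuine domination, and this forces one through the comparisons of Section~\ref{sec:angular-derivatives} between the hyperbolic-distance defect, the density-excess integral along the geodesic ray, and the Rodin--Warschawski modulus/area defect. That the increasing hypothesis cannot be relaxed to Carath\'eodory convergence is visible from translating a fixed notch out towards $+\infty$: the domains converge to $\mathcal S$ while $C$ stays constant, and what breaks is precisely the domination $\rho_{\mathcal W_n}-\rho_{\mathcal S}\le\rho_{\mathcal W_1}-\rho_{\mathcal S}$. (The companion Lemma~\ref{slight-modification-of-omega} is of the same spirit but more elementary, as there the large-scale shape of $\Omega$ is held fixed.)
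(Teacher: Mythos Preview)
Your overall strategy is different from the paper's, and it contains one genuine gap.

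\medskip

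\textbf{How the paper argues.} The paper does not introduce the hyperbolic density integral $b_n$. It proves Lemma~\ref{omega-increasing} in the same way as Lemma~\ref{slight-modification-of-omega}, via Lemma~\ref{ostrowski} (Ostrowski's lemma). Passing to the strip, all the $\mathcal U_n$ contain $\mathcal U_1$, so they share a common modulus of inner tangency; hence the remark after Lemma~\ref{ostrowski} gives a single $x_{\osc}(\varepsilon)$ that works for every $n$. Likewise, the Rodin--Warschawski estimate \eqref{eq:close-to-1-modulus-estimate} for $\mathcal U_1$ automatically holds for every $\mathcal U_n\supset\mathcal U_1$, which fixes a single $x_0$. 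Combining these, $\phi_n^{-1}(\mathcal U_n(x_0))$ lies within $2\varepsilon$ of $\mathcal S(x_0-C_n)$ for every $n$, and since $\phi_n^{-1}(x_0)\to\phi^{-1}(x_0)$ on compact sets, one reads off $C_n\to C$. No density comparison is needed.

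\medskip

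\textbf{The gap in your argument.} Your reductions are sound: passing to the strip, the monotonicity of $C(\mathcal U_n)$, the pullback to $\mathcal W_n=\phi_{\mathcal U}^{-1}(\mathcal U_n)\nearrow\mathcal S$, the identification of $\kappa_n$ with the angular derivative of $\mathcal W_n$, and the competitor bound $\pi\kappa_n\le b_n$ are all correct. The problem is the claim that the dominating integral
\[
b_1=\int_{p_0}^{\infty}\bigl(\rho_{\mathcal W_1}(x)-\rho_{\mathcal S}(x)\bigr)\,dx
\]
is finite. You assert that $b_1$ is comparable to the area $\bigl|(\mathcal S\cap\{\re z>p_0\})\setminus\mathcal W_1^{+}\bigr|$ and cite Theorem~\ref{RW-geometric-form} and Lemmas~\ref{RW-preliminaries}--\ref{RW-moduli-estimate}. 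But those results compare the \emph{modulus defect} $\Mod\,\mathcal W_1(x_1,x_2)-(x_2-x_1)$ to the area defect; they say nothing about the integral of $\rho_{\mathcal W_1}-\rho_{\mathcal S}$ along the real axis. Your ``pinch'' heuristic (a pinch of depth $h$ and width $w$ contributes $\asymp h\max(h,w)$ to both quantities) is reasonable for an isolated rectangular obstacle, but is not a proof for a general Jordan domain $\mathcal W_1$: the boundary may be wild, pinches can overlap and interact, and the cited lemmas do not supply this comparison. Note also that thickness only gives $\pi\kappa_1\le b_1$, not the reverse, so $\kappa_1<\infty$ alone does not force $b_1<\infty$.

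This is not a fatal error in the sense that the comparability you want is likely true (it is in the spirit of \cite{rohde-wong}), but establishing it rigorously is a lemma of roughly the same weight as Lemma~\ref{RW-moduli-estimate} itself, and it is not in the paper. The paper's route through Lemma~\ref{ostrowski} sidesteps this entirely.
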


We deduce Lemmas \ref{slight-modification-of-omega} and \ref{omega-increasing} from the following technical lemma which follows from the proof of the Rodin-Warschawski criterion (Theorem \ref{eq:RW}) presented in \cite{garnett-marshall}:
 
 \begin{lemma}
\label{ostrowski}
If $\mathcal U \subset \mathcal S$ possesses an inner tangent at $+\infty$ then the pre-images $\phi^{-1}(\mathcal U(x))$ are asymptotically vertical line segments. More precisely, for any $\varepsilon > 0$, if $x \ge x_{\osc}(\varepsilon)$ is sufficiently large then $\phi^{-1}(\mathcal U(x))$ lies within $\varepsilon$ of  $\mathcal S(\re \phi^{-1}(x))$.
\end{lemma}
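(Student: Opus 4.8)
The plan is to restate the conclusion as a bound on the oscillation of $\re\phi^{-1}$ along the crosscut $\mathcal U(x)$, and to control that oscillation separately in the ``bulk'' of $\mathcal S$ and near the two boundary corners. Write $u=\re\phi$; then $\theta(x):=\phi^{-1}(\mathcal U(x))$ is exactly the component of the level curve $\{u=x\}$ that separates $0$ from $+\infty$, it is a crosscut of $\mathcal S$, and for $x$ large it passes through $\phi^{-1}(x)$. So it is enough to produce, for each $\varepsilon>0$, a value $x_{\osc}(\varepsilon)$ with $\osc_{w\in\theta(x)}\re w<\varepsilon$ whenever $x\ge x_{\osc}(\varepsilon)$. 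Given $\varepsilon$, I would use the inner-tangent hypothesis to fix a small $\delta\in(0,\varepsilon)$ (pinned down below) and $x_0=x_{\IT}(\delta)$ so that $R:=(x_0,\infty)\times(-\tfrac12+\delta,\tfrac12-\delta)\subset\mathcal U$. For $x>x_0$ the crosscut $\mathcal U(x)$ then contains the central segment $J_\delta(x)=\{x\}\times(-\tfrac12+\delta,\tfrac12-\delta)$, while $\mathcal U(x)\setminus J_\delta(x)$ sits in the two height-$\delta$ corner boxes adjacent to $\partial\mathcal S$.

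For the central segment I would run a normal-families argument. The inclusions $R\subset\mathcal U\subset\mathcal S$ together with $\im\phi(x+iy)-y\to0$ show that the translated domains $\mathcal U-\phi(X)$ converge in the Carath\'eodory sense to $\mathcal S$ as $X\to+\infty$ along the real axis: they eventually contain $\mathbb R\times(-\tfrac12+\delta',\tfrac12-\delta')$ for every $\delta'>0$, and they are contained in strips converging to $\mathcal S$. Since each map $w\mapsto\phi(w+X)-\phi(X)$ fixes $0$ and both ends of the strip, normality plus these three boundary fixed points forces $\phi(\,\cdot\,+X)-\phi(X)\to\mathrm{id}$ locally uniformly on $\mathcal S$. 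Taking $X=\re\phi^{-1}(x)$, this pins $\re\phi^{-1}$ to within $\varepsilon/3$ of $\re\phi^{-1}(x)$ along the whole of $J_\delta(x)$ once $x$ is large (depending on $\delta$ and $\varepsilon$). Equivalently, applying the Cauchy--Riemann equations and interior gradient estimates to the harmonic function $\im\phi(w)-\im w\to0$ gives $\partial u/\partial y\to0$ and $\partial u/\partial x\to1$ locally uniformly, which says the same thing.

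The crux is the corners, and this is where the length--area machinery underlying Rodin--Warschawski comes in. The two leftover pieces of $\mathcal U(x)$ lie in $\mathcal U\cap\{|y|>\tfrac12-\delta\}$, which near $\re w=x$ is a thin neighbourhood of $\partial\mathcal U$ (of Euclidean height $<\delta$) wedged against $\partial\mathcal S$; the central step has already fixed the inner endpoints of these pieces on $J_\delta(x)$. Transporting the picture to $\mathcal S$ and running the moduli estimates in the spirit of Lemmas~\ref{RW-preliminaries} and \ref{RW-moduli-estimate}, but on a bounded window around $\re w=x$ rather than on a tail at $+\infty$, the thinness of the corner region limits how much horizontal modulus it can carry and thereby confines $\phi^{-1}$ of the corner piece to within $O(\delta)$ of $\mathcal S(\re\phi^{-1}(x))$ --- once its inner endpoint has been controlled. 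Choosing $\delta$ so small that this $O(\delta)<\varepsilon/3$, and then $x$ large, the central and corner bounds add up to $\osc_{w\in\theta(x)}\re w<\varepsilon$. Making the corner estimate fully rigorous is the one genuinely analytic step --- the rest is formal --- and it is precisely for this reason that the lemma is phrased as following from the proof of Rodin--Warschawski in \cite{garnett-marshall}; the mechanism is that the inner-tangent condition confines all of the irregularity of $\partial\mathcal U$ near $+\infty$ to a corner strip whose width one is free to shrink.
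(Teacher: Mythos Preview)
The paper does not actually prove this lemma: immediately before the statement it says the lemma ``follows from the proof of the Rodin--Warschawski criterion (Theorem~\ref{RW}) presented in \cite{garnett-marshall},'' and no argument is given. So there is no in-paper proof to compare your proposal against; the paper simply defers to Garnett--Marshall.

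Your sketch is a reasonable outline of how the Ostrowski-type argument in \cite{garnett-marshall} actually runs: split the crosscut $\mathcal U(x)$ into a central segment $J_\delta(x)$ and two corner pieces, control the central segment by showing $\phi'\to 1$ locally uniformly (equivalently, your Cauchy--Riemann argument from $\im\phi(w)-\im w\to 0$), and then handle the corners by a length--area/extremal-length estimate exploiting that the irregular part of $\partial\mathcal U$ is confined to a strip of height $\delta$. One small correction: in your normal-families version you claim the maps $w\mapsto\phi(w+X)-\phi(X)$ fix ``both ends of the strip,'' but $\mathcal U$ is only assumed to have $+\infty$ on its boundary, not $-\infty$, so the left end need not be fixed. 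Your alternative route via interior gradient estimates on the harmonic function $\im\phi-\im w$ avoids this and is the cleaner argument.

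Your own assessment is accurate: the corner estimate is the genuine content, and your description of it (``limits how much horizontal modulus it can carry and thereby confines $\phi^{-1}$ of the corner piece to within $O(\delta)$'') is a heuristic rather than a proof. Since the paper itself punts this step to \cite{garnett-marshall}, you are in exactly the same position as the authors; if you want a self-contained argument, the relevant passage is the proof of Theorem~V.5.7 there, where the corner control comes from comparing the extremal length of $\mathcal U(x_1,x_2)$ to that of the sub-rectangle lying in the central strip.
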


\begin{remark}
(i) The function $x_{\osc}(\varepsilon)$ can be chosen uniformly over all domains with the same modulus of inner tangency $x_{\IT}(\delta)$. In particular, $x_{\osc}(\varepsilon)$ can be chosen uniformly over the collection of Jordan domains in $\mathcal S$ which contain
a particular domain $\mathcal U$ that has an inner tangent at $+\infty$.

(ii) Since $\partial \mathcal U$ can be complicated, the images of the vertical segments $\mathcal S(x)$ could be rather wiggly.
\end{remark}

\begin{proof}[Proof of Lemma \ref{slight-modification-of-omega}] Converting to the horizontal strip $\mathcal S$ as in Section \ref{sec:angular-derivatives-strip}, we are given a domain $\mathcal U \subset \mathcal S$ with $0 \in \mathcal U$ and $+\infty \in \partial \mathcal U$, as well as a conformal map $\phi: \mathcal S \to \mathcal U$ which fixes $0$ and $+\infty$. For each $n \ge 1$, we define the domain 
$$
\mathcal U_n = \hull \bigl ( \mathcal U \cup \{ z \in \mathcal S: \re z > n \} \bigr )
$$
and form an analogous conformal map $\phi: \mathcal S \to \mathcal U_n$ which fixes $0$ and $+\infty$.

 By the Schwarz lemma, the angular derivatives $C_n = \lim_{x \to +\infty} x - \phi_n(x)$ form an increasing sequence of positive numbers.
We first consider the case when $\phi$ has an angular derivative at $+\infty$, i.e.~$C = \lim_{x \to +\infty} x - \phi(x) < \infty$.
Given an $\varepsilon > 0$, there exists $x'_0 > 0$ sufficiently large so that
  \begin{equation}
  \label{eq:close-to-1-modulus-estimate}
 (x_2 - x_1) -  \Mod \, \mathcal U(x_1,x_2)  < \varepsilon, \qquad \text{for any } x_1, x_2 \ge x'_0.
  \end{equation}
 As $\mathcal U_n \supset \mathcal U$, the modulus estimate (\ref{eq:close-to-1-modulus-estimate}) a fortiori holds for each $\mathcal U_n$ in place of $\mathcal U$.  
 Now, by Lemma \ref{ostrowski}, we may pick an $x''_0 > 0$ sufficiently large so that for any $x \ge x''_0$,
 \begin{itemize}
\item $\phi^{-1}(\mathcal U(x))$ lies within $\varepsilon$ of $\mathcal S(\phi^{-1}(x))$  and 
 \item $\phi^{-1}_n(\mathcal U(x))$ lies within $\varepsilon$ of $\mathcal S(\phi_n^{-1}(x))$, $n \ge 1$.
 \end{itemize}
 Set $x_0 = \max(x'_0, x''_0)$. Taking $x_1 = x_0$ and $x_2$ large in (\ref{eq:close-to-1-modulus-estimate}) shows that
  \begin{itemize}
\item $\phi^{-1}(\mathcal U(x_0))$ lies within $2\varepsilon$ of $\mathcal S(x_0 - C)$  and 
 \item $\phi^{-1}_n(\mathcal U(x_0))$ lies within $2\varepsilon$ of $\mathcal S(x_0 - C_n)$, $n \ge 1$.
 \end{itemize}
 From the convergence $\phi^{-1}_n(x_0) \to \phi^{-1}(x_0)$, we see that $C_n \to C$ as desired.

We now consider the case when $\phi$ does not possess an angular derivative at infinity. As $\lim_{x \to +\infty} x - \phi(x) = \infty$, for any $C_* > 0$, one can find an $x_* > 0$ so that $\phi(x_*) - x_* > C_*$. Since $\phi_n \to \phi$ converge uniformly on compact subsets of $\mathcal S$, 
$C_n > \phi(x_*) - x_* > C_* - 1$ for all $n$ sufficiently large. In other words, $C_n \to \infty$.
 \end{proof}
 
 The proof of Lemma \ref{omega-increasing} is similar.

\section{Continuity of angular derivatives}

\begin{lemma}
\label{continuity-of-angular-derivatives}
Let $\varphi: \mathbb{D} \to \Omega$ be a conformal map onto a Jordan domain $\Omega \subset \mathbb{D}$. The function $|(\varphi^{-1})'(\zeta)|$ is upper semi-continuous on $\partial \mathbb{D}$,
where we use the convention that $|(\varphi^{-1})'(\zeta)| = 0$ if $\zeta \in (\partial \Omega \cap \partial \mathbb{D})_{\thin}$ or $\zeta \notin \partial \Omega$.
\end{lemma}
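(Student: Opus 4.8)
The plan is to reduce the statement to the upper semicontinuity of an explicit quantity on $\partial\mathbb{D}$ which is visibly a $\liminf$ of a continuous function, and to use Julia's lemma to identify that quantity with the angular derivative. For $q \in \partial\mathbb{D}$ set
$$
\alpha_\varphi(q) \, = \, \liminf_{z \to q}\, \frac{1-|\varphi(z)|}{1-|z|},
$$
the $\liminf$ taken over $z \in \mathbb{D}$. Since $\Omega$ is a Jordan domain, Carath\'eodory's theorem gives a homeomorphism $\varphi : \overline{\mathbb{D}} \to \overline{\Omega}$; in particular $\partial\Omega \cap \partial\mathbb{D}$ is closed in $\partial\mathbb{D}$ and $\varphi^{-1}$ restricts to a homeomorphism of $\partial\Omega \cap \partial\mathbb{D}$ onto a closed subset of $\partial\mathbb{D}$. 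For a holomorphic self-map $\varphi$ of $\mathbb{D}$ with $|\varphi(q)|=1$, the Julia--Wolff--Carath\'eodory theorem (the quantitative form of Julia's lemma, cf.~\cite[Corollary 4.10]{mashreghi}) says that $\varphi'$ has a non-tangential limit at $q$ of modulus $\alpha_\varphi(q)$ when $\alpha_\varphi(q) < \infty$, and that $\varphi$ has no finite non-zero angular derivative at $q$ when $\alpha_\varphi(q) = \infty$. This is exactly the dichotomy between $\Omega$ being thick and thin at $\zeta = \varphi(q)$, and with the paper's convention one has, for every $\zeta \in \partial\Omega \cap \partial\mathbb{D}$,
$$
|(\varphi^{-1})'(\zeta)| \, = \, \frac{1}{\alpha_\varphi(\varphi^{-1}(\zeta))}, \qquad \tfrac{1}{\infty} := 0 .
$$

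It then remains to show that $G := 1/\alpha_\varphi$ is upper semicontinuous on $\partial\mathbb{D}$; this part is soft. For any continuous $F : \mathbb{D} \to (0,\infty)$, the function $q \mapsto \liminf_{z \to q} F(z) = \sup_{r>0}\, \inf\{ F(z) : z \in \mathbb{D},\, |z-q| < r \}$ is lower semicontinuous, since if it exceeds $t$ at $q_0$ then some ball $B(q_0,r)$ satisfies $\inf_{B(q_0,r)\cap\mathbb{D}} F > t$, and every nearby $q$ has a smaller ball sitting inside $B(q_0,r)$. Hence $\alpha_\varphi$ is lower semicontinuous (with values in $[0,\infty]$), and therefore its reciprocal $G$ (with $1/0 := +\infty$) is upper semicontinuous: $\{G < t\}$ is empty for $t \le 0$ and equals the open set $\{\alpha_\varphi > 1/t\}$ for $t>0$. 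Composing with the homeomorphism $\varphi^{-1}$ shows that $\zeta \mapsto |(\varphi^{-1})'(\zeta)|$ is upper semicontinuous on the closed set $\partial\Omega \cap \partial\mathbb{D}$. Finally one glues with the trivial behaviour elsewhere: at $\zeta_0 \notin \partial\Omega$ the function is identically $0$ on a relative neighbourhood, while at $\zeta_0 \in \partial\Omega \cap \partial\mathbb{D}$, given $\varepsilon>0$ one picks a neighbourhood on which $|(\varphi^{-1})'(\cdot)| < |(\varphi^{-1})'(\zeta_0)| + \varepsilon$ at the points of $\partial\Omega$, and the remaining points of the neighbourhood carry the value $0 \le |(\varphi^{-1})'(\zeta_0)|$.

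The only genuinely delicate point is the boundary identification in the first paragraph: one must be sure that Julia's lemma matches the paper's thick/thin dichotomy and the (normalization-dependent) value $|(\varphi^{-1})'(\zeta)|$ on the nose, and that $\alpha_\varphi$ is being compared along \emph{all} approach directions, not merely non-tangential ones --- this is what makes the $\liminf$ lower semicontinuous. I expect no other obstacle. (Alternatively, one can argue geometrically: enlarge $\Omega$ to $\Omega_\delta = \hull(\Omega \cup (B(\zeta_0,\delta)\cap\mathbb{D}))$, control $|(\varphi_\delta^{-1})'(\zeta_0)|$ via Lemma~\ref{slight-modification-of-omega}, note that $\Omega_\delta$ coincides with $\mathbb{D}$ near $\zeta_0$ so that $|(\varphi_\delta^{-1})'|$ extends real-analytically across the corresponding boundary arc by Schwarz reflection, and combine with the monotonicity $|(\varphi^{-1})'(\zeta)| \le |(\varphi_\delta^{-1})'(\zeta)|$ that follows from Julia's lemma applied to $\varphi_\delta^{-1}\circ\varphi$ at a common interior fixed point; but the $\liminf$ argument above is cleaner and self-contained.)
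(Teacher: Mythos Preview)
Your proof is correct and takes essentially the same approach as the paper: both arguments identify the angular derivative via the Julia--Wolff--Carath\'eodory theorem and then observe that the resulting characterization is stable under limits. The paper uses the horoball form of Julia's lemma (the mapping condition $\varphi(\mathcal H_c(q)) \subset \mathcal H_{c/M}(\varphi(q))$ for all $c>0$ is a closed condition in $q$ once $\varphi$ is continuous up to the boundary), whereas you use the equivalent Julia number $\alpha_\varphi(q)=\liminf_{z\to q}(1-|\varphi(z)|)/(1-|z|)$ and the general lower semicontinuity of a $\liminf$; these are two packagings of the same idea.
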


The lemma says that for any $M > 0$, the set of points
$\mathcal A_M$
where the angular derivative of $\varphi$ is at most $M$ is closed.

\begin{proof}
For $\zeta \in \partial \mathbb{D}$ and $c > 0$, the set
$$
\mathcal H_c(\zeta) = \biggl \{ z \in \mathbb{D} : \re \frac{\zeta + z}{\zeta - z} > c \biggr \}
$$
is a horoball in the unit disk which rests on $\zeta \in \partial \mathbb{D}$. In this representation, the radius of the horoball decreases as $c$ increases.
According to Julia's lemma \cite[Lemma 4.7]{mashreghi}, a point $\zeta \in \partial \mathbb{D}$ belongs to $\mathcal A_M$ if and only if 
\begin{equation}
\label{eq:julia}
\varphi \bigl (\mathcal H_c(\zeta) \bigr ) \subset \mathcal H_{c/M} \bigl (\varphi(\zeta) \bigr ), \qquad \forall c > 0.
\end{equation}

As $\varphi$ is a conformal map to a Jordan domain, it is continuous up to the boundary. 
Since the mapping property  (\ref{eq:julia}) is preserved under limits, the set $\mathcal A_M$ is closed, which is what we wanted to show.
\end{proof}
 
 The same proof shows:

\begin{lemma}
\label{continuity-of-angular-derivatives2}
Let
 $\varphi_n: \mathbb{D} \to \Omega_n$ be a sequence of conformal maps which converge to a conformal map $\varphi: \mathbb{D} \to \Omega$. 
 If $\zeta_n \in \partial \mathbb{D}$ are points on the unit circle which converge to $\zeta \in \partial \mathbb{D}$, then 
 $\limsup_{n \to \infty} |(\varphi_n^{-1})'(\zeta_n)| \le |(\varphi^{-1})'(\zeta)|.$
 \end{lemma}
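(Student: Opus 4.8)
The plan is to run the proof of Lemma~\ref{continuity-of-angular-derivatives} essentially verbatim, the only new wrinkle being that both the maps and the base points move, so we must pass to subsequences. Throughout we keep the standing assumption $\Omega_n, \Omega \subset \mathbb{D}$, so that $\varphi_n, \varphi$ are holomorphic self-maps of the disk and Julia's lemma \cite[Lemma 4.7]{mashreghi} applies. Write $L = \limsup_{n\to\infty} |(\varphi_n^{-1})'(\zeta_n)|$ and $R = |(\varphi^{-1})'(\zeta)|$; we must show $L \le R$. If $L = 0$ there is nothing to prove, so fix $t \in (0, L)$. By definition of the $\limsup$, $|(\varphi_n^{-1})'(\zeta_n)| \ge t$ for infinitely many $n$; restricting to this subsequence, each $\varphi_n$ has a finite angular derivative at $\zeta_n$, so $p_n := \varphi_n(\zeta_n) \in \partial\mathbb{D}$, and $\zeta_n$ lies in the set $\mathcal A_{1/t}$ attached to $\varphi_n$, i.e.\ by Julia's lemma
$$
\varphi_n \bigl( \mathcal H_c(\zeta_n) \bigr) \subset \mathcal H_{ct} \bigl( p_n \bigr), \qquad \forall\, c > 0.
$$
It suffices to show this passes to the limit as $\varphi(\mathcal H_c(\zeta)) \subset \mathcal H_{ct}(\varphi(\zeta))$ for all $c > 0$: by the converse direction of Julia's lemma this forces the angular derivative of $\varphi$ at $\zeta$ to be at most $1/t$, i.e.\ $R = |(\varphi^{-1})'(\zeta)| \ge t$, and letting $t \to L^-$ gives $L \le R$.

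For the passage to the limit, first pass to a further subsequence along which $p_n \to p^* \in \partial\mathbb{D}$ (compactness of the circle). Fix $c > 0$ and a point $z \in \mathcal H_c(\zeta)$. Since $z$ is interior and $\zeta_n \to \zeta$, we have $\re \frac{\zeta_n + z}{\zeta_n - z} \to \re \frac{\zeta + z}{\zeta - z} > c$, so $z \in \mathcal H_c(\zeta_n)$ for all large $n$, whence $\varphi_n(z) \in \mathcal H_{ct}(p_n)$. The horoballs $\mathcal H_{ct}(p_n)$ are Euclidean disks of fixed radius centred at $\frac{ct}{1+ct} p_n$, so letting $n \to \infty$ and using $\varphi_n \to \varphi$ locally uniformly together with $p_n \to p^*$ gives $\varphi(z) \in \overline{\mathcal H_{ct}(p^*)}$. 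As this holds for every $z \in \mathcal H_c(\zeta)$, the \emph{open} set $\varphi(\mathcal H_c(\zeta))$ is contained in the closed disk $\overline{\mathcal H_{ct}(p^*)}$, hence in its interior $\mathcal H_{ct}(p^*)$. Since this holds for all $c > 0$ with a single point $p^*$, Julia's lemma yields that the non-tangential limit of $\varphi$ at $\zeta$ exists and equals $p^*$ and that the angular derivative of $\varphi$ there is $\le 1/t$, which is the claim.

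The one step needing care is the presence of $p^* = \lim_n \varphi_n(\zeta_n)$: because the domains $\Omega_n$ need not converge up to the boundary, one cannot assert a priori that $\varphi_n(\zeta_n) \to \varphi(\zeta)$, and this is precisely why the argument is funnelled through the horoball form of Julia's lemma rather than through boundary values of $\varphi$. The horoballs $\mathcal H_{ct}(p_n)$ depend continuously on their base point, so a convergent subsequence of the $p_n$ is all that is required, and the converse of Julia's lemma then identifies $p^*$ with $\varphi(\zeta)$ after the fact. The other ingredients — continuity of $z \mapsto \mathcal H_c(\zeta_n)$ in $\zeta_n$ at a fixed interior $z$, and the passage from a closed to an open horoball containment using openness of $\varphi(\mathcal H_c(\zeta))$ — are routine; the remaining thing to keep straight is the bookkeeping of reciprocals relating $|(\varphi^{-1})'|$ to the angular derivative of $\varphi$ and the direction in which the horoballs $\mathcal H_s$ nest as $s$ varies.
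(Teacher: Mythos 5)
Your machinery --- Julia's lemma in horoball form, stability of the horoball inclusions under locally uniform convergence, compactness of the circle for the uncontrolled boundary points, and the converse direction of Julia's lemma --- is exactly the intended extension of Lemma \ref{continuity-of-angular-derivatives}, and your limit mechanics (fixed interior $z$, $p_n \to p^*$ along a subsequence, passing from the closed to the open horoball via openness of the image) are fine. However, you have transposed the roles of the two boundary points. In this lemma $\zeta_n$ and $\zeta$ sit on the \emph{image} side: by the convention of Lemma \ref{continuity-of-angular-derivatives} and Theorem \ref{radon-nikodym}, $|(\varphi_n^{-1})'(\zeta_n)|$ is the angular derivative of $\varphi_n^{-1}\colon \Omega_n \to \mathbb{D}$ at $\zeta_n \in \partial\Omega_n \cap \partial\mathbb{D}$, i.e.\ the reciprocal of the angular derivative of $\varphi_n$ at the preimage $q_n = \varphi_n^{-1}(\zeta_n)$ (and $0$ if $\zeta_n \notin \partial\Omega_n$ or $\Omega_n$ is thin there). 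Your proof instead reads $\zeta_n$ as a point of the domain disk, assumes $\varphi_n$ has angular derivative at most $1/t$ \emph{at} $\zeta_n$, and lets the image points $p_n = \varphi_n(\zeta_n)$ float. What you prove is therefore the transposed statement (semicontinuity of the angular derivative at converging domain-side points), not the stated one; the two are genuinely different, since the preimages $q_n$ of a convergent image-side sequence need not converge, and it is the image-side version that the paper needs in the proof of Lemma \ref{green-quotient}, where $\zeta$ is a fixed boundary point of $\Omega$ and the preimages $\varphi_\delta^{-1}(\zeta)$ move with $\delta$.

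The repair is mechanical and follows your own template with the sides interchanged: from $|(\varphi_n^{-1})'(\zeta_n)| \ge t$ extract $q_n \in \partial\mathbb{D}$ with $\varphi_n \bigl( \mathcal H_c(q_n) \bigr) \subset \mathcal H_{ct}(\zeta_n)$ for all $c > 0$; pass to a subsequence with $q_n \to q^*$; for fixed $z \in \mathcal H_c(q^*)$ conclude, exactly as you did, that $\varphi(z) \in \overline{\mathcal H_{ct}(\zeta)}$, hence $\varphi \bigl( \mathcal H_c(q^*) \bigr) \subset \mathcal H_{ct}(\zeta)$; the converse of Julia's lemma then gives that $\varphi$ has angular derivative at most $1/t$ at $q^*$ with nontangential limit $\zeta$; finally, since $\varphi$ extends to a homeomorphism of $\overline{\mathbb{D}}$ onto $\overline{\Omega}$ (the domains here are Jordan), $q^* = \varphi^{-1}(\zeta)$, so $|(\varphi^{-1})'(\zeta)| \ge t$, and letting $t \to L^-$ finishes as in your write-up. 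Note that this last identification step, which uses continuity of $\varphi$ up to the boundary exactly as in the proof of Lemma \ref{continuity-of-angular-derivatives}, has no counterpart in your argument --- a symptom of the transposition.
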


One can also prove Lemmas \ref{continuity-of-angular-derivatives} and \ref{continuity-of-angular-derivatives2} using Aleksandrov-Clark measures. For the definition and basic properties of Aleksandrov-Clark measures, we refer the reader to the surveys \cite{PS, saksman}.

\section{Green's functions}

Let $\Omega \subset \partial \mathbb{D}$ be a simply-connected domain in the unit disk.
Fix a basepoint $p \in \Omega$ and suppose $\varphi: \mathbb{D} \to \Omega$ is a conformal map which fixes $p$.
In this section, we give an interpretation of the angular derivative in terms of the ratios of the Green's functions  $G_\Omega(p, z)$ and $G_\mathbb{D}(p, z)$ of $\Omega$ and $\mathbb{D}$ respectively:

\begin{lemma}
\label{green-quotient}
Suppose  $c_n \in \Omega$ is a sequence of points that escape to $\zeta \in \partial \Omega \cap \partial \mathbb{D}$.

{\em (a)}
If $\zeta \in \partial \Omega \cap \partial \mathbb{D}$ is a point of thickness of $\Omega$ then
$$
\limsup_{n \to \infty} \frac{ G_{\Omega}(p, c_n) }{ G_{\mathbb{D}}(p, c_n) } \le |(\varphi^{-1})'(\zeta)|.
$$
Equality holds if each $c_n \to \zeta$ non-tangentially.

{\em (b)}
If $\zeta \in \partial \Omega \cap \partial \mathbb{D}$ is not a point of thickness of $\Omega$ then
$$
\limsup_{n \to \infty} \frac{ G_{\Omega}(p, c_n) }{ G_{\mathbb{D}}(p, c_n) } = 0.
$$
\end{lemma}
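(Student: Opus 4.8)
The plan is to reduce everything to a statement about the single self-map $\varphi\colon\mathbb D\to\Omega$, and to use the dictionary between Green's functions, hyperbolic distances, and Julia's lemma. Since $\varphi$ fixes $p$, conformal invariance of Green's functions gives $G_\Omega(p,z)=G_{\mathbb D}(p,\varphi^{-1}(z))$, so with $w_n:=\varphi^{-1}(c_n)$ (and $w_n\to q:=\varphi^{-1}(\zeta)$, because $U$ Jordan makes $\varphi$ extend to a homeomorphism of closures) we have
$$\frac{G_\Omega(p,c_n)}{G_{\mathbb D}(p,c_n)}=\frac{G_{\mathbb D}(p,w_n)}{G_{\mathbb D}(p,c_n)}.$$
The key elementary input is the boundary asymptotics of the disk Green's function: $G_{\mathbb D}(p,z)=-\log|M_p(z)|$, hence for $z\to\partial\mathbb D$ one has $G_{\mathbb D}(p,z)=\tfrac12\bigl(1-|M_p(z)|^2\bigr)(1+o(1))$, equivalently, writing $G_D(p,z)=-\log\tanh\tfrac12 d_D(p,z)$ for any simply connected $D$ (which follows from the disk case by conformal invariance),
$$\frac{G_\Omega(p,c_n)}{G_{\mathbb D}(p,c_n)}=e^{-\,[\,d_\Omega(p,c_n)-d_{\mathbb D}(p,c_n)\,]}\,\bigl(1+o(1)\bigr),\qquad d_\Omega(p,c_n)=d_{\mathbb D}(p,w_n).$$
Thus the whole lemma becomes a statement about the Schwarz--Pick deficit $d_{\mathbb D}(p,w_n)-d_{\mathbb D}\!\bigl(p,\varphi(w_n)\bigr)\ge 0$, i.e.\ (up to the bounded, convergent factors coming from $|1-\bar p\,c_n|/|1-\bar p\,w_n|\to|1-\bar p\,\zeta|/|1-\bar p\,q|$) about $\dfrac{1-|w_n|}{1-|c_n|}=\Bigl(\dfrac{1-|\varphi(w_n)|}{1-|w_n|}\Bigr)^{-1}$.

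For the non-tangential equality in (a): since $\zeta$ is a point of thickness, $\Omega$ has an inner tangent at $\zeta$ (property (1) of Section \ref{sec:angular-derivatives}), so by boundary conformality a non-tangentially converging sequence $c_n\to\zeta$ in $\Omega$ is carried by $\varphi^{-1}$ to a non-tangentially converging sequence $w_n\to q$ in $\mathbb D$. The Julia--Wolff--Carathéodory theorem applied to the self-map $\varphi\colon\mathbb D\to\mathbb D$ then gives $\frac{1-|\varphi(w_n)|}{1-|w_n|}\to|\varphi'(q)|$, the angular derivative; feeding this back through the two displays identifies the limit of the Green's ratio with $|(\varphi^{-1})'(\zeta)|$. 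For the general-sequence inequality in (a) and for (b), the point is that one does not need non-tangentiality: the Julia number $A_q:=\liminf_{w\to q}\frac{1-|\varphi(w)|}{1-|w|}$ is taken over \emph{unrestricted} approach, it coincides with $|\varphi'(q)|$ when $\zeta$ is thick and equals $+\infty$ exactly when $\zeta$ is not thick, and along our sequence $|\varphi(w_n)|=|c_n|\to1$. Hence $\liminf_n\frac{1-|\varphi(w_n)|}{1-|w_n|}\ge A_q$, which yields $\limsup_n\frac{G_\Omega(p,c_n)}{G_{\mathbb D}(p,c_n)}\le A_q^{-1}$ in case (a) and $\to 0$ in case (b). One may equivalently phrase this via the horoball form of Julia's lemma, $\varphi(\mathcal H_c(q))\subset\mathcal H_{c/A_q}(\zeta)$, exactly as in the proof of Lemma \ref{continuity-of-angular-derivatives}.

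I expect two points to require the most care. First, the transfer of non-tangentiality through $\varphi^{-1}$ and, more generally, the treatment of sequences $c_n\to\zeta$ that approach tangentially inside the (possibly very irregular) domain $\Omega$: this is precisely where one must invoke the full strength of Julia's lemma / the unrestricted Julia number rather than just JWC, and where the hypothesis ``$\zeta$ is a point of thickness'' (inner tangent) is used. Second, bookkeeping of the basepoint: one must track how $p$ transforms under the conformal identification so that the limiting constant comes out as exactly $|(\varphi^{-1})'(\zeta)|$ rather than a multiple of it by a Poisson-kernel ratio; choosing the normalization of $\varphi$ consistently with the pole of the Green's function is what makes these factors cancel. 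Everything else — the $-\log\tanh$ expansion, the convergence of the auxiliary factors, and domain monotonicity $d_\Omega\ge d_{\mathbb D}$ (which also gives $G_\Omega\le G_{\mathbb D}$ and so $\le 1$ a priori) — is routine.
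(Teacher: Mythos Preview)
Your reduction to the unrestricted Julia number $A_q=\liminf_{w\to q}\tfrac{1-|\varphi(w)|}{1-|w|}$ is correct and takes a genuinely different route from the paper. The paper does not apply Julia--Wolff--Carath\'eodory to $\varphi$ directly; instead it enlarges $\Omega$ near $\zeta$ to $\Omega_\delta=\hull\bigl(\Omega\cup(B(\zeta,\delta)\cap\mathbb D)\bigr)$, uses domain monotonicity $G_\Omega\le G_{\Omega_\delta}$, computes the ratio for the regularized domain (where $\varphi_\delta^{-1}$ extends conformally across the boundary near $\zeta$), and then sends $\delta\to 0$ via Lemma~\ref{slight-modification-of-omega}. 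Your route is more elementary and bypasses both the auxiliary domains and Lemma~\ref{slight-modification-of-omega}; the paper's detour has the mild advantage of not needing the Carath\'eodory extension $w_n\to q$ (so it does not rely on $\Omega$ being Jordan), since the $c_n$ eventually sit inside the half-ball where everything is smooth.

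One correction to your bookkeeping paragraph: the factor $|1-\bar p\,c_n|^2/|1-\bar p\,w_n|^2\to P_p(q)/P_p(\zeta)$ does \emph{not} cancel from the normalization $\varphi(p)=p$ alone. Your computation actually yields, for non-tangential $c_n$,
\[
\lim_{n\to\infty}\frac{G_\Omega(p,c_n)}{G_{\mathbb D}(p,c_n)}=\frac{P_p(q)}{P_p(\zeta)}\cdot|(\varphi^{-1})'(\zeta)|,
\]
which is the harmonic-measure density $d\omega_\Omega(p,\cdot)/d\omega_{\mathbb D}(p,\cdot)$ at $\zeta$ and is invariant under the choice of $\varphi$ fixing $p$, whereas $|(\varphi^{-1})'(\zeta)|$ alone is not (take $\Omega=\mathbb D$ and $\varphi$ a nontrivial elliptic automorphism around some $p\neq 0$). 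The paper's own displayed step $G_{\Omega_\delta}(p,c_n)=G_{\mathbb D}(p,c_n)\cdot\bigl(|(\varphi_\delta^{-1})'(\zeta)|+o(1)\bigr)$ has the same slip, so this is a wrinkle in the stated constant rather than a defect of your method; part~(b) and the $\limsup$ bound in~(a) go through regardless, and you were right to single this out as the delicate point.
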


\begin{proof}
  Let $\zeta \in \partial \Omega \cap \partial \mathbb{D}$ be a point of thickness of $\partial \Omega$. Fix $\delta > 0$. Since $c_n \to \zeta$, for $n$ large, the points $c_n$ are contained in $B(\zeta, \delta)$. Since by increasing the domain, one increases the Green's function, we have
\begin{equation}
\label{eq:tilde-comparison}
 G_{\Omega}(p, c_n) \le  G_{\Omega_\delta}(p, c_n),
 \end{equation}
 where $\Omega_\delta = \hull \bigl ( \Omega \cup (B(\zeta, \delta) \cap \mathbb{D}) \bigr )$ as in Section \ref{sec:stability-ad}. 
 
  Let $\varphi_\delta$ denote the Riemann map from $\mathbb{D}$ to $\Omega_\delta$ which fixes the point $p$.
As the Green's function is conformally invariant, for any fixed $\delta > 0$,
$$
G_{\Omega_\delta}(p, c_n) \, = \,  G_{\mathbb{D}} \bigl (p, {\varphi}^{-1}_{\delta}  (c_n) \bigr )
\, = \, G_{\mathbb{D}}(p, c_n) \cdot \bigl (|({\varphi}^{-1}_{\delta} )'(\zeta)| + o (1) \bigr ),
$$
as $n \to \infty$. In view of Lemma \ref{continuity-of-angular-derivatives2}, when $n > n_0(\zeta, \delta)$ is large, we have
 $$
 G_{\Omega}(p, c_n) \le    G_{\mathbb{D}}(p, c_n) \cdot \bigl ( |(\varphi^{-1})'(\zeta)| + o_\delta(1) \bigr ),
 $$
 where the term $o_\delta(1)$ can be made arbitrarily small by choosing $\delta > 0$ small. Part (b) is similar.
 \end{proof}

An analogue of Lemma \ref{green-quotient} holds for sequences:

\begin{lemma}
\label{green-quotient2}
Suppose $(\Omega_n, p) \to (\Omega, p)$ is a sequence of domains in the unit disk converging in the Carath\'eodory sense and  $c_n \in \Omega_{n}$ is a sequence of points that escape to $\zeta \in \partial \mathbb{D}$.

{\em (a)} If $\zeta \in \partial \Omega \cap \partial \mathbb{D}$ is a point of thickness of $\Omega$ then
$$
\limsup_{n \to \infty} \frac{ G_{\Omega_{n}}(p, c_n) }{ G_{\mathbb{D}}(p, c_n) } \le |(\varphi^{-1})'(\zeta)|.
$$

{\em (b)} If $\zeta \in \partial \Omega \cap \partial \mathbb{D}$ is not a point of thickness of $\Omega$, 
then
$$
\limsup_{n \to \infty} \frac{ G_{\Omega_{n}}(p, c_n) }{ G_{\mathbb{D}}(p, c_n) } = 0.
$$
\end{lemma}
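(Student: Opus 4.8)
The plan is to reduce Lemma~\ref{green-quotient2} to Lemma~\ref{green-quotient} by a diagonal/comparison argument, exploiting monotonicity of Green's functions and Carath\'eodory convergence. First I would fix $\delta > 0$. Since $c_n \to \zeta$ and $(\Omega_n, p) \to (\Omega, p)$ in the Carath\'eodory sense, for $n$ large the points $c_n$ lie in $B(\zeta, \delta)$ and the domains $\Omega_n$ are contained in $\Omega_{2\delta} = \hull\bigl(\Omega \cup (B(\zeta, 2\delta) \cap \mathbb{D})\bigr)$ \emph{up to a set bounded away from $\zeta$}; more precisely, I would use that Carath\'eodory convergence gives $\Omega_n \subset \Omega'_n$ where $\Omega'_n$ is $\Omega$ enlarged only near $\zeta$, and monotonicity of the Green's function in the domain then yields $G_{\Omega_n}(p, c_n) \le G_{\Omega_{2\delta}}(p, c_n)$ once $n$ is large enough (the point being that the part of $\Omega_n$ lying outside a fixed neighborhood of $\zeta$ is eventually inside $\Omega$, and the part inside that neighborhood is inside $B(\zeta, 2\delta)$). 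By the conformal invariance of the Green's function and the existence of the angular derivative of $\varphi_{2\delta}$ at $\zeta$ (which holds because $\Omega_1 \subset \Omega \subset \Omega_{2\delta}$ and thickness is monotone and local, so $\Omega_{2\delta}$ is thick at $\zeta$ in case (a)), we get
$$
G_{\Omega_{2\delta}}(p, c_n) = G_{\mathbb{D}}(p, c_n) \cdot \bigl(|(\varphi_{2\delta}^{-1})'(\zeta)| + o(1)\bigr), \qquad n \to \infty.
$$

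Next I would invoke Lemma~\ref{slight-modification-of-omega}: the Riemann maps $\varphi_\delta: \mathbb{D} \to \Omega_\delta$ converge uniformly on compact sets to $\varphi: \mathbb{D} \to \Omega$, so $|(\varphi_\delta^{-1})'(\zeta)| \to |(\varphi^{-1})'(\zeta)|$ as $\delta \to 0$. Combining the two displays above,
$$
\limsup_{n \to \infty} \frac{G_{\Omega_n}(p, c_n)}{G_{\mathbb{D}}(p, c_n)} \le |(\varphi_{2\delta}^{-1})'(\zeta)| = |(\varphi^{-1})'(\zeta)| + o_\delta(1),
$$
and letting $\delta \to 0$ gives part (a). For part (b), when $\zeta$ is not a point of thickness of $\Omega$, I would still have $\Omega_n \subset \Omega_{2\delta}$ for $n$ large, but now $\Omega_{2\delta}$ need not be thin at $\zeta$ — enlarging near $\zeta$ can create thickness. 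Instead I would argue directly: fix $\varepsilon > 0$; since $\Omega$ is thin at $\zeta$, Lemma~\ref{green-quotient}(b) gives a neighborhood on which $G_\Omega(p, \cdot)/G_\mathbb{D}(p, \cdot)$ is small, but this is about $\Omega$, not $\Omega_n$. The cleaner route is to note that for any thick domain $\Omega^\sharp \supset \Omega$ that is thin at $\zeta$ — if one exists — we'd be done by (a); failing that, one can use that the ratio $G_{\Omega_n}(p, c_n)/G_\mathbb{D}(p, c_n)$ is controlled by the harmonic measure $\omega(c_n, \partial\Omega_n \cap \partial\mathbb{D}, \Omega_n)$-type quantity, which must vanish in the thin case. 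Concretely, I would pass through a horoball/Julia's lemma estimate as in Lemma~\ref{continuity-of-angular-derivatives}: if $\Omega_n \subset \Omega_{2\delta}$ and $\varphi_{2\delta}$ has angular derivative $A_\delta = |(\varphi_{2\delta}^{-1})'(\zeta)|$ at $\zeta$, then the bound $|(\varphi^{-1})'(\zeta)| + o_\delta(1)$ still decreases to $0$ since $A_\delta \to |(\varphi^{-1})'(\zeta)| = 0$ by Lemma~\ref{slight-modification-of-omega} even in the thin case (the filling construction is the same; one just needs $\Omega_{2\delta}$ thick, which it is for $\delta$ small as it contains a half-disk at $\zeta$). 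So (b) reduces to (a) applied to $\Omega_{2\delta}$ with its angular derivative tending to $0$.

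The main obstacle I anticipate is the \emph{domain-monotonicity step under mere Carath\'eodory convergence}: Carath\'eodory convergence of $(\Omega_n, p) \to (\Omega, p)$ does not literally give $\Omega_n \subset \Omega_{2\delta}$, because $\Omega_n$ could have thin tentacles reaching outside $\Omega$ near other boundary points. What is true is that for any compact $K \subset \mathbb{D}$, the domains $\Omega_n \cap K$ eventually satisfy $\Omega_n \cap K \subset \Omega$; combined with the fact that $c_n \in B(\zeta, \delta)$ and that the Green's function $G_{\Omega_n}(p, c_n)$ with $c_n$ near $\zeta$ is dominated by the Green's function of $\Omega_n$ "as seen from the $\zeta$-end," one localizes: replace $\Omega_n$ by $(\Omega_n \cap B(\zeta,\delta)) \cup (\Omega \cap \{|z - \zeta| > \delta/2\})$-type hybrid domain $\widehat\Omega_n \subset \Omega_{2\delta}$ that still contains $c_n$ and still contains $p$ after $n$ is large, and has $G_{\Omega_n}(p,c_n) \le G_{\widehat\Omega_n}(p, c_n) \le G_{\Omega_{2\delta}}(p, c_n)$ — the first inequality requires a short harmonic-measure argument (a path from $p$ to $c_n$ in $\Omega_n$ eventually runs inside $\widehat\Omega_n$ except possibly for excursions that only decrease the Green's function). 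Handling this localization carefully is the crux; everything after it is a verbatim repeat of the proof of Lemma~\ref{green-quotient} together with the already-proven Lemma~\ref{slight-modification-of-omega}. I would write "Part (b) is similar" only after the $\delta \to 0$ limit, mirroring the concise style used for Lemma~\ref{green-quotient}.
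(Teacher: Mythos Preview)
Your overall strategy---enlarge the domain near $\zeta$, invoke monotonicity of the Green's function, compute the ratio via the angular derivative, then let $\delta \to 0$ using Lemma~\ref{slight-modification-of-omega}---matches the paper's, and your treatment of part~(b) (reducing it to the thick case since $\Omega_\delta$ contains a half-disk and $|(\varphi_\delta^{-1})'(\zeta)| \to 0$) is fine. The gap is precisely the one you flag: you try to compare $\Omega_n$ with the enlargement of the \emph{limit} domain, $\Omega_{2\delta} = \hull(\Omega \cup (B(\zeta,2\delta)\cap\mathbb{D}))$, and Carath\'eodory convergence simply does not give $\Omega_n \subset \Omega_{2\delta}$. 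Your attempted repair does not work: the claim ``for any compact $K \subset \mathbb{D}$, $\Omega_n \cap K \subset \Omega$ eventually'' is false (Carath\'eodory convergence gives the opposite inclusion, $K \subset \Omega_n$ for compact $K \subset \Omega$), and the inequality $G_{\Omega_n}(p,c_n) \le G_{\widehat\Omega_n}(p,c_n)$ for your hybrid domain $\widehat\Omega_n$ is unjustified since $\widehat\Omega_n$ need not contain $\Omega_n$.

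The fix is a one-line change that makes the proof genuinely ``the same'' as Lemma~\ref{green-quotient}: enlarge $\Omega_n$, not $\Omega$. Set $\Omega_{n,\delta} = \hull\bigl(\Omega_n \cup (B(\zeta,\delta)\cap\mathbb{D})\bigr)$, so that $\Omega_n \subset \Omega_{n,\delta}$ trivially and hence $G_{\Omega_n}(p,c_n) \le G_{\Omega_{n,\delta}}(p,c_n)$. Each $\Omega_{n,\delta}$ contains the fixed half-disk $B(\zeta,\delta)\cap\mathbb{D}$, so the Riemann maps $\varphi_{n,\delta}^{-1}$ extend conformally across $\zeta$ with derivatives bounded uniformly in $n$; this makes the error in
\[
\frac{G_{\Omega_{n,\delta}}(p,c_n)}{G_{\mathbb{D}}(p,c_n)} = |(\varphi_{n,\delta}^{-1})'(\zeta)| + o(1)
\]
uniform in $n$ as $c_n \to \zeta$. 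Since $(\Omega_{n,\delta},p) \to (\Omega_\delta,p)$ in the Carath\'eodory sense, Lemma~\ref{continuity-of-angular-derivatives2} gives $\limsup_n |(\varphi_{n,\delta}^{-1})'(\zeta)| \le |(\varphi_\delta^{-1})'(\zeta)|$, and then Lemma~\ref{slight-modification-of-omega} lets $\delta \to 0$ exactly as before.
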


The proof is essentially the same. 
While we are on the subject of Green's functions, we mention the following well-known lemma:

\begin{lemma}
\label{green-quotient3}
Let $p, q \in \mathbb{D}$ and $\zeta \in \partial \mathbb{D}$. For any sequence of points $c_n \in \mathbb{D}$ converging to $\zeta$,
$$
\lim_{n \to \infty} \frac{G_{\mathbb{D}}(p, c_n)}{G_{\mathbb{D}}(q, c_n)} = \frac{P_p(\zeta)}{P_q(\zeta)}.
$$
\end{lemma}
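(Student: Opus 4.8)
The plan is to use the explicit formula for the Green's function of the unit disk. Recall that $G_{\mathbb{D}}(p, z) = \log\bigl|\frac{1-\bar p z}{z-p}\bigr|$, and that for $z$ near the boundary this is comparable to $\frac{1}{2}(1-|z|^2)\cdot\frac{1-|p|^2}{|1-\bar p z|^2}$ up to an error of higher order, since $-\log|w| = (1-|w|) + O((1-|w|)^2)$ as $|w|\to 1$ applied with $w = \frac{z-p}{1-\bar p z}$, and $1-\left|\frac{z-p}{1-\bar p z}\right|^2 = \frac{(1-|p|^2)(1-|z|^2)}{|1-\bar p z|^2}$. The quantity $\frac{1-|p|^2}{|1-\bar p\zeta|^2}$ is exactly the Poisson kernel $P_p(\zeta)$ (up to the usual normalizing constant, which cancels in the ratio). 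So the claim reduces to showing that the common factor $\frac{1}{2}(1-|c_n|^2)$ — and the higher-order error terms — wash out in the quotient.

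First I would write, for each $n$,
\begin{equation}
\label{eq:green-ratio-expansion}
\frac{G_{\mathbb{D}}(p, c_n)}{G_{\mathbb{D}}(q, c_n)} = \frac{-\log\bigl|\tfrac{c_n-p}{1-\bar p c_n}\bigr|}{-\log\bigl|\tfrac{c_n-q}{1-\bar q c_n}\bigr|}.
\end{equation}
Then I would use the asymptotic $-\log|w| = (1-|w|)\bigl(1 + O(1-|w|)\bigr)$ as $|w|\to 1$, together with the identity $1 - \bigl|\tfrac{c_n-p}{1-\bar p c_n}\bigr|^2 = \tfrac{(1-|p|^2)(1-|c_n|^2)}{|1-\bar p c_n|^2}$, to get
\[
G_{\mathbb{D}}(p, c_n) = \frac{(1-|p|^2)(1-|c_n|^2)}{2\,|1-\bar p c_n|^2}\bigl(1 + o(1)\bigr)
\]
as $n\to\infty$ (here $1-|c_n|\to 0$ drives the error, and $|1-\bar p c_n|$ stays bounded below since $|p|<1$), and similarly for $q$. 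Dividing, the factor $\tfrac{1}{2}(1-|c_n|^2)$ cancels, leaving
\[
\frac{G_{\mathbb{D}}(p, c_n)}{G_{\mathbb{D}}(q, c_n)} = \frac{(1-|p|^2)/|1-\bar p c_n|^2}{(1-|q|^2)/|1-\bar q c_n|^2}\bigl(1+o(1)\bigr) \longrightarrow \frac{P_p(\zeta)}{P_q(\zeta)},
\]
using $c_n \to \zeta$ and continuity of $c \mapsto \tfrac{1-|p|^2}{|1-\bar p c|^2}$ at $c = \zeta$.

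There is no real obstacle here — this is a routine computation — but the one point requiring a word of care is the case $q = \zeta$ or $p=\zeta$ being impossible (the points $p,q$ are fixed interior points, so $P_p(\zeta), P_q(\zeta)$ are finite and nonzero and the denominator in the limit is legitimate), and making sure the $o(1)$ in the numerator and denominator are genuinely uniform enough to pass to the ratio — which they are, since both $1-|c_n|\to 0$ and the Blaschke factors are bounded away from $0$ and $\infty$. Alternatively, and perhaps more cleanly, one can invoke the standard fact (e.g. from potential theory, cf. the Martin kernel description of $\mathbb{D}$) that $G_{\mathbb{D}}(p,\cdot)/G_{\mathbb{D}}(q,\cdot)$ extends continuously to $\overline{\mathbb{D}}\setminus\{p,q\}$ with boundary values $P_p/P_q$; but the two-line computation above is self-contained and I would include it.
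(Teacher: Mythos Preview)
Your proof is correct. The paper does not actually prove this lemma: it introduces it with the phrase ``we mention the following well-known lemma'' and states it without proof. Your direct computation via the explicit formula $G_{\mathbb{D}}(p,z) = -\log\bigl|\tfrac{z-p}{1-\bar p z}\bigr|$ and the identity $1-\bigl|\tfrac{z-p}{1-\bar p z}\bigr|^2 = \tfrac{(1-|p|^2)(1-|z|^2)}{|1-\bar p z|^2}$ is exactly the standard verification one has in mind when calling such a statement well known; the alternative you mention (recognizing the ratio as the Martin kernel of $\mathbb{D}$) is the abstract version of the same fact. Either would be an acceptable way to fill in what the paper leaves to the reader.
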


\section{Composition operator on measures}

Suppose $\Omega \subset \mathbb{D}$ is a Jordan domain. By Carath\'eodory's theorem, any conformal map $\varphi: \mathbb{D} \to \Omega$ extends continuously to the closed unit disk $\overline{\mathbb{D}}$.  
 For a positive measure $\mu \ge 0$ on the unit circle, we can form its Poisson extension $P_\mu(z)$ to the unit disk.  Since $P_{\mu}(\varphi(z))$ is a positive harmonic function, it can be represented as the Poisson extension of some finite measure $\nu \ge 0$.

By decomposing  $\mu = \mu_+ - \mu_-$ into positive and negative parts, the mapping $\mu \to \nu$ naturally extends to signed measures. We refer to the correspondence $\mu \to \nu$ as the {\em composition operator on measures}\/ 
$C_\varphi: \mathcal M(\partial \mathbb{D}) \to  \mathcal M(\partial \mathbb{D})$. When $\mu = \delta_x$ is a delta mass, $\nu_x := C_\varphi \delta_x$ is known as the {\em Aleksandrov-Clark measure} at $x$. It is easy to see that $C_\varphi$ is a continuous linear operator when $\mathcal M(\partial \mathbb{D})$ is equipped with the weak topology on measures.

In this section, we give an explicit formula for $C_\varphi \mu$. Below, we write $m = |dz|/2\pi$ for the normalized Lebesgue measure on the unit circle and $d\omega_{\Omega, w}$ for the harmonic measure on $\partial \Omega$ as viewed from $w \in \Omega$. The following theorem is taken from \cite[Theorem 2.1]{BBC84}:

\begin{theorem}
\label{radon-nikodym2}
If $\mu$ is a finite meaure on the unit circle and $\nu = C_\varphi \mu$, then 
\begin{equation*}
\varphi_*\nu|_{(\partial \Omega \cap \mathbb{D})} = P_\mu(z) \, d\omega_{\Omega, \varphi(0)}(z), \qquad
\varphi_* \nu|_{(\partial \Omega \cap \partial \mathbb{D})} = 
|(\varphi^{-1})'(x)| \, d\mu(x).
\end{equation*}
\end{theorem}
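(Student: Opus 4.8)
The plan is to prove the two formulas separately, treating first the part of $\varphi_*\nu$ living on $\partial\Omega\cap\mathbb{D}$ and then the part living on $\partial\Omega\cap\partial\mathbb{D}$. Throughout, by linearity and the Jordan decomposition $\mu=\mu_+-\mu_-$ it suffices to assume $\mu\ge 0$, so that $\nu = C_\varphi\mu\ge 0$ is the measure whose Poisson extension is the positive harmonic function $P_\mu\circ\varphi$. The key tool is that pushing forward under a conformal map $\varphi$ sends the harmonic measure $\omega_{\mathbb{D},0}=m$ of the disk to the harmonic measure $\omega_{\Omega,\varphi(0)}$ of $\Omega$, together with the characterization of the angular derivative via Julia's lemma already used in the proof of Lemma \ref{continuity-of-angular-derivatives}.

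For the interior part, I would argue as follows. The function $u(w):=P_\mu(w)$ is the harmonic extension of $\mu$ to $\mathbb{D}$, hence for any $z_0\in\Omega$ we have the mean value property against harmonic measure: $u(z_0)=\int_{\partial\mathbb{D}}P_\mu\,dm$ evaluated via the Poisson kernel, and more to the point, since $u$ is harmonic on all of $\mathbb{D}\supset\Omega$ and continuous up to $\partial\Omega\cap\mathbb{D}$, while on $\partial\Omega\cap\partial\mathbb{D}$ the boundary values are the Radon–Nikodym density of $\mu$ with respect to $m$ (which may be singular), one decomposes $h(w):=P_\mu(\varphi(w))$, a positive harmonic function on $\mathbb{D}$, using its Herglotz representation $h = P_\nu$. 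Solving the Dirichlet problem on $\Omega$ with boundary data $P_\mu$ on $\partial\Omega\cap\mathbb{D}$ shows that the absolutely continuous (with respect to harmonic measure on $\partial\Omega$) part of the balayage of $\mu$ to $\partial\Omega$, restricted to the interior arcs $\partial\Omega\cap\mathbb{D}$, has density $P_\mu(z)$ against $d\omega_{\Omega,\varphi(0)}$; transporting back by $\varphi$ and using conformal invariance of harmonic measure ($\varphi_*m = \omega_{\Omega,\varphi(0)}$, since $\varphi$ fixes $0$) gives $\varphi_*\nu|_{\partial\Omega\cap\mathbb{D}} = P_\mu(z)\,d\omega_{\Omega,\varphi(0)}(z)$. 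The content here is really just that the swept-out measure splits according to where $\partial\Omega$ meets $\mathbb{D}$ versus $\partial\mathbb{D}$, and that on the former $\mu$ contributes nothing singular.

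For the boundary part, the main point is local behaviour of $\varphi$ at a point $x\in\partial\mathbb{D}$ with $\varphi(x)\in\partial\Omega\cap\partial\mathbb{D}$. I would test $\nu$ against approximate identities: for $\zeta\in\partial\mathbb{D}$ and $r\to 1$, $\int P_{r\zeta}\,d\nu = h(r\zeta) = P_\mu(\varphi(r\zeta))$. When $\varphi$ has a finite nonzero angular derivative at $x$ and $\varphi(x)=p\in\partial\mathbb{D}$, then as $r\to 1$ the point $\varphi(rx)$ approaches $p$ non-tangentially and, by Julia's lemma applied to $\varphi$ (exactly formula (\ref{eq:julia})), the horoballs at $x$ map comparably to horoballs at $p$ with ratio $|(\varphi^{-1})'(x)|$; hence the "mass of $\nu$ near $x$" is $|(\varphi^{-1})'(x)|$ times the "mass of $\mu$ near $p$", which is the claimed identity $\varphi_*\nu|_{\partial\Omega\cap\partial\mathbb{D}} = |(\varphi^{-1})'(x)|\,d\mu(x)$. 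Conversely, at points $x$ where $\varphi$ has angular derivative $0$ (or where $\varphi(x)\in\mathbb{D}$), the horoball at $x$ maps inside every horoball at $p$, forcing the corresponding contribution to $\nu$ to vanish there — consistent with all the boundary-singular mass of $\nu$ sitting over $(\partial\Omega\cap\partial\mathbb{D})$ and with finite angular derivative.

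I expect the main obstacle to be making the horoball/Julia-lemma comparison quantitatively precise enough to identify the Radon–Nikodym derivative pointwise $\mu$-a.e., rather than merely getting two-sided bounds. Concretely, one must show that for $\mu$-a.e.\ $x$ the limit of $\nu(B(x,t))/\mu(B(\varphi(x),s(t)))$ exists and equals $|(\varphi^{-1})'(x)|$, where $s(t)$ is the natural rescaling dictated by the angular derivative; this requires knowing that $\varphi$ is sufficiently "conformal at the boundary" at such points — which is precisely the content of the existence of a nonzero angular derivative (semiconformality plus $|\varphi'|\to A$) and of Lemma \ref{ostrowski}. Since the statement is quoted from \cite{BBC84}, I would either reproduce their argument via the Herglotz/Julia machinery above or simply cite it; the role of the theorem in this paper is to feed into Theorem \ref{radon-nikodym}, where $\mu = \sigma(F'_U)$ pulled through the component map, so the precise a.e.\ identification is what is needed downstream.
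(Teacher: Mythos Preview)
Your overall strategy is sound and the boundary part correctly identifies the role of the angular derivative, but the execution is harder than necessary and the interior argument has a gap. When you write ``solving the Dirichlet problem on $\Omega$ with boundary data $P_\mu$ on $\partial\Omega\cap\mathbb{D}$'' you have only specified boundary data on part of $\partial\Omega$; on $\partial\Omega\cap\partial\mathbb{D}$ the function $P_\mu$ need not have boundary values at all, and you have not explained why no singular mass of $\nu$ can sit over $\varphi^{-1}(\partial\Omega\cap\mathbb{D})$. Your own closing paragraph correctly diagnoses the difficulty on the boundary side: attacking a general $\mu$ directly forces you to identify a Radon--Nikodym derivative pointwise $\mu$-a.e., which is exactly the quantitative Julia-lemma comparison you are worried about.

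The paper avoids both problems with one move you did not make: it first proves the theorem for $\mu=\delta_x$, then integrates over $x$ using Fubini and the weak continuity of $C_\varphi$. For a point mass, $P_{\delta_x}(\varphi(z))$ is a positive harmonic function on $\mathbb{D}$ that extends continuously to $\partial\mathbb{D}\setminus\{y\}$ (where $y=\varphi^{-1}(x)$ if $x\in\partial\Omega\cap\partial\mathbb{D}$, and otherwise to all of $\partial\mathbb{D}$). This makes the interior formula immediate---$\nu_x$ is absolutely continuous on $\varphi^{-1}(\partial\Omega\cap\mathbb{D})$ with density $P_{\delta_x}\circ\varphi$---and reduces the boundary formula to computing a single point mass $\nu_x(\{y\})$ via the radial growth $P_{\delta_x}(\varphi(ry))\sim |(\varphi^{-1})'(x)|\cdot\frac{2}{1-r}$ when the angular derivative exists, and $o\bigl(\frac{1}{1-r}\bigr)$ when it does not. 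The delta-mass reduction turns your ``obstacle'' into a two-line computation and simultaneously closes the gap in your interior argument.
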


\begin{proof} 
{\em Step 1.} In this step, we prove the theorem when $\mu =\delta_x$ is a delta mass.  To this end, we consider two cases:

Case I. If $x \notin \partial \Omega \cap \partial \mathbb{D}$, then $P_{\delta_x}(\varphi(z))$ extends continuously to the unit circle and  $\nu_x = P_{\delta_x}(\varphi(z))dm$.

Case II. Otherwise, $x = \varphi(y)$ for some point $y \in \partial \mathbb{D}$. 
Since $P_{\delta_x}(\varphi(z))$ extends continuously to $\partial \mathbb{D} \setminus \{y\}$, we have
 $$\nu_x|_{\varphi^{-1}(\partial \Omega \cap \mathbb{D})} = P_{\delta_x}(\varphi(z))dm, \qquad \supp \nu_x|_{\varphi^{-1}(\partial \Omega \cap \partial \mathbb{D})} \subseteq \{y\}.$$
To evaluate $\nu_x(\{y\})$, we consider two sub-cases:

\begin{enumerate}
\item[a.] Suppose first that $x \in (\partial \Omega \cap \partial \mathbb{D})_{\thick}$. Since $\varphi$ has an angular derivative at $y = \varphi^{-1}(x)$,
$$
P_{\delta_{x}}(\varphi(ry)) \sim  |(\varphi^{-1})'(x)| \cdot \frac{2}{1-r}, \qquad \text{as }r \to 1,
$$
which tells us that $\nu_x(\{y\}) = |(\varphi^{-1})'(x)|$.
\item[b.]
On the other hand, when $x \in (\partial \Omega \cap \partial \mathbb{D})_{\thin}$,
$$
P_{\delta_{x}}(\varphi(ry)) = o \biggl ( \frac{1}{1-r} \biggr ), \qquad \text{as }r \to 1,
$$
which implies that  $\nu_x(\{y\}) = 0$.
\end{enumerate}
In all cases, $\nu_x$ matches with the expression in the statement of theorem. 

\medskip

{\em Step 2.}
To deduce the general case from that of a delta mass, we expand the expression for
$$
P_{\mu}(\varphi(z)) \, = \, \int_{\partial \mathbb{D}} P_{\nu_x}(z) d\mu(x),
$$
which is naturally a sum of two terms: $\textrm{I} +  \textrm{II}$, corresponding to the cases $\textrm{I}$ and $\textrm{II}$ above.
To simplify the expression for the first term, we use Fubini's theorem:
\begin{align*}
 \textrm{I}  & \, = \, \frac{1}{2\pi} \int_{\partial \mathbb{D}} \biggl \{ \int_{\partial \mathbb{D}} \re \frac{\zeta+z}{\zeta-z} \cdot \bigl [ P_{\delta_x}(\varphi(\zeta)) dm(\zeta) \bigr ] \biggr \} d\mu(x) \\
& \, = \,  \frac{1}{2\pi}  \int_{\partial \mathbb{D}} \re \frac{\zeta+z}{\zeta-z} \cdot \bigl [ P_{\mu}(\varphi(\zeta)) dm(\zeta) \bigr ],
\end{align*}
while for the second term, we make a change of variables: 
\begin{align*}
 \textrm{II} & \, = \, \frac{1}{2\pi} \int_{(\partial \Omega \cap \partial \mathbb{D})_{\thick}} \biggl \{ \int_{\partial \mathbb{D}}  \re \frac{\zeta+z}{\zeta-z} \cdot  \, \bigl [ |(\varphi^{-1})'(x)| \, \delta_y (\zeta) \bigr ] \biggr \} d\mu(x) \\
 & \, = \, \frac{1}{2\pi} \int_{(\partial \Omega \cap \partial \mathbb{D})_{\thick}}  \re \frac{y+z}{y-z} \cdot  \, |(\varphi^{-1})'(x)| \, d\mu(x), \\
& \, = \, \frac{1}{2\pi} \int_{\varphi^{-1}((\partial \Omega \cap \partial \mathbb{D})_{\thick})}  \re \frac{y+z}{y-z} \cdot  d\varphi^* \Bigl [ |(\varphi^{-1})'(x)| \, d\mu(x) \Bigr ] (y).
\end{align*}
As the expressions for $ \textrm{I}$ and $ \textrm{II}$ above match the expressions in the statement of the theorem, the  proof is complete.
\end{proof}

\section{Radon-Nikodym derivative}

Let $F \in \mathscr J$ be an inner function of finite entropy.
Suppose $V$ is a Jordan domain compactly contained in the unit disk. Let $U$ be a connected component of $F^{-1}(V)$ and $F_U = \psi^{-1} \circ F \circ \varphi$ be the associated component inner function.
In this section, we prove Theorem \ref{radon-nikodym} which relates the singular parts of $F'$ and $F'_U$.

We say that a Jordain $\Omega \subset \mathbb{C}$ is a {\em Nevanlinna domain} if the derivative of any conformal map $R : \mathbb{D} \to \Omega$ is an outer function. In the literature, there is a similar notion of a Smirnov domain which also requires $\partial \Omega$ to be a rectifiable. It is easy to see that any smooth domain is a Nevanlinna domain. In fact, any chord-arc curve is a Smirnov domain and hence a Nevanlinna domain, e.g.~see \cite[Theorem VII.4.6]{garnett-marshall}.
As a byproduct of our investigation, we will obtain the following theorem:

\begin{theorem}
\label{nevanlinna-preserving}
Suppose $F \in \mathscr J$ is an inner function of finite entropy. If $V$ is a Nevanlinna domain, then so is any connected component 
$U$ of $F^{-1}(V)$.
 \end{theorem}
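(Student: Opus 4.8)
The plan is to follow the strategy behind the proof of Theorem \ref{radon-nikodym}. Normalize so that $F(0)=0$, $0\in U,V$ and $\varphi(0)=\psi(0)=0$. Since any two Riemann maps onto $U$ differ by pre-composition with an automorphism of $\mathbb{D}$ — which is a finite Blaschke product, hence inner, and has outer derivative — it suffices, using the auxiliary fact recorded below, to prove that this particular $\varphi'$ is outer; as $\varphi$ is conformal, $\varphi'$ has no zeros, so this amounts to showing that the singular inner factor of $\varphi'$ is trivial. Differentiating $F\circ\varphi=\psi\circ F_U$ gives
$$\varphi' \;=\; \frac{(\psi'\circ F_U)\cdot F_U'}{F'\circ\varphi},$$
and all three factors lie in the Nevanlinna class (for $F_U'$ this is Lemma \ref{stable-exhaustion}; for $F'\circ\varphi$ because $F'$ is a ratio of two bounded holomorphic functions and composition with $\varphi$ preserves this). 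Moreover $F_U'$ and $F'\circ\varphi$ have exactly the same zeros, namely the critical points of $F_U$ — this is visible from $F_U'=\bigl((\psi^{-1})'\!\circ F\circ\varphi\bigr)\cdot(F'\circ\varphi)\cdot\varphi'$, whose outer-type prefactors never vanish — so the Blaschke factors cancel in the ratio. Comparing inner–outer factorizations in the identity $\log|F_U'|=\log|F'\circ\varphi|+\log|\varphi'|-\log|\psi'\circ F_U|$ therefore yields the relation between (signed) singular measures
$$\sigma(\varphi')\;=\;\sigma(F_U')-\sigma(F'\circ\varphi)+\sigma(\psi'\circ F_U),$$
so everything reduces to two claims: (i) $\sigma(\psi'\circ F_U)=0$, and (ii) $\sigma(F'\circ\varphi)=\sigma(F_U')$.

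For (i) I would first record that \emph{an outer function composed with an inner function is outer}. The point is that $\log|h\circ\theta|=P\bigl[\log|h^*|\bigr]\circ\theta$, and an inner $\theta$ pushes $m$ forward to the absolutely continuous measure $P_{\theta(0)}\,m$; consequently the composition operator $C_\theta$ on measures preserves the Lebesgue decomposition — it sends a positive singular measure to a singular one (because $S_\lambda\circ\theta$ is again inner, so the absolutely continuous part of its representing measure vanishes) and an absolutely continuous measure to an absolutely continuous one. Applying this to the positive and negative parts of $\log|h^*|$ shows $\log|h\circ\theta|$ is the Poisson integral of an $L^1$ function, so $h\circ\theta$ — being zero-free — is outer. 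Since $V$ is a Nevanlinna domain, $\psi'$ is outer, and $F_U$ is inner (Lemma \ref{component-is-inner}); hence $\psi'\circ F_U$ is outer, which is (i). This is precisely the step that uses the hypothesis on $V$: a nontrivial singular factor $S_{\tau_1}/S_{\tau_2}$ of $\psi'$ would contribute the still-singular measures $C_{F_U}\tau_i$ to $\sigma(\psi'\circ F_U)$.

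For (ii) write $F'=B\,S_{\sigma(F')}\,O$, so $\sigma(F'\circ\varphi)=\sigma(B\circ\varphi)+\sigma(S_{\sigma(F')}\circ\varphi)+\sigma(O\circ\varphi)$. The middle term is handled by Theorem \ref{radon-nikodym2}: with $\nu=C_\varphi\,\sigma(F')$ one has $\log|S_{\sigma(F')}\circ\varphi|=-P_\nu$, so $\sigma(S_{\sigma(F')}\circ\varphi)=\nu_s$, the singular part of $\nu$; the formula of \cite{BBC84} says $\varphi_*\nu$ is absolutely continuous on $\partial U\cap\mathbb{D}$ (it equals $P_{\sigma(F')}\,d\omega_{U,\varphi(0)}$, which pulls back to $P_{\sigma(F')}(\varphi^*(\cdot))\,dm$ since $\varphi^*_*m=\omega_{U,\varphi(0)}$) and equals $|(\varphi^{-1})'(x)|\,d\sigma(F')(x)$ on $\partial U\cap\partial\mathbb{D}$, which — with the convention that $|(\varphi^{-1})'|=0$ at thin points — is concentrated on the thick part and, by Theorem \ref{radon-nikodym}, equals $\varphi_*\sigma(F_U')$. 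Hence $\nu_s=\sigma(F_U')$. The term $O\circ\varphi$ contributes nothing: by Theorem \ref{radon-nikodym2} again, $C_\varphi$ sends absolutely continuous measures to absolutely continuous ones, so (applied to the parts of $\log|(F')^*|$) $\log|O\circ\varphi|=P\bigl[\log|(F')^*|\bigr]\circ\varphi$ is a Poisson integral of an $L^1$ function. Finally, $\log|B\circ\varphi(z)|=-\sum_c G_{\mathbb{D}}(c,\varphi(z))$ over the critical points $c$ of $F$: the block $c\in U$ rebuilds the Green potential of the critical divisor of $F_U$ (matching the zeros of $B\circ\varphi$) plus a sum of bounded harmonic functions converging in $L^1$ by the Blaschke condition coming from finite entropy; the block $c\notin\overline{U}$ is a convergent sum of bounded positive harmonic functions; neither adds singular mass, so $\sigma(B\circ\varphi)$ is a sum of atoms at the points $\zeta_c=\varphi^{-1}(c)$ for critical points $c\in\partial U\cap\mathbb{D}$. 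Combining (i), (ii) and Theorem \ref{radon-nikodym} then gives $\sigma(\varphi')=0$.

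I expect the main obstacle to be exactly this last point: showing that $B\circ\varphi$ has \emph{no} singular atom at any $\zeta_c=\varphi^{-1}(c)$ with $c$ a critical point of $F$ lying on $\partial U\cap\mathbb{D}$, equivalently that $\varphi(z)-c$ decays at most polynomially as $z\to\zeta_c$. Near such a $c$, $F$ is a branched cover of the Jordan domain $V$ over the boundary point $F(c)\in\partial V$, so $U$ locally looks like the image, under a root map, of a translate of $V$; the boundary regularity packaged into the assumption that $V$ is a Nevanlinna domain, transported through $F$ and combined with the finite entropy of $F$ (which constrains which points $F(c)$ can occur, and with what local degree), is what forces the polynomial rate and hence $\sigma(B\circ\varphi)=0$. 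This transfer of Nevanlinna-type boundary regularity from $\partial V$ to $\partial U$ through the covering $F$ is the real work, and it is naturally carried out in tandem with the proof of Theorem \ref{radon-nikodym}.
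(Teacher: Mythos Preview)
Your overall structure—chain rule, reducing to claims (i) and (ii), and handling (i) via ``outer composed with inner is outer''—matches the paper. But there are two genuine gaps.

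First, your claim (ii) invokes the full equality in Theorem \ref{radon-nikodym} to identify $\nu_s$ with $\sigma(F'_U)$, but in the paper's logical order only the \emph{upper bound} $\varphi_*\sigma(F'_U)\le |(\varphi^{-1})'|\,d\sigma(F')|_{\thick}$ is available at this point (proved by stable approximation and Green's function estimates). The equality for Nevanlinna domains is a \emph{consequence} of Theorem \ref{nevanlinna-preserving}, not an ingredient. The paper instead combines the upper bound with the chain-rule identity $\sigma(F'_U)=\sigma(F'\circ\varphi)+\sigma(\varphi')$ to deduce $\sigma(\varphi')\le 0$; then observes that $\sigma(\varphi')$ must be supported on $\varphi^{-1}(\partial U\cap\partial\mathbb{D})_{\thick}$ (since $\sigma(F'\circ\varphi)$ is, by Corollary \ref{sing-fphiz}); and finally notes that $\varphi'$ has a finite nonzero radial limit at every thick point, so by Lemma \ref{SSO-radial-limits} the signed measure $\sigma(\varphi')$ cannot charge that set. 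This inequality-plus-support closing argument is precisely the step your plan lacks.

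Second, the ``main obstacle'' you identify—showing $\sigma(B\circ\varphi)=0$—is not where the difficulty lies, and its resolution does not use the Nevanlinna hypothesis on $V$ at all. The paper dispatches it in Lemma \ref{psi-nonsingular}: Beurling's harmonic-measure estimate gives $m\{\theta:|\varphi(re^{i\theta})-c|<t\}\lesssim t^{1/2}$ uniformly in $r$, which makes $\theta\mapsto\log|\varphi(re^{i\theta})-c|$ uniformly integrable and forces the singular part of each M\"obius factor $m_c\circ\varphi$ to vanish; then multiply over the zeros of $B$. Your proposed route through local root-map geometry and polynomial decay rates is unnecessary, and the Nevanlinna condition on $V$ provides no such quantitative control in any case.
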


\subsection{Upper bound}
\label{sec:upper-bound}
Let $F_n \to F$ by a stable approximation by finite Blaschke products. For each $n = 1, 2, \dots$, we  select a connected component ${U_n} \subset F_n^{-1}(V)$ so that the $U_n$ converge to $U$ in the Carath\'eodory topology.

 Choose an arbitrary basepoint $p \in U$. By dropping finitely terms from the sequence, we may assume that $p \in U_n$ for all $n$.
For each $n = 1, 2, \dots$, we form a conformal map $\varphi_n: \mathbb{D} \to U_n$ with $\varphi_n(p) = p$ and $\varphi'_n(p) > 0$. By construction, the limit $\varphi: \mathbb{D} \to U$ will be a conformal map which also fixes $p$.  With the above normalization, the associated component inner functions $F_{n, U_n}$ converge uniformly on compact subsets to $F_U$.
  
By Hurwitz theorem, for any critical point $c_i \in \crit F$, one can find a sequence of critical points $c_{n,i} \in \crit F_n$ such that $c_{n,i} \to c_i$.  
For each $n = 1, 2, \dots$, we designate a subset of the critical points of $F_n$ as {\em converging} so that
$$
\sum_{\substack{ c \in \crit F_n \\ \converging}} (1- |c|) \cdot \delta_c \to \sum_{c \in \crit F} (1- |c|) \cdot \delta_c.
$$
We refer to the remaining critical points of $F_n$ as {\em escaping}\/. 

\begin{proof}[Proof of Theorem \ref{radon-nikodym}: Upper bound]
Since $F_n \to F$ is stable, the measures
 $$
\sum_{\substack{ c \in \crit F \\ \escaping}} G_{\mathbb{D}}( 0,  c) \cdot \delta_{c} \, \to \, \sigma(F').
 $$
Applying Lemmas \ref{green-quotient3} and \ref{green-quotient2} respectively, we get
  $$
\sum_{\substack{ c \in \crit F \\ \escaping}} G_{\mathbb{D}}(p,  c) \cdot \delta_{c} \, \to \, P_{p}(\zeta) \, d\sigma(F')
 $$
and
\begin{equation*}
\limsup_{n \to \infty} \sum_{\substack{ c \in \crit F_n \cap U_{n} \\ \escaping}} G_{U_{n}}( p,  c) \cdot \delta_{c}
 \, \le \, |(\varphi^{-1})'(\zeta)| \,  P_{p}(\zeta) \, d\sigma(F')|_{(\partial U \cap \partial \mathbb{D})_{\thick}}.
 \end{equation*} 
As $F_{n,U_n} \to F_U$ converge uniformly on compact subsets of the unit disk, we have
\begin{align*}
\sigma(F'_U)
& \le \liminf_{n \to \infty} \sum_{\substack{\hat c \in \crit F_{n, U_n} \\ \escaping}} G_{\mathbb{D}}( 0, \hat c) \cdot \delta_{\hat c} \\
& = \liminf_{n \to \infty} \sum_{\substack{\hat c \in \crit F_{n, U_n} \\ \escaping}} G_{\mathbb{D}}( p, \hat c) \cdot P_p(\zeta)^{-1} \cdot \delta_{\hat c}.
 \end{align*}
Since one has a  correspondence between critical points $c \in \crit F_n \cap U_{n}$ and $\hat{c} = \varphi_n^{-1}(c) \in \crit F_{n, U_n}$, the conformal invariance of the Green's function shows:
\begin{align*}
\varphi_* \, \sigma(F_U') & \le  \liminf_{n \to \infty} \sum_{\substack{\hat c \in \crit F_{n, U_n} \\ \escaping}} G_{U_n}( p, c) \cdot P_p(\zeta)^{-1} \cdot \delta_{c}
 \\
& \le |(\varphi^{-1})'(\zeta)| \, d\sigma(F')|_{(\partial U \cap \partial \mathbb{D})_{\thick}}.
\end{align*}
The proof is complete.
 \end{proof}

\subsection{Singular parts}

We now investigate the singular part of $\varphi'$ and prove Theorem \ref{nevanlinna-preserving}.

\begin{lemma}
\label{nevanlinna-fact}
 If $f \in \mathcal N$ and $\varphi$ is a holomorphic self-mapping of the unit disk, then $f \circ \varphi \in \mathcal N$.
\end{lemma}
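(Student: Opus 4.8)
The statement to prove is Lemma~\ref{nevanlinna-fact}: if $f \in \mathcal N$ and $\varphi$ is a holomorphic self-map of the unit disk, then $f \circ \varphi \in \mathcal N$.

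The plan is to use the standard characterization of the Nevanlinna class in terms of bounded growth of the Nevanlinna characteristic, or equivalently, that a holomorphic function $g$ on $\mathbb D$ belongs to $\mathcal N$ if and only if $\log^+ |g|$ has a harmonic majorant on $\mathbb D$. I would work with this harmonic-majorant formulation since it behaves well under composition.

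First I would recall that since $f \in \mathcal N$, there is a positive harmonic function $u$ on $\mathbb D$ with $\log^+|f(w)| \le u(w)$ for all $w \in \mathbb D$; indeed one may take $u = P_\lambda$, the Poisson integral of the least harmonic majorant's boundary measure $\lambda \ge 0$, using that $\mathcal N$ functions have the form $B S_1 O / S_2$ so that $\log|f|$ is dominated by a positive harmonic function. Next, since $\varphi : \mathbb D \to \mathbb D$ is holomorphic, the pullback $u \circ \varphi$ is again a positive harmonic function on $\mathbb D$ (harmonicity is preserved under holomorphic precomposition, and positivity is obvious). Then for every $z \in \mathbb D$,
$$
\log^+ |(f\circ\varphi)(z)| = \log^+|f(\varphi(z))| \le u(\varphi(z)) = (u\circ\varphi)(z),
$$
so $\log^+|f\circ\varphi|$ has the positive harmonic majorant $u \circ \varphi$. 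By the harmonic-majorant criterion for membership in the Nevanlinna class (equivalently, $\sup_{0<r<1}\int_{\partial\mathbb D}\log^+|(f\circ\varphi)(r\zeta)|\,dm(\zeta) \le (u\circ\varphi)(0) < \infty$ by the sub-mean-value inequality applied to the subharmonic function $\log^+|f\circ\varphi|$ dominated by the harmonic $u\circ\varphi$), we conclude $f \circ \varphi \in \mathcal N$.

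There is essentially no serious obstacle here; the only point requiring a word of care is the passage from ``$f \in \mathcal N$'' to ``$\log^+|f|$ has a positive harmonic majorant,'' which is classical (see e.g.\ Garnett, \emph{Bounded Analytic Functions}, or \cite{mashreghi}): one uses the canonical factorization of $f$ into Blaschke, singular, and outer factors (with a singular factor possibly in the denominator), observes that $\log|f|$ is bounded above by the Poisson integral of a finite positive measure, and hence so is $\log^+|f|$. Alternatively, one can avoid factorization entirely by noting that $T(r,f) := \frac{1}{2\pi}\int \log^+|f(re^{i\theta})|\,d\theta$ is bounded, that $\log^+|f|$ is subharmonic, and that a subharmonic function with bounded circular means admits a least harmonic majorant given by the Poisson integral of the weak-$*$ limit of the measures $\log^+|f(re^{i\theta})|\,d\theta/2\pi$; composing with $\varphi$ then works verbatim as above. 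I would present the harmonic-majorant version as it is the cleanest.
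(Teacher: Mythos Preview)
Your proof is correct, but the paper takes a different (and shorter) route: it simply invokes the characterization of $\mathcal N$ as the set of quotients $g/h$ of bounded analytic functions on $\mathbb{D}$ (with $h \not\equiv 0$). Since $g\circ\varphi$ and $h\circ\varphi$ are again bounded analytic and $h\circ\varphi \not\equiv 0$, the composition $f\circ\varphi = (g\circ\varphi)/(h\circ\varphi)$ lies in $\mathcal N$ immediately. Your harmonic-majorant argument is equally valid and is the more ``intrinsic'' proof from the point of view of potential theory; the quotient description buys brevity (one sentence) and avoids any discussion of factorization or subharmonic means, at the cost of relying on a slightly less elementary structural fact about $\mathcal N$.
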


The lemma follows from the description of Nevanlinna class in terms of quotients of bounded analytic functions.

\begin{corollary}
Let $V$ be any simply-connected domain compactly contained in the unit disk. If $\psi' \in \mathcal N$, then $\varphi'  \in \mathcal N$.
\end{corollary}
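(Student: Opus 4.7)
The plan is to exploit the functional equation $F \circ \varphi = \psi \circ F_U$, which holds by definition of the component inner function $F_U = \psi^{-1} \circ F \circ \varphi$. Differentiating both sides via the chain rule yields the pointwise identity
\[
F'(\varphi(z)) \cdot \varphi'(z) \, = \, \psi'(F_U(z)) \cdot F_U'(z), \qquad z \in \mathbb{D},
\]
so that $\varphi'$ can be written as the quotient
\[
\varphi'(z) \, = \, \frac{\psi'(F_U(z)) \cdot F_U'(z)}{F'(\varphi(z))}.
\]
The idea is then to verify that each factor on the right-hand side belongs to the Nevanlinna class $\mathcal N$ and invoke the fact that $\mathcal N$ is closed under products and quotients (recall that $f \in \mathcal N$ iff $f = g/h$ for bounded holomorphic $g, h$, with $h \not\equiv 0$).

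To check the three ingredients: since $F \in \mathscr J$, the derivative $F'$ belongs to $\mathcal N$, and as $\varphi$ is a holomorphic self-map of $\mathbb{D}$ (because $U \subset \mathbb{D}$), Lemma \ref{nevanlinna-fact} gives $F' \circ \varphi \in \mathcal N$. By Lemma \ref{stable-exhaustion}, the component inner function $F_U$ also has finite entropy, so $F_U' \in \mathcal N$. Finally, the standing hypothesis $\psi' \in \mathcal N$ together with Lemma \ref{nevanlinna-fact} applied to the self-map $F_U$ yields $\psi' \circ F_U \in \mathcal N$. Multiplying and dividing, we conclude $\varphi' \in \mathcal N$, provided the denominator $F' \circ \varphi$ is not identically zero.

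The only point requiring any care is the non-vanishing of the denominator: if $F' \circ \varphi \equiv 0$ then $F' \equiv 0$ on the open set $U$, forcing $F$ to be constant on $U$, which is incompatible with $F$ being a non-constant inner function. Thus $\varphi'$ is genuinely the quotient of two Nevanlinna functions, hence lies in $\mathcal N$. I expect no real obstacle here; the statement is essentially a bookkeeping consequence of the chain rule combined with Lemmas \ref{stable-exhaustion} and \ref{nevanlinna-fact}, and (once Theorem \ref{nevanlinna-preserving} is established via the upper bound of Section \ref{sec:upper-bound}) gives the inheritance of the Nevanlinna property from $V$ to each component $U$.
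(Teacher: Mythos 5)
Your argument is correct and takes essentially the same route as the paper: both rearrange the chain-rule identity $F_U' = \psi'(F_U)^{-1}\cdot (F'\circ\varphi)\cdot\varphi'$ to express $\varphi'$ as an algebraic combination of the three Nevanlinna functions $F_U'$ (via Lemma \ref{stable-exhaustion}), $\psi'\circ F_U$ and $F'\circ\varphi$ (via Lemma \ref{nevanlinna-fact}). The only cosmetic difference is that you differentiate $F\circ\varphi = \psi\circ F_U$ directly rather than $F_U = \psi^{-1}\circ F\circ\varphi$; these produce the same identity, and your extra remark about the denominator not vanishing identically is harmless but, as you suspect, immediate.
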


\begin{proof}
 By the chain rule,
\begin{align}
\label{eq:lower-bound-chain-rule}
F'_U(z)  & = (\psi^{-1})' \bigl (F(\varphi(z)) \bigr ) \cdot F'(\varphi(z)) \cdot \varphi'(z) \\
\label{eq:lower-bound-chain-rule2}
& = \psi' (F_U(z))^{-1} \cdot F'(\varphi(z)) \cdot \varphi'(z).
\end{align}
Since $F'_U(z)$,  $ \psi' (F_U(z))$ and $F'(\varphi(z)) \in \mathcal N$, we must have
 $\varphi' \in \mathcal N$ as well.  
\end{proof}

\begin{lemma}
\label{SSO-radial-limits}
Suppose $f = (S_{\mu_1}/S_{\mu_2})O$, where the measures $\mu_1$ and $\mu_2$ are mutually singular and $O$ is an outer function. The radial limits
\begin{align*}
\lim_{r \to 1} f(r\zeta) = 0, \qquad &\mu_1\text{-a.e.~}\zeta \in \partial \mathbb{D}, \\
\lim_{r \to 1} f(r\zeta) = \infty, \qquad &\mu_2\text{-a.e.~}\zeta \in \partial \mathbb{D}.
\end{align*}
\end{lemma}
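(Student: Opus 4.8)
The plan is to reduce the claim to a statement about Poisson integrals and then invoke the Besicovitch differentiation theorem. First, it suffices to prove the first assertion: since $1/O$ is again outer ($1/O$ is zero-free in $\mathbb{D}$ with boundary modulus $e^{-\log|O|}$, and $\log|O|\in L^1(m)$) and $1/f=(S_{\mu_2}/S_{\mu_1})\cdot(1/O)$, applying the first assertion to $1/f$, with the roles of $\mu_1$ and $\mu_2$ exchanged, gives $f(r\zeta)\to\infty$ for $\mu_2$-a.e.\ $\zeta$. To prove the first assertion, write $u=\log|O|\in L^1(m)$, so that for $0<h<1$,
\[
\log\bigl|f((1-h)\zeta)\bigr| \;=\; P_u((1-h)\zeta)\,-\,P_{\mu_1}((1-h)\zeta)\,+\,P_{\mu_2}((1-h)\zeta),
\]
where $P_\lambda$ denotes the Poisson extension of $\lambda$. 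Since $P_u\le P_{u_+}$ with $u_+=\max(u,0)$, setting $\nu=u_+\,dm+\mu_2$ — a finite positive measure, mutually singular with $\mu_1$ because $\mu_1\perp m$ and $\mu_1\perp\mu_2$ — it suffices to show that for $\mu_1$-a.e.\ $\zeta\in\partial\mathbb{D}$,
\[
P_\nu((1-h)\zeta)-P_{\mu_1}((1-h)\zeta)\;\longrightarrow\;-\infty,\qquad h\to 0^+.
\]

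I will use the standard two-sided comparison of a Poisson integral with dyadic arc averages: writing $I(\zeta,s)$ for the arc of length $2s$ centered at $\zeta$, for every finite positive measure $\lambda$ one has, with absolute implied constants,
\[
P_\lambda((1-h)\zeta)\;\asymp\;\sum_{k\ge 0}2^{-k}\,\frac{\lambda\bigl(I(\zeta,2^kh)\bigr)}{2^kh}.
\]
The two facts needed, both holding for $\mu_1$-a.e.\ $\zeta$, follow from the Besicovitch differentiation theorem for measures: differentiating $m$ against $\mu_1$ (legitimate since $m\perp\mu_1$) yields $\mu_1(I(\zeta,s))/s\to+\infty$ as $s\to0^+$, hence $P_{\mu_1}((1-h)\zeta)\to+\infty$; differentiating $\nu$ against $\mu_1$ (legitimate since $\nu\perp\mu_1$) yields $\nu(I(\zeta,s))/\mu_1(I(\zeta,s))\to 0$ as $s\to 0^+$.

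Now fix such a $\zeta$ and let $\varepsilon>0$; choose $s_0>0$ with $\nu(I(\zeta,s))\le\varepsilon\,\mu_1(I(\zeta,s))$ for all $0<s<s_0$. In the dyadic sum for $\nu$, the terms with $2^kh<s_0$ contribute at most $\varepsilon\sum_{k\ge 0}2^{-k}\mu_1(I(\zeta,2^kh))/(2^kh)\asymp\varepsilon\,P_{\mu_1}((1-h)\zeta)$, while the terms with $2^kh\ge s_0$ — for which $2^{-k}\le h/s_0$ — contribute at most $\|\nu\|\,h^{-1}\sum_{2^kh\ge s_0}4^{-k}\le\tfrac43\|\nu\|\,h\,s_0^{-2}$, which tends to $0$. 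Hence $P_\nu((1-h)\zeta)\le C\varepsilon\,P_{\mu_1}((1-h)\zeta)+o_\varepsilon(1)$; as $P_{\mu_1}((1-h)\zeta)\to\infty$ and $\varepsilon$ was arbitrary, $P_\nu((1-h)\zeta)=o\bigl(P_{\mu_1}((1-h)\zeta)\bigr)$, so $P_\nu((1-h)\zeta)-P_{\mu_1}((1-h)\zeta)=P_{\mu_1}((1-h)\zeta)\bigl(o(1)-1\bigr)\to-\infty$. Therefore $\lim_{r\to1}f(r\zeta)=0$ for $\mu_1$-a.e.\ $\zeta$.

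The one substantive point is that the domination of $S_{\mu_1}$ over the competing factors $1/S_{\mu_2}$ and $O$ must be \emph{additive} at the level of $\log|\cdot|$: it is not enough that $P_{\mu_1}(r\zeta)\to\infty$, one needs $P_{\mu_1}(r\zeta)$ to outgrow $P_\nu(r\zeta)$. This is exactly what the Besicovitch ratio estimate provides once Poisson integrals are rewritten as dyadic arc averages; the remainder is bookkeeping.
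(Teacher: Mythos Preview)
Your proof is correct and follows the standard route the paper has in mind: the paper does not give an argument at all but simply says ``the proof is essentially the same as that of \cite[Theorem 1.4]{mashreghi},'' which is the classical fact that $S_\mu(r\zeta)\to 0$ for $\mu$-a.e.\ $\zeta$, proved via $P_\mu(r\zeta)\to\infty$. Your write-up is a fleshed-out version of that argument, correctly identifying the one extra point needed here---that $P_{\mu_1}$ must \emph{dominate} $P_{u_+\,dm+\mu_2}$, not merely blow up---and handling it cleanly via Besicovitch differentiation of $\nu$ against $\mu_1$ together with the dyadic Poisson comparison.
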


The proof is essentially the same as that of \cite[Theorem 1.4]{mashreghi}. 

\begin{lemma}
\label{psi-nonsingular}
Suppose $U \subset \mathbb{D}$ is a simply-connected domain  and $\varphi: \mathbb{D} \to U$ is a conformal map.

{\em (a)} The singular part of $\varphi(z)$ is trivial.

{\em (b)} The singular part of $B(\varphi(z))$ is trivial for any Blaschke product $B$.
\end{lemma}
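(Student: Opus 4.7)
The plan is to observe that part (a) is the special case of (b) with $B(w)=w$, so it suffices to prove (b). The strategy is to expand $-\log|B(\varphi(z))|$ as a sum of disk Green's functions $G_{\mathbb{D}}(a_j,\varphi(z))$ over the zeros $\{a_j\}$ of $B$, and to decompose each $G_{\mathbb{D}}(a_j,\cdot)$ against the Green's function of $U$, so that the Blaschke contribution and the bounded-harmonic (Poisson) contribution separate cleanly, leaving no room for a singular factor.

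For each $a_j \in U$, the identity $G_{\mathbb{D}}(a_j,w) = G_U(a_j,w) + h_j(w)$, in which $h_j$ is harmonic on $U$ with boundary data $G_{\mathbb{D}}(a_j,\cdot)|_{\partial U}$, together with the conformal invariance of the Green's function, yields
\[
G_U(a_j,\varphi(z)) \;=\; -\log\biggl|\frac{z-z_j}{1-\overline{z_j}\, z}\biggr|, \qquad z_j:=\varphi^{-1}(a_j).
\]
The boundary data for $h_j$ is bounded because $a_j$ has positive distance from $\partial U$; hence $h_j$ is bounded on $U$ and $h_j \circ \varphi$ is bounded harmonic on $\mathbb{D}$, so it equals the Poisson integral of its radial boundary values $h_j(\varphi^*(\zeta))=G_{\mathbb{D}}(a_j,\varphi^*(\zeta))$ (using that $G_U(a_j,\cdot)$ vanishes on $\partial U$). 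For $a_j \notin \overline{U}$ one simply sets $h_j := G_{\mathbb{D}}(a_j,\cdot)$, which is already bounded harmonic on $U$ and produces no Blaschke factor. Summing over $j$ and using $\sum_j G_{\mathbb{D}}(a_j,w)=-\log|B(w)|$ gives the key identity
\[
\log|B(\varphi(z))| \;=\; \log|B'(z)| + P_{\log|B\circ\varphi^*|}(z), \qquad B'(z):=\prod_{a_j\in U}\frac{z-z_j}{1-\overline{z_j}\, z}.
\]
Comparing with the canonical inner--outer factorization $B\circ\varphi = B'\cdot S\cdot O$ forces $\log|S|\equiv 0$, so $S\equiv 1$ and the singular inner factor of $B\circ\varphi$ is trivial.

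The hard part will be the borderline case where a zero $a_j$ sits on $\partial U$: there $G_{\mathbb{D}}(a_j,\cdot)$ blows up at $a_j$, $h_j$ fails to be bounded on $U$, and the bounded-harmonic step above breaks literally. I would dispose of this by first proving the lemma under the genericity assumption $\{a_j\}\cap\partial U=\emptyset$, and then recovering the general case by an approximation argument: slightly perturb each offending $a_j$ off $\partial U$ to produce Blaschke products $B_\varepsilon \to B$ locally uniformly on $\mathbb{D}$, and exploit weak convergence of the singular inner measures of $B_\varepsilon\circ\varphi$ (all of which vanish) to force the singular inner measure of $B\circ\varphi$ to vanish as well.
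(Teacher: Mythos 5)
Your Green's-function decomposition cleanly handles the case where every zero $a_j$ of $B$ lies off $\partial U$; in that regime the key identity is right (modulo routine convergence checks when $B$ is an infinite Blaschke product), and it is a genuinely different route from the paper's, which proves part (a) directly and then deduces (b) by factoring $B$ into M\"obius factors $m_j$ and applying (a) to each $m_j \circ \varphi$. But the borderline case $a_j \in \partial U$ is where essentially all the content of the lemma lives --- note that already for (a), i.e. $B(w)=w$, the only nontrivial case is $0 \in \partial U$ --- and the approximation argument you sketch for it has a genuine gap.

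The trouble is that ``$\sigma(B_\varepsilon \circ \varphi)=0$ for every $\varepsilon$'' does \emph{not} force $\sigma(B \circ \varphi)=0$ under locally uniform convergence: singular mass can be created in the limit. Concretely, if $\phi_n \geq 0$ are $L^1$ densities on $\partial\mathbb{D}$ of total mass $1$ concentrating at $\zeta = 1$, then the outer functions
$$
O_n(z) = \exp\Bigl(-\int_{\partial\mathbb{D}} \frac{\zeta+z}{\zeta-z}\,\phi_n(\zeta)\,dm(\zeta)\Bigr)
$$
have trivial singular parts yet converge locally uniformly to the singular inner function $\exp\bigl(-\frac{1+z}{1-z}\bigr)$. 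The semicontinuity thus runs against you: outer mass that concentrates on a small set can collapse into singular mass. That is exactly the danger in your setup: as the perturbed zero $a_j^\varepsilon \in U$ tends back to $a_j \in \partial U$, the zero $z_\varepsilon = \varphi^{-1}(a_j^\varepsilon)$ of $B_\varepsilon \circ \varphi$ escapes to $\partial\mathbb{D}$, its M\"obius factor degenerates to a unimodular constant, and nothing in your argument prevents the corresponding mass from reappearing as a singular measure of $B \circ \varphi$. What prevents it in reality is uniform integrability of $\log|\varphi|$ on the circles $|z|=r$ as $r \to 1$, which the paper extracts from Beurling's projection theorem via the estimate $m\{\theta : |\varphi(re^{i\theta})| < t\} \lesssim t^{1/2}$; this yields $\sigma(\varphi)=0$ through the Jensen-type identity, and (b) then follows by factoring $B$ into M\"obius pieces. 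Your plan needs this uniform integrability, or some equivalent quantitative control near $\partial U \cap \partial \mathbb{D}$, as a separate ingredient; the perturbation of the zeros by itself cannot supply it.
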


\begin{proof}
(a) In view of Lemma \ref{SSO-radial-limits}, the lemma is trivial if $0 \notin \partial U$. When $0 \in \partial U$, Beurling's estimate for harmonic measure, e.g. see \cite[Corollary III.9.3]{garnett-marshall}, tells us that
$$
m \bigl (\{\theta: |\varphi({re^{i\theta}})|  < t\} \bigr ) < Ct^{1/2}, \qquad 1/2 < r < 1,
$$
which implies that the functions $\theta \to \log|\varphi({re^{i\theta}})|$ with $1/2 < r < 1$ are uniformly integrable. In particular,
$$
\int_{|z|=r} \log|\varphi(z)| dm \to \int_{|z|=1} \log|\varphi(z)| dm, \qquad \text{as }r \to 1.
$$
By \cite[Lemma 3.1]{inner}, we have $\sigma(\varphi) = 0$ in this case as well.

(b) Write $B$ as a product of M\"obius transformations and notice that $\varphi_m(z) = m(\varphi(z))$ has no singular part for any $m \in \aut(\mathbb{D})$ by part (a).
\end{proof}

\begin{corollary} 
\label{sing-fphiz}
The singular part of $F'(\varphi(z))$ is given by
$$
\sigma(F' \circ \varphi) = \varphi^* \Bigl [ |(\varphi^{-1})'(\zeta)| \cdot d\sigma(F')|_{(\partial U \cap \partial \mathbb{D})_{\thick}} \Bigr ] .
$$
\end{corollary}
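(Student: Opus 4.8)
The plan is to extract the singular factor of $F' \circ \varphi$ by writing $F'$ in terms of its Nevanlinna decomposition and tracking what happens under composition with $\varphi$. Recall that since $F \in \mathscr J$, the derivative $F'$ lies in $\mathcal N$ and factors as $F' = \inn(F') \cdot \out(F') = B_{F'} S_{\sigma(F')} \out(F')$, where $B_{F'}$ is a Blaschke product (encoding the critical points of $F$), $S_{\sigma(F')}$ is the singular inner factor, and $\out(F')$ is outer. Composing with the conformal map $\varphi: \mathbb{D} \to U$, we obtain
$$
F' \circ \varphi \, = \, \bigl( B_{F'} \circ \varphi \bigr) \cdot \bigl( S_{\sigma(F')} \circ \varphi \bigr) \cdot \bigl( \out(F') \circ \varphi \bigr).
$$
By Lemma \ref{nevanlinna-fact} each factor is in $\mathcal N$, so each has its own $BSO$ (or $B(S_1/S_2)O$) decomposition, and $\sigma(F' \circ \varphi)$ is the sum of the singular parts contributed by the three pieces. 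The first term contributes nothing: by Lemma \ref{psi-nonsingular}(b), $B_{F'} \circ \varphi$ has trivial singular part. The outer factor $\out(F')$ is a quotient of two outer functions (it need not itself be outer since $\log|\out(F')| = \log|F'|$ could be of either sign, but it has no inner part); composing an outer function with $\varphi$ cannot create a singular \emph{inner} factor, and likewise for the reciprocal — this uses that $\varphi$ is inner-free in the sense of Lemma \ref{psi-nonsingular}(a) together with the standard fact that composing an outer function with an arbitrary self-map of $\mathbb{D}$ yields a function whose inner part divides no singular factor (here one may invoke Lemma \ref{SSO-radial-limits} and the uniform integrability argument of Lemma \ref{psi-nonsingular}(a) applied to $\out(F') \circ \varphi$, noting the radial limits of $\out(F') \circ \varphi$ are finite and nonzero a.e.\ on $\partial \mathbb{D}$). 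Hence
$$
\sigma(F' \circ \varphi) \, = \, \sigma\bigl( S_{\sigma(F')} \circ \varphi \bigr).
$$

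It remains to compute the singular part of $S_{\sigma(F')} \circ \varphi$. The key point is that $|S_{\sigma(F')}(z)| = \exp\bigl(-P_{\sigma(F')}(z)\bigr)$, so
$$
-\log \bigl| S_{\sigma(F')}(\varphi(z)) \bigr| \, = \, P_{\sigma(F')}\bigl(\varphi(z)\bigr),
$$
and the right-hand side, being a positive harmonic function, is the Poisson extension of exactly the measure $C_\varphi \, \sigma(F')$ furnished by the composition operator on measures of Section 7. By Theorem \ref{radon-nikodym2}, the part of $\varphi_* \bigl( C_\varphi\, \sigma(F') \bigr)$ living on $\partial U \cap \partial \mathbb{D}$ equals $|(\varphi^{-1})'(x)| \, d\sigma(F')(x)$, which is automatically supported on $(\partial U \cap \partial \mathbb{D})_{\thick}$ since $|(\varphi^{-1})'(x)| = 0$ at thin points by the convention of Lemma \ref{continuity-of-angular-derivatives}. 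The part of $C_\varphi\,\sigma(F')$ corresponding to $\partial U \cap \mathbb{D}$ is absolutely continuous with respect to harmonic measure $\omega_{U,\varphi(0)}$, hence is an absolutely continuous measure on $\partial \mathbb{D}$ after pulling back by $\varphi$ — it contributes only to the outer part of $S_{\sigma(F')} \circ \varphi$, not the singular part. Therefore the singular part of $S_{\sigma(F')} \circ \varphi$ is precisely the pullback $\varphi^*\bigl[ |(\varphi^{-1})'(\zeta)| \cdot d\sigma(F')|_{(\partial U \cap \partial \mathbb{D})_{\thick}} \bigr]$, which is what we wanted.

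The main obstacle I anticipate is the bookkeeping in the middle step: one must be careful that the Nevanlinna-class function $S_{\sigma(F')} \circ \varphi$ genuinely has \emph{no} Blaschke factor and no singular factor in the denominator, so that its singular inner part is read off directly from the mass of $C_\varphi\,\sigma(F')$ on $\partial U \cap \partial \mathbb{D}$. This follows because $|S_{\sigma(F')}(\varphi(z))| \le 1$ (so $S_{\sigma(F')}\circ\varphi$ is a bounded function, indeed itself inner-times-outer with no denominator), and the Ahern–Clark-type argument quoted in the introduction — combined with the identification $-\log|S_{\sigma(F')}\circ\varphi| = P_{C_\varphi\sigma(F')}$ and the clean splitting of $C_\varphi\sigma(F')$ into its $\partial U \cap \mathbb{D}$ (absolutely continuous) and $\partial U \cap \partial \mathbb{D}$ (potentially singular) pieces given by Theorem \ref{radon-nikodym2} — pins down the singular factor exactly. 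The only subtlety worth double-checking is that on $\varphi^{-1}(\partial U \cap \partial \mathbb{D})$ the measure $|(\varphi^{-1})'|\,d\sigma(F')$ pulled back via $\varphi$ really is singular with respect to Lebesgue measure on $\partial\mathbb{D}$; this holds because $\sigma(F')$ is supported on a countable union of Beurling–Carleson sets (Theorem A), each of Lebesgue measure zero, and $\varphi$ maps $\partial \mathbb{D}$-null sets to $\partial U \cap \partial \mathbb{D}$-null sets in a way compatible with the change of variables, so the pullback remains Lebesgue-null.
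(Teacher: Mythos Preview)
Your approach is essentially the paper's: decompose $F' = B S_{\sigma(F')} O$, kill the singular contribution of $B \circ \varphi$ via Lemma~\ref{psi-nonsingular}(b), and read off the singular part of the remaining piece through Theorem~\ref{radon-nikodym2}. The only structural difference is that the paper bundles $S$ and $O$ together as a single Herglotz exponential $\exp\bigl(\int \frac{\zeta+z}{\zeta-z}\,d\mu\bigr)$ with $\mu = \log|F'|\,dm - \sigma(F')$ and applies Theorem~\ref{radon-nikodym2} once to $\mu$, whereas you separate the two and must argue independently that $\out(F') \circ \varphi$ contributes no singular mass.

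That separate step is where your write-up has a gap. The assertions you give---that ``composing an outer function with an arbitrary self-map of $\mathbb{D}$ yields a function whose inner part divides no singular factor,'' and that finite nonzero radial limits a.e.\ preclude a singular factor via Lemma~\ref{SSO-radial-limits}---are not correct as stated (a singular inner function itself has unimodular, hence finite nonzero, radial limits a.e.). Your parenthetical that $\out(F')$ ``need not itself be outer'' is also a slip: it is outer by definition, regardless of the sign of $\log|F'|$. What actually makes the outer piece harmless here is specific to the situation: since $F$ is inner and $V$ is compactly contained in $\mathbb{D}$, the set $\partial U \cap \partial\mathbb{D}$ has Lebesgue measure zero, so the absolutely continuous measure $\log|F'|\,dm$ restricted to $(\partial U \cap \partial\mathbb{D})_{\thick}$ vanishes. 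Theorem~\ref{radon-nikodym2} then gives $C_\varphi(\log|F'|\,dm)$ as an absolutely continuous measure on $\partial\mathbb{D}$, and hence $\out(F')\circ\varphi$ is outer. The paper's bundled approach uses exactly the same fact, just without isolating it: when you take singular parts of $\mu_U = C_\varphi\mu$, only the $-\sigma(F')$ piece of $\mu$ survives on $(\partial U\cap\partial\mathbb{D})_{\thick}$. Once you replace your hand-wave by this observation, your proof goes through and matches the paper's.
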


\begin{proof} As $F \in \mathscr J$ is an inner function of finite entropy, we may decompose $F' = BSO$ into a Blaschke factor, a singular factor and an outer factor. If $\mu =  \log|F'| dm - \sigma(F')$  then
$$
F'(z) \, = \, B(z)S(z)O(z) \, = \, B(z)  \exp \biggl ( \int_{\partial \mathbb{D}} \frac{\zeta + z}{\zeta - z} d\mu(\zeta) \biggr ).
$$
According to Theorem \ref{radon-nikodym2},
\begin{equation}
\label{eq:lower-bound-composition}
F'(\varphi(z)) = B(\varphi(z)) \exp \biggl ( \int_{\partial \mathbb{D}} \frac{\zeta + z}{\zeta - z} d\mu_U(\zeta) \biggr ),
\end{equation}
where $\mu_U =  C_{\varphi} \mu$. The result follows after taking singular parts and using Lemma \ref{psi-nonsingular}.
\end{proof}

\begin{lemma}
\label{inner-composition}
Suppose $h \in L^1(\partial \mathbb{D})$ and $P_h$ is its Poisson extension to the unit disk. If $F$ is an inner function, then $h \circ F \in L^1(\partial \mathbb{D})$ and $P_h \circ F = P_{h \circ F}$.
\end{lemma}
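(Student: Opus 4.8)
The plan is to first record the one nontrivial input: an inner function pushes normalized Lebesgue measure on $\partial\mathbb{D}$ forward to harmonic measure seen from $F(0)$, i.e.\ $F_*m = P_{F(0)}(w)\,dm(w)$, where $P_{F(0)}$ denotes the Poisson kernel at $F(0)\in\mathbb{D}$. This follows from the already-classical case of the lemma for continuous data: if $g\in C(\partial\mathbb{D})$ then $P_g$ is continuous on $\overline{\mathbb{D}}$, so $P_g\circ F$ is a bounded harmonic function on $\mathbb{D}$ whose nontangential limit at a.e.\ $\zeta\in\partial\mathbb{D}$ equals $g(F^*(\zeta))$ (here $F^*(\zeta)\in\partial\mathbb{D}$ is the radial limit of $F$, which exists a.e.\ since $F$ is inner); being the Poisson extension of its boundary values, $P_g\circ F = P_{g\circ F}$, and evaluating at the origin gives $\int_{\partial\mathbb{D}} g\circ F\,dm = P_g(F(0)) = \int_{\partial\mathbb{D}} g\,P_{F(0)}\,dm$, i.e.\ $F_*m = P_{F(0)}\,dm$. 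Since $F(0)\in\mathbb{D}$ (the case where $F$ is a unimodular constant being trivial), $P_{F(0)}$ is bounded above and below by positive constants on $\partial\mathbb{D}$, so the change of variables $\int \phi\circ F\,dm = \int \phi\,P_{F(0)}\,dm$ extends to every $\phi\in L^1(\partial\mathbb{D})$; taking $\phi=|h|$ shows $h\circ F\in L^1(\partial\mathbb{D})$ with $\|h\circ F\|_{L^1}\asymp\|h\|_{L^1}$, and $h\circ F$ is well defined a.e.\ because $F_*m\ll m$.

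Next I would approximate and pass to the limit. Choose $h_k\in C(\partial\mathbb{D})$ with $h_k\to h$ in $L^1(\partial\mathbb{D})$, so that $P_{h_k}\circ F = P_{h_k\circ F}$ by the previous paragraph, and take $k\to\infty$ on both sides. On the left, for each fixed $z\in\mathbb{D}$ one has $|P_{h_k}(z)-P_h(z)|\le\bigl(\sup_{w\in\partial\mathbb{D}}P_z(w)\bigr)\,\|h_k-h\|_{L^1}$, and $\sup_{w}P_z(w)$ stays bounded on compact subsets of $\mathbb{D}$, so $P_{h_k}\to P_h$ locally uniformly on $\mathbb{D}$; since $F$ maps compact subsets of $\mathbb{D}$ into compact subsets of $\mathbb{D}$, it follows that $P_{h_k}\circ F\to P_h\circ F$ pointwise on $\mathbb{D}$. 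On the right, the change of variables bound gives $\|h_k\circ F-h\circ F\|_{L^1}\le C\,\|h_k-h\|_{L^1}\to 0$, and the Poisson extension is continuous from $L^1$ to pointwise convergence by the same kernel estimate, so $P_{h_k\circ F}\to P_{h\circ F}$ pointwise on $\mathbb{D}$. Equating the two limits yields $P_h\circ F = P_{h\circ F}$, as desired; the argument works verbatim for complex-valued $h$ after splitting into real and imaginary parts, with no positivity or Herglotz representation needed.

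I expect the only genuine obstacle to be the pushforward identity $F_*m = P_{F(0)}(w)\,dm(w)$ and, more to the point, the two-sided comparability of $P_{F(0)}$ with a constant on $\partial\mathbb{D}$ that powers the $L^1$-continuity $h_k\circ F\to h\circ F$. This is precisely the step that uses that $F$ is inner (so its boundary map lands in $\partial\mathbb{D}$ almost everywhere) together with $F(0)\in\mathbb{D}$ being an interior point. Everything else is a routine density-and-limiting argument.
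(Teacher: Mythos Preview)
Your proof is correct and follows essentially the same approximation strategy as the paper: establish the identity for a dense class, use that the boundary map $F^*$ pushes $m$ forward to a measure comparable to $m$, and pass to the $L^1$ limit. The only cosmetic differences are that the paper first normalizes to $F(0)=0$ (so that $F_*m=m$ exactly) and approximates $h$ by its truncations $h_N=\max(\min(h,N),-N)$ rather than by continuous functions, invoking directly that bounded harmonic functions are Poisson extensions of their boundary values. Your version has the mild advantage of making the pushforward identity $F_*m=P_{F(0)}\,dm$ explicit and avoiding the normalization; the paper's version is marginally shorter since truncation preserves the pointwise bound needed without an extra density step.
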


\begin{proof}
By composing with M\"obius transformations, we may assume that $F(0) = 0$ so that $F$ preserves Lebesgue measure on the unit circle, i.e.~$F^{-1}(E) = E$ for any measurable set $E \subset \partial \mathbb{D}$.

For $N > 0$, form the truncation
$$h_N(\zeta) = \begin{cases}
N, & h(\zeta) \ge N, \\
h(\zeta), & -N \le h(\zeta) \le N, \\
-N, &  h(\zeta) \le -N.
\end{cases}$$
Since bounded harmonic functions are Poisson extensions of their boundary values, $P_{h_N} \circ F = P_{h_N \circ F}$. Since $h_N \to h$ in $L^1(\partial \mathbb{D})$ as $N \to \infty$, it is clear that $P_{h_N}  \to P_h$ and $P_{h_N} \circ F \to P_h \circ F$. The invariance of the Lebesgue measure implies that $h_N \circ F \to h \circ F$ in $L^1(\partial \mathbb{D})$ and $P_{h_N \circ F} \to P_{h \circ F}$.
\end{proof}

\begin{corollary}
\label{psiF-outer}
If $\psi'$ is an outer function, then so is $\psi'(F_U(z))$.
\end{corollary}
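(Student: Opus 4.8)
The plan is to produce $\psi'(F_U(z))$ directly as an outer function by comparing the real parts of the relevant logarithms; this is the composition‑operator analogue of the classical fact that an outer function pre‑composed with an inner function is again outer. Since $\psi\colon\mathbb{D}\to V$ is conformal, $\psi'$ is holomorphic and zero‑free on the simply‑connected disk, so $\log\psi'$ has a single‑valued holomorphic branch there, and consequently $\log\bigl(\psi'\circ F_U\bigr)$ is a well‑defined holomorphic function on $\mathbb{D}$. Because $\psi'$ is outer, $h:=\log|\psi'|$ lies in $L^1(\partial\mathbb{D})$ and $\log|\psi'(w)|=P_h(w)$ for every $w\in\mathbb{D}$.

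Next I would feed this $h$ into Lemma~\ref{inner-composition}, applied to the inner function $F_U$ (Lemma~\ref{component-is-inner}): it gives $h\circ F_U\in L^1(\partial\mathbb{D})$ together with the identity $P_h\circ F_U=P_{h\circ F_U}$. Let $O$ be the outer function with boundary modulus $e^{h\circ F_U}$, normalized so that $\log O$ is the holomorphic branch on $\mathbb{D}$ with $\re\log O=P_{h\circ F_U}$. Then, for all $z\in\mathbb{D}$,
$$
\re\log\bigl(\psi'(F_U(z))\bigr)\;=\;\log\bigl|\psi'(F_U(z))\bigr|\;=\;\bigl(P_h\circ F_U\bigr)(z)\;=\;P_{h\circ F_U}(z)\;=\;\re\log O(z).
$$

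Finally, two holomorphic functions on the simply‑connected domain $\mathbb{D}$ whose real parts coincide differ by a purely imaginary additive constant, so $\psi'(F_U(z))=e^{i\alpha}O(z)$ for some $\alpha\in\mathbb{R}$; since outer functions are closed under multiplication by unimodular constants, $\psi'\circ F_U$ is outer. There is no genuine obstacle in this argument: one only checks that Lemma~\ref{inner-composition} applies (which needs nothing beyond $\log|\psi'|\in L^1$, automatic once $\psi'$ is outer) and that $\log(\psi'\circ F_U)$ is single‑valued (automatic since $\psi'$ is zero‑free on $\mathbb{D}$). The one point worth flagging is that we never have to match the boundary function $(\psi'\circ F_U)^{*}$ with $\psi'^{*}\circ F_U^{*}$: the interior identity $\psi'\circ F_U=e^{i\alpha}O$ already forces the boundary modulus to be $e^{h\circ F_U}$.
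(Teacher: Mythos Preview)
Your proof is correct and follows essentially the same route as the paper: both arguments use Lemma~\ref{inner-composition} with $h=\log|\psi'|$ and the inner function $F_U$ to conclude that $\log|\psi'\circ F_U|$ is the Poisson extension of its $L^1$ boundary data, which is the characterization of outerness. Your version is slightly more explicit (noting that $\psi'$ is zero-free so the logarithm is single-valued, and packaging the conclusion via an auxiliary outer function $O$), but the substance is identical.
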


 \begin{proof}
Since $\psi'$ is an outer function, $\log | \psi'(z)|$ is the harmonic extension of its boundary values on the unit circle. By Lemma \ref{inner-composition}, this implies that $\log | \psi'(F_U(z))|$ is also the harmonic extension of its boundary values on the unit circle, i.e.~$\psi'(F_U(z))$ is an outer function.
 \end{proof}

Having computed the necessary singular parts, the proof Theorem \ref{nevanlinna-preserving} runs as follows:

 \begin{proof}[Proof of Theorem \ref{nevanlinna-preserving}]
Taking singular parts in (\ref{eq:lower-bound-chain-rule}) and using Corollary \ref{psiF-outer}, we get
\begin{equation}
\label{eq:singular-parts-again}
\sigma(F'_U) = \sigma(F' \circ \varphi) + \sigma(\varphi').
\end{equation}
 From the upper bound in Theorem \ref{radon-nikodym}, which we have proved in Section \ref{sec:upper-bound}, we know that $0 \le \sigma(F'_U) \le \sigma(F' \circ \varphi)$, which tells us that $\sigma(\varphi') \le 0$.
Since $\sigma(F' \circ \varphi)$ is supported on $\varphi^{-1}(\partial U \cap \partial \mathbb{D})_{\thick}$, so must $\sigma(F'_U)$ and $\sigma(\varphi')$. 

In view of Lemma \ref{SSO-radial-limits}, $\sigma(\varphi')$ cannot charge $\varphi^{-1}(\partial U \cap \partial \mathbb{D})_{\thick}$ as $\varphi'$ has a finite non-zero radial limit at any point in this set. Hence, $\sigma(\varphi') = 0$ as desired.
\end{proof}

The above proof shows that when  $V$ is a Nevanlinna domain, $\sigma(F'_U) = \sigma(F' \circ \varphi)$, which was computed in Corollary \ref{sing-fphiz}.
Consequently, Theorem \ref{radon-nikodym} holds for Nevanlinna domains.

\subsection{Lower bound}

In order to establish Theorem \ref{radon-nikodym} for general Jordan domains, we approximate $V$ by smooth Jordan domains from below:

\begin{lemma}
\label{nice-approximations}
Let $V$ be a Jordan domain compactly contained in the unit disk and $U$ be a connected component of $F^{-1}(V)$. There is a sequence of smooth Jordan domains $V_n$ which increase to $V$ such that the conformal maps $\varphi_n: \mathbb{D} \to U_n$ converge to the conformal map $\varphi: \mathbb{D} \to U$ uniformly on the closed unit disk.
\end{lemma}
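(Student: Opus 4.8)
The plan is to obtain the $V_n$ by exhausting $V$ through its Riemann map, to let $U_n$ be the component of $F^{-1}(V_n)$ sitting inside $U$, and then to upgrade the routine locally uniform convergence of the conformal maps to uniform convergence on $\overline{\mathbb{D}}$ by invoking the continuity of the Riemann map with respect to kernel convergence of the domains \emph{and} of their complements. For the construction, fix a base point $p\in U$ and set $q=F(p)\in V$. Let $\psi_V\colon\mathbb{D}\to V$ be a conformal map with $\psi_V(0)=q$, choose $r_n\uparrow 1$, and put $V_n=\psi_V(\{|z|<r_n\})$; then $V_n$ is the domain bounded by the real-analytic Jordan curve $\psi_V(\{|z|=r_n\})$, hence a smooth Jordan domain, and $V_n\Subset V_{n+1}\Subset V$, $q\in V_n$ for every $n$, and $\bigcup_n V_n=V$. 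Let $U_n$ be the connected component of $F^{-1}(V_n)$ containing $p$; by Lemma~\ref{component-is-jordan}, applied with $V_n$ in place of $V$, each $U_n$ is a Jordan domain.

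Next I would check that the $U_n$ exhaust $U$. Since $F(U_n)\subseteq V_n\subseteq V$ and $U_n$ is connected and contains $p$, we have $U_n\subseteq U$; since $F^{-1}(V_n)\subseteq F^{-1}(V_{n+1})$, the $p$-component of $F^{-1}(V_n)$ is contained in the $p$-component of $F^{-1}(V_{n+1})$, so $U_n\subseteq U_{n+1}$. Conversely, given $z\in U$, join $z$ to $p$ by a path $\gamma\subseteq U$: then $F(\gamma)$ is compact in $V$, hence $F(\gamma)\subseteq V_n$ for $n$ large, and the connected set $\gamma\subseteq F^{-1}(V_n)$ then lies in $U_n$, so $z\in U_n$. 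Thus $U_n\uparrow U$, which in particular is Carath\'eodory kernel convergence with base point $p$. Normalizing the conformal maps by $\varphi_n(0)=\varphi(0)=p$ and $\varphi_n'(0),\varphi'(0)>0$, the Carath\'eodory kernel theorem gives $\varphi_n\to\varphi$ locally uniformly on $\mathbb{D}$, and since $U$ and all the $U_n$ are Jordan domains each of these maps extends to a homeomorphism of $\overline{\mathbb{D}}$.

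The remaining point, which I expect to be the main obstacle, is to push this to uniform convergence on the \emph{closed} disk, since locally uniform convergence on $\mathbb{D}$ carries no information about the boundary maps by itself. What rescues the situation is that the exhaustion is monotone: $\widehat{\mathbb{C}}\setminus\overline{U_n}$ is a decreasing sequence of simply connected domains, each containing $\infty$, whose intersection has interior $\widehat{\mathbb{C}}\setminus\overline{U}$, and this set is connected because $\overline{U}$ is a closed Jordan region; hence the exteriors $\widehat{\mathbb{C}}\setminus\overline{U_n}$ also converge to $\widehat{\mathbb{C}}\setminus\overline{U}$ in the kernel sense. Since $\partial U$ is a Jordan curve, kernel convergence of the $U_n$ together with kernel convergence of their exteriors implies, by the classical continuity theory of the Riemann map up to the boundary (Carath\'eodory, Rad\'o), that $\varphi_n\to\varphi$ uniformly on $\overline{\mathbb{D}}$, which is the assertion of the lemma.

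If one prefers to avoid quoting that boundary-continuity theorem, a more hands-on variant of the last step is as follows. Write $\varphi_n=\varphi\circ g_n$, where $g_n$ maps $\mathbb{D}$ conformally onto the increasing Jordan subdomains $\varphi^{-1}(U_n)$, which exhaust $\mathbb{D}$ and whose boundaries $\partial\varphi^{-1}(U_n)$ converge to $\partial\mathbb{D}$ in the Hausdorff metric. Using a Beurling-type length--area estimate one tries to produce a modulus of continuity for the family $\{g_n\}$ on $\overline{\mathbb{D}}$ that is uniform in $n$; granting this, equicontinuity of $\{g_n\}$ together with the locally uniform convergence $g_n\to\mathrm{id}$ yields $g_n\to\mathrm{id}$ uniformly on $\overline{\mathbb{D}}$, and composing with $\varphi$ (uniformly continuous on $\overline{\mathbb{D}}$ since $U$ is a Jordan domain) finishes the proof. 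Making the length--area bound uniform in $n$ — concretely, that a short crosscut of $\varphi^{-1}(U_n)$ near $\partial\mathbb{D}$ cuts off a region of small diameter, uniformly over $n$ — is the delicate point of this variant.
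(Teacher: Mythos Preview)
Your construction of the $V_n$ and the $U_n$, and the verification that $U_n\uparrow U$ with Carath\'eodory kernel convergence, are fine and match the paper's choice $V_n=\psi(\{|z|<1-1/n\})$. The gap is in the last step: the claim that kernel convergence of the $U_n$ \emph{together with} kernel convergence of their exteriors forces $\varphi_n\to\varphi$ uniformly on $\overline{\mathbb{D}}$ is not a theorem, and in fact it is false even for increasing sequences of Jordan domains. Here is a sketch of a counterexample. Take $U=\mathbb{D}$, pick $\epsilon_n\downarrow 0$ rapidly, and in each thin annulus $\{1-\epsilon_n<|z|<1-\epsilon_n/2\}$ draw a Jordan core curve that, over a fixed angular interval $[\theta_1,\theta_2]$, passes three times at three different radii (forward, backward, forward); let $U_n$ be its interior. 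One can arrange $U_n\subset B(0,1-\tfrac34\epsilon_n)\subset B(0,1-\epsilon_{n+1})\subset U_{n+1}$, so the $U_n$ increase to $\mathbb{D}$ and both the domains and their complements converge in the kernel sense. Yet the boundaries $\partial U_n$ converge to $\partial\mathbb{D}$ \emph{with backtracking} over the arc $\{e^{i\theta}:\theta\in[\theta_1,\theta_2]\}$, and by the equivalence in the paper's Lemma~\ref{uniform-convergence-criterion} (the Rad\'o--Warschawski criterion) this is exactly what obstructs uniform convergence of the Riemann maps on $\overline{\mathbb{D}}$. Your alternative route via $\varphi_n=\varphi\circ g_n$ runs into the identical obstruction, since $\varphi^{-1}(U_n)\uparrow\mathbb{D}$ is again just an increasing exhaustion of $\mathbb{D}$ by Jordan domains; the ``uniform length--area bound'' you flag as delicate is precisely what fails in the example above.

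What is missing is an argument that uses the specific origin of the $U_n$ as components of $F^{-1}(V_n)$. The paper supplies exactly this: since the $\psi_n(z)=\psi((1-1/n)z)$ converge uniformly on $\overline{\mathbb{D}}$, one knows from Lemma~\ref{uniform-convergence-criterion} that $\partial V_n\to\partial V$ without backtracking, and then one \emph{transports} this property to $\partial U_n\to\partial U$ via $F$. Concretely, if $\partial U_n\to\partial U$ backtracked along some arc $\alpha\subset\partial U$, then because $\partial U\cap\partial\mathbb{D}$ has measure zero (so $\alpha$ meets $\mathbb{D}$) and $F$ has only countably many critical points, one can localize near a non-critical point $z_0\in\alpha\cap\mathbb{D}$ where $F$ is a homeomorphism; pushing the two opposing arcs forward by $F$ would then produce backtracking for $\partial V_n\to\partial V$, a contradiction. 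This step---ruling out backtracking upstairs by using injectivity of $F$ locally to push it downstairs---is the genuine content, and it cannot be replaced by a soft appeal to kernel convergence of complements.
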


The following criterion for uniform convergence on the closed unit disk follows from the discussion on \cite[p.~347]{warschawski}.

\begin{lemma}
\label{uniform-convergence-criterion}
Suppose $\{\Omega_n\}_{n=1}^\infty$ is an increasing sequence of Jordan domains whose union $\Omega$ is also a Jordan domain and $p \in \Omega_1$ be a basepoint. For each $n = 1, 2, \dots$, let $\varphi_n: \mathbb{D} \to \Omega_n$ be the conformal map with $\varphi_n(0) = p$ and $\varphi_n'(0) > 0$. Similarly, let $\varphi: \mathbb{D} \to \Omega$ be the conformal map with $\varphi(0) = p$ and $\varphi'(0) > 0$. 
The following statements are equivalent:

\begin{enumerate}
\item The conformal maps $\varphi_n$ converge uniformly on the closed unit disk $\overline{\mathbb{D}}$.
\item The domains $\{ \Omega_n \}$ have a common structural modulus of continuity.
\item There exists a sequence of homeomorphisms $\gamma_n: \partial \mathbb{D} \to \partial \Omega_n$ which converge uniformly to a homeomorphism $\gamma: \partial \mathbb{D} \to \partial \Omega$.
\item The curves $\partial \Omega_n$ converge to $\partial \Omega$ in the Hausdorff topology without backtracking.
\end{enumerate}
\end{lemma}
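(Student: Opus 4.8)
The plan is to close the cycle $(1) \Rightarrow (3) \Rightarrow (4) \Rightarrow (2) \Rightarrow (1)$, with essentially all of the analytic substance in the last step. Two of the implications are almost formal. For $(1) \Rightarrow (3)$, since each $\Omega_n$ and $\Omega$ is a Jordan domain, Carath\'eodory's theorem extends $\varphi_n$ and $\varphi$ to homeomorphisms of $\overline{\mathbb{D}}$; then $\gamma_n := \varphi_n|_{\partial \mathbb{D}}$ and $\gamma := \varphi|_{\partial \mathbb{D}}$ are homeomorphisms onto $\partial \Omega_n$, $\partial \Omega$, and $(1)$ says precisely that $\gamma_n \to \gamma$ uniformly. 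The equivalence $(3) \Leftrightarrow (4)$ is a matter of unwinding the definition of Hausdorff convergence without backtracking: a uniformly convergent sequence of homeomorphic parametrizations $\partial \mathbb{D} \to \partial \Omega_n$ automatically yields Hausdorff convergence $\partial \Omega_n \to \partial \Omega$ with no sub-arc of nonvanishing diameter collapsing to a point, which is what ``without backtracking'' means, and conversely the parametrizations witnessing $(4)$ are homeomorphisms because the curves involved are Jordan curves.

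For $(4) \Rightarrow (2)$ I would argue by contradiction. Take homeomorphic parametrizations $g_n : \partial \mathbb{D} \to \partial \Omega_n$ and $g : \partial \mathbb{D} \to \partial \Omega$ with $g_n \to g$ uniformly. If there were no common structural modulus of continuity, there would exist $\varepsilon_0 > 0$ and, for each $k$, a crosscut $\sigma_k$ of $\Omega_{n_k}$ with endpoints $a_k, b_k \in \partial \Omega_{n_k}$ such that $\diam \sigma_k \le 1/k$ but both components of $\Omega_{n_k} \setminus \sigma_k$ have diameter $\ge \varepsilon_0$. Since the diameter of a bounded plane domain equals that of its boundary, the two boundary arcs $\alpha_k, \beta_k$ into which $\sigma_k$ divides $\partial \Omega_{n_k}$ both satisfy $\diam \alpha_k, \diam \beta_k \ge \varepsilon_0 - 1/k$. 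Passing to a subsequence, $a_k, b_k \to c^* \in \partial \Omega$ (using $|a_k - b_k| \le \diam \sigma_k \to 0$), hence $g_{n_k}^{-1}(a_k)$ and $g_{n_k}^{-1}(b_k)$ both converge to $g^{-1}(c^*)$, because $g_{n_k}(g_{n_k}^{-1}(a_k)) = a_k \to c^*$ and $g_{n_k} \to g$ uniformly. Then one of the two arcs of $\partial \mathbb{D}$ determined by these two points shrinks to $g^{-1}(c^*)$, so its $g_{n_k}$-image --- which is one of $\alpha_k, \beta_k$ --- has diameter tending to $0$ by uniform convergence and continuity of $g$; this contradicts $\diam \alpha_k, \diam \beta_k \ge \varepsilon_0 - 1/k$.

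The heart of the matter is $(2) \Rightarrow (1)$, where the main tool is a length--area estimate that is uniform in $n$ because $\area(\Omega_n) \le \pi$, all $\Omega_n$ lying in $\mathbb{D}$. Fix $\zeta \in \partial \mathbb{D}$ and let $C_r = \{\, |z - \zeta| = r \,\} \cap \mathbb{D}$. The length $L_n(r)$ of $\varphi_n(C_r)$ satisfies $L_n(r)^2 \le (\operatorname{length} C_r) \int_{C_r} |\varphi_n'|^2 \, ds$ by Cauchy--Schwarz, so integrating against $dr/r$ over $(\delta, \sqrt{\delta})$ and comparing with $\int_\delta^{\sqrt{\delta}} dr/r = \frac{1}{2} \log(1/\delta)$ produces some $r^* \in (\delta, \sqrt{\delta})$ with $\diam \varphi_n(C_{r^*}) \le L_n(r^*) \lesssim (\log(1/\delta))^{-1/2}$, independently of $n$. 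Now $\varphi_n(C_{r^*})$ is a crosscut of $\Omega_n$ whose complementary component $\varphi_n(B(\zeta, r^*) \cap \mathbb{D})$ avoids the basepoint $p = \varphi_n(0)$; since all $\Omega_n$ contain a fixed ball about $p$, for $\delta$ small this is necessarily the \emph{small} side, so by $(2)$ its diameter is at most $\omega\bigl( (\log(1/\delta))^{-1/2} \bigr) \to 0$ uniformly in $n$. Together with the interior Schwarz--Pick bound $|\varphi_n'(z)| \le (1 - |z|^2)^{-1}$, this furnishes a common modulus of continuity for $\{\varphi_n\}$ on all of $\overline{\mathbb{D}}$. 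By Arzel\`a--Ascoli every subsequence of $\{\varphi_n\}$ has a further subsequence converging uniformly on $\overline{\mathbb{D}}$; since $\Omega_n \uparrow \Omega$ with $p \in \Omega_1$, the Carath\'eodory kernel theorem gives $\varphi_n \to \varphi$ locally uniformly on $\mathbb{D}$, so any subsequential limit agrees with $\varphi$ on $\overline{\mathbb{D}}$ by continuity. Hence $\varphi_n \to \varphi$ uniformly on $\overline{\mathbb{D}}$, which is $(1)$.

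I expect the main obstacle to be this last implication, and specifically the point that the length--area bound on $\diam \varphi_n(C_{r^*})$ is worthless on its own: it is the \emph{common} structural modulus of continuity in $(2)$ that turns a short crosscut into a genuinely small cut-off region, uniformly along the sequence --- for a single fixed Jordan domain this is just the classical proof of continuity up to the boundary, and the only real issue is the uniformity, which is exactly why $(2)$, $(3)$, $(4)$ are needed rather than mere Jordan-ness of the individual $\Omega_n$. A secondary point needing care is the elementary fact, used in $(4) \Rightarrow (2)$, that uniform convergence of the parametrizations $g_n$ forces convergence of the relevant pre-images $g_n^{-1}(a_k)$, $g_n^{-1}(b_k)$; this should be spelled out as above rather than quoted.
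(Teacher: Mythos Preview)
The paper does not prove this lemma; it simply records that the criterion ``follows from the discussion on [p.~347]'' of Warschawski. Your proposal therefore supplies far more than the paper does, and the core of it --- the cycle $(1)\Rightarrow(3)\Rightarrow(2)\Rightarrow(1)$, with the length--area (Wolff) estimate carrying $(2)\Rightarrow(1)$ --- is correct and is the classical route.

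There is, however, a gap in your handling of condition~(4). First, your paraphrase of ``without backtracking'' as ``no sub-arc of nonvanishing diameter collapsing to a point'' does not match the paper's definition: backtracking means that two oriented sub-arcs of $\partial\Omega_n$ Hausdorff-converge to the \emph{same} arc of $\partial\Omega$ with their endpoints converging in opposite orders. More seriously, your phrase ``the parametrizations witnessing $(4)$'' is empty, because $(4)$ is a statement about Hausdorff limits and supplies no parametrizations at all; and the argument you label ``$(4)\Rightarrow(2)$'' opens by \emph{choosing} uniformly convergent homeomorphisms $g_n\to g$, which is exactly hypothesis~(3). So what you have actually written is a proof of $(3)\Rightarrow(2)$, and the implication from $(4)$ to the other three conditions is left unestablished. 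This matters, since that is precisely the direction the paper uses later (one verifies $(4)$ for the pre-image domains $U_n$ and then invokes the present lemma to obtain~$(1)$). The repair is either to argue $(4)\Rightarrow(2)$ directly --- your contradiction setup is right, but from the two large-diameter boundary arcs $\alpha_k,\beta_k$ determined by the shrinking crosscuts you must extract an actual backtracking pair in the paper's sense, using only Hausdorff convergence --- or to construct the homeomorphisms for $(4)\Rightarrow(3)$ by hand.
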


We now explain the terms in the lemma above.
A {\em crosscut} of a planar domain $\Omega$ is a Jordan arc $\gamma: (0,1) \to \Omega$ such that $\lim_{t \to 0} \gamma(t)$ and $\lim_{t \to 1} \gamma(t)$ exist and belong to $\partial \Omega$. Assuming that $\gamma$ does not pass through the basepoint $p$, we can define $\Gamma$ to be the connected component of $\Omega \setminus \gamma$ which does not contain $p$. In other words, $\Gamma$ is the domain cut off by $\gamma$. Let $\eta: [0, \infty) \to [0, \infty)$ be a non-decreasing function with $\eta(0) = 0$. We say that $\Omega$ has structural modulus of continuity $\eta$ if $$\diam \Gamma \le \eta(\diam \gamma),$$ for any crosscut $\gamma$ of $\Omega$.

We now describe the ``no backtracking'' condition. As is standard in complex analysis, we orient the boundaries of the domains $\partial \Omega_n$, $n=1,2,\dots$ and $\partial \Omega$ counterclockwise. 
We say that the curves $\partial \Omega_n$ converge to $\partial \Omega$ with {\em backtracking} if after passing to a subsequence, there exist two sets of oriented arcs 
$\alpha^1_n, \alpha^2_n: [0, 1] \to \partial \Omega_n$ which converge in the Hausdorff topology to the same arc $\alpha = [a, b] \subset \partial \Omega$ so that
\begin{align*}
\alpha^1_n (0) & \to a, \qquad \alpha^1_n (1) \to b, \\
\alpha^2_n (0) & \to b, \qquad \alpha^2_n (1) \to a.
 \end{align*}
In other words, the arc $\alpha^1_n$ passes by $\alpha$ in one direction, while $\alpha^2_n$ passes by $\alpha$ in the other direction.

\begin{proof}[Proof of Lemma \ref{nice-approximations}]
One particular sequence of domains that works is
$$
V_n = \psi(B(0,1-1/n)), \qquad n = 1, 2, \dots
$$
Since the conformal maps $\psi_n(z) = \psi((1-1/n)z)$ converge uniformly on the closed unit disk to $\psi(z)$, Lemma \ref{uniform-convergence-criterion} tells us that $\partial V_n \to \partial V$ without backtracking.
We claim that $\partial U_n \to \partial U$ also converges without backtracking.

For the sake of contradiction, suppose that there were sequences of arcs $\alpha_n^1, \alpha_n^2 \subset \partial U_n$, which were an instance of backtracking for $\partial U_n \to \partial U$. Since $\partial U \cap \partial \mathbb{D}$ has zero Lebesgue measure, the Hausdorff limit $\alpha = \lim_{n \to \infty} \alpha_n^1 = \lim_{n \to \infty} \alpha_n^2 \subset \partial U$ passes through a point $z_0 \in \mathbb{D}$. As $F$ has at most countably many critical points, we may assume that $z_0 \notin \crit F$.
Pick a small ball $B = B(z_0, r) \subset \mathbb{D}$ on which $F$ is injective. By truncating the arcs $\alpha_n^1, \alpha_n^2$ and dropping finitely many $\alpha_n^i$ with small indices, we may assume that $\alpha_n^1, \alpha_n^2 \subset B(z_0, r/2)$ for all $n$. 
The image curves $F(\alpha_n^1), F( \alpha_n^2) \subset \partial V_n$ provide an instance of backtracking for $\partial V_n \to \partial V$, contradicting the choice of the approximating domains $V_n$.
 \end{proof}

\begin{proof}[Proof of Theorem \ref{radon-nikodym}: Lower bound] 
Suppose $V$ is a general Jordan domain compactly contained in the unit disk. Let $V_1 \subset V_2 \subset \dots$ be an increasing sequence of smooth Jordan domains whose union is $V$, given by Lemma \ref{nice-approximations}. We choose connected components $U_n \subset F^{-1}(V_n)$ so that $U_1 \subset U_2 \subset \dots$ increase to $U$.
By the theorem applied to the $F_{U_n}$, we have
\begin{equation}
\label{eq:Un-RN}
(\varphi_{U_n})_*\, \sigma(F'_{U_n}) = |(\varphi_{U_n}^{-1})'(\zeta)| \cdot d\sigma(F')|_{(\partial U_n \cap \partial \mathbb{D})_{\thick}}.
\end{equation}
In view of the Schwarz lemma, the measures on the right side of (\ref{eq:Un-RN}) increase with $n$. 
Consequently, the component inner functions $F_{U_n}$ converge to $F_U$ in the stable topology and we may use Lemma \ref{uniform-convergence-criterion} to conclude that
\begin{equation}
\label{eq:Un-RN2}
(\varphi_{U})_*\, \sigma(F'_{U}) = |(\varphi_{U}^{-1})'(\zeta)| \cdot d\sigma(F')|_{(\partial U \cap \partial \mathbb{D})_{\thick}}.
\end{equation}
The proof is complete.
\end{proof}

\begin{remark}
The above proof shows that the measure $(\varphi_{U})_*\, \sigma(F'_{U})$ is supported on the set of points $\zeta \in \partial U \cap \partial \mathbb{D}$ for which the radial boundary value $F(\zeta) \in V$. (A priori, the radial boundary value could be in $\overline{V}$.) In particular, $\partial U$ is thin at $\sigma(F')$ a.e.~boundary point $\zeta \in \partial U \cap \partial \mathbb{D}$ with $F(\zeta) \in \partial V$.
\end{remark}

\section{Background on inner functions}
\label{sec:inner}

In this section, we gather some estimates on inner functions of finite entropy. 
Let $\lambda_{\mathbb{D}} = \frac{2}{1-|z|^2}$ be the hyperbolic metric on the unit disk.
A holomorphic self-map $F$ of the unit disk naturally defines the conformal metric
$$
\lambda_F = F^* \lambda_{\mathbb{D}} = \frac{2|F'(z)|}{1-|F(z)|^2}.
$$
With the above definition, if $\gamma \subset \mathbb{D}$ is a rectifiable curve, then the hyperbolic length of $F(\gamma)$ is $\int_\gamma \lambda_F$.

We begin by recalling \cite[Lemma 2.3]{inner} which describes $\lambda_F$ as the minimal conformal pseudometric of curvature $-1$ that lies above $|\inn F'| \lambda_{\mathbb{D}}$. 
To state the lemma in a more pedestrian way, we write $B_C$ for the Blaschke product with zero set $C$.

\begin{lemma}[Fundamental lemma]
\label{fundamental-lemma}
For any inner function $F \in \mathscr J$,
\begin{equation}
\label{eq:fundamental-lemma}
\lambda_F \ge |\inn F'| \lambda_{\mathbb{D}}.
\end{equation}
Suppose $F_1, F_2 \in \mathscr J$ are two inner functions with $\inn F'_1 = B_{C_1} S_{\mu_1}$ and $\inn F'_2 = B_{C_2} S_{\mu_2}$. If $C_1 \subseteq C_2$ and $\mu_1 \le \mu_2$ then
\begin{equation}
\label{eq:fundamental-lemma2}
\lambda_{F_1} \ge \lambda_{F_2}.
\end{equation}
\end{lemma}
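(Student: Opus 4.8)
The plan is to prove both statements first for finite Blaschke products, where the pullback metric extends continuously to $\overline{\mathbb{D}}$, and then transfer to an arbitrary $F \in \mathscr J$ by a stable approximation by finite Blaschke products (which exists by \cite{craizer}, cf.\ \cite[Lemma 4.3]{inner}). The transfer works because stable convergence, by its definition, includes $F_n \to F$ and $\inn F_n' \to \inn F'$ locally uniformly, whence $\lambda_{F_n} = \frac{2|F_n'|}{1-|F_n|^2} \to \lambda_F$ and $|\inn F_n'| \to |\inn F'|$ pointwise on $\mathbb{D}$; one then passes to the limit in the finite Blaschke inequalities. So the crux is the finite Blaschke case, and there the key input is: if $F$ is a finite Blaschke product then $\Phi := \lambda_F/\lambda_{\mathbb{D}} = |F'(z)|(1-|z|^2)/(1-|F(z)|^2)$ extends continuously to $\overline{\mathbb{D}}$ with $\Phi \equiv 1$ on $\partial\mathbb{D}$. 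This follows from the classical identity $\zeta F'(\zeta)/F(\zeta) > 0$ for $\zeta \in \partial\mathbb{D}$ and L'H\^opital's rule applied to the ratio of the smooth functions $1-|F|^2$ and $1-|z|^2$, both of which vanish to first order on $\partial\mathbb{D}$ (the ratio having boundary value $|F'(\zeta)|$).

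For (\ref{eq:fundamental-lemma}) with $F$ a finite Blaschke product, write $\inn F' = B_C$ with $C = \crit F$ and set $w := \log\bigl(|B_C|\lambda_{\mathbb{D}}/\lambda_F\bigr) = \log|B_C| - \log\Phi$. Away from $C$, the curvature equations $\Delta\log\lambda_{\mathbb{D}} = \lambda_{\mathbb{D}}^2$ and $\Delta\log\lambda_F = \lambda_F^2$, together with $\Delta\log|F'| = 2\pi\sum_{c\in C} m_c\delta_c = \Delta\log|B_C|$, give $\Delta w = \lambda_{\mathbb{D}}^2 - \lambda_F^2 \ge 0$ by the Schwarz--Pick lemma, the point masses at the critical points cancelling; since $w$ is bounded near each $c \in C$ (both terms have a $m_c\log|z-c|$ singularity there), $w$ is subharmonic on $\mathbb{D}$. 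By the boundary fact, $w \to 0$ uniformly as $|z| \to 1$, so the maximum principle on $\{|z|<r\}$ with $r \to 1$ forces $w \le 0$, i.e.\ $\lambda_F \ge |\inn F'|\lambda_{\mathbb{D}}$.

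For (\ref{eq:fundamental-lemma2}) I would use the sharper statement that $\lambda_F$ is the \emph{minimal} conformal pseudometric of curvature $-1$ lying above $|\inn F'|\lambda_{\mathbb{D}}$. For a finite Blaschke product this follows from a similar argument: if $\rho$ is any curvature $-1$ pseudometric with $\rho \ge |B_C|\lambda_{\mathbb{D}}$, then $\rho$ has no conical singularity outside $C$ and at each $c \in C$ a singularity of order at most $m_c$, so $w := \log(\lambda_F/\rho)$ satisfies $\Delta w \ge 0$ on $\{w > 0\}$; as $w \le \log(\Phi/|B_C|)$ is bounded above on $\mathbb{D}$ and has $\limsup \le 0$ along $\partial\mathbb{D}$, the maximum principle gives $w \le 0$, i.e.\ $\rho \ge \lambda_F$. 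Now if $C_1 \subseteq C_2$ and $\mu_1 \le \mu_2$ then $|\inn F_1'| \ge |\inn F_2'|$, so by (\ref{eq:fundamental-lemma}) the metric $\lambda_{F_1}$ itself lies above $|\inn F_2'|\lambda_{\mathbb{D}}$, and minimality of $\lambda_{F_2}$ yields $\lambda_{F_1} \ge \lambda_{F_2}$ when $F_1,F_2$ are finite Blaschke products. The general case of (\ref{eq:fundamental-lemma2}) then follows by approximating $F_1, F_2$ by finite Blaschke products \emph{compatibly} — truncating $B_{C_2}$ to finitely many zeros and $B_{C_1}$ to those among them lying in $C_1$, and discretizing $\mu_2 = \mu_1 + (\mu_2 - \mu_1)$ into two disjoint families of Blaschke zeros — so that $\crit F_{1,n} \subseteq \crit F_{2,n}$, invoking the classical fact that every finite multiset in $\mathbb{D}$ is the critical set of a finite Blaschke product, and passing to the limit.

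The main obstacle is the behaviour at $\partial\mathbb{D}$: for a merely subharmonic $w$, the property ``radial boundary values $0$ a.e.'' does \emph{not} imply $w \le 0$ (e.g.\ $-P_{\delta_1}$), so the decisive point in the finite Blaschke case is that $\Phi = \lambda_F/\lambda_{\mathbb{D}}$ extends continuously up to the unit circle with boundary value exactly $1$, which makes the extremal principle applicable. For general $F \in \mathscr J$ the corresponding control is exactly what ``stable convergence'' supplies: the locally uniform convergence $|\inn F_n'| \to |\inn F'|$ — equivalently $\sum_{c \in \crit F_n} m_c\, G_{\mathbb{D}}(c,\cdot) \to \sum_{c \in \crit F} m_c\, G_{\mathbb{D}}(c,\cdot) + P_{\sigma(F')}$, using $G_{\mathbb{D}}(c,z) \sim (1-|c|)\,P_{c/|c|}(z)$ as $c \to \partial\mathbb{D}$ — is precisely the statement that no critical mass escapes, and under mere uniform convergence on compacta (where critical points and entropy can be lost) the finite Blaschke inequalities need not survive. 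The remaining genuinely technical piece, which I expect to be routine but slightly delicate, is the construction of the compatible stable approximations used for (\ref{eq:fundamental-lemma2}).
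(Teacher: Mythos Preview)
The paper does not give a proof of this lemma; it simply recalls it from \cite[Lemma~2.3]{inner}, where the operative statement is the \emph{minimality} of $\lambda_F$ among curvature $-1$ pseudometrics lying above $|\inn F'|\lambda_{\mathbb D}$. So there is no in-paper argument to compare against, only the cited characterization.

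Your argument for \eqref{eq:fundamental-lemma} is correct: the finite Blaschke case via the subharmonicity of $w=\log(|B_C|\lambda_{\mathbb D}/\lambda_F)$ and the continuous boundary extension $\Phi\equiv 1$ is clean, and the passage to general $F\in\mathscr J$ through a stable approximation is exactly what stable convergence is designed for.

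For \eqref{eq:fundamental-lemma2} your route diverges from the one implicit in the citation, and the divergence is where the real difficulty sits. You correctly observe that minimality plus \eqref{eq:fundamental-lemma} yields \eqref{eq:fundamental-lemma2}, but you only establish minimality for finite Blaschke products and then try to transfer \eqref{eq:fundamental-lemma2} itself via \emph{compatible} stable approximations $F_{1,n},F_{2,n}$ with $\crit F_{1,n}\subseteq\crit F_{2,n}$. This step is more than ``slightly delicate.'' To make it work you need that prescribing the critical data $C_{i,n}\cup D_{i,n}$ and letting the associated measures tend to $\mu_{F_i}$ actually forces the corresponding finite Blaschke products to converge to $F_i$ (locally uniformly, hence $\lambda_{F_{i,n}}\to\lambda_{F_i}$). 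That is precisely the continuity of the inverse of the parametrization in Theorem~A, which is the content of \cite{stable} and whose proof leans on the Fundamental Lemma itself. So without an independent argument for this continuity, your reduction risks circularity.

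The approach in \cite{inner} avoids this by proving minimality directly for arbitrary $F\in\mathscr J$, using Heins' theory of maximal SK-metrics \cite{heins}: one shows that $\lambda_F$ coincides with the (unique) maximal conformal metric of curvature $\le -1$ whose singular divisor is $\inn F'$, and that any curvature $-1$ pseudometric $\rho\ge|\inn F'|\lambda_{\mathbb D}$ has singular divisor dividing $\inn F'$, hence lies below this maximal metric. Once minimality is in hand for general $F$, \eqref{eq:fundamental-lemma2} is immediate from $|\inn F_1'|\ge|\inn F_2'|$ with no approximation needed. If you want to salvage your strategy, the cleanest fix is to transfer \emph{minimality} (rather than \eqref{eq:fundamental-lemma2}) through the approximation, but your own boundary counterexample $-P_{\delta_1}$ shows why a naive maximum-principle transfer fails; the Heins machinery is what replaces the missing boundary control.
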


In Lemmas \ref{stable-bound}--\ref{jensen-type} below, $\mu$ will be a measure supported on a Beurling-Carleson set $E$. We write $F_\mu \in \mathscr J$ for the inner function with $\inn F_\mu'=S_\mu$, $F_\mu(0)=0$ and $F'_\mu(0) > 0$, given by Theorem \ref{main-thm-inner}. The following coarse estimate can be found in   \cite[Corollary 3.4]{stable}:

\begin{lemma}
\label{stable-bound}
 The inner function $F_\mu$ extends analytically past any
arc $I \subset \partial \mathbb{D}$ which does not meet the support of $\mu$.
The derivative of $F_\mu$ on the unit circle is bounded by
\begin{equation}
\label{eq:stable-bound}
|F'_{\mu}(\zeta)| \le C \bigl (\mu(\partial \mathbb{D}) \bigr ) \cdot \dist(\zeta, E)^{-4}, \qquad \zeta \in \partial \mathbb{D},
\end{equation}
where $C(t)$ is a positive increasing function defined on $(0, \infty)$.
\end{lemma}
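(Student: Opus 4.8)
The plan is to establish the analytic continuation first and then to read the derivative bound off it by a Cauchy estimate. (This is \cite[Corollary 3.4]{stable}, so one may also simply quote it.)

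\emph{A metric sandwich near $I$.} Since $\inn F_\mu' = S_\mu$ is purely singular, $F_\mu$ has no critical points and $F_\mu' = S_\mu O$ with $O$ outer. Fix a closed arc $I$ with $2\epsilon := \dist(I,\supp\mu) > 0$ and put $D_\epsilon = \{z\in\overline{\mathbb{D}}:\dist(z,E)\ge\epsilon\}$, a neighbourhood of $I$ in $\overline{\mathbb{D}}$. On $D_\epsilon$ the singular inner function $S_\mu(z)=\exp\bigl(-\int\frac{\zeta+z}{\zeta-z}\,d\mu(\zeta)\bigr)$ is holomorphic and zero-free, with $\exp(-2\mu(\partial\mathbb{D})/\epsilon^2)\le|S_\mu(z)|\le 1$. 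Combining Schwarz--Pick ($\lambda_{F_\mu}\le\lambda_{\mathbb{D}}$) with the Fundamental Lemma (Lemma \ref{fundamental-lemma}, $\lambda_{F_\mu}\ge|S_\mu|\lambda_{\mathbb{D}}$) produces a constant $\kappa=\kappa(\mu(\partial\mathbb{D}),\epsilon)\in(0,1)$ with
\[
\kappa\,\lambda_{\mathbb{D}}(z)\ \le\ \lambda_{F_\mu}(z)\ \le\ \lambda_{\mathbb{D}}(z),\qquad z\in D_\epsilon\cap\mathbb{D},
\]
i.e.\ $|F_\mu'(z)|(1-|z|^2)/(1-|F_\mu(z)|^2)\in[\kappa,1]$ there: near $I$ the map $F_\mu$ contracts the hyperbolic metric by a factor bounded away from $0$ and $1$.

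\emph{Analytic continuation across $I$.} It is enough to show that the spectrum of the inner function $F_\mu$ --- the union of $\overline{F_\mu^{-1}(0)}$ with the closed support of the singular inner measure $\nu$ of $F_\mu$ --- is disjoint from $I$; then $F_\mu$ continues holomorphically across $I$ with $|F_\mu|\equiv 1$ there, and by Schwarz reflection across a neighbourhood of $I$ in $\mathbb{C}$. The Ahern--Clark divisibility (\ref{eq:svm-frostman}), applied at the value $0=F_\mu(0)$, gives $\nu\le\sigma(F_\mu')=\mu$, so $\nu$ puts no mass on $I$. The crux is that $F_\mu^{-1}(0)$ does not accumulate on $I$: if $z_j\to\zeta_0\in I$ were zeros of $F_\mu$, the metric sandwich would force $F_\mu$ to behave near $\zeta_0$ like a hyperbolic quasi-isometry, and the piling up of preimages of $0$ would violate the lower bound $\lambda_{F_\mu}\ge\kappa\lambda_{\mathbb{D}}$ through a length--area comparison over the dyadic boxes abutting $\zeta_0$ (equivalently, one may transport the geometry near $\zeta_0$ into the strip model of Section \ref{sec:angular-derivatives} and argue there). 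I expect this non-accumulation to be the main obstacle, and I would phrase it quantitatively: a zero $z_0$ of $F_\mu$ satisfies $1-|z_0|\gtrsim_{\mu(\partial\mathbb{D})}\dist(z_0,E)^2$.

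\emph{The derivative bound.} Fix $\zeta\in\partial\mathbb{D}$ with $d:=\dist(\zeta,E)>0$. The estimate of the previous paragraph shows that, for a small absolute constant $c$, the disk $B(\zeta,c\,d^2)$ contains no zeros of $F_\mu$; since $\nu$ and $\supp\mu$ lie at distance $\gtrsim d$ from $\zeta$, the Blaschke/Poisson expansion of $\log|F_\mu|$ together with the metric sandwich bounds $|F_\mu|$ below by a positive constant $c'=c'(\mu(\partial\mathbb{D}))$ on $B(\zeta,c\,d^2)\cap\mathbb{D}$. Schwarz reflection then bounds $|F_\mu|$ on $B(\zeta,\tfrac12 c\,d^2)$ by $1/c'$, and Cauchy's estimate on $\{|w-\zeta|=\tfrac12 c\,d^2\}$ gives $|F_\mu'(\zeta)|\le C(\mu(\partial\mathbb{D}))\,d^{-2}$; in particular this is at most $C(\mu(\partial\mathbb{D}))\,d^{-4}$, the exponent $4$ being far from sharp. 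Every constant produced along the way is monotone in $\mu(\partial\mathbb{D})$, so the bounding function $C(\cdot)$ may be taken increasing on $(0,\infty)$.
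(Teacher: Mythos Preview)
The paper does not prove this lemma; it simply quotes \cite[Corollary 3.4]{stable}. Your proposal attempts a direct argument, and the overall architecture (metric sandwich $\Rightarrow$ analytic extension $\Rightarrow$ derivative bound) is the right one, but the step you yourself flag as ``the main obstacle'' is not actually carried out.

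In Step 2 you need the zeros of $F_\mu$ not to accumulate on $I$, and you offer only a heuristic (``length--area comparison over dyadic boxes,'' ``transport to the strip model''). The fixed lower bound $\lambda_{F_\mu}\ge\kappa\,\lambda_{\mathbb D}$ with $\kappa=\exp(-2\mu(\partial\mathbb D)/\epsilon^2)$ possibly far from $1$ is not strong enough to separate preimages of $0$ by any mechanism I can reconstruct from your sketch: an infinitesimal $\kappa$-quasi-isometry can still fold many points onto one. What you are missing is a \emph{refinement} of your own sandwich. For $z\in B(\zeta_0,\rho)\cap\mathbb D$ with $\zeta_0\in I$ one has $P_\mu(z)\lesssim \rho\,\mu(\partial\mathbb D)/\epsilon^2$, so by taking $\rho\asymp c\,\epsilon^2/\mu(\partial\mathbb D)$ you get $|S_\mu|>1-\varepsilon_0$ and hence $\lambda_{F_\mu}>(1-\varepsilon_0)\lambda_{\mathbb D}$ on $B(\zeta_0,\rho)\cap\mathbb D$. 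Now Lemma~\ref{mcmullen-injectivity} applies and gives, in one stroke, injectivity on $B(\zeta_0,\rho)$ (hence no zeros there, since $F_\mu(0)=0$ and $0\notin B(\zeta_0,\rho)$) and the analytic continuation across $B(\zeta_0,\rho)\cap\partial\mathbb D$. This also yields the quantitative estimate $1-|z_0|\gtrsim_{\mu(\partial\mathbb D)}\dist(z_0,E)^2$ you anticipate.

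Your Step 3 inherits the same difficulty: the assertion that ``the Blaschke/Poisson expansion of $\log|F_\mu|$ together with the metric sandwich bounds $|F_\mu|$ below by a positive constant $c'$ on $B(\zeta,c\,d^2)\cap\mathbb D$'' is not justified --- even if every individual zero satisfies $1-|z_0|\gtrsim d^2$, many zeros could sit just outside $B(\zeta,c\,d^2)$ and drive the Blaschke product to $0$. Once injectivity on $B(\zeta,\rho)$ is available, the clean route is the one used in the proof of Lemma~\ref{refined-bound}: Koebe distortion pins $|F_\mu'(z)/F_\mu'(\zeta)|$ near $1$ on $B(\zeta,c_2\rho)$, and if $|F_\mu'(\zeta)|$ exceeded $C/\rho$ then integrating along the inward radius would push $F_\mu((1-c_2\rho)\zeta)$ out of $\mathbb D$. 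This gives $|F_\mu'(\zeta)|\lesssim_{\mu(\partial\mathbb D)} d^{-2}$, stronger than the stated $d^{-4}$, as you suspected.
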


The above estimate will suffice to study radial limits of $F_\mu$. For thick limits, we require a more refined estimate:

\begin{lemma}
\label{refined-bound}
Let $\zeta \in \partial \mathbb{D} \setminus E$ be a point on the unit circle, not contained in $E$. Consider the point $z = (1-\delta)\zeta$ where $\delta = \dist(\zeta, E)$. If $\delta < 1$ and $P_\mu(z) \ge 1$ then
\begin{equation}
\label{eq:refined-bound}
|F'_{\mu}(\zeta)| \le C \cdot \frac{P_\mu(z)}{\delta},
\end{equation}
for some universal constant $C > 0$.
\end{lemma}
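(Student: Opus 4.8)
\section*{Proof proposal for Lemma \ref{refined-bound}}

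The plan is to first understand the estimate in the ``model'' case $F_\mu = S_\mu$, where everything is an explicit kernel computation, and then transfer it to a general $F_\mu \in \mathscr J$ using the analytic continuation furnished by Lemma \ref{stable-bound} together with the Fundamental Lemma (Lemma \ref{fundamental-lemma}). Throughout I normalize $|\zeta| = 1$, $\delta = \dist(\zeta,E)$, $z = (1-\delta)\zeta$, and I split $\mu = \mu_{\reduced} + \mu_{\thin}$ with $\mu_{\reduced} = \mu|_{B(\zeta,10\delta)}$ the part of $E$ near $\zeta$ and $\mu_{\thin}$ the rest. The two elementary facts driving the proof are: for $w \in \supp \mu_{\reduced}$ one has $\delta \le |w-\zeta|\le 10\delta$ and $|w - z| \asymp \delta$, hence $\mu_{\reduced}(\partial \mathbb{D}) \lesssim \delta\, P_{\mu_{\reduced}}(z) \le \delta\, P_\mu(z)$; and for $w \in \supp \mu_{\thin}$ one has $|w-z|\asymp |w-\zeta|$, hence $\int \frac{d\mu_{\thin}(w)}{|w-\zeta|^2} \asymp \tfrac1\delta P_{\mu_{\thin}}(z) \le \tfrac1\delta P_\mu(z)$.

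First I would treat $S_\mu$. Since $|S_\mu|\equiv 1$ on $\partial \mathbb{D}\setminus E$ and $(\log S_\mu)'(z) = -\int \frac{2w}{(w-z)^2}\,d\mu(w)$, the boundary derivative at $\zeta$ satisfies $|S_\mu'(\zeta)| \le 2\int \frac{d\mu(w)}{|w-\zeta|^2}$. Splitting this integral over $\mu_{\reduced}$ and $\mu_{\thin}$ and using $|w-\zeta|\ge \delta$ on $\supp\mu_{\reduced}$ gives $|S_\mu'(\zeta)| \lesssim \frac{\mu_{\reduced}(\partial \mathbb{D})}{\delta^2} + \int\frac{d\mu_{\thin}}{|w-\zeta|^2} \lesssim \frac{P_\mu(z)}{\delta}$ by the two facts above. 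This already proves the lemma whenever $S_\mu'$ has no zeros in $\mathbb{D}$ (so that $F_\mu = S_\mu$ up to a M\"obius factor), and, more importantly, it identifies the mechanism: the local mass $\mu_{\reduced}(\partial \mathbb{D})$ is forced to be $\lesssim \delta P_\mu(z)$ precisely because $\dist(\zeta,E)=\delta$, which tames the $|w-\zeta|^{-2}$ singularity; the far mass contributes via the conjugate-Poisson kernel, which is $\asymp P_\mu(z)/\delta$ by construction.

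For a general $F_\mu$, I would proceed as follows. By Lemma \ref{stable-bound}, $F_\mu$ extends analytically past the arc $I \subset \partial \mathbb{D}$ of length $\asymp\delta$ centered at $\zeta$ (which misses $\supp\mu$), so $F_\mu'$ is genuinely analytic and non-vanishing near $\zeta$ and $|F_\mu'(\zeta)|$ is an honest derivative; one may also invoke $|F_\mu'(\zeta)| = 1/\sigma_{F_\mu(\zeta)}(\{\zeta\})$ for the Clark measure to keep the target concrete. For the far part, the singular factor $S_{\mu_{\thin}}$ contributes at most $\lesssim P_\mu(z)/\delta$ to the boundary logarithmic derivative as above, while $S_{\mu_{\reduced}}$ accounts for the ``winding'' of $F_\mu$ inside $B(\zeta,\delta)$; to control this I would rescale at $\zeta$ by a M\"obius map $T$ with $|T'(\zeta)|\asymp 1/\delta$, obtaining (using Theorem A, Lemma \ref{inner-composition} and Corollary \ref{psiF-outer}) a factorization $F_{\mu_{\reduced}}\circ T = M\circ F_{\widetilde\mu}$ where $M$ is M\"obius and $\widetilde\mu = C_T\mu_{\reduced}$. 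By the transformation rule $P_{\widetilde\mu}(0)=P_{\mu_{\reduced}}(T(0))$ (Theorem \ref{radon-nikodym2}) the total mass of $\widetilde\mu$ is controlled by a value of $P_{\mu_{\reduced}}$, hence by $P_\mu(z)$ up to constants, and $\dist(\zeta,\supp\widetilde\mu)\gtrsim 1$; applying the version of Lemma \ref{stable-bound} with a linear-in-mass constant $C(t)\lesssim 1+t$ (valid here because a singular measure of small mass produces an inner function close to a M\"obius transformation) and transporting the estimate back through the chain rule recovers $\lesssim P_\mu(z)/\delta$.

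The main obstacle is gluing the near and far contributions, since $F_\mu$ does \emph{not} factor through $F_{\mu_{\reduced}}$ and $F_{\mu_{\thin}}$. Here I would lean on the Fundamental Lemma: both $\lambda_{F_{\mu_{\reduced}}}$ and $\lambda_{F_{\mu_{\thin}}}$ lie above $|S_\mu|\lambda_{\mathbb{D}}$ and hence above the minimal metric $\lambda_{F_\mu}$, and more quantitatively the differential inequality $\lambda_{F_\mu}\ge |S_\mu|\lambda_{\mathbb{D}} = e^{-P_\mu}\lambda_{\mathbb{D}}$, combined with the uniform bound $P_\mu \lesssim P_\mu(z)$ on $B(\zeta,\delta/2)\cap\mathbb{D}$, confines the image $F_\mu$ enough near $\zeta$ that its angular derivative is dominated by the sum of the two contributions computed above; making this last step precise — i.e.\ controlling the auxiliary M\"obius factor $M$ and the passage from an interior estimate near $z$ to the boundary value at $\zeta$ without losing the linear dependence on $P_\mu(z)$ — is where I expect the real work to lie.
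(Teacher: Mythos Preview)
Your kernel computation for $S_\mu$ is correct and identifies the right scale $\delta/P_\mu(z)$, but the transfer to $F_\mu$ has a genuine gap that you yourself flag. The problem is structural: $F_\mu$ does not factor through $F_{\mu_{\reduced}}$ and $F_{\mu_{\thin}}$, and the Fundamental Lemma only tells you $\lambda_{F_\mu}\le \min(\lambda_{F_{\mu_{\reduced}}},\lambda_{F_{\mu_{\thin}}})$, which controls hyperbolic contraction, not the angular derivative. Indeed $\lambda_{F_\mu}/\lambda_{\mathbb D}\to 1$ radially at \emph{every} point where $F_\mu$ has a finite angular derivative, so an interior bound on $\lambda_{F_\mu}$ carries no information about $|F'_\mu(\zeta)|$. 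Your last paragraph proposes to use $\lambda_{F_\mu}\ge e^{-P_\mu}\lambda_{\mathbb D}$ on $B(\zeta,\delta/2)\cap\mathbb D$ together with $P_\mu\lesssim P_\mu(z)$ there; but when $P_\mu(z)$ is large this lower bound is essentially vacuous, and in any case a lower bound on $\lambda_{F_\mu}$ cannot by itself produce an \emph{upper} bound on $|F'_\mu(\zeta)|$. The M\"obius-rescaling route via Theorem~A and a hoped-for linear constant in Lemma~\ref{stable-bound} is also speculative: the paper never asserts $C(t)\lesssim 1+t$, and even in the toy case $F_\mu=M\circ S_\mu$ the M\"obius factor $M$ introduces a dependence on $\mu(\partial\mathbb D)$ that you would still have to eliminate.

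The paper's argument is quite different and avoids all of this. The key observation is that on the \emph{smaller} ball $B(\zeta,\rho)$ with $\rho=c_1\delta/P_\mu(z)$ one has $P_\mu\lesssim c_1$ uniformly (since $1-|w'|\le\rho$ there while $|w-w'|\asymp|w-\zeta|\asymp|w-z|$ for $w\in E$), so $|S_\mu|>1-\varepsilon_0$ and hence $\lambda_{F_\mu}>(1-\varepsilon_0)\lambda_{\mathbb D}$ on $B(\zeta,\rho)\cap\mathbb D$. A separate injectivity lemma (the paper's Lemma~\ref{mcmullen-injectivity}, based on a $2$-jet/geodesic-curvature argument) then shows $F_\mu$ is univalent on $B(\zeta,\rho)$; Koebe-type distortion makes $|F'_\mu|$ essentially constant on $B(\zeta,c_2\rho)$; and finally, since $F_\mu$ maps $\mathbb D$ into $\mathbb D$, moving radially a distance $c_2\rho$ from $\zeta$ cannot push the image outside $\mathbb D$, forcing $|F'_\mu(\zeta)|\lesssim 1/\rho\asymp P_\mu(z)/\delta$. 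In short, the missing idea is to shrink the ball until $P_\mu$ is \emph{absolutely small} (not merely $\lesssim P_\mu(z)$), upgrade the Fundamental Lemma to univalence, and then read off the derivative bound from the radius of univalence rather than from any kernel identity for $F_\mu'$.
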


To show Lemma \ref{refined-bound}, we will use the following lemma:

\begin{lemma}
\label{mcmullen-injectivity}
Let $F$ be a holomorphic self-map of the disk, $\zeta \in \partial \mathbb{D}$ be a point on the unit circle and $0 < \rho < 1$.
There exists a universal constant $\varepsilon_0 > 0$ so that the inequality 
$$
\lambda_F > (1 - \varepsilon_0)\lambda_{\mathbb{D}}, \qquad \text{on }B(\zeta, \rho) \cap \mathbb{D},
$$
implies that $F$ is injective on $B(\zeta, \rho)$.
\end{lemma}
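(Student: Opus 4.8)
The plan is to argue by contradiction: assume $F(z_1)=F(z_2)$ for distinct $z_1,z_2\in B(\zeta,\rho)\cap\mathbb{D}$ and, if $\varepsilon_0$ is small, locate a point of $B(\zeta,\rho)\cap\mathbb{D}$ at which $\lambda_F$ drops below $(1-\varepsilon_0)\lambda_{\mathbb{D}}$. Post-composing with a M\"obius automorphism changes neither $\lambda_F$ nor injectivity, so we may assume $F(z_1)=F(z_2)=0$. The first ingredient is the comparison function $u:=\log(\lambda_{\mathbb{D}}/\lambda_F)$, which is non-negative by the Schwarz--Pick lemma, subharmonic away from $\crit F$ (since $\Delta\log\lambda_F=\lambda_F^2\le\lambda_{\mathbb{D}}^2=\Delta\log\lambda_{\mathbb{D}}$ there), and has a logarithmic pole $+\infty$ at each critical point of $F$. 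In fact one has a quantitative version: conjugating $F$ by automorphisms so that a given critical point $c$ goes to $0$ and $F(c)$ goes to $0$, the resulting map $\widetilde F$ vanishes to order $\ge 2$ at $0$, so $|\widetilde F(z)|\le|z|^2$ and $|\widetilde F'(z)|\lesssim|z|$; this forces $\lambda_{\widetilde F}<\tfrac12\lambda_{\mathbb{D}}$ on a hyperbolic ball about $0$ of \emph{universal} radius $r_0$, hence $\lambda_F<\tfrac12\lambda_{\mathbb{D}}$ on the hyperbolic ball of radius $r_0$ about \emph{every} critical point of $F$. Consequently, for $\varepsilon_0<\tfrac12$ the hypothesis rules out critical points of $F$ within hyperbolic distance $r_0$ of $R:=B(\zeta,\rho)\cap\mathbb{D}$; in particular $F|_R$ is a local homeomorphism.

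The second ingredient is that $R$ is \emph{hyperbolically convex}: the circular arc $\partial B(\zeta,\rho)\cap\mathbb{D}$ meets $\partial\mathbb{D}$ non-orthogonally (as $\rho\neq 0$), so it is a hypercycle, and each region it cuts off $\mathbb{D}$ — in particular $R$ — is convex. Thus the hyperbolic geodesic $\gamma^{\ast}$ joining $z_1$ and $z_2$ lies in $R$, and, choosing the coincidence pair $(z_1,z_2)$ to minimize $d:=d_{\mathbb{D}}(z_1,z_2)$, the restriction $F|_{\gamma^{\ast}}$ is injective except at the endpoints, so $F(\gamma^{\ast})$ is a simple loop based at $0$. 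The third ingredient is a Schwarz--Pick estimate at a single point: writing $b_{z_1}(z)=\frac{z-z_1}{1-\overline{z_1}z}$ and $H:=F/b_{z_1}$, one has $H\colon\mathbb{D}\to\mathbb{D}$ with $H(z_2)=0$, whence $\lambda_F(z_2)=\lambda_H(z_2)\,|b_{z_1}(z_2)|\le\tanh(\tfrac12 d)\,\lambda_{\mathbb{D}}(z_2)$. Since $z_2\in R$, the hypothesis gives $\tanh(\tfrac12 d)>1-\varepsilon_0$, so $d$ exceeds any prescribed constant once $\varepsilon_0$ is small.

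To finish, note that $F\colon(\mathbb{D},\lambda_F)\to(\mathbb{D},\lambda_{\mathbb{D}})$ is a local isometry, so the hyperbolic geodesic curvature of the loop $F(\gamma^{\ast})$ equals the $\lambda_F$-geodesic curvature of the geodesic $\gamma^{\ast}$, namely $-\partial_n\log|F^{\#}|/|F^{\#}|$, where $|F^{\#}|=\lambda_F/\lambda_{\mathbb{D}}$. A gradient estimate for the non-negative subharmonic function $u$ on $R$ shows that $|\nabla u|$ is $\lesssim\varepsilon_0$ wherever $\gamma^{\ast}$ stays a bounded hyperbolic distance inside $R$, so there $F(\gamma^{\ast})$ has geodesic curvature $\lesssim\varepsilon_0$; a curve of geodesic curvature $\le\kappa_0<1$ is a $(\lambda(\kappa_0),k(\kappa_0))$-quasigeodesic with $\lambda(\kappa_0)\to1$ and $k(\kappa_0)\to0$ as $\kappa_0\to0$, and a quasigeodesic loop has length at most $k$. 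Hence $(1-\varepsilon_0)d\le\ell_{\mathrm{hyp}}(F(\gamma^{\ast}))\le k(O(\varepsilon_0))\to0$, contradicting $d>2\tanh^{-1}(1-\varepsilon_0)\to\infty$ for $\varepsilon_0$ small.

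The main obstacle is the last step's hidden hypothesis that $\gamma^{\ast}$ does not linger near the arc-boundary $\partial B(\zeta,\rho)\cap\mathbb{D}$, where the subharmonic gradient estimate for $u$ degrades and $F(\gamma^{\ast})$ could in principle become wiggly. Resolving this is exactly where one must exploit that $R$ is a \emph{lune} with $\rho<1$ (hyperbolically convex, with only the hypercycle as a finite-distance boundary) rather than, say, an annulus $\{1-\delta<|z|<1\}$ — on the latter the analogous statement is false, since $z\mapsto z^{N}$ is a non-injective near-isometry there. Concretely, one argues that the minimizing geodesic cannot spend a long time near the hypercycle (or handles that stretch by a separate local analysis using the absence of critical points in an $r_0$-neighborhood of $R$); this is the delicate point of the proof.
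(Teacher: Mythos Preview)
Your overall architecture---hyperbolic convexity of the lune $R=B(\zeta,\rho)\cap\mathbb{D}$, plus a geodesic-curvature bound for $F(\gamma^\ast)$---matches the paper's proof. The genuine gap is exactly the one you flag: your curvature bound comes from a gradient estimate on $u=\log(\lambda_{\mathbb D}/\lambda_F)$, and that estimate needs a hyperbolic collar inside $R$, so it says nothing when $\gamma^\ast$ skirts the hypercycle $\partial B(\zeta,\rho)\cap\mathbb D$. Your suggested fixes (minimizing pairs, handling the boundary stretch separately using the $r_0$-buffer of critical points) are not carried out, and it is not clear they can be made to work without essentially redoing the argument.

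The paper avoids this boundary issue completely by a different mechanism: the curvature bound is obtained \emph{pointwise}, using only the value $\lambda_F(z)/\lambda_{\mathbb D}(z)>1-\varepsilon_0$ at the single point $z$, together with the \emph{global} fact that $F$ is a self-map of $\mathbb D$. After normalizing $z\mapsto 0$, $F(0)=0$, $F'(0)\in(1-\varepsilon_0,1)$, one applies the Schwarz lemma to the self-map $w\mapsto F(w)/w$: Schwarz--Pick gives $d_{\mathbb D}(F(w)/w,F'(0))=O(1)$ on $B(0,\tfrac12)$, and since $F'(0)$ sits within $\varepsilon_0$ of $\partial\mathbb D$, this hyperbolic ball has Euclidean diameter $O(\varepsilon_0)$, so $|F(w)-w|=O(\varepsilon_0)$ there and Cauchy's formula yields $|F''(0)|=O(\varepsilon_0)$. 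Thus the $2$-jet of $F$ at \emph{every} point of $R$ is $O(\varepsilon_0)$-close to that of an isometry, uniformly up to the hypercycle; the image of any hyperbolic geodesic in $R$ then has geodesic curvature $<1$ (below the horocycle threshold), hence is embedded. Convexity of $R$ finishes the argument, and Schwarz reflection extends injectivity across $\partial\mathbb D$. The moral: the global Schwarz structure of $F$ converts the pointwise metric bound into a pointwise $2$-jet bound, so no interior-regularity estimate (with its attendant boundary degeneration) is needed.
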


\begin{proof}
{\em Step 1.} We first show that the assumption implies that at any point $z \in B(\zeta, \rho) \cap \mathbb{D}$, the 2-jet of $F$  matches the
2-jet of a hyperbolic isometry with an error of $O(\varepsilon_0)$.

Composing with M\"obius transformations, one may assume that $z = 0$, $F(0) = 0$ and $0 \le F'(0) < 1$. With this normalization, the assumption says that $F'(0) > 1 - \varepsilon_0$. We need to show that $|F''(0)| = O(\varepsilon_0)$. Applying the Schwarz lemma to $F(w)/w$ shows that the hyperbolic distance
$$d_{\mathbb{D}} \bigl (F(w)/w, F'(0) \bigr ) = O(1), \qquad \text{for }w \in B(0, 1/2).$$
Taking note of the location of $F'(0) \in \mathbb{D}$, this implies that $|F (w) - w| = O(\varepsilon_0)$ for $w \in B(0, 1/2)$. Cauchy's integral formula now gives the estimate for the second derivative. 

\medskip

{\em Step 2.}
As a result, when $\varepsilon_0 > 0$ is sufficiently small, the image of a hyperbolic geodesic $\gamma$ contained in $B(\zeta,\rho) \cap \mathbb{D}$ will have hyperbolic geodesic curvature less than $1$ (the curvature of a horocycle). Since such a curve cannot cross itself, $F$ is injective on $\gamma$.

\medskip

{\em Step 3.}
As $B(\zeta, \rho) \cap \mathbb{D}$ is convex in the hyperbolic metric, this shows that $F$ is injective on $B(\zeta, \rho) \cap \mathbb{D}$.  Taking advantage of the fact that $F$ is an inner function, one may use the aforementioned injectivity together with \cite[Theorem 2.6]{mashreghi} to see that $F$ extends analytically across $B(\zeta, \rho) \cap \partial \mathbb{D}$. By the Schwarz reflection principle, $F$ extends to a meromorphic function on $B(\zeta, \rho)$.
Finally, as $F$ is symmetric with respect to the unit circle, $F$ is injective on the whole ball $B(\zeta, \rho)$. 
\end{proof}

\begin{remark}
Steps 1 and 2 of the above proof are taken from \cite[Corollary 10.7 and Proposition 10.9]{mcmullen-rtree}.
\end{remark}

\begin{proof}[Proof of Lemma \ref{refined-bound}]
Consider the ball
$$
B = B (\zeta, \rho ), \qquad \rho =  \frac{c_1 \cdot \delta}{P_\mu(z)}, \qquad 0 < c_1 < 1.
$$
From the assumption, it follows that $0 < \rho < \delta$, so that $\overline{B(\zeta, \rho)} \cap \supp \mu = \emptyset$. In particular, $F_\mu$ extends meromorphically to $B(\zeta, \rho)$ by Schwarz reflection and Lemma \ref{stable-bound}. By requesting $c_1 > 0$ to be sufficiently small, we can make $P_\mu(z)$ to be as small as we wish on $B (\zeta, \rho)$.
Since $|S_\mu(z)| = \exp(- P_\mu(z))$, we can choose $c_1 > 0$ so that $|S_\mu(z)| > 1 - \varepsilon_0$  on $B (\zeta, \rho)$, where $\varepsilon_0$ is the constant from Lemma \ref{mcmullen-injectivity}. 

The estimate (\ref{eq:fundamental-lemma}) and Lemma \ref{mcmullen-injectivity} guarantee that $F_\mu$ is injective on  $B (\zeta, \rho)$.
A compactness argument shows that
 $$
\biggl | \frac{F_\mu'(z)}{F_\mu'(\zeta)} - 1 \biggr | < 0.1, \qquad z \in B(\zeta, c_2 \cdot \rho),
$$
where $0 < c_2 < 1$ is a universal constant. If the desired estimate (\ref{eq:refined-bound}) were false, then
$F_\mu((1-c_2\rho)\zeta) \notin \mathbb{D}$, which would contradict that $F_\mu$ is an inner function.
\end{proof}

To estimate the derivative of $F_\mu$ in the unit disk, we will use a Jensen-type formula for Nevanlinna functions, e.g.~see \cite[Lemma 3.1]{inner}:
\begin{lemma}
\label{jensen-type}
For $z \in \mathbb{D}$, we have:
\begin{equation}
\label{eq:jensen-type}
\log|F_{\mu}'(z)|=\int_{\partial\mathbb{D}}\log|F_{\mu}'(\zeta)|d\omega_{z}(\zeta)-\int_{\partial\mathbb{D}}P_{z}(\zeta)d\mu(\zeta),
\end{equation}
where $\omega_z$ denotes the harmonic measure on the unit circle as viewed from $z$.
\end{lemma}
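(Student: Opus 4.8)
The plan is to read the identity off the Nevanlinna factorization of $F_\mu'$ together with the standard Poisson-integral representations of singular and outer factors. Since $F_\mu \in \mathscr J$ has finite entropy, $F_\mu' \in \mathcal N$, so by the Ahern-Clark theorem recalled in the introduction we may write $F_\mu' = BSO$ with $B$ a Blaschke product, $S$ a singular inner function and $O$ an outer function --- crucially, with no singular factor in the denominator. By construction $\inn F_\mu' = S_\mu$, so $B$ is trivial and $S = S_\mu$; thus $F_\mu' = S_\mu O$. In particular $|F_\mu'| = |O|$ a.e.~on $\partial\mathbb{D}$, hence $\log|F_\mu'| = \log|O| \in L^1(\partial\mathbb{D})$, which is where the finite-entropy hypothesis enters.

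Next I would take $\log|\cdot|$ of this factorization, $\log|F_\mu'(z)| = \log|S_\mu(z)| + \log|O(z)|$ for $z \in \mathbb{D}$, and evaluate the two pieces separately. For the singular factor, the explicit formula $S_\mu(z) = \exp\bigl(-\int_{\partial\mathbb{D}} \tfrac{\zeta+z}{\zeta-z}\,d\mu(\zeta)\bigr)$ gives
\begin{equation*}
\log|S_\mu(z)| = -\int_{\partial\mathbb{D}} P_z(\zeta)\,d\mu(\zeta), \qquad P_z(\zeta) = \re \tfrac{\zeta+z}{\zeta-z} = \tfrac{1-|z|^2}{|\zeta-z|^2}.
\end{equation*}
For the outer factor, $\log|O|$ is the Poisson extension of its boundary values, so $\log|O(z)| = \int_{\partial\mathbb{D}} P_z(\zeta)\log|O(\zeta)|\,dm(\zeta) = \int_{\partial\mathbb{D}} P_z(\zeta)\log|F_\mu'(\zeta)|\,dm(\zeta)$, using $|O| = |F_\mu'|$ a.e.~on $\partial \mathbb{D}$. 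Since the harmonic measure of $\mathbb{D}$ as seen from $z$ is $d\omega_z(\zeta) = P_z(\zeta)\,dm(\zeta)$, adding the two expressions yields exactly (\ref{eq:jensen-type}).

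The only step that is more than bookkeeping is the passage from the generic $B(S_1/S_2)O$ factorization available for an arbitrary function in $\mathcal N$ to the genuine $BSO$ factorization of $F_\mu'$ --- this is precisely the Ahern-Clark input, and it is what allows $\log|O|$ to be handled by a single Poisson integral. Everything else is a routine manipulation of Poisson kernels together with the identification of harmonic measure on the circle with $P_z\,dm$.
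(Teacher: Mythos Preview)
Your proposal is correct and is precisely the standard argument: factor $F_\mu' = S_\mu O$ via Ahern--Clark and the defining relation $\inn F_\mu' = S_\mu$, then read off the two Poisson integrals from the explicit formulas for singular and outer factors. The paper does not give its own proof of this lemma but simply cites \cite[Lemma 3.1]{inner}, so there is nothing further to compare.
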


The normalization $F_\mu(0)=0$ guarantees that $|F'_\mu(\zeta)| \ge 1$ on the unit circle, e.g.~see \cite[Theorem 4.15]{mashreghi}. In particular, the first term in (\ref{eq:jensen-type}) is positive, while the second term is negative.

We now give a short glimpse of how the above techniques will be used to show the abundance of radial and thick limits in Sections \ref{sec:abundance-radial} and \ref{sec:abundance-thick}. In order for the estimates of Lemmas \ref{stable-bound}, \ref{refined-bound} and \ref{jensen-type}
to be effective, the point $z \in \mathbb{D}$ must be far away from the support of $\mu$. To obtain estimates near the support of $\mu$, we use the Fundamental Lemma 
(Lemma \ref{fundamental-lemma}) to remove the part of the support of $\mu$ which obstructs the vision of $z$. However, we do not want to remove too much mass to retain enough hyperbolic contraction of $F_\mu$. These conflicting demands make the arguments below somewhat delicate.

\section{Background on measures}
\label{sec:measures}

In this section, we discuss several facts about measures that are supported on Beurling-Carleson sets.

We begin by introducing some notation. For a point $x \in \partial \mathbb{D}$ and $0 < \varepsilon < \pi$, we let $I(x, \varepsilon) = (e^{-i\varepsilon}x ,e^{i\varepsilon}x)$ denote the arc of the unit circle centered at $x$ of length $2\varepsilon$. We denote the left and right arcs by
 $I^L(x,\varepsilon) = (e^{-i\varepsilon} x , x]$ and $I^R(x,\varepsilon) = [x, e^{i\varepsilon} x)$ respectively.
 For a measure $\mu$ on the unit circle, we write $\mu (x,\varepsilon) = \mu(I(x,\varepsilon))$. The quantities $\mu ^L (x,\varepsilon)$ and $\mu ^R (x,\varepsilon)$ are defined similarly.

\subsection{Non-centered maximal function}
\label{sec:ncf}

We consider an analogue of the Hardy-Littlewood maximal function for measures. For two measures $\mu, \nu$ on $\partial \mathbb{D}$, the {\em non-centered maximal function} is given by
\begin{equation}
H_\mu \nu(x) = \liminf_{x\in I,\,  |I|\to 0} \, \frac{\mu(I)}{\nu(I)}.
\end{equation}
\begin{lemma}
\label{uncentered lemma}
Let $\mu, \nu$ be finite measures on $\partial \mathbb{D}$ such that $\nu$ is outer regular and $\nu(\supp \mu)=0$. For a.e.~$x$ with respect to $\mu$,
$$
H_\mu \nu(x)=\infty.
$$
\end{lemma}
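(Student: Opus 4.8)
The plan is to argue by contradiction, extracting a positive-$\mu$-measure set where $H_\mu\nu$ is finite and using a covering argument to show that $\nu$ then assigns positive mass to a set arbitrarily close to $\supp\mu$, eventually forcing $\nu(\supp\mu) > 0$. First I would fix $\lambda < \infty$ and let $A_\lambda = \{x : H_\mu\nu(x) < \lambda\}$; it suffices to show $\mu(A_\lambda) = 0$ for every $\lambda$, since $\{H_\mu\nu < \infty\} = \bigcup_{n} A_n$. By definition of the $\liminf$, for each $x \in A_\lambda$ and each $\varepsilon > 0$ there is an arc $I \ni x$ with $|I| < \varepsilon$ and $\mu(I) \le \lambda\,\nu(I)$. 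Thus the family $\mathcal V_\varepsilon = \{I : |I| < \varepsilon,\ \mu(I) \le \lambda\nu(I)\}$ is a Vitali cover of $A_\lambda$.

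The key step is a Vitali-type selection: from $\mathcal V_\varepsilon$ I would extract a countable subfamily of pairwise disjoint arcs $\{I_k\}$ (using the standard $5r$-covering lemma, or its one-dimensional refinement where the constant can be taken to be $3$) such that $A_\lambda \subset \bigcup_k 3I_k$ up to a $\mu$-null set, or more simply such that $\mu(A_\lambda) \le C\sum_k \mu(I_k)$ for an absolute constant $C$. Since each $I_k$ has length less than $\varepsilon$ and meets $A_\lambda$ — hence lies within distance $2\varepsilon$ of $\supp\mu$ once we discard arcs disjoint from $\supp\mu$ (note: if $I \cap \supp\mu = \emptyset$ then $\mu(I) = 0$, so such arcs contribute nothing; the arcs that matter all meet $\supp\mu$) — we get
$$
\mu(A_\lambda) \ \le\ C\sum_k \mu(I_k) \ \le\ C\lambda \sum_k \nu(I_k) \ \le\ C\lambda\,\nu\bigl(N_{2\varepsilon}(\supp\mu)\bigr),
$$
where $N_{2\varepsilon}(\supp\mu)$ is the $2\varepsilon$-neighbourhood of $\supp\mu$ and we used disjointness of the $I_k$ for the last inequality. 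Now let $\varepsilon \to 0$: since $\supp\mu$ is closed, $\bigcap_{\varepsilon > 0} N_{2\varepsilon}(\supp\mu) = \supp\mu$, and outer regularity of $\nu$ gives $\nu(N_{2\varepsilon}(\supp\mu)) \to \nu(\supp\mu) = 0$. Hence $\mu(A_\lambda) = 0$, completing the proof.

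The main obstacle is getting the covering step to play well with the measure $\mu$ rather than Lebesgue measure: the classical Vitali covering theorem controls Lebesgue measure of the covered set via disjoint subfamilies, but here I need to control $\mu(A_\lambda)$. The clean way around this is the Vitali--Besicovitch covering theorem (valid on $\mathbb{R}$, hence on $\partial\mathbb{D}$, for arbitrary Radon measures): given a fine cover of $A_\lambda$ by arcs, one can extract a countable disjoint subfamily $\{I_k\}$ with $\mu\bigl(A_\lambda \setminus \bigcup_k I_k\bigr) = 0$, so in fact $\mu(A_\lambda) \le \sum_k \mu(I_k)$ directly with no loss of constant. Alternatively, since we only need arcs and $\mu$ is finite on $\partial\mathbb{D}$, a direct one-dimensional argument suffices. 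Either way, once the disjoint subfamily is in hand the inequality $\mu(I_k) \le \lambda\nu(I_k)$ and the shrinking-neighbourhood argument finish things off; the only subtlety to record is that outer regularity of $\nu$ is exactly what licenses $\nu(N_{2\varepsilon}(\supp\mu)) \downarrow \nu(\supp\mu)$.
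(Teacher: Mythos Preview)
Your proof is correct and follows essentially the same strategy as the paper's: define the bad set $A_\lambda$, cover it by small arcs satisfying $\mu(I)\le\lambda\,\nu(I)$, extract a disjoint subfamily via a one-dimensional covering lemma, and bound $\mu(A_\lambda)$ by the $\nu$-mass of a set shrinking to $\supp\mu$. The only cosmetic differences are that the paper cites a covering lemma of Aldaz (giving constant $3$) in place of Besicovitch--Vitali, and applies outer regularity directly to $A_N\subset\supp\mu$ to produce an open superset $C_\varepsilon$ with $\nu(C_\varepsilon)<\varepsilon$, rather than passing through the neighbourhoods $N_{2\varepsilon}(\supp\mu)$.
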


\begin{proof}
For $N > 0$, consider the set
$$
A_N := \bigl \{x\in \supp \mu : H_\mu \nu(x) < N \bigr \}.
$$
For every point $x \in A_N$, there is a sequence of  arcs $\{ I_n^x \}_{n=1}^\infty$ such that $x\in I_n^x$, $|I_n^x|\to0$ as $n\to \infty$ and $\mu(I_n^x)<N\nu(I_n^x)$.

As $A_N$ is contained in the support of $\mu$, we have $\nu(A_N)=0$ by assumption. 
Since $\nu$ is outer regular, for any $\varepsilon>0$, there exists an open set $C_\varepsilon \supset A_N$ with $\nu(C_\varepsilon)<\varepsilon$.
Let 
$$
\mathcal{A}_{N,\varepsilon} := \bigl \{I_n^x\subseteq C_\varepsilon \,: \,  x\in A_N,\, n \ge 1 \bigr \}.
$$
Clearly, the arcs in $\mathcal{A}_{N,\varepsilon}$ cover the set $A_N$.  As explained in \cite{aldaz}, one can find a disjoint sub-collection of arcs $\Gamma \subset \mathcal{A}_{N,\varepsilon}$ such that
$$
\mu(\cup \mathcal{A}_{N,\varepsilon})\le 3\mu (\cup \Gamma),
$$
where $\cup \mathcal{A}_{N,\varepsilon}$ and $\cup \Gamma$ denote the union of arcs in
$\mathcal{A}_{N,\varepsilon}$ and $\Gamma$ respectively.
Therefore,
$$
\mu(A_N)\leq \mu(\cup \mathcal{A}_{N,\varepsilon}) \le 3\mu(\cup \Gamma)\le 3 N \cdot \nu(\cup \Gamma)\le 3N\varepsilon.
$$
Taking $\varepsilon \to 0$, we see that $\mu (A_N) = 0$, from which the result follows.
\end{proof}

To a Beurling-Carleson set $E$, we associate the {\em auxiliary measure} 
\begin{equation}
\label{eq:auxiliary-measure}
\nu_E = \log^+ \frac{1}{d(\zeta,E)} \, dm
\end{equation}
on the unit circle. The summability condition (\ref{eq:beurling-carleson}) ensures that $\nu_E$ is a finite outer regular measure. It is not difficult to see that 
\begin{equation}
\label{eq:nue-lowerbound}
\nu_E(I) \ge C |I| \log \frac{1}{|I|}, \qquad I \subset \partial \mathbb{D}, \qquad \overline{I} \cap E \ne \emptyset,
\end{equation}
where $C$ is a universal constant, independent of $E$. More precisely, we may write $I \setminus E = \bigcup J_k$ as a countable union of complementary arcs.
Since each complementary arc $J_k$ has at least one endpoint in $E$,
$$\nu_E(J_k) \, \ge \, C |J_k| \log \frac{1}{|J_k|} \, \ge \, C |J_k| \log \frac{1}{|I|}.$$
From here, (\ref{eq:nue-lowerbound}) follows after summing over $k$. Applying Lemma \ref{uncentered lemma} with  $\nu = \nu_E$ shows the following corollary:

\begin{corollary}
\label{Shapiro bound}
Let $\mu$ be a finite measure supported on  Beurling-Carleson set. For $\mu$ a.e.~$x \in \partial \mathbb{D}$,
$$
\lim_{\varepsilon \to 0} \frac{\mu ^R(x,\varepsilon)}{\nu^R(x,\varepsilon) }=\infty
\qquad
\text{and}
\qquad
\lim_{\varepsilon \to 0} \frac{\mu ^R(x,\varepsilon)}{\varepsilon \log  \frac{1}{\varepsilon} }=\infty.
$$
Similar statements hold for left intervals and centered intervals.
\end{corollary}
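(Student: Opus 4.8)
The plan is to deduce the corollary directly from Lemma \ref{uncentered lemma}, applied with $\nu = \nu_E$. First I would check the hypotheses of that lemma: $\nu_E$ is a finite outer regular measure by the Beurling--Carleson condition (noted right after the definition (\ref{eq:auxiliary-measure})), and since $\mu$ is supported on the Beurling--Carleson set $E$ while $\nu_E \ll m$ and $m(E) = 0$, we get $\nu_E(\supp \mu) = 0$. Lemma \ref{uncentered lemma} then produces a set of full $\mu$-measure on which
$$
H_\mu \nu_E(x) \ = \ \liminf_{x \in I,\, |I| \to 0} \frac{\mu(I)}{\nu_E(I)} \ = \ \infty .
$$
Intersecting with the full-measure set $\{x : x \in E\}$ (recall $\mu$ is supported on $E$), I fix an $x$ at which both of these hold and prove all the claimed limits there.

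The key observation is that the liminf defining $H_\mu \nu_E(x)$ ranges over the \emph{entire} family of arcs containing $x$, so restricting to any subfamily of arcs that shrink to $x$ can only increase the liminf. Applied to the centered arcs $I(x,\varepsilon)$ (which are open and contain $x$ in their interior) this gives $\mu(x,\varepsilon)/\nu_E(x,\varepsilon) \to \infty$ at once. For the one-sided arcs $I^R(x,\varepsilon) = [x, e^{i\varepsilon}x)$ and $I^L(x,\varepsilon)$, whose endpoint rather than interior point is $x$, I would fix $\varepsilon$, approximate $I^R(x,\varepsilon)$ from outside by the open arcs $(e^{-i\eta}x, e^{i\varepsilon}x)$ with $\eta \downarrow 0$, and pass to the limit: continuity from above of the finite measures $\mu$ and $\nu_E$, together with $\nu_E^R(x,\varepsilon) > 0$ (immediate from (\ref{eq:nue-lowerbound})), shows that $\mu^R(x,\varepsilon)/\nu_E^R(x,\varepsilon)$ is bounded below by any prescribed constant once $\varepsilon$ is small enough, hence $\mu^R(x,\varepsilon)/\nu_E^R(x,\varepsilon) \to \infty$, and symmetrically for $I^L$. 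This is the first displayed limit of the corollary, together with its left and centered analogues.

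For the second limit I would invoke the lower bound (\ref{eq:nue-lowerbound}): since $x \in E$, the closure of $I^R(x,\varepsilon)$ meets $E$, so $\nu_E^R(x,\varepsilon) \ge C\,|I^R(x,\varepsilon)| \log \frac{1}{|I^R(x,\varepsilon)|} = C\varepsilon \log \frac{1}{\varepsilon}$ for all small $\varepsilon$. Then
$$
\frac{\mu^R(x,\varepsilon)}{\varepsilon \log \frac{1}{\varepsilon}} \ = \ \frac{\mu^R(x,\varepsilon)}{\nu_E^R(x,\varepsilon)} \cdot \frac{\nu_E^R(x,\varepsilon)}{\varepsilon \log \frac{1}{\varepsilon}} \ \ge \ C \cdot \frac{\mu^R(x,\varepsilon)}{\nu_E^R(x,\varepsilon)} \ \longrightarrow \ \infty
$$
by the first part, and the identical computation handles the left and centered arcs (the centered case picking up only the harmless extra factor $2\log\frac{1}{2\varepsilon}/\log\frac{1}{\varepsilon} \to 2$). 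The only step that takes a moment of care is the passage from the two-sided maximal function of Lemma \ref{uncentered lemma} to the half-open arcs $I^R, I^L$ in the middle paragraph; everything else is a transcription of that lemma and of the elementary bound (\ref{eq:nue-lowerbound}).
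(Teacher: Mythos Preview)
Your proposal is correct and follows exactly the approach the paper intends: the paper's entire proof is the single sentence ``Applying Lemma \ref{uncentered lemma} with $\nu = \nu_E$ shows the following corollary,'' and you have simply unpacked the hypotheses and the passage from the non-centered maximal function to one-sided arcs via continuity from above, together with the use of (\ref{eq:nue-lowerbound}) for the second limit. There is nothing to add.
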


\subsection{A rearrangement inequality}


\begin{lemma}
\label{rearrangement lemma for Poisson extension}
Suppose $\mu, \nu$ are two measures on $\partial \mathbb{D}$ and $x \in \partial \mathbb{D}$ is a point. If
$$
\mu(I(x,\varepsilon)) \ge \nu(I(x,\varepsilon)), \qquad 0 < \varepsilon \le \pi,
$$
 then their Poisson extensions satisfy
$$
P_{\mu} (rx)\ge P_{\nu} (rx),
$$
for any $0 < r<1$.
\end{lemma}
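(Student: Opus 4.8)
The plan is to exploit the radial symmetry and monotonicity of the Poisson kernel about the point $x$. Normalizing $x=1$ and writing $\zeta=e^{i\theta}$ with $\theta\in(-\pi,\pi]$, the kernel at $z=r$ is
\[
P_{r}(\theta)=\re\frac{\zeta+r}{\zeta-r}=\frac{1-r^{2}}{1-2r\cos\theta+r^{2}},
\]
which is even in $\theta$, strictly decreasing on $[0,\pi]$, and smooth for $r<1$; hence $-P_{r}'(s)\ge 0$ for $s\in[0,\pi]$ and $\int_{0}^{\pi}\bigl(-P_{r}'(s)\bigr)\,ds=P_{r}(0)-P_{r}(\pi)<\infty$. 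Thus $P_{\mu}(rx)=\int_{\partial\mathbb{D}}P_{r}(|\theta|)\,d\mu$, the integrand depending on $\zeta$ only through its arclength distance to $x$.

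The key move is a layer-cake rewriting (equivalently, integration by parts). Starting from
\[
P_{r}(|\theta|)=P_{r}(\pi)+\int_{|\theta|}^{\pi}\bigl(-P_{r}'(s)\bigr)\,ds,
\]
I would integrate against $\mu$ and swap the order of integration by Tonelli (the integrand is nonnegative), obtaining
\[
P_{\mu}(rx)=P_{r}(\pi)\,\mu(\partial\mathbb{D})+\int_{0}^{\pi}\bigl(-P_{r}'(s)\bigr)\,\mu\bigl(I(x,s)\bigr)\,ds,
\]
and likewise for $\nu$. Subtracting,
\[
P_{\mu}(rx)-P_{\nu}(rx)=P_{r}(\pi)\bigl(\mu(\partial\mathbb{D})-\nu(\partial\mathbb{D})\bigr)+\int_{0}^{\pi}\bigl(-P_{r}'(s)\bigr)\bigl(\mu(I(x,s))-\nu(I(x,s))\bigr)\,ds.
\]
Every factor on the right is nonnegative: $P_{r}(\pi)\ge0$ and $-P_{r}'(s)\ge0$ on $[0,\pi]$, while $\mu(I(x,s))-\nu(I(x,s))\ge0$ for $0<s\le\pi$ is precisely the hypothesis — and the case $\varepsilon=\pi$ (with the convention $I(x,\pi)=\partial\mathbb{D}$) also gives $\mu(\partial\mathbb{D})\ge\nu(\partial\mathbb{D})$. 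Hence the right-hand side is $\ge0$, which is the asserted inequality $P_{\mu}(rx)\ge P_{\nu}(rx)$.

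The only points needing care are bookkeeping, not ideas. One must handle the antipode $-x$: this is exactly what the boundary term $P_{r}(\pi)\,\mu(\partial\mathbb{D})$ records, and it is harmless once one observes that the $\varepsilon=\pi$ instance of the hypothesis controls the total masses. One must also check that the slice appearing after the Tonelli interchange at level $s$ is a priori the closed arc $\overline{I(x,s)}$ rather than the open arc $I(x,s)$; but since $\mu(I(x,s))\ge\nu(I(x,s))$ holds for \emph{every} $s$, letting $s$ decrease to a fixed value shows $\mu(\overline{I(x,t)})\ge\nu(\overline{I(x,t)})$ as well, so the conclusion is insensitive to the convention (and the two integrals coincide off the countable set of $s$ where $\mu$ or $\nu$ charges $e^{\pm is}x$). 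Beyond writing the layer-cake identity and this interchange carefully, I anticipate no genuine obstacle.
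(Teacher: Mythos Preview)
Your argument is correct: the layer-cake (Abel summation) identity expressing $P_\mu(rx)$ as $P_r(\pi)\mu(\partial\mathbb{D})+\int_0^\pi(-P_r'(s))\,\mu(I(x,s))\,ds$ is exactly the right way to exploit the radial monotonicity of the Poisson kernel, and the bookkeeping about open versus closed arcs and the antipode is handled adequately. The paper itself states this lemma without proof, treating it as an elementary rearrangement inequality; your write-up is the standard justification and is entirely in the spirit of what the authors have in mind.
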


\subsection{Local behaviour}

\begin{theorem}
\label{local-behaviour}
Suppose $\mu$ is a singular measure supported on a Beurling-Carleson set.  For $\mu$-a.e.~$x \in \partial \mathbb{D}$, the integral
\begin{equation}
\label{eq:local-behaviour}
\int_0^1 \mu(x, \varepsilon)^{-1} d\varepsilon 
\end{equation}
is finite.
\end{theorem}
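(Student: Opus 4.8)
The plan is to estimate the double integral $\int_{\partial\mathbb{D}}\bigl(\int_0^1\mu(x,\varepsilon)^{-1}\,d\varepsilon\bigr)\,d\mu(x)$ and then invoke the elementary fact that a nonnegative measurable function with finite integral is a.e.\ finite. There is no issue with the inner integrand being $+\infty$ on a $\mu$-null set, since for $x\in\supp\mu$ one has $\mu(x,\varepsilon)>0$ for every $\varepsilon>0$. As $(x,\varepsilon)\mapsto\mu(I(x,\varepsilon))$ is jointly measurable, Tonelli's theorem permits interchanging the order of integration, so it suffices to bound $\int_{\partial\mathbb{D}}\mu(I(x,\varepsilon))^{-1}\,d\mu(x)$ for each fixed $\varepsilon\in(0,1)$ and to verify that the resulting bound is integrable in $\varepsilon$ over $(0,1)$.

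For the inner integral, fix $\varepsilon\in(0,1)$ and partition $\partial\mathbb{D}$ into consecutive arcs $J_1,\dots,J_M$ of common length $2\pi/M$, where $M=\lceil 2\pi/\varepsilon\rceil$, so that $|J_k|\le\varepsilon$ and $M\asymp 1/\varepsilon$. If $x\in J_k$ then $J_k\subseteq I(x,\varepsilon)$, hence $\mu(I(x,\varepsilon))\ge\mu(J_k)$; consequently $\int_{J_k}\mu(I(x,\varepsilon))^{-1}\,d\mu(x)\le 1$ whenever $\mu(J_k)>0$, while this contribution vanishes when $\mu(J_k)=0$. Summing over $k$,
$$
\int_{\partial\mathbb{D}}\mu(I(x,\varepsilon))^{-1}\,d\mu(x)\;\le\;N(\varepsilon),
$$
where $N(\varepsilon)$ denotes the number of arcs $J_k$ meeting $\supp\mu\subseteq E$. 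Each such $J_k$ lies within distance $\varepsilon$ of $E$, so by disjointness $N(\varepsilon)\cdot\varepsilon\lesssim|(E)_\varepsilon|$, where $(E)_\varepsilon=\{\zeta\in\partial\mathbb{D}:d(\zeta,E)\le\varepsilon\}$ is the $\varepsilon$-neighbourhood of $E$.

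The final step is the classical observation that the Beurling--Carleson condition is equivalent to $\int_0^1\varepsilon^{-1}|(E)_\varepsilon|\,d\varepsilon<\infty$. Writing $\partial\mathbb{D}\setminus E=\bigcup_k I_k$ and using $|E|=0$, one checks that $|(E)_\varepsilon|=\sum_k\min(|I_k|,2\varepsilon)$, whence
$$
\int_0^1\frac{|(E)_\varepsilon|}{\varepsilon}\,d\varepsilon\;=\;\sum_k\int_0^1\frac{\min(|I_k|,2\varepsilon)}{\varepsilon}\,d\varepsilon\;\asymp\;\sum_k|I_k|\Bigl(1+\log^+\tfrac1{|I_k|}\Bigr)\;<\;\infty
$$
by (\ref{eq:beurling-carleson}), the finitely many arcs with $|I_k|$ bounded below contributing only a bounded amount. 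Combining the three steps yields $\int_{\partial\mathbb{D}}\int_0^1\mu(x,\varepsilon)^{-1}\,d\varepsilon\,d\mu(x)\lesssim\int_0^1 N(\varepsilon)\,d\varepsilon<\infty$, which is the assertion.

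I expect the conceptual crux to be recognizing that one must argue "in measure" rather than pointwise: the pointwise lower bound $\mu(x,\varepsilon)\gg\varepsilon\log\tfrac1\varepsilon$ available from Corollary~\ref{Shapiro bound} is genuinely too weak, since $\int_0^1\frac{d\varepsilon}{\varepsilon\log(1/\varepsilon)}$ already diverges and the ``$\to\infty$'' in that corollary carries no uniform rate, so no integration of a pointwise bound can succeed. Once the averaging is set up, the Beurling--Carleson summability enters in exactly one place -- the integrability of the covering numbers $N(\varepsilon)$ -- and the remaining points (joint measurability to justify Tonelli, the bookkeeping for arcs with $\mu(J_k)=0$, and the elementary integral computation for $|(E)_\varepsilon|$) are routine.
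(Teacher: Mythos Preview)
Your proof is correct and takes a genuinely different route from the paper. Both arguments bound the same double integral $\int_E\int_0^1\mu(x,\varepsilon)^{-1}\,d\varepsilon\,d\mu(x)$, but the mechanics diverge from there. The paper's proof is geometric: it identifies the Beurling--Carleson sum with the weighted area $\int_{K_E}\frac{dA(z)}{1-|z|}$ of the union of Stolz angles emanating from $E$, then disintegrates this area integral over the Stolz angles $S(\zeta)$ with the overlap function $\eta(z)=\mu(I_z)^{-1}$, and finally observes that on each horizontal slice $\{1-|z|=\varepsilon\}$ the integrand dominates $\mu(\zeta,3\varepsilon)^{-1}$. Your argument instead freezes $\varepsilon$, partitions the circle into arcs of length $\asymp\varepsilon$, and obtains the clean bound $\int\mu(x,\varepsilon)^{-1}\,d\mu(x)\le N(\varepsilon)$ directly from $J_k\subset I(x,\varepsilon)$; the Beurling--Carleson condition then enters as the integrability of the covering numbers $N(\varepsilon)\lesssim|(E)_\varepsilon|/\varepsilon$.

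Your approach is more elementary and arguably cleaner: it avoids the Stolz-angle geometry entirely and makes the role of the Beurling--Carleson hypothesis transparent via the equivalent formulation $\int_0^1\varepsilon^{-1}|(E)_\varepsilon|\,d\varepsilon<\infty$. The paper's approach, on the other hand, embeds the estimate in a hyperbolic-geometric picture that resonates with the rest of the paper (Stolz angles, Green's functions, inner tangents). Your closing remark --- that the pointwise bound from Corollary~\ref{Shapiro bound} is too weak because $\int_0^1(\varepsilon\log\tfrac1\varepsilon)^{-1}\,d\varepsilon$ diverges and the rate of divergence there is not uniform --- is a correct and useful observation about why the averaging step is essential.
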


\begin{proof}
Since $\mu$ is a singular measure, for $\mu$-a.e.~$x \in \partial \mathbb{D}$, $\lim_{\varepsilon \to 0} \frac{\mu(x, \varepsilon)}{\varepsilon} = \infty$.
To prove the lemma, we will show that the double integral
$$
\int_{E} \int_0^1 \mu(x, \varepsilon)^{-1} \, d\varepsilon d\mu(x) < \infty.
$$
For a point $x \in \partial \mathbb{D}$, we write $S(x)$ for the Stolz angle of opening $\pi/2$ with vertex at $x$ and $K_E$ for the union of the Stolz angles emanating from points $x \in E$. It is not difficult to see that
$$
1 + \sum_I |I| \log^+ \frac{1}{|I|} \asymp \int_{K_E} \frac{dA(z)}{1-|z|},
$$
where $I$ ranges over the complementary arcs in $\partial \mathbb{D} \setminus E$. 
We subdivide the above integral over individual Stolz angles:
$$
\int_{K_E} \frac{dA(z)}{1-|z|} = \int_E \int_{S(\zeta)} \eta(z) \cdot \frac{dA(z)}{1-|z|} d\mu(\zeta),
$$
where the function $\eta(z) = \mu(I_z)^{-1}$ measures how many Stolz angles cover $z$. Here, 
 $I_z$ is the arc of the unit circle that consists of points $\zeta$ for which $z \in S(\zeta)$.
From
\begin{align*}
\int_{S(\zeta) \cap \{1-|z| = \varepsilon\}} \eta(z) \cdot \frac{|dz|}{1-|z|} & \ge  \min_{z \in S(\zeta) \cap \{1-|z| = \varepsilon\}} \mu(I_z)^{-1} \\
& \ge \mu(\zeta, 3\varepsilon)^{-1},
\end{align*}
we deduce that
$$
\int_{E} \int_0^1 \mu(\zeta, 3\varepsilon)^{-1} d\varepsilon d\mu(\zeta) \lesssim 1 + \sum_I |I| \log^+ \frac{1}{|I|} 
$$
as desired.
\end{proof}

\begin{remark}
Curiously enough, the integral (\ref{eq:local-behaviour}) also appears in the study of harmonically weighted Dirichlet spaces, see \cite{el-fallah, el-fallah2}.
\end{remark}

 \section{Some thick regions}
\label{sec:thick-poisson-regions}

In this section, we prefer to work with domains defined in the upper half-plane $\mathbb{H}$.
For a continuous function $f: \mathbb{R} \to [0, \infty)$ with $f(0) = 0$ and $f(x) \le |x|/4$, we define an approach region $\mathcal U_f \subset \mathbb{H}$ by
$$
 \mathcal U_f = \bigl \{ x+iy \in \mathbb{H} : y > f(x) \bigr \}.
 $$
One may then investigate the thickness of $\mathcal U_f$ at $0 \in \partial \mathcal U_f \cap \partial \mathbb{H}$ using the auxiliary regions 
\begin{align*}
\mathcal U_{f^+} = \mathbb{H}
&  \setminus \bigcup_{x_0 \in \mathbb{R}} \bigl [x_0 - f(x_0), x_0 + f(x_0) \bigr ] \times \bigl [0, 2 f(x_0) \bigr ]
\end{align*}
and
\begin{align*}
\mathcal U_{f^-} = \mathbb{H}
&  \setminus \bigcup_{x_0 \in \mathbb{R}} \biggl [x_0 - \frac{f(x_0)}{4}, \ x_0 +  \frac{f(x_0)}{4} \biggr ] \times \biggl [0, \frac{f(x_0)}{2} \biggr ].
\end{align*}

While the above regions are slightly different from the ones obtained by than taking logarithms, forming the analogous regions in $\mathcal S$ and exponentiating back to $\mathbb{H}$, the difference is not essential as $f(x) \le |x|/4$. The assumption $f(x) \le |x|/4$ does not cost us anything since thickness is a local property, while possessing an inner tangent at 0 necessitates that $\lim_{x \to 0^+} f(x)/x = 0$. 
 
\begin{theorem}
\label{H-thickness-condition}
Suppose $\mu$ is a finite measure on the real line $\mathbb{R}$ such that
\begin{equation}
\label{eq:H-thickness-condition}
\int_0^1 \mu(0, x)^{-1} \,dx < \infty,
\end{equation}
where $\mu(0, r) = \mu(B(0,r))$.
For any $c > 0$, the symmetric region $\mathcal U_f$ with
$$
f(\pm x) = c \cdot \frac{x^2}{\mu(0, x/2)}, \qquad x > 0,
$$
 is thick at $0$.
\end{theorem}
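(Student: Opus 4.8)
The plan is to verify the geometric characterization of thickness from Theorem~\ref{RW-geometric-form}, transported to the half-plane. First I would observe that the hypothesis (\ref{eq:H-thickness-condition}) forces $\mu(0,x)/x \to \infty$ as $x \to 0^{+}$: otherwise one could extract scales $x_n \to 0$ with $x_{n+1}\le x_n/2$ and $\mu(0,x)\le 2Lx$ on each $[x_n/2,x_n]$, so that $\int_0^1 \mu(0,x)^{-1}dx \ge \sum_n \int_{x_n/2}^{x_n}\frac{dx}{2Lx}=\infty$. Hence $f(x)=cx^2/\mu(0,x/2)\le |x|/4$ for $|x|$ small, so near $0$ the domain $\mathcal U_f$ is of the type discussed in this section, and since thickness is local we may freely modify $\mathcal U_f$ away from $0$ (e.g.\ to enforce the containment of a middle strip after passing to logarithmic coordinates). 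Passing to the strip via $w=\tfrac1\pi\log z$, and using that the half-plane auxiliary regions $\mathcal U_{f^{\pm}}$ differ inessentially from the exponentiated strip regions (as $f(x)\le|x|/4$), Theorem~\ref{RW-geometric-form} together with Lemma~\ref{RW-preliminaries} shows that $\mathcal U_f$ is thick at $0$ if and only if
\[
A(\alpha,\beta)\;:=\;\int_{\{\alpha<|z|<\beta\}\cap\mathbb{H}}\chi_{(\mathcal U_{f^{+}})^{c}}(z)\,\frac{dA(z)}{|z|^2}\;\to\;0\qquad\text{as }\alpha,\beta\to 0^{+}.
\]

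Next I would estimate $A(\alpha,\beta)$ dyadically. By the reflection symmetry $x\mapsto -x$ it suffices to control the part of $(\mathcal U_{f^{+}})^{c}$ coming from the squares $Q(x_0)=[x_0-f(x_0),x_0+f(x_0)]\times[0,2f(x_0)]$ with $x_0>0$. Since $f(x_0)\le x_0/4$, one has $Q(x_0)\subset\{3x_0/4\le|z|\le 2x_0\}$, so $z\in Q(x_0)\cap\{\alpha<|z|<\beta\}$ forces $x_0\in(\alpha/2,4\beta/3)$ and $|z|^{-2}\asymp x_0^{-2}$ there. Grouping $x_0$ into dyadic blocks $J_k=[2^{-k-1},2^{-k}]$ and setting $M_k=\sup_{x_0\in J_k}f(x_0)$, the union $\bigcup_{x_0\in J_k}Q(x_0)$ is contained in a rectangle of dimensions at most $2^{-k}\times 2M_k$ (using $M_k\le 2^{-k}/4$), so its weighted area is $\lesssim 2^{2k}\cdot 2^{-k}M_k=2^{k}M_k$. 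By monotonicity of $r\mapsto\mu(0,r)$ one gets $M_k\le c\,2^{-2k}/\mu(0,2^{-k-2})$, so the block-$k$ contribution to $A(\alpha,\beta)$ is at most a constant times $c\,2^{-k}/\mu(0,2^{-k-2})$.

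It remains to sum. Comparing with the integral via $\mu(0,x)\le\mu(0,2^{-j+1})$ on $[2^{-j},2^{-j+1}]$, one obtains, up to a shift of the dyadic index and an absolute constant,
\[
\sum_{k}\frac{2^{-k}}{\mu(0,2^{-k-2})}\;\lesssim\;\int_0^1\frac{dx}{\mu(0,x)}\;<\;\infty
\]
by hypothesis (\ref{eq:H-thickness-condition}). Hence $A(\alpha,\beta)$, being the tail (in $k$) of this convergent series, doubled to account for the contribution of $x_0<0$, tends to $0$ as $\alpha,\beta\to 0^{+}$; by the first paragraph this is precisely the statement that $\mathcal U_f$ is thick at $0$. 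Since every estimate above is linear in $c$, the conclusion holds for each $c>0$.

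The main obstacle is that $r\mapsto\mu(0,r)$ need not be doubling, so Corollary~\ref{doubling-strip} and Theorem~\ref{rodin-warschawski} do not apply to $\mathcal U_f$ directly; in particular the smallest auxiliary region $\mathcal U_{f^{-}}$ could a priori be much larger than the naive area $\int f(x)x^{-2}dx$ suggests. The resolution is that the specific profile $f(x)=cx^2/\mu(0,x/2)$ is tame enough scale-by-scale: the crude estimate $\sup_{J_k}f\lesssim 2^{-2k}/\mu(0,2^{-k-2})$, which uses only monotonicity of $\mu(0,\cdot)$, already converts the Rodin--Warschawski area condition back into the integrability hypothesis (\ref{eq:H-thickness-condition}), with no regularity of $\mu$ needed.
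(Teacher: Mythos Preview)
Your proof is correct and follows the same strategy as the paper: reduce to the Rodin--Warschawski area criterion (Theorem~\ref{RW-geometric-form}) and then show that the weighted area of the complement of the enlarged auxiliary region is controlled by $\int_0^1 \mu(0,x)^{-1}\,dx$, using only the monotonicity of $r\mapsto\mu(0,r)$.

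The packaging differs slightly. The paper passes to an auxiliary profile $g(x)=2c\,x^2/\mu(0,x/4)\ge f(x)$, so that $\mathcal U_g\subset\mathcal U_f$, and then argues that $\int_0^1 g^{+}(x)\,x^{-2}\,dx$ is finite (hence $\mathcal U_g$, and a fortiori $\mathcal U_f$, is thick). You instead work directly with $\mathcal U_{f^{+}}$ and control the weighted area by a dyadic decomposition, bounding the contribution of $x_0\in[2^{-k-1},2^{-k}]$ by $c\,2^{-k}/\mu(0,2^{-k-2})$ and summing. Both devices accomplish the same thing: they absorb the possible failure of doubling of $\mu(0,\cdot)$ by trading the uncontrolled pointwise behaviour of $f^{+}$ (or $g^{+}$) for a scale-by-scale supremum that monotonicity alone handles. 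Your dyadic bookkeeping is arguably more transparent, while the paper's auxiliary-$g$ trick avoids an explicit sum; neither approach needs any regularity of $\mu$, which you correctly identify as the point.
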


\begin{proof}
Form the symmetric approach region $\mathcal U_g$ with
 $$
g(\pm x) = 2c \cdot \frac{x^2}{\mu(0, x/4)}, \qquad x > 0.
$$
 Since $g(x) \ge f(x)$, we have $\mathcal U_g \subset \mathcal U_f$. The crude estimate 
 $$
g^{+}(x) \, \ge \, \min_{x_0 \in [x/2, 2x]} 2 g(x_0) \, \ge \, 4c \cdot  \frac{(x/2)^2}{\mu(0, x/2)} \, = \, f(x),
 $$
for $x \ge 0$, shows that $\mathcal U_{g^+} \subset \mathcal U_f$. Since
$$
\int_0^1 \frac{g^+(x)}{x^2} dx \, \lesssim \, \int_0^1 \frac{f(x)}{x^2} dx \, < \, \infty,
$$
$\mathcal U_g$ is thick at 0. It follows that the bigger region $\mathcal U_f$ is thick at 0 as well.
\end{proof}

\begin{corollary}
If $\mu$ is a finite measure on the real line $\mathbb{R}$ which satisfies (\ref{eq:H-thickness-condition}), then for any $\eta > 0$, the region
 $$
 \Omega_\eta = \{ z \in \mathbb{D} : P_\mu(z) > \eta \}
 $$
 is thick at 0. (If $\Omega_\eta$ is disconnected, then we take the connected component which has an inner tangent at 0.) 
\end{corollary}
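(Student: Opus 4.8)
The plan is to locate, near $0$, a subregion of $\Omega_\eta$ of the type handled by Theorem \ref{H-thickness-condition}, and then appeal to the monotonicity and locality of thickness recorded in Section \ref{sec:angular-derivatives}. Working in the upper half-plane, where $P_\mu(x+iy) = \frac1\pi\int_{\mathbb{R}}\frac{y}{(x-t)^2+y^2}\,d\mu(t)$, set $c = 5\pi\eta$ and let $\mathcal U_f\subset\mathbb{H}$ be the symmetric region with $f(\pm x) = c\,x^2/\mu(0,x/2)$; Theorem \ref{H-thickness-condition} says $\mathcal U_f$ is thick at $0$. It therefore suffices to produce a $\rho>0$ with
$$
\mathcal U_f\cap B(0,\rho)\ \subseteq\ \Omega_\eta ,
$$
for then $\mathcal U_f\cap B(0,\rho)$ — which coincides with $\mathcal U_f$ in the neighbourhood $B(0,\rho)$ of $0$, hence is again thick at $0$ by locality — lies in a single connected component $\Omega_\eta'$ of $\Omega_\eta$ (the set $\mathcal U_f\cap B(0,\rho)$ is connected once $\rho$ is small), and $\Omega_\eta'$ is thick at $0$ by monotonicity. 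Being thick, $\Omega_\eta'$ has an inner tangent at $0$, so it is the component singled out in the statement.

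To establish the inclusion I would bound $P_\mu$ from below on $\mathcal U_f$ by discarding all of the Poisson integral except the contribution of a well-chosen interval, splitting into two regimes. If $y\le|x|$, then $B(x,2|x|)\supseteq B(0,|x|/2)$ and $(x-t)^2+y^2\le 4x^2+y^2\le 5x^2$ for $t\in B(x,2|x|)$, so
$$
P_\mu(x+iy)\ \ge\ \frac1\pi\cdot\frac{y}{5x^2}\cdot\mu(0,|x|/2)\ >\ \frac{1}{5\pi}\cdot\frac{f(x)\,\mu(0,|x|/2)}{x^2}\ =\ \frac{c}{5\pi}\ =\ \eta,
$$
where the strict inequality uses $y>f(x)$; note that this estimate requires no smallness of $|z|$. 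If instead $y>|x|$, then $B(x,2y)\supseteq B(0,y)$ and $(x-t)^2+y^2\le 5y^2$ there, giving $P_\mu(x+iy)\ge \mu(0,y)/(5\pi y)$. Here I would invoke the elementary fact that, since $t\mapsto\mu(0,t)^{-1}$ is non-increasing and, by hypothesis, integrable near $0$, one has $y\,\mu(0,y)^{-1}\le\int_0^y\mu(0,t)^{-1}\,dt\to0$, i.e. $\mu(0,y)/y\to\infty$ as $y\to0^{+}$; hence $\mu(0,y)/(5\pi y)>\eta$ as soon as $y<\rho_0$ for a suitable $\rho_0>0$.

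Finally, choose $\rho\le\rho_0$ small enough that also $f(x)\le|x|/4$ on $(-\rho,\rho)$, which is possible since $f(x)/|x| = 2c\cdot(|x|/2)/\mu(0,|x|/2)\to0$ by the same fact. Then every $x+iy\in\mathcal U_f\cap B(0,\rho)$ falls into one of the two regimes — and conversely $\{y>|x|\}\cap B(0,\rho)\subseteq\mathcal U_f$ automatically, since there $y>|x|\ge f(x)$ — so $P_\mu>\eta$ throughout $\mathcal U_f\cap B(0,\rho)$, establishing the inclusion. The one point that needs care is the tangential estimate: the Poisson lower bound in the regime $y\le|x|$ must come out with exactly the factor $y\,\mu(0,|x|/2)/x^2$, so that it cancels against $f(x) = c\,x^2/\mu(0,x/2)$ and the comparison region is precisely the one to which Theorem \ref{H-thickness-condition} applies; the remaining steps are routine.
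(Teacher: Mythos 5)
Your proof is correct and follows the same route as the paper's. The paper dispatches this corollary in a single sentence (``notice that $\Omega_\eta$ contains the symmetric region $\mathcal U_f$ from Theorem \ref{H-thickness-condition} for some $c = c(\eta) > 0$''), and what you have written is precisely the verification it leaves implicit: the explicit constant $c = 5\pi\eta$, the two-regime lower bound on the Poisson integral (tangential versus vertical approach), and the use of $\int_0^1 \mu(0,t)^{-1}\,dt < \infty$ to guarantee $\mu(0,y)/y \to \infty$ and $f(x)/|x| \to 0$ as $y,|x|\to 0^+$. You are in fact slightly more careful than the text, since the inclusion $\mathcal U_f \subseteq \Omega_\eta$ holds only near $0$ (the vertical estimate $\mu(0,y)/(5\pi y) > \eta$ requires $y$ small), and you correctly localize with a ball $B(0,\rho)$ and invoke locality and monotonicity of thickness. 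A small remark: $\mathcal U_f \cap B(0,\rho)$ need not literally be connected (the graph of $f$ may pinch off slivers near $\partial B(0,\rho)$), but the connected component touching $0$ still coincides with $\mathcal U_f$ in a smaller ball, so the conclusion is unaffected.
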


To see the corollary, notice that $\Omega_\eta$ contains the symmetric region $\mathcal U_f$ from Theorem \ref{H-thickness-condition} for some $c = c(\eta) > 0$.

\section{Abundance of radial limits}
\label{sec:abundance-radial}

Given a point $x \in \partial \mathbb{D}$, the diameter that passes through $x$ divides $\partial \mathbb{D}$ into a left arc $I^L$ and a right arc $I^R$. Even though $I^L$ and $I^R$ depend on $x$, we will suppress $x$ from the notation. For a measure $\mu$, we set $\mu^L = \mu|_{I^L}$ and $\mu^R = \mu|_{I^R}$. We view $I^L$ and $I^R$ as closed arcs, so that $\mu(\{x\})$ and $\mu(\{-x\})$ appear in both $\mu^L$ and $\mu^R$.
 
\begin{lemma}
\label{contraction-bound}
Suppose $\mu$ is a singular measure supported on a Beurling-Carleson set $E$ and $F_\mu \in \mathscr J$ is an inner function of finite entropy such that $\inn F'_\mu = S_\mu$. For a.e.~$x \in \partial \mathbb{D}$ with respect to $\mu$ and any real number $N \ge 1$,
\begin{equation}
\label{eq:contraction-bound}
\lambda_{F_{\mu^L}} \bigl ((1-\varepsilon)x \bigr ) \lesssim \varepsilon^{N},
\qquad \lambda_{F_{\mu^R}}\bigl ((1-\varepsilon)x \bigr ) \lesssim \varepsilon^{N},
\end{equation}
for all $0 < \varepsilon < \varepsilon_0(x, N)$ sufficiently small.
\end{lemma}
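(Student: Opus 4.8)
The plan is to prove the bound for $\mu^L$; the case of $\mu^R$ follows after reflecting the circle. Since $\mu^L=\mu|_{I^L}$ is supported on a subset of $E$, the inner function $F_{\mu^L}\in\mathscr J$ is defined as in Section~\ref{sec:inner}, has finite entropy, and satisfies $\inn F_{\mu^L}'=S_{\mu^L}$, $F_{\mu^L}(0)=0$, $F_{\mu^L}'(0)>0$; in particular $F_{\mu^L}'=S_{\mu^L}O_{\mu^L}$ has no Blaschke factor, where $O_{\mu^L}$ denotes its outer part. Fix $x$ in the set of full $\mu$-measure on which Corollary~\ref{Shapiro bound} holds, so that $\mu(I^L(x,\varepsilon))/(\varepsilon\log\tfrac1\varepsilon)\to\infty$ and $\mu(I^L(x,\varepsilon))/\nu_E(I^L(x,\varepsilon))\to\infty$ as $\varepsilon\to0$, where $\nu_E$ is the auxiliary measure of Section~\ref{sec:measures}. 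Because $F_{\mu^L}(0)=0$, the Schwarz lemma gives $|F_{\mu^L}((1-\varepsilon)x)|\le 1-\varepsilon$, hence $1-|F_{\mu^L}((1-\varepsilon)x)|^2\ge\varepsilon$ and $\lambda_{F_{\mu^L}}((1-\varepsilon)x)\le 2\varepsilon^{-1}|F_{\mu^L}'((1-\varepsilon)x)|$. It therefore suffices to show that $\log|F_{\mu^L}'((1-\varepsilon)x)|$ tends to $-\infty$ faster than any fixed multiple of $\log\tfrac1\varepsilon$.

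I would obtain this from the Jensen-type identity of Lemma~\ref{jensen-type} applied to $F_{\mu^L}$, which expresses $\log|F_{\mu^L}'((1-\varepsilon)x)|$ as $\log|O_{\mu^L}((1-\varepsilon)x)|$ minus $P_{\mu^L}((1-\varepsilon)x)$. The subtracted term is large: since the Poisson kernel at $(1-\varepsilon)x$ is $\gtrsim\varepsilon^{-1}$ on $I(x,\varepsilon)\supseteq I^L(x,\varepsilon)$, one has $P_{\mu^L}((1-\varepsilon)x)\gtrsim\varepsilon^{-1}\mu(I^L(x,\varepsilon))=g(\varepsilon)\log\tfrac1\varepsilon$ with $g(\varepsilon)\to\infty$. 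The additive term is where the work lies. By the coarse derivative bound of Lemma~\ref{stable-bound}, $\log|F_{\mu^L}'(\zeta)|\le C+4\log\tfrac1{\dist(\zeta,\supp\mu^L)}$ on $\partial\mathbb D$. The decisive point is that $\supp\mu^L\subseteq\overline{I^L}$: for $\zeta$ on the right semicircle this bound degenerates only to the integrable logarithmic singularities $\log\tfrac1{|\zeta-x|}$ and $\log\tfrac1{|\zeta+x|}$, whose harmonic extension at $(1-\varepsilon)x$ is $O_x(\log\tfrac1\varepsilon)$; for $\zeta$ on the left one has $\dist(\zeta,\supp\mu^L)\ge\dist(\zeta,E)$, so that contribution is at most $4P_{\nu_E|_{I^L}}((1-\varepsilon)x)$.

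It then remains to absorb $P_{\nu_E|_{I^L}}((1-\varepsilon)x)$ into $P_{\mu^L}((1-\varepsilon)x)$. Fix a large constant $K$; by the second limit above there is $\varepsilon_1(x)>0$ with $\mu(I^L(x,\varepsilon))\ge K\nu_E(I^L(x,\varepsilon))$ for $\varepsilon\le\varepsilon_1(x)$. The part of $\nu_E|_{I^L}$ outside $I^L(x,\varepsilon_1(x))$ lies at distance $\gtrsim\varepsilon_1(x)$ from $(1-\varepsilon)x$ and so contributes only $O_x(\varepsilon)$, while $\mu|_{I^L(x,\varepsilon_1(x))}$ and $K\nu_E|_{I^L(x,\varepsilon_1(x))}$ satisfy the hypothesis of the rearrangement inequality (Lemma~\ref{rearrangement lemma for Poisson extension}) on every centered arc, giving $P_{\nu_E|_{I^L(x,\varepsilon_1(x))}}((1-\varepsilon)x)\le K^{-1}P_{\mu^L}((1-\varepsilon)x)$. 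Collecting the bounds,
$$\log|F_{\mu^L}'((1-\varepsilon)x)|\ \le\ C_x+C\log\frac1\varepsilon+\frac{4}{K}P_{\mu^L}((1-\varepsilon)x)-P_{\mu^L}((1-\varepsilon)x),$$
which for $K\ge 8$ is at most $C_x+C\log\tfrac1\varepsilon-\tfrac12 g(\varepsilon)\log\tfrac1\varepsilon$. As $g(\varepsilon)\to\infty$, this is $\le-(N+1)\log\tfrac1\varepsilon$ once $\varepsilon<\varepsilon_0(x,N)$, whence $|F_{\mu^L}'((1-\varepsilon)x)|\le\varepsilon^{N+1}$ and $\lambda_{F_{\mu^L}}((1-\varepsilon)x)\lesssim\varepsilon^N$.

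The one genuinely delicate step is the control of the additive term in the second paragraph: recognizing that, because the singular factor of $F_{\mu^L}$ sits on one side of $x$, the entropy of $E$ on the far side is irrelevant, and that on the near side the scale-by-scale domination $\mu\gg\nu_E$ of Corollary~\ref{Shapiro bound} — itself a consequence of the non-centered maximal estimate (Lemma~\ref{uncentered lemma}) — can be upgraded to a Poisson-extension inequality via Lemma~\ref{rearrangement lemma for Poisson extension}. Everything else is the Schwarz lemma, the Jensen formula, and bookkeeping.
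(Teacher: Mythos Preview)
Your proof is correct and follows essentially the same route as the paper: Jensen's formula (Lemma~\ref{jensen-type}), the coarse boundary estimate (Lemma~\ref{stable-bound}), splitting the circle into the $I^L$ and $I^R$ arcs, and absorbing the auxiliary-measure term on $I^L$ into $P_{\mu^L}$ via Corollary~\ref{Shapiro bound} and the rearrangement inequality (Lemma~\ref{rearrangement lemma for Poisson extension}). The only cosmetic differences are that the paper uses $\nu^L=\nu_{E\cap I^L}$ rather than $\nu_E|_{I^L}$, and handles the $I^R$ contribution by a reflection symmetry (folding it into the $I^L$ integral and changing the constant $4$ to $8$) instead of your direct $O(\log\tfrac{1}{\varepsilon})$ bound; neither change affects the argument.
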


\begin{proof}[Proof of Lemma \ref{contraction-bound}]

{\em Step 0.} For brevity, we write $z = (1-\varepsilon)x$. Without loss of generality, we work with the measure $\mu^L$ to the left of $x$.
Form the auxiliary measure
$$
\nu^L = \log^+ \frac{1}{d(\zeta,E \cap I^L)} \, dm.
$$
As the measures $\mu^L,\nu^L$ satisfy the conditions of Corollary \ref{Shapiro bound}, for $\mu$ a.e.~$x \in \partial \mathbb{D}$ and any real constants $C_1, C_2 > 0$ (to be chosen), there exists an $\varepsilon_0 > 0$ so that
$$
\frac{\mu^L(x,\varepsilon)}{\nu^L(x, \varepsilon)} > C_1
\qquad
\text{and}
\qquad
\frac{\mu^L(x, \varepsilon)}{\varepsilon \log\frac{1}{\varepsilon}} > C_2,
$$
for all $0 < \varepsilon < \varepsilon_0$.

\medskip

{\em Step 1.} 
 By Lemma \ref{jensen-type} and the estimate (\ref{eq:stable-bound}), we have
\begin{align*}
\log|F_{\mu^L}'(z)| & =\int_{\partial\mathbb{D}}\log|F_{\mu^L}'(\zeta)|d\omega_{z}(\zeta)-\int_{\partial\mathbb{D}}P_{z}(\zeta)d\mu^L(\zeta) \\
& \le C + 4\int_{\partial\mathbb{D}}\log^+ \frac{1}{d(\zeta,E \cap I^L)}d\omega_{z}(\zeta)-\int_{\partial\mathbb{D}}P_{z}(\zeta) d\mu^L(\zeta) \\
& = C + 4\int_{\partial\mathbb{D}}P_{z}(\zeta)d\nu^L(\zeta)-\int_{\partial\mathbb{D}}P_{z}(\zeta) d\mu^L(\zeta).\\
& \le C + \int_{I^L}P_{z}(\zeta)(4d\nu^L(\zeta) - d\mu^L(\zeta)) + 4\int_{I^R}P_{z}(\zeta)d\nu^L(\zeta).
\end{align*}
Since $z$ belongs to the line segment $[0, x]$,
$$
\log|F_{\mu^L}'(z)|  \le C + \int_{I^L}P_{z}(\zeta)(8 \, d\nu^L(\zeta) - d\mu^L(\zeta)).
$$
If $C_1 > 16$, then Lemma \ref{rearrangement lemma for Poisson extension} tells us that
\begin{equation}
\label{eq:bound-on-the-derivative}
\log|F_{\mu^L}'(z)| \le C' - \frac{1}{2} \int_{I^L(x, \varepsilon)} P_{z}(\zeta) d\mu^L(\zeta).
\end{equation}

{\em Step 2.} From (\ref{eq:bound-on-the-derivative}), it is readily seen that if the constant $C_2 > 0$ from Step 0 is sufficiently large, then
$$
\log|F_{\mu^L}'(z)| \le-(N+1)\cdot\log\frac{1}{\varepsilon} \quad \implies \quad |F_{\mu^L}'(z)|\le\varepsilon^{N+1}.
$$
By the Schwarz lemma,
$$
\lambda_{F_{\mu^R}}(z) 
\, = \,
 \frac{2| F_{\mu^L}'(z)|}{1-|F_{\mu^L}(z)|^{2}} \, \le \, 
\frac{2| F_{\mu^L}'(z)|}{1-|z|^2} \, \lesssim \,
 \varepsilon^N
$$
as desired.
\end{proof}

\begin{proof}[Proof of Theorem \ref{main-thm}: Radial limits]
Let $F\in \mathscr{J}$ be an inner function of finite entropy. Suppose that $\inn F' = B S_\mu$. 
We need to show that for a.e.~$x$ with respect to $\mu$, the radial limit $\lim _{r\to1} F(rx)$ exists and lies inside the unit disk. 
By Theorem \ref{main-thm-inner}, we may write
$$
\mu=\sum_{i=1}^{\infty}\mu_i,
$$
where each measure $\mu_i$ is supported on a Beurling-Carleson set. In view of Lemmas \ref{fundamental-lemma} and \ref{contraction-bound}, for a.e.~$x$ with respect to $\mu_i$, we have
$$
\lambda_{F}(rx) \, \le \, \lambda_{F_{\mu_i^L}}(rx) \, \le \, C(x)(1-r), \qquad \text{as } r \to 0.
$$
The last statement implies that the hyperbolic length of $F([0, x])$ is finite. In particular, the radial limit
$F(x) := \lim_{r \to 1} F(rx)$ exists and is contained in $\mathbb{D}$. The proof is complete.
\end{proof}

\begin{remark}
The above argument can be easily adapted to show the abundance of non-tangential limits.
\end{remark}

\section{Abundance of thick limits}
\label{sec:abundance-thick}

Without loss of generality, we may investigate the behaviour of $F_\mu \in \mathscr J$ to the right of $x \in \partial \mathbb{D}$, as the behaviour of $F_\mu$ to the left of $x$ can be examined in an analogous manner.

For a point $x \in \partial \mathbb{D}$ and $\varepsilon >0$, we write $x_\varepsilon =  e^{i\varepsilon} x$ and $z_{\varepsilon, h}= e^{i\varepsilon} (1-h)x$. For a singular measure $\mu$, we set $\mu_{\varepsilon} =\mu|_{I^L\cup {I^R(x,\varepsilon)}}$ and $h_\varepsilon = \varepsilon /P_{\mu_\varepsilon}(z_{3\varepsilon, \varepsilon})$.
By Lemma \ref{refined-bound}, we have:

\begin{lemma}
\label{strong-boundary-bound}
Suppose $\mu$ is a singular measure supported on a Beurling-Carleson set $E$. For a.e.~$x \in \partial \mathbb{D}$ with respect to $\mu$,
\begin{equation}
\label{eq:F_mu and S_mu'-bound}
|F_{\mu_\varepsilon}'(\zeta)| \, \lesssim \, \frac{P_{\mu_\varepsilon}(z_{3\varepsilon, \varepsilon})}{\varepsilon} \, = \, \frac{1}{h_\varepsilon}, \qquad \zeta \in I(x_{3\varepsilon},\varepsilon),
\end{equation}
for any $0 < \varepsilon < \varepsilon_0(x)$ sufficiently small.
\end{lemma}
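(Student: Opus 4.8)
The plan is to apply the refined derivative bound of Lemma \ref{refined-bound} to the inner function $F_{\mu_\varepsilon}$ associated to the truncated measure $\mu_\varepsilon = \mu|_{I^L \cup I^R(x,\varepsilon)}$, with the distinguished point taken to be $x_{3\varepsilon} = e^{3i\varepsilon}x$ and the scale parameter $\delta \asymp \varepsilon$. First I would verify that for $\mu$-a.e.~$x$ the geometry is as required: since $\mu$ is supported on the Beurling-Carleson set $E$ and (for $\mu$-a.e.~$x$) the point $x$ is a point of positive lower density for $\mu$ in the sense of Corollary \ref{Shapiro bound}, the distance from $x_{3\varepsilon}$ to the support of $\mu_\varepsilon$ is comparable to $\varepsilon$ once $\varepsilon$ is small enough — the nearest mass of $\mu_\varepsilon$ to $x_{3\varepsilon}$ lies near $x_\varepsilon$, at distance $\asymp\varepsilon$. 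Thus $\dist(x_{3\varepsilon}, \supp\mu_\varepsilon) \asymp \varepsilon$, which plays the role of $\delta$ in Lemma \ref{refined-bound}, and the point $z_{3\varepsilon,\varepsilon} = e^{3i\varepsilon}(1-\varepsilon)x$ is exactly the radially-retracted point $(1-\delta)\zeta$ appearing in that lemma (up to the harmless comparability $\delta \asymp \varepsilon$).

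Next I would check the hypothesis $P_{\mu_\varepsilon}(z_{3\varepsilon,\varepsilon}) \ge 1$ of Lemma \ref{refined-bound}. This is where the singularity of $\mu$ enters: by Corollary \ref{Shapiro bound}, for $\mu$-a.e.~$x$ we have $\mu^R(x,\varepsilon)/(\varepsilon\log\frac1\varepsilon)\to\infty$, so in particular $\mu_\varepsilon(I(x,\varepsilon)) = \mu^R(x,\varepsilon) \gtrsim \varepsilon$ for small $\varepsilon$; since the Poisson kernel $P_{z_{3\varepsilon,\varepsilon}}(\zeta) \gtrsim 1/\varepsilon$ uniformly for $\zeta$ in an arc of length $\asymp\varepsilon$ around $x_\varepsilon$ (which is at distance $\asymp\varepsilon$ from $z_{3\varepsilon,\varepsilon}$), we get $P_{\mu_\varepsilon}(z_{3\varepsilon,\varepsilon}) \gtrsim \mu^R(x,\varepsilon)/\varepsilon \to \infty$, so certainly $\ge 1$ eventually. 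Lemma \ref{refined-bound}, applied at the boundary point $x_{3\varepsilon}$ with $\delta \asymp \varepsilon$, then gives
$$
|F_{\mu_\varepsilon}'(x_{3\varepsilon})| \, \lesssim \, \frac{P_{\mu_\varepsilon}(z_{3\varepsilon,\varepsilon})}{\varepsilon} \, = \, \frac{1}{h_\varepsilon}.
$$

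To upgrade this pointwise estimate at $x_{3\varepsilon}$ to a uniform bound over the whole arc $I(x_{3\varepsilon},\varepsilon)$, I would invoke the distortion mechanism already built into the proof of Lemma \ref{refined-bound}: there one shows, via Lemma \ref{mcmullen-injectivity} and the Fundamental Lemma, that $F_{\mu_\varepsilon}$ is injective on a ball $B(x_{3\varepsilon}, c\rho)$ with $\rho \asymp \varepsilon/P_{\mu_\varepsilon}(z_{3\varepsilon,\varepsilon}) = h_\varepsilon$, and that $|F_{\mu_\varepsilon}'|$ varies by at most a bounded factor on a slightly smaller ball. Since $\mu_\varepsilon$ puts no mass on $I^R(x,\varepsilon)$ away from near $x_\varepsilon$, the Poisson mass $P_{\mu_\varepsilon}$ is comparable at all points $z_{t,\varepsilon}$ with $\varepsilon \le t \le C\varepsilon$, so the same ball-of-injectivity and bounded-distortion conclusions hold with $x_{3\varepsilon}$ replaced by any $\zeta \in I(x_{3\varepsilon},\varepsilon)$; running the argument of Lemma \ref{refined-bound} at each such $\zeta$ yields the uniform bound $|F_{\mu_\varepsilon}'(\zeta)| \lesssim 1/h_\varepsilon$ on $I(x_{3\varepsilon},\varepsilon)$, which is the claim. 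I expect the main obstacle to be precisely this last uniformization step — ensuring that the comparability $\dist(\zeta, \supp\mu_\varepsilon)\asymp\varepsilon$ and $P_{\mu_\varepsilon}(\,\cdot\,)\asymp P_{\mu_\varepsilon}(z_{3\varepsilon,\varepsilon})$ are genuinely uniform over the arc $I(x_{3\varepsilon},\varepsilon)$ and hold for all small $\varepsilon$ simultaneously, which requires care with the null-set of exceptional $x$ coming from Corollary \ref{Shapiro bound}.
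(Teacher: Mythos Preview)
Your proposal is correct and follows exactly the approach the paper intends: the paper simply writes ``By Lemma \ref{refined-bound}, we have:'' before stating the lemma, so all the details you supply are the ones the authors left implicit. Your verification that $\dist(\zeta,\supp\mu_\varepsilon)\asymp\varepsilon$ for $\zeta\in I(x_{3\varepsilon},\varepsilon)$ (using that $x\in\supp\mu$ for $\mu$-a.e.~$x$ gives the upper bound, while $\supp\mu_\varepsilon\subset I^L\cup I^R(x,\varepsilon)$ gives the lower bound) and that $P_{\mu_\varepsilon}(z_{3\varepsilon,\varepsilon})\to\infty$ via Corollary \ref{Shapiro bound} are exactly right.

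One simplification: your final ``uniformization'' paragraph is more elaborate than necessary. Rather than proving the bound at $x_{3\varepsilon}$ and then propagating via the injectivity/distortion machinery inside the proof of Lemma \ref{refined-bound}, you can simply apply Lemma \ref{refined-bound} \emph{directly} at each $\zeta\in I(x_{3\varepsilon},\varepsilon)$. For such $\zeta$, the scale $\delta_\zeta=\dist(\zeta,\supp\mu_\varepsilon)$ lies between (roughly) $\varepsilon$ and $4\varepsilon$, so the associated point $(1-\delta_\zeta)\zeta$ stays within bounded hyperbolic distance of $z_{3\varepsilon,\varepsilon}$, and Harnack's inequality gives $P_{\mu_\varepsilon}((1-\delta_\zeta)\zeta)\asymp P_{\mu_\varepsilon}(z_{3\varepsilon,\varepsilon})$ with a uniform constant. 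This yields the uniform bound over the arc in one stroke and avoids re-opening the proof of Lemma \ref{refined-bound}.
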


An argument involving Lemmas \ref{jensen-type} and \ref{rearrangement lemma for Poisson extension} shows:

\begin{corollary}
\label{contraction at thick point}
Suppose $\mu$ is a singular measure supported on a Beurling-Carleson set $E$. For a.e.~$x \in \partial \mathbb{D}$ with respect to $\mu$ there exists $\varepsilon_0>0$ such that
$$
|F_{\mu_\varepsilon}'(z_{3\varepsilon, h})|\le  \frac{K}{h_\varepsilon} \cdot e^{-\frac{h}{8 h_\varepsilon}},
$$
for any $0<C h_\varepsilon < h <\varepsilon<\varepsilon_0$. In particular,
\begin{equation}
\label{eq:contraction at thick point}
\int_{C \cdot h_{\varepsilon}}^{\varepsilon}|F_{\mu_{\varepsilon}}'(z_{3\varepsilon ,h} )| \, dh  \le 8K \cdot e^{-\frac{C}{8}}.
\end{equation}
\end{corollary}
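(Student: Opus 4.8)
The plan is to run the Jensen-type formula (Lemma~\ref{jensen-type}) for $F_{\mu_\varepsilon}$ at the point $z = z_{3\varepsilon,h} = (1-h)x_{3\varepsilon}$, which lies on the radius through $x_{3\varepsilon}$, and feed it the boundary estimate of Lemma~\ref{strong-boundary-bound}, much as in the proof of Lemma~\ref{contraction-bound}. Fix $x$ in the full-$\mu$-measure set on which Lemma~\ref{strong-boundary-bound} and Corollary~\ref{Shapiro bound} (the latter with a large constant $C_1$, to be chosen) both hold. Since $\mu_\varepsilon$ is singular, $\inn F'_{\mu_\varepsilon} = S_{\mu_\varepsilon}$ has no Blaschke factor, so Lemma~\ref{jensen-type} reads $\log|F'_{\mu_\varepsilon}(z)| = u(z) - P_{\mu_\varepsilon}(z)$, where $u$ is the Poisson extension of the boundary function $\log|F'_{\mu_\varepsilon}|$. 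I would establish two estimates: (a) $u(z) \le \log(K/h_\varepsilon) + \tfrac12 P_{\mu_\varepsilon}(z) + O_{\mu,x}(1)$; and (b) $P_{\mu_\varepsilon}(z) \asymp h/h_\varepsilon$ for $0 < h < \varepsilon$. Granting these, $\log|F'_{\mu_\varepsilon}(z)| \le \log(K/h_\varepsilon) - \tfrac12 P_{\mu_\varepsilon}(z) + O_{\mu,x}(1) \le \log(K/h_\varepsilon) - \tfrac{h}{4h_\varepsilon} + O_{\mu,x}(1)$ by (b); choosing $C$ large enough that $h > Ch_\varepsilon$ makes the right side $\le \log(K/h_\varepsilon) - \tfrac{h}{8h_\varepsilon}$, we get $|F'_{\mu_\varepsilon}(z_{3\varepsilon,h})| \le (K/h_\varepsilon)\, e^{-h/(8h_\varepsilon)}$. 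The integral estimate (\ref{eq:contraction at thick point}) follows immediately, since $\int_{Ch_\varepsilon}^{\varepsilon} (K/h_\varepsilon)\, e^{-h/(8h_\varepsilon)}\, dh \le (K/h_\varepsilon)\int_{Ch_\varepsilon}^{\infty} e^{-h/(8h_\varepsilon)}\, dh = 8K e^{-C/8}$.

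For (b), since $d(x_{3\varepsilon}, \supp\mu_\varepsilon) \ge 2\varepsilon$ and $0 < h < \varepsilon$, one has $|(1-h)x_{3\varepsilon} - \zeta| \asymp |x_{3\varepsilon}-\zeta| \asymp |(1-\varepsilon)x_{3\varepsilon}-\zeta|$ with absolute constants for every $\zeta \in \supp\mu_\varepsilon$, so $P_{(1-h)x_{3\varepsilon}}(\zeta) \asymp (h/\varepsilon)\,P_{(1-\varepsilon)x_{3\varepsilon}}(\zeta)$; integrating against $\mu_\varepsilon$ and recalling $P_{\mu_\varepsilon}(z_{3\varepsilon,\varepsilon}) = \varepsilon/h_\varepsilon$ gives $P_{\mu_\varepsilon}(z_{3\varepsilon,h}) \asymp h/h_\varepsilon$. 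For (a), I split $\partial\mathbb{D}$ into $I(x_{3\varepsilon},\varepsilon)$ and its complement. On $I(x_{3\varepsilon},\varepsilon)$, Lemma~\ref{strong-boundary-bound} gives $\log|F'_{\mu_\varepsilon}(\zeta)| \le \log(C/h_\varepsilon)$. On the complement, the coarse bound (\ref{eq:stable-bound}) applied to $F_{\mu_\varepsilon}$ --- with the Beurling--Carleson set taken to be $E_\varepsilon := \overline{\supp\mu_\varepsilon} \subseteq E \cap [-x, x_\varepsilon]$, a closed subset of $E$ and hence again Beurling--Carleson --- gives $\log|F'_{\mu_\varepsilon}(\zeta)| \le \log C(\mu(\partial\mathbb{D})) + 4\log^+\tfrac{1}{d(\zeta, E_\varepsilon)}$. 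It is important to use $E_\varepsilon$ rather than $E$, so that the auxiliary density stays mild on the arc $(x_\varepsilon, -x)$, from which the truncation $\mu \mapsto \mu_\varepsilon$ has removed all of $E$'s mass. Integrating against $\omega_z = P_z\,dm$ and using $\omega_z(\partial\mathbb{D}) = 1$ yields $u(z) \le \log\!\bigl(C\,C(\mu(\partial\mathbb{D}))/h_\varepsilon\bigr) + 4 P_\nu(z)$, where $\nu := \log^+\tfrac{1}{d(\zeta, E_\varepsilon)} \cdot \chi_{\partial\mathbb{D} \setminus I(x_{3\varepsilon},\varepsilon)}\, dm$ is a measure with $\nu \le \nu_E$ that vanishes within distance $\varepsilon$ of $x_{3\varepsilon}$.

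It therefore remains to prove $4P_\nu(z) \le \tfrac12 P_{\mu_\varepsilon}(z) + O_{\mu,x}(1)$ --- the heart of the matter. Since $z$ is radial at $x_{3\varepsilon}$, I want to invoke the rearrangement inequality (Lemma~\ref{rearrangement lemma for Poisson extension}) with base point $x_{3\varepsilon}$; the obstruction is that $\mu_\varepsilon$ carries no mass on the arcs $I(x_{3\varepsilon},\delta)$ with $\delta \lesssim \varepsilon$, where $\nu$ generally does. I would decompose $\nu = \nu_{\mathrm{near}} + \nu_{\mathrm{mid}} + \nu_{\mathrm{far}}$: $\nu_{\mathrm{near}}$ lying within a fixed multiple of $I(x,\varepsilon)$ (the only place $\supp\mu_\varepsilon$ comes close to $x_{3\varepsilon}$), $\nu_{\mathrm{far}}$ outside a fixed arc $I(x_{3\varepsilon}, r_0)$, and $\nu_{\mathrm{mid}}$ the rest. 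On $\supp\nu_{\mathrm{far}}$ the kernel satisfies $P_z(\cdot) \le 2h/r_0^2 \le 2\varepsilon_0/r_0^2$, so $P_{\nu_{\mathrm{far}}}(z) = O_\mu(1)$. On $\supp\nu_{\mathrm{near}}$ one has $P_z(\cdot) \lesssim h/\varepsilon^2$, and the total mass of $\nu_{\mathrm{near}}$ is $\lesssim \nu_E(I(x, O(\varepsilon))) \le C_1^{-1}\mu(I(x, O(\varepsilon)))$ by Corollary~\ref{Shapiro bound} at $x$ (the contribution of the mild part of the density near $x_\varepsilon$ is $O(\varepsilon\log\tfrac1\varepsilon)$, absorbed using $\mu^R(x,\varepsilon) \ge C_2\varepsilon\log\tfrac1\varepsilon$); since the mass of $\mu_\varepsilon$ within $O(\varepsilon)$ of $x$ contributes $\gtrsim (h/\varepsilon^2)\bigl(\mu^L(x,O(\varepsilon)) + \mu^R(x,\varepsilon)\bigr)$ to $P_{\mu_\varepsilon}(z)$, this gives $P_{\nu_{\mathrm{near}}}(z) \lesssim C_1^{-1} P_{\mu_\varepsilon}(z)$. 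Finally $\nu_{\mathrm{mid}}$ is supported at distances $\delta$ from $x_{3\varepsilon}$ bounded below by $\asymp\varepsilon$ and above by $r_0$, and for such $\delta$ Corollary~\ref{Shapiro bound} at $x$ yields $\nu_E(I(x_{3\varepsilon},\delta)) \lesssim C_1^{-1}\mu_\varepsilon(I(x_{3\varepsilon},\delta))$ (again the mild part of the density near the truncation contributes $O(\delta\log\tfrac1\delta)$, absorbed via $\mu^L(x,\delta) \ge C_2\delta\log\tfrac1\delta$), so Lemma~\ref{rearrangement lemma for Poisson extension} gives $P_{\nu_{\mathrm{mid}}}(z) \lesssim C_1^{-1}P_{\mu_\varepsilon}(z)$. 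Summing and taking $C_1$ large enough yields $4P_\nu(z) \le \tfrac12 P_{\mu_\varepsilon}(z) + O_{\mu,x}(1)$, completing the proof as outlined.

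The main obstacle is precisely this last comparison: the scale-by-scale domination demanded by the rearrangement inequality fails both at small scales centered at $x_{3\varepsilon}$ (where $\mu_\varepsilon$ vanishes identically) and at large scales, and making the decomposition $\nu = \nu_{\mathrm{near}} + \nu_{\mathrm{mid}} + \nu_{\mathrm{far}}$ work --- isolating the near density and matching it against the large mass of $\mu$ near $x$ supplied by Corollary~\ref{Shapiro bound}, remembering that the Poisson extensions in play are radial at $x_{3\varepsilon}$ rather than at $x$, and using $\overline{\supp\mu_\varepsilon}$ in place of $E$ so that the density is genuinely mild over the truncated arc --- is where the bookkeeping is delicate. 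Carrying the constants through produces the exponent $\tfrac18$ and the form $8Ke^{-C/8}$ of the integral bound.
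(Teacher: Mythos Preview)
Your proposal is correct and follows essentially the same strategy as the paper: apply the Jensen formula (Lemma~\ref{jensen-type}) at $z_{3\varepsilon,h}$, use the refined bound (Lemma~\ref{strong-boundary-bound}) on the arc $I(x_{3\varepsilon},\varepsilon)$ and the coarse bound (Lemma~\ref{stable-bound}) elsewhere, then invoke Corollary~\ref{Shapiro bound} together with the rearrangement inequality to dominate the auxiliary $\log^+(1/d)$--measure by $\tfrac12\mu_\varepsilon$, and finally use $P_{\mu_\varepsilon}(z_{3\varepsilon,h})\asymp h/h_\varepsilon$.

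The only difference is organizational. The paper splits $\partial\mathbb{D}$ into four arcs $I_1=I^L\cup I^R(x,\varepsilon)$, $I_2=(x_\varepsilon,x_{2\varepsilon})$, $I_3=(x_{2\varepsilon},x_{4\varepsilon})=I(x_{3\varepsilon},\varepsilon)$, and $I_4=I^R\setminus(x,x_{4\varepsilon})$: on $I_1$ it compares $\nu_\varepsilon$ with $\mu_\varepsilon$ directly (rearrangement on $I^L$, Poisson pinching on $I^R(x,\varepsilon)$); the coarse bound on $I_2\cup I_4$ gives a clean $(Kh/\varepsilon)\log(1/\varepsilon)$ term, which is absorbed into $\tfrac14 P_{\mu_\varepsilon}$ in a separate step; and the refined bound handles $I_3$. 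Your near/mid/far decomposition centered at $x_{3\varepsilon}$ reaches the same conclusion but requires more bookkeeping---in particular tracking the shift between arcs centered at $x$ (where Corollary~\ref{Shapiro bound} applies) and arcs centered at $x_{3\varepsilon}$ (where the rearrangement lemma applies). The paper sidesteps that shift by applying rearrangement only on $I^L$ and isolating the ``mild'' density on $I_2\cup I_4$ as a single explicit error term. Your observation that one must use $E_\varepsilon=\overline{\supp\mu_\varepsilon}$ rather than $E$ in the coarse bound, so that the density stays mild on the truncated arc, is exactly what makes your mid-region estimate go through and is implicit in the paper's Step~3 (where the distance is taken to the single point $x_\varepsilon$).
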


We follow the same strategy as in the proof of Lemma \ref{contraction-bound}. In the proof below, we will frequently re-use the constants $C, C', K > 0$.

\begin{proof}
{\em Step 0.} 
Form the auxiliary measures
$$
\nu_L = \log^+ \frac{1}{d(\zeta,E \cap I^L)} \, dm, \qquad \nu_R = \log^+ \frac{1}{d(\zeta,E \cap I^R)} \, dm
$$
and
$$
\nu_{\varepsilon} = \log^+ \frac{1}{d \bigl (\zeta,E \cap (I^L \cup I^R(x,\varepsilon)) \bigr )} \, dm.
$$
As the pairs of measures $(\mu^L,\nu^L)$ and $(\mu^R,\nu^R)$ satisfy the conditions of Corollary \ref{Shapiro bound}, for $\mu$ a.e.~$x \in \partial \mathbb{D}$ and any real constants $C_1, C_2 > 0$ (to be chosen), there exists an $\varepsilon_0 > 0$ so that
$$
\frac{\mu^L(x,\varepsilon)}{\nu^L(x, \varepsilon)} > C_1,
\quad
\frac{\mu^R(x,\varepsilon)}{\nu^R(x, \varepsilon)} > C_1,
\quad  
\frac{\mu^L(x, \varepsilon)}{\varepsilon \log\frac{1}{\varepsilon}} > C_2,
\quad
\frac{\mu^R(x, \varepsilon)}{\varepsilon \log\frac{1}{\varepsilon}} > C_2,
$$
for all $0 < \varepsilon < \varepsilon_0$.

\medskip

{\em Step 1.}  By Lemma \ref{jensen-type}, we have
\begin{equation}
\label{eq:jensen-thick}
\log|F_{\mu_\varepsilon}'(z_{3\varepsilon,h})| = \int_{\partial\mathbb{D}} \log|F_{\mu_\varepsilon}'(\zeta)| d\omega_{z_{3\varepsilon,h}}(\zeta) -\int_{I^L \cup I^R(x, \varepsilon)}P_{z_{3\varepsilon,h}}(\zeta) d\mu_\varepsilon(\zeta).
\end{equation}
We split the first integral in the above equation over four arcs:
\begin{itemize}
\item $I_1=I^L\cup I^R(x, \varepsilon)$, 

\item$I_2 = (x_\varepsilon, x_{2\varepsilon})$,

\item$I_3 = (x_{2\varepsilon},x_{4\varepsilon})$,

\item$I_4 = I^R \setminus (x, x_{4\varepsilon})$.
\end{itemize}
Since the second integral in (\ref{eq:jensen-thick}) is supported only on $I_1$, we leave it as it is.

\medskip

{\em Step 2.}
By Lemma \ref{stable-bound}, the contribution  of the arc $I_1$ to the right side of (\ref{eq:jensen-thick}) is bounded above by
$$
C + \int_{I^L \cup I^R(x, \varepsilon)}P_{z_{3\varepsilon,h}}(\zeta)(4d\nu_\varepsilon(\zeta) - d\mu_\varepsilon(\zeta)).
$$
We claim that this expression is
$$
\le \, C' - \frac{1}{2} \int_{I^L \cup I^R(x, \varepsilon)}P_{z_{3\varepsilon,h}}(\zeta)  d\mu_\varepsilon(\zeta) \, = \, C' -\frac{1}{2} \cdot P_{\mu_\varepsilon}(z_{3\varepsilon,h}),
$$
provided that $0 < \varepsilon < \varepsilon_0$ is sufficiently small.

The bound on $I^L$ follows from Lemma \ref{rearrangement lemma for Poisson extension} if $C_1 > 8$. To see the corresponding bound on $I^R(x,\varepsilon)$, one only needs to take $C_1$ sufficiently large and use the fact that the Poisson kernel is pinched:
 $$
1/K \le \frac{ P_{z_{3\varepsilon,h}}(\zeta_1)}{ P_{z_{3\varepsilon,h}}(\zeta_2)} \le K, \qquad \zeta_1,\zeta_2 \in I^R(x,\varepsilon).
 $$

\medskip

{\em Step 3.}
We also use the coarse bound from Lemma \ref{stable-bound} to estimate the integrals over the arcs $I_2$ and $I_4$:
\begin{align*}
\int_{I_2 \cup I_4}\log|F_{\mu_\varepsilon}'(\zeta)|d\omega_{z_{3\varepsilon,h}}(\zeta)
 & \le C + 4 \int_{I_2 \cup I_4} \log^+ \frac{1}{d(\zeta,x_\varepsilon)} \, P_{z_{3\varepsilon,h}}(\zeta) dm(\zeta) \\
& \le C + \frac{K h}{\varepsilon}\log \frac{1}{\varepsilon},
\end{align*}
while we estimate the integral over $I_3$ using the refined estimate from Lemma \ref{strong-boundary-bound}:
$$
\int_{I_3}\log|F_{\mu_\varepsilon}'(\zeta)|d\omega_{z_{3\varepsilon,h}}(\zeta) \, \le \, \log\frac{K}{h_\varepsilon}
\, = \, C + \log\frac{1}{h_\varepsilon}.
$$

\smallskip

{\em Step 4.} Putting the estimates from Steps 2 and Steps 3 together, we arrive at
\begin{equation}
\label{eq:equations-together}
\log|F_{\mu_\varepsilon}'(z_{3\varepsilon, h})| \, \le \,  
C \, + \,  \log \frac{1}{h_\varepsilon} \, + \, \frac{K h}{\varepsilon}\log \frac{1}{\varepsilon} 
\, - \, \frac{1}{2} \cdot P_{\mu_\varepsilon}(z_{3\varepsilon,h}).
\end{equation}
To clean up the right hand side of the above equation, we choose the constant $C_2$ from Step 0 sufficiently large so that
$$
\frac{\mu^R(x, \varepsilon)}{\varepsilon \log\frac{1}{\varepsilon}} \, > \,  C_2 \, > \, 40 \, K,
$$
in which case,
$$
\frac{1}{4} \cdot P_{\mu_\varepsilon}(z_{3\varepsilon,h}) \, \ge \, \frac{1}{4} \cdot \frac{\mu^R(x, \varepsilon)h}{10 \, \varepsilon^2} \, \ge \, \frac{Kh}{\varepsilon} \log \frac{1}{\varepsilon}.
$$
Since
$$
\frac{h}{8h_\varepsilon} \, = \,  \frac{h}{8\varepsilon} \cdot P_\mu(z_{3\varepsilon}, \varepsilon) \, \le \, \frac{1}{4} \cdot P_\mu(z_{3\varepsilon, h}),
$$
(\ref{eq:equations-together}) simplifies to
$$
\log|F_{\mu_\varepsilon}'(z_{3\varepsilon, h})|\le  C + \log \frac{1}{h_\varepsilon} -\frac{h}{8 h_\varepsilon}.
$$
The lemma follows after exponentiating both sides.
\end{proof}

With the above estimate at our disposal, we can prove Theorem \ref{main-thm2}:

\begin{proof}[Proof of Theorem \ref{main-thm2}]
Suppose $F\in \mathscr{J}$ is an inner function of finite entropy with $\inn F' = B S_\mu$. 
By Theorem \ref{main-thm-inner}, we may write
$$
\mu=\sum_{i=1}^{\infty}\mu_i,
$$
where each measure $\mu_i$ is supported on a Beurling-Carleson set. Fix an integer $i \ge 1$.
We want to show that for a.e.~$x$ with respect to $\mu_i$, $F$ has a thick limit at $x$. By the elementary bound
$$
h_\varepsilon \, = \,  \frac{\varepsilon}{P_{\mu_\varepsilon}(z_{3\varepsilon, \varepsilon})} \, \lesssim \, \frac{\varepsilon^2}{\mu(I(x,\varepsilon))}
$$ 
and Theorems \ref{local-behaviour} and \ref{H-thickness-condition}, for any $C > 0$, 
$$
\bigl \{ z_{3\varepsilon, h}\, : \, 0 < \varepsilon \le \varepsilon_0, \,
C h_\varepsilon \le h \le \varepsilon \bigr \}
$$
describes the right part of a thick approach region, minus a Stolz angle, at $\mu_i$ a.e.~$x$. By the remark at the end of Section \ref{sec:abundance-radial}, we may assume that
$F$ has a non-tangential limit at $x \in \partial \mathbb{D}$ which lies in the open unit disk.
 In order to estimate $|F(z_{3\varepsilon, h}) - F(x)|$ from above, we connect $z_{3\varepsilon, h}$ and $x$ by a union of two line segments:
 $[z_{3\varepsilon, h}, z_{3\varepsilon,\varepsilon}] \cup [z_{3\varepsilon,\varepsilon}, x]$ and give an upper bound for the hyperbolic length of
 $F_\mu \bigl ([z_{3\varepsilon, h}, z_{3\varepsilon,\varepsilon}] \cup [z_{3\varepsilon,\varepsilon}, x] \bigr )$.

 Fix a positive real number $\eta >0$. We claim that for $\mu_i$ a.e.~$x$, we can choose the parameter $C = C(x, \eta)$ so that for any
 sufficiently small $\varepsilon > 0$, the hyperbolic length of $$F_{\mu_{i,\varepsilon}} \bigl ( [z_{3\varepsilon,C h_\varepsilon}, z_{3\varepsilon,\varepsilon}] \cup [z_{3\varepsilon,\varepsilon}, x] \bigr )$$ is less than $\eta$.
From Lemma \ref{fundamental-lemma}, we know that $\lambda_{F} \le \lambda_{F_{\mu_{i, \varepsilon}}}$ for any $i \ge 1$ and $\varepsilon > 0$.
 Thus a fortiori, the hyperbolic length of
$F \bigl ( [z_{3\varepsilon, h}, z_{3\varepsilon,\varepsilon}] \cup [z_{3\varepsilon,\varepsilon}, x] \bigr )$ is also less than $\eta$.
Therefore, once we prove the claim, we obtain the abundance of right parts of thick approach regions. 
An analogous argument will then show the abundance of left parts of thick approach regions, thereby proving the theorem.

It remains to prove the claim. The hyperbolic length of $F_{\mu_{i,\varepsilon}}([z_{3\varepsilon,\varepsilon}, x])$ can be estimated analogously to the radial approach, and so is less than $\eta/2$ when $\varepsilon > 0$ is small.
Since $\mu_{i, \varepsilon} \ge \mu_i^L = \mu_i|_{I^L}$ for any $\varepsilon > 0$, the radial limit $F_{\mu_{i,\varepsilon}}(x)$ lies in a compact subset of the unit disk.
A brief inspection of (\ref{eq:contraction at thick point}) shows that by selecting the constant $C = C(x, \eta)$ to be large, we can make the Euclidean length of $F_{\mu_{i, \varepsilon}}([z_{3\varepsilon,C h_\varepsilon}, z_{3\varepsilon,\varepsilon}])$ to be as small as we wish.
Consequently, we can choose $C > 0$ sufficiently large to ensure that the hyperbolic length of $F_{\mu_{i,\varepsilon}}([z_{3\varepsilon,C h_\varepsilon}, z_{3\varepsilon, \varepsilon}])$ is also less than $\eta/2$. The proof is complete.
\end{proof}

\section{Continuity of critical values}
\label{sec:finite}

In this section, we show that the singular value measures
$$
\nu_F = \sum_{c \in \crit F} (1-|c|) \cdot  \delta_{F(c)} + \underbrace{F_* (\sigma(F'))}_{\overline{\nu}_F}$$
vary continuously in $F \in \mathscr J$, thereby completing the proof of Theorem \ref{main-thm}.
More precisely, we show that if a sequence of inner functions $F_n \in \mathscr J$ converges stably to $F \in \mathscr J$, then
the measures $\nu_{F_n} \to \nu_F$ converge weakly.
It is enough to consider the case when the $F_n$ are finite Blaschke products as the general case follows after a diagonal argument.

Let $V \subset \mathbb{D}$ be a round disk compactly contained in $\mathbb{D}$ and $\mathcal U$ be the set of connected components of $F^{-1}(V)$. For each  $U_k \in \mathcal U$, choose a basepoint $p_k \in U_k$. Let $\varphi_k: \mathbb{D} \to U_k$ be the conformal map with $\varphi_k(p_k) = p_k$ and $\varphi_k'(p_k) > 0$.

 After passing to a subsequence, for each $U_k \in \mathcal U$, we can find connected components $U_{n,k}$ of $F_n^{-1}(V)$ such that $(U_{n,k}, p_k) \to (U_k, p_k)$ in Carath\'eodory sense. If we normalize the conformal maps $\varphi_{n,U_{n,k}}: \mathbb{D} \to U_{n,k}$ so that $\varphi_{n,U_{n,k}}(p_k) = p_k$ and $\varphi_{n,U_{n,k}}'(p_k) > 0$, then $F_{n, U_{n,k}} \to F_{U_k}$ converge uniformly on compact subsets of $\mathbb{D}$. 
Due to the discrepancy between  Carath\'eodory  and Hausdorff limits, the same connected component $U_{n,k}$ may appear in more than one sequence.

\begin{proof}[Proof of Theorem Theorem \ref{main-thm}: Continuity]
As a point on the unit circle can be thick for at most one domain $U_k \in \mathcal U$, the measures
 $\sigma(F')|_{(\partial U_k \cap \partial \mathbb{D})_{\thick}}$ are supported on disjoint sets. Given an $\varepsilon > 0$, choose finitely many disjoint closed subsets
$$E_k \subset (\partial U_k \cap \partial \mathbb{D})_{\thick}, \qquad k = 1, 2, \dots, N,$$
such that 
$
\bigl \|  \sum_{k=1}^\infty \sigma(F')|_{(\partial U_k \cap \partial \mathbb{D})_{\thick}} - 
 \sum_{k=1}^N \sigma(F')|_{E_k} \bigr \|_{\mathcal M} < \varepsilon.
$ 
 
  Since $F_{n, U_{n,k}} \to F_{U_k}$ converges uniformly on compact sets, we have
$$
\liminf_{n \to \infty} \sum_{\substack{\hat c \in \crit F_{n, U_{n,k}} \\ \escaping}} G_{\mathbb{D}}( 0, \hat c) \cdot \delta_{\hat c} \, \ge \, \sigma(F'_{U_k}),
$$
in the sense of measures. An application of Lemma \ref{green-quotient3} gives
$$
\liminf_{n \to \infty} \sum_{\substack{\hat c \in \crit F_{n, U_{n,k}} \\ \escaping}} G_{\mathbb{D}}( p_k, \hat c) \cdot \delta_{\hat c} \, \ge \,  P_{p_k}(\zeta) \cdot d\sigma(F'_{U_k}).
$$
Using the conformal invariance of the Green's function and Theorem \ref{radon-nikodym}, we get
$$
\liminf_{n \to \infty} \sum_{\substack{c \in \crit F_n \cap U_{n,k} \\ \escapingto E_k}} G_{U_{n,k}}(p_k, c) \cdot \delta_c
\, \ge \,
 |(\varphi_k^{-1})'(\zeta)|  \cdot P_{p_k}(\zeta) \cdot d\sigma(F')|_{E_k}.
 $$
Applying Lemmas \ref{green-quotient2} and \ref{green-quotient3} respectively shows
$$
\liminf_{n \to \infty} \sum_{\substack{c \in \crit F_n \cap U_{n,k} \\ \escapingto E_k}} G_{\mathbb{D}}(p_k, c) \cdot \delta_c
\, \ge \,
P_{p_k}(\zeta) \cdot d\sigma(F')|_{E_k}
 $$
and
\begin{equation}
\label{eq:finite2}
\liminf_{n \to \infty} \sum_{\substack{c \in \crit F_n \cap U_{n,k} \\ \escapingto E_k}} G_{\mathbb{D}}(0, c) \cdot \delta_c
\, \ge \,
 \sigma(F')|_{E_k}.
\end{equation}
In the above computation, we have used that $ |(\varphi_k^{-1})'(\zeta)| > 0$ on $E_k$ to avoid dividing by 0.
Summing (\ref{eq:finite2}) over the connected components of $F_n^{-1}(V)$, we obtain
\begin{align*}
\liminf_{n \to \infty} \sum_{\substack{c \,\in\, \crit F_n \,\cap\, F_n^{-1}(V) \\ \escaping}} G_{\mathbb{D}}(0, c) & \ge \sum_{k=1}^N \sigma(F')(E_k) \\
& \ge  \sum_{k=1}^\infty \sigma(F') \bigl ( (\partial U_k \cap \partial \mathbb{D})_{\thick} \bigr ) - \varepsilon \\
  & \ge
  \overline{\nu}_F(V) - \varepsilon,
\end{align*}
where in the last step, we have crucially used Theorem \ref{main-thm2}.
 As $\varepsilon > 0$ was arbitrary, we see that
\begin{equation}
\label{eq:finite1}
\liminf_{n \to \infty} \sum_{\substack{c \,\in\, \crit F_n \,\cap\, F_n^{-1}(V) \\ \escaping}} G_{\mathbb{D}}(0, c)  \ge \overline{\nu}_F(V).
\end{equation}
Since (\ref{eq:finite1}) holds for any round disk $V$ compactly contained in $\mathbb{D}$, any subsequential limit of the measures $\nu_{F_n}$ is at least $\nu_F$.
However, as the sequence $F_n \to F$ is stable, $\nu_{F_n} (\overline{\mathbb{D}}) \to \nu_{F} (\overline{\mathbb{D}})$, which implies that $\nu_{F_n} \to \nu_F$ weakly. The proof is complete.
\end{proof}

\appendix

\section{Compactness}
\label{sec:compactness}

It is well known that the closure of finite Blaschke products in the topology of uniform convergence of compact subsets is the unit ball in $H^\infty$ together with the modular constants, e.g.~see \cite{mashreghi-ransford}. If we only use finite Blaschke products vanish at the origin, then the closure consists of functions in the $H^\infty$ unit ball that vanish at the origin.

The following lemma says that if the entropy is bounded above, then the limit is necessarily an inner function:

\begin{lemma}
\label{J-compactness}
Suppose that $F_n \in \mathscr J$ is a sequence of inner functions of finite entropy with $F_n(0) = 0$, converging uniformly on compact subsets of the unit disk to
  $F$. If
\begin{equation}
\label{eq:J-compactness}
\sup \int_{|z|=1} \log |F'_n(z)| dm < \infty,
\end{equation}
then the limit function $F \in \mathscr J$.
\end{lemma}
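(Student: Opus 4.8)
The plan is to introduce the auxiliary function
$$
u(z)\;=\;\log\frac{1-|F(z)|^{2}}{1-|z|^{2}},\qquad u_{n}(z)\;=\;\log\frac{1-|F_{n}(z)|^{2}}{1-|z|^{2}},
$$
and to show that the uniform entropy bound forces the circular means of $u$ to stay bounded, which in turn pins down $|F^{*}|\equiv 1$. First I would record the elementary properties. Since $F_{n}(0)=0$, the Schwarz lemma gives $|F_{n}(z)|\le|z|$, so $u_{n}\ge0$ on $\mathbb D$; the same bound passes to the limit, so $u\ge0$. Using $\Delta\log(1-|w|^{2})=-4/(1-|w|^{2})^{2}$ and the chain rule, one computes $\Delta u_{n}=\lambda_{\mathbb D}^{2}-\lambda_{F_{n}}^{2}\ge0$ by the Schwarz--Pick inequality $\lambda_{F_{n}}\le\lambda_{\mathbb D}$, so $u_{n}$ (and likewise $u$) is subharmonic on $\mathbb D$; both vanish at the origin. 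Finally, because $F_{n}\to F$ uniformly on each circle $|z|=r$ while $1-|F_{n}|^{2}\in[1-r^{2},1]$ there, $u_{n}\to u$ uniformly on each such circle.

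Next I would reduce to the case that the $F_{n}$ are finite Blaschke products. Since finite Blaschke products are dense in $\mathscr J$ in the stable topology and the entropy is continuous along stable sequences, for each $n$ one may pick a finite Blaschke product $G_{n}$ with $G_{n}(0)=0$, with $\sup_{|z|\le 1-1/n}|G_{n}-F_{n}|<1/n$, and with $\int_{\partial\mathbb D}\log|G_{n}'|\,dm<\int_{\partial\mathbb D}\log|F_{n}'|\,dm+1$; then $G_{n}\to F$ locally uniformly and $\sup_{n}\int_{\partial\mathbb D}\log|G_{n}'|\,dm\le M+1$, so after replacing $F_{n}$ by $G_{n}$ (and $M$ by $M+1$) we may assume each $F_{n}$ is a finite Blaschke product. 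The gain of this reduction is that for a finite Blaschke product $B$ the quotient $(1-|B(z)|^{2})/(1-|z|^{2})$ extends to a positive smooth function on $\overline{\mathbb D}$ with boundary values $|B'(\zeta)|$, so $u_{n}$ extends continuously to $\overline{\mathbb D}$ with $u_{n}|_{\partial\mathbb D}=\log|F_{n}'|$. Being subharmonic on $\mathbb D$ and continuous on $\overline{\mathbb D}$, $u_{n}$ is dominated by the Poisson extension of its boundary values, hence
$$
\frac1{2\pi}\int_{0}^{2\pi}u_{n}(re^{i\theta})\,d\theta\;\le\;\int_{\partial\mathbb D}\log|F_{n}'|\,dm\;\le\;M+1,\qquad 0<r<1.
$$
Letting $n\to\infty$ for fixed $r$, and then using that the circular means of the subharmonic function $u$ are nondecreasing in $r$, one gets $L:=\sup_{r}\frac1{2\pi}\int_{0}^{2\pi}u(re^{i\theta})\,d\theta\le M+1<\infty$. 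This bounded‑means estimate is the heart of the argument and the one place where the uniform entropy bound is used; the technical work lies in the finite–Blaschke reduction and the boundary identity $u_{n}|_{\partial\mathbb D}=\log|F_{n}'|$.

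Finally I would extract innerness and finite entropy from $L<\infty$. A nonnegative subharmonic function with bounded circular means has a least harmonic majorant, and its Riesz decomposition reads $u=P_{\tau}-G_{\mathbb D}\mu$ for a finite positive measure $\tau$ on $\partial\mathbb D$ and a positive measure $\mu$ on $\mathbb D$ with $\tfrac1{2\pi}\tau(\partial\mathbb D)=L$. Consequently, for $m$-a.e.\ $\zeta$ the radial limit $u^{*}(\zeta)$ exists and equals the Radon--Nikodym density of the absolutely continuous part of $\tau$, so $u^{*}<\infty$ $m$-a.e. On the other hand $F\in H^{\infty}$ has radial limits $F^{*}$ $m$-a.e., and at any $\zeta$ with $|F^{*}(\zeta)|<1$ one has $1-|F(r\zeta)|^{2}\to 1-|F^{*}(\zeta)|^{2}>0$ while $1-r^{2}\to0$, forcing $u^{*}(\zeta)=+\infty$. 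Comparing, $m(\{|F^{*}|<1\})=0$, i.e.\ $F$ is inner (this also excludes $F\equiv0$). Since $F(0)=0$ we have $|F'^{*}|\ge1$ $m$-a.e.\ \cite[Theorem~4.15]{mashreghi}, hence $u^{*}=\log|F'^{*}|\ge0$ $m$-a.e., and therefore
$$
\int_{\partial\mathbb D}\log|F'|\,dm\;=\;\int_{\partial\mathbb D}u^{*}\,dm\;\le\;\tfrac1{2\pi}\tau(\partial\mathbb D)\;=\;L\;<\;\infty,
$$
so $F$ has finite entropy and $F\in\mathscr J$.
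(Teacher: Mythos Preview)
Your proof is correct and takes a genuinely different route from the paper's. The paper argues via conformal metrics: from the fundamental lemma $\lambda_{F_n}\ge|\inn F_n'|\,\lambda_{\mathbb D}$ it passes to a subsequential limit $\lambda_F\ge|H|\,\lambda_{\mathbb D}$, uses Jensen's formula together with the entropy bound to rule out $H\equiv 0$, and then invokes \cite[Lemma~3.6]{inner} as a black box to conclude $F\in\mathscr J$. Your argument is more self-contained: you work directly with the nonnegative subharmonic function $u=\log\frac{1-|F|^2}{1-|z|^2}$, reduce to finite Blaschke products so that $u_n$ extends to $\overline{\mathbb D}$ with boundary values $\log|F_n'|$, transfer the uniform entropy bound into a bound on the circular means of $u$, and then extract both innerness and finite entropy from the Riesz decomposition and Fatou/Littlewood theorems. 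The paper's proof is shorter precisely because it leans on machinery developed elsewhere in \cite{inner}; your approach trades that dependence for classical potential theory and has the merit of being readable without those references. Two small points worth making explicit: the identification $u^*=\log|F'|$ on $\partial\mathbb D$ uses the Julia--Carath\'eodory theorem (finiteness of $u^*$ gives a finite angular derivative, hence a non-tangential limit of $F'$), and the Schwarz--Pick bound $|F'|\le e^{u}$ combined with $L<\infty$ yields $F'\in\mathcal N$ directly, which is implicitly required for the finite-entropy condition to make sense.
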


\begin{proof}
According to the fundamental lemma \cite[Lemma 2.3]{inner},
$$
\lambda_{F_n} \ge | \inn F'_n | \lambda_{\mathbb{D}},
$$
where $\lambda_{F_n} = \frac{2|F'_n|}{1-|F_n|^2}$ and $\lambda_{\mathbb{D}} = \frac{2}{1-|z|^2}$.
Taking $n \to \infty$ and passing to a subsequence if necessary gives
$$
\lambda_{F} \ge | H | \lambda_{\mathbb{D}},
$$
where $H$ is a bounded holomorphic function. In view of Jensen's formula, the assumption (\ref{eq:J-compactness})
 rules out the possibility that $H$ is identically zero.
The lemma now follows from \cite[Lemma 3.6]{inner}. 
\end{proof}

\section{Basic properties of components}

\label{sec:preimage}

Let $F:\mathbb{D}\to\mathbb{D}$ be an inner function, $V\subset\mathbb{D}$ be a Jordan domain compactly contained in the disk and $U\subset F^{-1}(V)$ be a connected component of the pre-image. In this appendix, we show Lemmas \ref{component-is-jordan} and \ref{component-is-inner} from the introduction. We begin by studying the topological properties of $U$.

\begin{lemma}
\label{Same boundary}
The domains $U$ and $\mathbb{C} \setminus \overline{U}$ have the same boundary.
\end{lemma}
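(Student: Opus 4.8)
The plan is to establish $\partial U=\partial(\mathbb{C}\setminus\overline U)$ by proving the two inclusions separately. The inclusion $\partial(\mathbb{C}\setminus\overline U)\subseteq\partial U$ follows from point-set topology alone and holds for any open set $U\subseteq\mathbb{C}$: since $\mathbb{C}\setminus U$ is closed and contains $\mathbb{C}\setminus\overline U$, we have $\overline{\mathbb{C}\setminus\overline U}\subseteq\mathbb{C}\setminus U$, hence
$$
\partial(\mathbb{C}\setminus\overline U)=\overline{\mathbb{C}\setminus\overline U}\cap\overline U\subseteq(\mathbb{C}\setminus U)\cap\overline U=\overline U\setminus U=\partial U.
$$
So the content of the lemma is the reverse inclusion $\partial U\subseteq\partial(\mathbb{C}\setminus\overline U)$, which is equivalent to saying that $U$ coincides with the interior of its closure; this is where I would use that $U$ is a connected component of $F^{-1}(V)$ and that $F$ is holomorphic.

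To prove $\partial U\subseteq\partial(\mathbb{C}\setminus\overline U)$ I would argue by contradiction. A point $p\in\partial U$ fails to lie in $\partial(\mathbb{C}\setminus\overline U)$ precisely when some ball $B(p,r)$ is contained in $\overline U$, so suppose such $p$ and $r>0$ exist. First I would rule out $p\in\partial\mathbb{D}$: since $U\subseteq\mathbb{D}$ we have $\overline U\subseteq\overline{\mathbb{D}}$, and a Euclidean ball centered at a point of $\partial\mathbb{D}$ always contains points of modulus larger than $1$, so it cannot be contained in $\overline{\mathbb{D}}$. Hence $p\in\mathbb{D}$ and $F$ is holomorphic on $B(p,r)$. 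Because $U\subseteq F^{-1}(V)$ and $F^{-1}(\overline V)$ is closed, $\overline U\subseteq F^{-1}(\overline V)$, so $F(B(p,r))\subseteq\overline V$. Since an inner function is non-constant, $F$ is non-constant on the connected set $B(p,r)$, so by the open mapping theorem $F(B(p,r))$ is open; being contained in $\overline V$, it is contained in $\interior\overline V=V$. Thus $B(p,r)\subseteq F^{-1}(V)$, and as $B(p,r)$ is connected and meets $U$ (because $p\in\overline U$), it lies in the single component $U$ of $F^{-1}(V)$. This forces $p\in U$, contradicting $p\in\partial U$.

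I do not anticipate a genuine obstacle, but the step deserving the most care is the identification $\interior\overline V=V$, which is exactly where the hypothesis that $V$ is a Jordan domain is used: $\partial V$ is then a Jordan curve, so by the Jordan curve theorem it is also the boundary of the unbounded complementary component, and therefore every ball centered at a point of $\partial V$ meets $\mathbb{C}\setminus\overline V$; consequently no point of $\partial V$ lies in $\interior\overline V$. (The compact containment of $V$ in $\mathbb{D}$ serves only to guarantee that $\overline V\subset\mathbb{D}$, so that $F(B(p,r))\subseteq\overline V$ is automatic.)
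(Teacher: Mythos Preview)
Your proof is correct and follows essentially the same approach as the paper's: both arguments dispatch the inclusion $\partial(\mathbb{C}\setminus\overline U)\subseteq\partial U$ by pure point-set topology, then handle the reverse inclusion by separating the boundary point into the cases $p\in\partial\mathbb{D}$ and $p\in\mathbb{D}$, and in the interior case use the open mapping theorem together with the Jordan-domain property $\interior\overline V=V$. The only cosmetic difference is that the paper argues directly (showing every ball around $z\in\partial U\cap\mathbb{D}$ hits $\mathbb{C}\setminus\overline U$ because its $F$-image is an open neighborhood of $F(z)\in\partial V$), whereas you phrase the same idea as a contradiction; your write-up is in fact more explicit about why $\interior\overline V=V$.
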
 

\begin{proof}
The inclusion $\partial (\mathbb{C} \setminus \overline{U})\subseteq \partial U$ is automatic as
$$
\partial (\mathbb{C} \setminus \overline{U}) \, = \, \overline{(\mathbb{C} \setminus \overline{U})} \setminus (\mathbb{C} \setminus \overline{U}) \, = \, \overline{(\mathbb{C} \setminus \overline{U})} \, \cap \, \overline{U} \, \subset \, \overline{U}.
$$
We therefore need to show the other inclusion $\partial U \subseteq \partial(\mathbb{C} \setminus \overline{U})$. It is clear that if $z\in \partial U \cap \partial \mathbb{D}$, then $z \in \partial (\mathbb{C} \setminus \overline{U})$. Suppose instead that $z\in \partial U \cap \mathbb{D}$. Since $F$ is an open mapping, $F(z)\in \partial V$ and the image of any ball $B(z, \varepsilon)$ intersects $\mathbb{D} \setminus \overline{V}$, which also implies that $z \in \partial (\mathbb{C} \setminus \overline{U})$.
\end{proof}

\begin{lemma}
\label{Hinfinity}
Let $f: \mathbb{D} \to \mathbb{C}$ be a bounded analytic function and  $K$ be a compact subset of the plane with connected complement. Suppose that outside a measure zero subset $E \subset \partial \mathbb{D}$, the radial boundary values
$$
f(\zeta) := \lim_{r \to 1} f(r\zeta)
$$
are contained in $K$. Then, $f(\mathbb{D}) \subset \interior K$.
\end{lemma}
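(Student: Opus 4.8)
The plan is to prove the inclusion $f(\mathbb{D}) \subseteq K$ first, and then to upgrade it to $f(\mathbb{D}) \subseteq \interior K$ for free via the open mapping theorem (we may assume $f$ is non-constant, the only case that is relevant). For the inclusion $f(\mathbb{D}) \subseteq K$ I would argue by contradiction. Suppose $w_0 := f(z_0) \notin K$ for some $z_0 \in \mathbb{D}$. Since $\mathbb{C} \setminus K$ is connected, $K$ is polynomially convex --- a classical consequence of Runge's approximation theorem --- so there is a polynomial $p$ with
$$
\max_{w \in K} |p(w)| \,\le\, 1 \,<\, |p(w_0)|.
$$

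Next I would transfer the problem to the composition $h := p \circ f$. Because $f$ is bounded, its image lies in a fixed disk on which $p$ is bounded, so $h \in H^\infty(\mathbb{D})$. For every $\zeta$ outside the null set $E$ we have $f(r\zeta) \to f(\zeta) \in K$ as $r \to 1$, and hence $h(r\zeta) = p(f(r\zeta)) \to p(f(\zeta)) \in p(K)$ by continuity of $p$; thus the radial boundary values of $h$ satisfy $|h^*| \le \max_K |p| \le 1$ almost everywhere on $\partial \mathbb{D}$. Since a bounded analytic function is the Poisson integral of its boundary values, $\|h\|_{H^\infty} = \|h^*\|_{L^\infty(\partial\mathbb{D})} \le 1$, so $|h| \le 1$ throughout $\mathbb{D}$, contradicting $|h(z_0)| = |p(w_0)| > 1$. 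This proves $f(\mathbb{D}) \subseteq K$; as $f$ is non-constant, $f(\mathbb{D})$ is open, and therefore $f(\mathbb{D}) \subseteq \interior K$.

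I do not expect a genuine obstacle here: the argument is the classical ``dominate the bad value by a polynomial and invoke the maximum principle for $H^\infty$'' routine. The one conceptual point worth isolating is why a polynomial (rather than, say, a Riemann map of the complement) is the right tool: the set $\hat{\mathbb{C}} \setminus K$ is connected but need not be simply connected, so there is no conformal map available to transplant the picture; polynomial convexity of $K$ is exactly the substitute that makes a single polynomial enough to separate $w_0$ from $K$. Everything else --- boundedness of $h$, the almost-everywhere identification of $h^*$ with $p \circ f^*$, and the $H^\infty$ maximum principle --- is routine.
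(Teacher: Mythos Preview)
Your proof is correct and follows essentially the same route as the paper's: both use Runge's theorem (polynomial convexity of $K$) to produce a polynomial separating a point outside $K$ from $K$, compose with $f$, apply the $H^\infty$ maximum principle to the composition, and then invoke the open mapping theorem to pass from $K$ to $\interior K$. The only cosmetic difference is that you frame the first step as a contradiction starting from a specific $z_0$, whereas the paper shows directly that each $b \notin K$ lies outside the image.
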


\begin{proof}
We assume that $f$ is non-constant, otherwise, the lemma is obvious.
Let $b \in \mathbb{C} \setminus K$. By Runge's theorem, there is a polynomial $p$ with $p(K) \subset \overline{\mathbb{D}}$ and $|p(b)| > 1$. Since $p \circ f$ is a bounded analytic function on the unit disk whose radial boundary values are at most 1 a.e., $\| p \circ f \|_{H^\infty} \le 1$. This prevents $b$ from being in the image of $f$. In other words, $f(\mathbb{D}) \subset K$.
As $f$ is an open mapping, $f(\mathbb{D}) \subset \interior K$.
\end{proof}

\begin{corollary}
\label{Holes Lemma}
Suppose $W \subset \mathbb{D}$ is a simply connected domain whose boundary intersects $\partial \mathbb{D}$ in a set of measure 0.
If  $F(\partial W\cap \mathbb{D}) \subset \partial V$, then $F(W)\subset V$.
\end{corollary}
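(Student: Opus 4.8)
The plan is to reduce Corollary \ref{Holes Lemma} to Lemma \ref{Hinfinity} by transplanting $F|_W$ to the disk through a Riemann map. First I would fix a conformal map $R : \mathbb{D} \to W$ (possible since $W$ is simply connected) and set $f = F \circ R$, a bounded analytic function on $\mathbb{D}$ with $f(\mathbb{D}) = F(W)$. I would take $K = \overline{V}$: since $V$ is a Jordan domain compactly contained in $\mathbb{D}$, the set $K$ is compact, its complement $\mathbb{C}\setminus\overline V$ is the (connected) unbounded component of $\mathbb{C}\setminus\partial V$, and $\interior K = V$ by the Jordan curve theorem (no point of $\partial V$ is interior to $\overline V$, as every neighbourhood of such a point meets the unbounded complementary component). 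Thus $K$ satisfies the hypotheses of Lemma \ref{Hinfinity}, and it remains to check that the radial boundary values of $f$ lie in $K$ outside a set of Lebesgue measure zero; the lemma will then give $f(\mathbb{D}) \subset \interior K = V$, i.e.\ $F(W)\subset V$.

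The heart of the argument is controlling the radial limits of $R$. For a.e.\ $\zeta \in \partial\mathbb{D}$ the radial limit $R^*(\zeta) := \lim_{r\to1}R(r\zeta)$ exists and lies on $\partial W = (\partial W\cap\mathbb{D}) \cup (\partial W\cap\partial\mathbb{D})$. I claim the ``bad'' set $E_0 := \{\zeta : R^*(\zeta)\in \partial W\cap\partial\mathbb{D}\}$ is Lebesgue-null. By conformal invariance of harmonic measure, pushing normalized Lebesgue measure on $\partial\mathbb{D}$ forward under $R^*$ gives $\omega_W(R(0),\cdot)$, so $|E_0| = 2\pi\,\omega_W(R(0),\,\partial W\cap\partial\mathbb{D})$. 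Monotonicity of harmonic measure — applied to $W \subset \mathbb{D}$ with the common boundary portion $\partial W\cap\partial\mathbb{D}$, via the maximum principle for $\omega_{\mathbb{D}}(\cdot,\partial W\cap\partial\mathbb{D}) - \omega_W(\cdot,\partial W\cap\partial\mathbb{D})$ on $W$ (equivalently, restricting to $W$ a Poisson extension on $\mathbb{D}$ of a small open neighbourhood of $\partial W\cap\partial\mathbb{D}$ in $\partial\mathbb{D}$) — yields $\omega_W(R(0),\partial W\cap\partial\mathbb{D}) \le \omega_{\mathbb{D}}(R(0),\partial W\cap\partial\mathbb{D})$, and the right-hand side is $0$ because $\partial W\cap\partial\mathbb{D}$ has zero Lebesgue measure on $\partial\mathbb{D}$ by hypothesis. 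Hence for a.e.\ $\zeta\in\partial\mathbb{D}$ we have $R^*(\zeta)\in \partial W\cap\mathbb{D}$, an interior point of $\mathbb{D}$ at which $F$ is continuous, so the radial limit of $f = F\circ R$ at $\zeta$ exists and equals $F(R^*(\zeta)) \in F(\partial W\cap\mathbb{D}) \subset \partial V \subset K$. Together with the null set where $R$ (or $f$) fails to have a radial limit, this exhibits the measure-zero exceptional set required by Lemma \ref{Hinfinity}, and the proof is complete.

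Everything here except the harmonic-measure comparison is bookkeeping, so the one step I expect to require care is establishing that $R$ carries almost every point of $\partial\mathbb{D}$ into the interior portion $\partial W\cap\mathbb{D}$ of $\partial W$: one must phrase $\omega_W(\cdot,\partial W\cap\partial\mathbb{D})$ as a Perron–Wiener–Brelot solution (or approximate $\partial W\cap\partial\mathbb{D}$ from outside by open subsets of $\partial\mathbb{D}$ of small Lebesgue measure) to make the comparison with $\omega_{\mathbb{D}}$ rigorous, and recall that the change of variables $\omega_W(R(0),\cdot) = m\bigl((R^*)^{-1}(\cdot)\bigr)$ is legitimate since $W$ is simply connected. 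Once this is in hand, invoking Lemma \ref{Hinfinity} is immediate.
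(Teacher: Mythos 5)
Your proposal is correct and follows essentially the same route as the paper: set $f = F\circ R$ with $R:\mathbb{D}\to W$ a Riemann map, show that a.e.\ radial boundary value of $R$ lands in $\partial W\cap\mathbb{D}$, and invoke Lemma~\ref{Hinfinity} with $K=\overline{V}$. The only difference is presentational — the paper dispatches the key measure-zero step by citing Loewner's lemma (with a parenthetical nod to Brownian motion), whereas you spell out the underlying harmonic-measure monotonicity argument that proves it.
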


\begin{proof}
Let $R: \mathbb{D} \to W$ be a conformal map. Since $W \subset \mathbb{D}$, by Loewner's lemma, $R^{-1}(\partial W \cap \partial \mathbb{D}) \subset \partial \mathbb{D}$ has measure zero. (One can see this by using the interpretation of harmonic measure as the hitting distribution of Brownian motion.) 
The corollary follows after applying Lemma \ref{Hinfinity} with $f = F \circ R$.
\end{proof}

We now recall some well-known facts from topology:
\begin{enumerate}
\item If $E \subset \hat{\mathbb{C}}$ is a closed connected subset of the sphere, then every connected component of $\hat{\mathbb{C}} \setminus E$ is simply-connected. See \cite[p.~139]{ahlfors}.

\item Suppose now that $E \subset \mathbb{C}$ is a compact set with connected interior. A bounded connected component of $\mathbb{C} \setminus E$  is called a {\em hole} of $E$. If $E$ has no holes, $\interior E$ is simply-connected and  $\partial (\interior E)$ is connected.
\end{enumerate}

\begin{lemma}
\label{simply connected}
The domains $U$ and $\hat{\mathbb{C}} \setminus \overline{U}$ are simply-connected, while $\partial U$ is connected.
\end{lemma}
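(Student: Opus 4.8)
The plan is to reduce all three assertions — $U$ simply connected, $\hat{\mathbb C}\setminus\overline{U}$ simply connected, $\partial U$ connected — to the single topological statement that $\overline{U}$ \emph{has no holes}, i.e. that $\hat{\mathbb C}\setminus\overline{U}$ has no bounded connected component. Since $U$ is open and $\overline{U}\subset\overline{\mathbb D}$, the point $\infty$ lies in $\hat{\mathbb C}\setminus\overline{U}$, and Lemma \ref{Same boundary} gives $\partial U=\partial(\hat{\mathbb C}\setminus\overline{U})$. As $\overline{U}$ is a closed connected subset of $\hat{\mathbb C}$, every component of $\hat{\mathbb C}\setminus\overline{U}$ is simply connected by topological fact (1); so if there are no holes then $\hat{\mathbb C}\setminus\overline{U}$ equals its unbounded component $\Omega_\infty$, which is simply connected — the second assertion. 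Moreover $\interior\overline{U}=U$: if $p\in\interior\overline{U}\setminus U$ then $p\in\partial U=\partial(\mathbb C\setminus\overline{U})$, so every neighbourhood of $p$ meets $\mathbb C\setminus\overline{U}$, contradicting $p\in\interior\overline{U}$. Hence topological fact (2), applied to the compact set $\overline{U}$ with no holes and connected interior $U$, gives that $U=\interior\overline{U}$ is simply connected and $\partial U=\partial(\interior\overline{U})$ is connected — the first and third assertions.

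To rule out holes I would argue as follows. First, for $z\in\partial U\cap\mathbb D$ one has $F(z)\in\partial V$: indeed $F(z)\in\overline{V}$ by continuity, and if $F(z)\in V$ then a ball $B(z,r)\subset F^{-1}(V)$ is connected, hence contained in a single component of $F^{-1}(V)$, which must be $U$ because $z\in\partial U$, forcing $z\in U$ — impossible. Second, any bounded component $\Omega'$ of $\hat{\mathbb C}\setminus\overline{U}$ satisfies $\Omega'\subset\mathbb D$ (a point of $\Omega'\cap\partial\mathbb D$ would have a neighbourhood inside $\hat{\mathbb C}\setminus\overline U$ meeting $\hat{\mathbb C}\setminus\overline{\mathbb D}$, hence lie in $\Omega_\infty$); it is simply connected by fact (1); and $\partial\Omega'\subset\partial(\hat{\mathbb C}\setminus\overline U)=\partial U$, so $\partial\Omega'\cap\mathbb D\subset\partial U\cap\mathbb D$ and $F(\partial\Omega'\cap\mathbb D)\subset\partial V$. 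Corollary \ref{Holes Lemma} applied to $W=\Omega'$ then gives $F(\Omega')\subset V$. Now let $\hat{U}=\mathbb C\setminus K_\infty$, where $K_\infty$ is the unique unbounded component of $\mathbb C\setminus U$ (which is closed and connected, and contains $\mathbb C\setminus\mathbb D$); then $\hat U$ is open, connected, and $U\subseteq\hat U\subseteq\mathbb D$, with $\hat U\setminus U$ a union of holes $K_i$ of $U$, each compact in $\mathbb D$, with $\partial K_i\subset\partial U\cap\mathbb D$ and $\interior K_i$ covered by bounded components of $\hat{\mathbb C}\setminus\overline U$. Hence $F$ maps each $K_i$ into $V\cup\partial V=\overline V$, so $F(\hat U)\subset\overline V$; but $\hat U$ is open and $F$ is a nonconstant open map, so $F(\hat U)$ is open, and since $V$ is a Jordan domain, $\interior\overline V=V$, whence $F(\hat U)\subset V$. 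Thus $\hat U\subseteq F^{-1}(V)$ is connected and contains $U$, so $\hat U=U$, i.e. $\overline U$ has no holes.

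The step I expect to be the main obstacle is the application of Corollary \ref{Holes Lemma} to the bounded complementary component $\Omega'$: this needs $\partial\Omega'\cap\partial\mathbb D$ — a subset of $\partial U\cap\partial\mathbb D$ — to be negligible (for the harmonic measure of $\Omega'$, equivalently it suffices that $\partial U\cap\partial\mathbb D$ has Lebesgue measure zero), and this is precisely where the hypothesis that $F$ is \emph{inner} must enter rather than just that $F$ is a holomorphic self-map. The point is that $|F|$ is bounded away from $1$ on $U$ because $\overline{V}$ is compactly contained in $\mathbb D$, whereas at almost every $\zeta\in\partial\mathbb D$ the radial limit of $F$ exists and has modulus $1$; feeding this into Lindel\"of's theorem (together with the fact that $\partial\mathbb D$ is a smooth curve, which rules out the conformal map onto $\Omega'$ having ``twist points'' there) shows the relevant boundary set is null. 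I would isolate this measure-zero statement as a separate lemma; the rest of the argument above is routine point-set topology combined with the open mapping theorem.
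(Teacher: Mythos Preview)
Your overall architecture matches the paper's exactly: reduce all three assertions to ``$\overline U$ has no holes'' via Lemma~\ref{Same boundary} and the two listed topological facts, then kill the holes with Corollary~\ref{Holes Lemma}. You are in fact more explicit than the paper on several points --- the verification that $F(\partial U\cap\mathbb D)\subset\partial V$, the check that a bounded component of $\hat{\mathbb C}\setminus\overline U$ sits inside $\mathbb D$, and especially your third paragraph, which correctly isolates the one place the inner-function hypothesis is used (namely that $\partial U\cap\partial\mathbb D$ is Lebesgue-null, needed for Corollary~\ref{Holes Lemma}). The paper leaves that last point implicit.

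There is, however, a genuine slip at the end of your second paragraph. You prove $\hat U=U$, where $\hat U=\mathbb C\setminus K_\infty$ is the fill of $U$; this says $\mathbb C\setminus U$ is connected. You then write ``i.e.\ $\overline U$ has no holes'', but that means $\mathbb C\setminus\overline U$ is connected --- a different statement. Even under your standing hypotheses $U=\interior\overline U$ and $\partial U=\partial(\mathbb C\setminus\overline U)$, these are not equivalent: take $U$ to be a simply connected strip spiralling onto a circle $C$; then $\overline U\supset C$ separates the plane, yet $\mathbb C\setminus U$ is connected. So your argument, as written, delivers $U$ simply connected (hence also $\partial U$ connected), but not yet the simple connectivity of $\hat{\mathbb C}\setminus\overline U$. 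The paper's proof, terse as it is, applies Corollary~\ref{Holes Lemma} directly to a putative bounded component $W$ of $\mathbb C\setminus\overline U$ rather than passing through the fill of $U$, so it does not run into this conflation. One way to repair your route: you already showed that any hole $\Omega'$ of $\overline U$ has $F(\Omega')\subset V$; the component $U''$ of $F^{-1}(V)$ containing $\Omega'$ then satisfies $U''\cap\overline U=\emptyset$ (points of $\partial U\cap\mathbb D$ map to $\partial V$, and $U''\subset\mathbb D$), forcing $U''=\Omega'$; now $U\cup\overline{\Omega'}$ is connected (since $\partial\Omega'\subset\partial U$, every point of $\overline{\Omega'}\setminus\Omega'$ is a limit of $U$), sits in $\mathbb D$, and is mapped by $F$ into $\overline V$ --- feed this set, rather than $\hat U$, into the open-mapping step.
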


\begin{proof}
By Corollary \ref{Holes Lemma}, $\overline{U}$ has no holes, so that $\hat{\mathbb{C}} \setminus \overline{U}$ consists only of the unbounded connected component, which is simply connected. Furthermore, by Lemma \ref{Same boundary}, $U = \interior \overline{U}$. By the second fact above, $U$ is simply connected and $\partial U$ is connected.
\end{proof}

\begin{lemma}
\label{locally connected}
The boundary of $U$ is locally connected.
\end{lemma}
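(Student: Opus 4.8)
The plan is to prove that $\overline{U}$ is a locally connected continuum; then, since by Corollary~\ref{Holes Lemma} the compact set $\overline U$ has no holes, its only complementary component in $\hat{\mathbb C}$ is $\hat{\mathbb C}\setminus\overline U$, whose boundary is $\partial U$ (Lemma~\ref{Same boundary}), so Torhorst's theorem gives that $\partial U$ is locally connected as well. (Equivalently, one shows directly that the conformal map $\varphi\colon\mathbb D\to U$ extends continuously to $\overline{\mathbb D}$, which by Carath\'eodory is the same as local connectedness of $\partial U$.) To verify local connectedness of $\overline U$ it suffices, by the standard topological criterion, to check that $\overline U$ is connected im kleinen at each of its points: the case of interior points of $U$ is trivial, and the two remaining cases $z_0\in\partial U\cap\mathbb D$ and $z_0\in\partial U\cap\partial\mathbb D$ are handled separately below.

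For $z_0\in\partial U\cap\mathbb D$ we have $F(z_0)\in\partial V$ since $F$ is open. Choose a disk $B=B(z_0,r)$ compactly contained in $\mathbb D$ on which $F$ has the local normal form $F|_B=h\circ\bigl((\cdot)-z_0\bigr)^{d}$, where $d\ge 1$ is the local degree of $F$ at $z_0$ and $h$ is conformal near $(B-z_0)^{d}$. Because $V$ is a Jordan domain, near $h^{-1}(F(z_0))$ the set $h^{-1}(\partial V)$ is a single Jordan arc through that point, so $F^{-1}(\partial V)\cap B$ consists, near $z_0$, of finitely many Jordan arcs emanating from $z_0$; these cut $B$ into finitely many ``sectors'', each contained in $F^{-1}(V)$ or in $F^{-1}(\mathbb D\setminus\overline V)$. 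Hence for $r'$ small, $U\cap B(z_0,r')$ is a union of at most $d$ sectors, each a Jordan domain with $z_0$ on its boundary arc; since all their closures contain $z_0$, the set $\overline U\cap\overline{B(z_0,r')}$ equals the union of these closures and is therefore connected, of diameter at most $2r'$. (If one prefers to argue with $\partial U$ directly: every point of $\partial U\cap\overline{B(z_0,r')}$ joins $z_0$ along a boundary sub-arc, using that the common boundary arc of two adjacent sectors lies in $\partial U$ unless both sectors belong to $U$, in which case it lies in $U$.) The only property of $V$ used here is that $\partial V$ is locally a Jordan arc.

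The case $z_0\in\partial U\cap\partial\mathbb D$ is the main obstacle, since $F$ need not extend continuously past $z_0$ and no local normal form is available. The key geometric input is that $\partial U\cap\partial\mathbb D$ has Lebesgue measure zero --- an inner function has unimodular nontangential limits a.e., which is incompatible with $F$ carrying a nontangential approach region inside $U$ into $V$ --- so in particular $\overline U\cap\partial\mathbb D$ is totally disconnected. The remaining point is to rule out ``isolated fingers'' of $U$ accumulating at $z_0$, i.e.\ to show that every portion of $\overline U$ inside a small ball $B(z_0,\delta)$ is contained in a connected subset of $\overline U\cap B(z_0,\varepsilon)$. Since the fingers of $U$ are bounded by components of $F^{-1}(\partial V)$, which (being disjoint level sets in $\mathbb D$) are crosscuts of $\mathbb D$ or loops, and since $\overline U\cap\partial\mathbb D$ is totally disconnected, I would analyse the subdomains of $\mathbb D$ these arcs cut off near $z_0$ by applying Lemma~\ref{Hinfinity} and Corollary~\ref{Holes Lemma} to them exactly as in the proof of Corollary~\ref{Holes Lemma}, to show that such a subdomain must be entirely mapped into $V$ or its complement and hence attaches to $z_0$ within a small connected piece of $\overline U$. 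This bookkeeping near $\partial\mathbb D$ is where I expect the real work to lie; once connectedness im kleinen is established at every point, $\overline U$ is locally connected, and the lemma follows.
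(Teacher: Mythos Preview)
Your treatment of points $z_0\in\partial U\cap\mathbb D$ is fine and close to the paper's, though the paper makes the sharper observation (via Corollary~\ref{Holes Lemma}) that exactly one of the $d$ sectors can belong to $U$, not ``at most $d$''; this is not essential for local connectedness but simplifies the picture.

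The genuine gap is at points $z_0\in\partial U\cap\partial\mathbb D$. You correctly identify this as the hard case and correctly note that $\partial U\cap\partial\mathbb D$ has measure zero, but you do not actually prove anything there: the phrases ``I would analyse\dots'' and ``this bookkeeping\dots\ is where I expect the real work to lie'' are an acknowledgment that the argument is missing. Your suggested route---classifying the arcs of $F^{-1}(\partial V)$ near $z_0$ and applying Corollary~\ref{Holes Lemma} to the subdomains they cut off---does not by itself rule out infinitely many such arcs accumulating at $z_0$, which is exactly the ``fingers'' phenomenon you need to exclude.

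The paper's argument supplies the missing idea. For a.e.\ radius $r$, the circular crosscut $C_r=\partial B(z_0,r)\cap\mathbb D$ meets $\partial U$ in a set compactly contained in $\mathbb D$ (since $F$ has unimodular nontangential limits a.e.\ along the endpoints of $C_r$, while $F$ is close to $\overline V$ on $\partial U\cap\mathbb D$). Pick two such good radii $0<r_1<r_2$. Using the already-established local connectedness of $\partial U$ inside $\mathbb D$, one shows there are only finitely many ``crossings'' of $\partial U$ between $C_{r_1}$ and $C_{r_2}$: an infinite sequence of crossings would have a Hausdorff limit contradicting local connectedness at an interior point. Since $\partial U$ is connected (Lemma~\ref{simply connected}), each component of $\partial U\cap B(z_0,r_2)$ must contain a crossing, so there are only finitely many such components; the one containing $z_0$ therefore contains a full neighbourhood of $z_0$ in $\partial U$. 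This is precisely the finiteness mechanism your outline lacks.
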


Before giving the proof, recall that a topological space $X$ is {\em locally connected} if every point $x \in X$ admits a neighbourhood basis consisting of open, connected sets. A topological space $X$ is {\em weakly locally connected}  if for every point $x \in X$ and open set $U$ containing $x$, one can find a connected set $A \subset U$ which contains a neighbourhood $V$ of $\zeta$. As weak local connectivity is equivalent to local connectivity by \cite[p.~162]{munkres}, it is enough to show that $\partial U$ is weakly locally connected.

\begin{proof}[Proof of Lemma \ref{locally connected}]
It is not difficult to check the weak local connectivity of $\partial U$ near a point $z \in \partial U\cap \mathbb{D}$. Indeed, if $z \in \partial U\cap \mathbb{D}$ is not a critical point, then $F$ is a homeomorphism in a neighbourhood of $z$. On the other hand, if $z \in \partial U\cap \mathbb{D}$ is a critical point of order $m \ge 1$, then by Lemma \ref{Holes Lemma}, only one of the $m$ wedges of angle $\frac{2\pi}{m}$ around $z$ may be present in $U$, on which $F$ acts homeomorphically.

It remains to verify the weak local connectivity of $\partial U$ at a point $\zeta\in\partial U\cap\partial\mathbb{D}$.  Pick $0 < r_0 = r_0(\zeta) < 1/3$ so that $\partial U$ is not entirely contained in $B(\zeta, r_0)$. Consider the circular arcs $C_r = \partial B(\zeta,r) \cap \mathbb{D}$ centered at $\zeta$, with $0 < r < r_0$. As $C_r$ approaches the unit circle non-tangentially and $F$ is an inner function, for a.e.~$0 < r < r_0$, the intersection of $C_r$ and $\partial U$ is compactly contained in the unit disk.

Consider two such radii $r_1,r_2$ for which the intersections of $C_r$ and $\partial U$ do not go off the unit circle. By the local connectivity of $\partial U$ inside the unit disk, $\partial U$ can cross finitely many times from $C_{r_1}$ to $C_{r_2}$. By a {\em crossing}\/, we mean a connected component of $\partial U$ which lies between $C_{r_1}$ and $C_{r_2}$ and intersects both arcs. Indeed, if there were infinitely such crossings, then a Hausdorff limit would contradict the local connectivity of $\partial U$ inside the unit disk.

Given an $0 < r < r_0$, take two good radii $0 < r_1< r_2 < r$ as above. 
Since $\partial U$ is connected, every connected component of $\partial U \cap B(\zeta, r_2)$ must contain a crossing from $C_{r_1}$ to $C_{r_2}$. We may therefore enumerate the connected components of $\partial U \cap B(\zeta, r_2)$ as $A_1, A_2, \dots, A_n$, with $A_1$ being the connected component which contains which contains $\zeta$.
 Since $A_2, A_3, \dots, A_n$ are closed sets and don't contain $\zeta$, they are located at definite distance from $\zeta$.
Hence, $A_1$ contains an open neighborhood of $\zeta$ in $\partial U$. In other words, $\partial U$ is weakly
 locally connected at $\zeta$.
\end{proof}

To check that $\partial U$ is a Jordan domain, we use the following criterion from \cite[Theorem 1]{Fary} which may be viewed as a
converse to the Jordan curve theorem:

\begin{lemma}
\label{Converse to Jordan curve theorem}
Let $C$ be a compact set in the plane. If the complement of $C$ has two components, and every point of $C$ is an accessible boundary point of both components, then $C$ is a Jordan curve.
\end{lemma}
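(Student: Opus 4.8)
The plan is to prove that each of the two components of $\hat{\mathbb{C}}\setminus C$ is a Jordan domain whose boundary is $C$; since $C$ is then the image of the unit circle under the boundary extension of a Riemann map, it is a simple closed curve. Working on the sphere, let $\Omega_1,\Omega_2$ be the two components of $\hat{\mathbb{C}}\setminus C$. Each $\Omega_i$ is open and closed in $\hat{\mathbb{C}}\setminus C$, so $\overline{\Omega_i}\subseteq\Omega_i\cup C$ and hence $\partial\Omega_i\subseteq C$; conversely every $w\in C$ is accessible from $\Omega_1$, so $w\in\overline{\Omega_1}\setminus\Omega_1=\partial\Omega_1$. Thus $C=\partial\Omega_1$, and likewise $C=\partial\Omega_2$. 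Consequently $\hat{\mathbb{C}}\setminus\Omega_1=C\cup\Omega_2=\overline{\Omega_2}$ is connected, so $\Omega_1$ is simply connected and its boundary $C$ is a continuum; it is nondegenerate, since a one-point set would leave $\hat{\mathbb{C}}\setminus C$ connected. Let $f_i:\mathbb{D}\to\Omega_i$ be Riemann maps. By Carath\'eodory's theorem it suffices to establish (a) that $C$ is locally connected, so that $f_1$ extends continuously to $\overline{\mathbb{D}}$, and (b) that this extension is injective on $\partial\mathbb{D}$.

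For (b), assume (a) and suppose $f_1(\zeta)=f_1(\zeta')=w$ with $\zeta\neq\zeta'$. The image $J_0$ of the closed chord $[\zeta,\zeta']$ is a Jordan curve with $J_0\setminus\{w\}\subseteq\Omega_1$, so $J_0\cap C=\{w\}$, and it separates $\hat{\mathbb{C}}$ into Jordan domains $G_1,G_2$ containing the images $f_1(H_1),f_1(H_2)$ of the two open half-disks cut off by the chord; these images lie in different components, for otherwise one component of $\hat{\mathbb{C}}\setminus J_0$ would be disjoint from $\Omega_1$, hence contained in $\overline{\Omega_2}$, forcing the impossible inclusion $J_0\subseteq\overline{\Omega_2}$. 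Since $f_1$ is holomorphic and nonconstant, it is not constant on any boundary arc (Privalov's uniqueness theorem), so each of the two arcs of $\partial\mathbb{D}\setminus\{\zeta,\zeta'\}$ carries a point $\xi_i\in\partial H_i$ with $w_i:=f_1(\xi_i)\neq w$; then $w_i\in\overline{f_1(H_i)}\setminus J_0\subseteq G_i$. Now $\Omega_2$ is connected and disjoint from $J_0\subseteq\overline{\Omega_1}$, so it lies in a single $G_i$; but $w_1$ and $w_2$ are accessible from $\Omega_2$, which forces $\Omega_2$ to meet both $G_1$ and $G_2$ — a contradiction. Hence $f_1$ is injective on $\partial\mathbb{D}$ and $C=\partial\Omega_1$ is a Jordan curve.

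The remaining task, which I expect to be the main obstacle, is (a). Suppose $C$ is not locally connected. By a standard fact in continuum theory, $C$ then contains a \emph{continuum of convergence}: a nondegenerate subcontinuum $K$ and pairwise disjoint nondegenerate subcontinua $K_n\subseteq C\setminus K$ with $K_n\to K$ in the Hausdorff metric. Fix $p\neq q$ in $K$ and, splicing accessing arcs in $\Omega_1$ (resp.\ in $\Omega_2$) and using that these domains are connected, obtain arcs $\gamma_1,\gamma_2$ joining $p$ to $q$ with $\gamma_i\setminus\{p,q\}\subseteq\Omega_i$, so that $J:=\gamma_1\cup\gamma_2$ is a Jordan curve with $J\cap C=\{p,q\}$; it separates $\hat{\mathbb{C}}$ into two disks $G_1,G_2$. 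Each $K_n$ is connected and disjoint from $J$ (being contained in $C$ and missing $\{p,q\}$), hence lies in a single $G_i$; since $K_n\to K$, if $K\setminus\{p,q\}$ met both $G_1$ and $G_2$ some $K_n$ would meet both as well, which is impossible, so $K\setminus\{p,q\}$ and all but finitely many $K_n$ lie in one disk, say $G_1$. It then remains to derive a contradiction from this configuration of infinitely many disjoint continua $K_n$ accumulating on $K$, each accessible from \emph{both} $\Omega_1$ and $\Omega_2$; this is the crux of the classical converse to the Jordan curve theorem. The guiding picture is the topologist's sine curve, with the $K_n$ playing the role of the oscillations and $K$ of the terminal segment: one expects the $K_n$ to act as barriers cutting the interior points of $K$ off from whichever of $\Omega_1,\Omega_2$ lies on their concave side, contradicting two-sided accessibility — for instance by using accessing arcs of consecutive $K_n$'s to build a nested sequence of Jordan curves that traps a point of $K$. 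Granting (a) and (b), $C=\partial\Omega_1$ is a simple closed curve, as claimed.
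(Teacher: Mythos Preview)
The paper does not give a proof of this lemma at all: it is quoted as \cite[Theorem 1]{Fary} and used as a black box in the proof of Lemma \ref{component-is-jordan}. So there is no ``paper's proof'' to compare against.

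As for your attempt on its own merits: the reduction to (a) local connectivity and (b) injectivity of the boundary extension is correct and standard, and your argument for (b) is fine. The gap is that you do not prove (a). You set up the continuum-of-convergence $K$ with $K_n\to K$, build the Jordan curve $J=\gamma_1\cup\gamma_2$ through $p,q\in K$, and place $K\setminus\{p,q\}$ and almost all $K_n$ in one complementary disk $G_1$ --- but then you stop, writing that ``it remains to derive a contradiction'' and that ``one expects'' the $K_n$ to act as barriers. That expectation is exactly the nontrivial content of the theorem, and your sketch (nested Jordan curves built from access arcs to consecutive $K_n$'s trapping a point of $K$) does not yet constitute an argument: you have not said which access arcs you use, why the resulting curves are nested, or why a point of $K$ is separated from one of the $\Omega_i$. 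Without this step you have not ruled out the topologist's-sine-curve picture, so the proof is incomplete.

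If you want to finish along these lines, one clean route is to use prime ends rather than continua of convergence directly: accessibility of every boundary point from $\Omega_1$ forces every prime end of $\Omega_1$ to have a singleton impression, which by Carath\'eodory is equivalent to local connectivity of $\partial\Omega_1=C$. Alternatively, follow the original argument in Fary--Isenberg.
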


In the lemma above, a boundary point $p \in \partial \Omega$ is said to be {\em accessible} from $\Omega$ if $\Omega \cup \{ p \}$ is path-connected.

\begin{proof}[Proof of Lemma \ref{component-is-jordan}]
In Lemmas \ref{Same boundary}, \ref{simply connected} and \ref{locally connected}, we have seen that $U$ and $\hat{\mathbb{C}} \setminus \overline{U}$ are simply connected domains with $\partial U =  \partial(\hat{\mathbb{C}}\setminus\overline{U})$ locally connected.
By a theorem of Carathéodory, every point of $\partial U = \partial(\hat{\mathbb{C}} \setminus \overline{U})$ is accessible from both $U$ and $\hat{\mathbb{C}} \setminus \overline{U}$.
Since $\mathbb{C} = (\mathbb{C}\setminus\overline{U}) \cup \partial U \cup U$, Lemma \ref{Converse to Jordan curve theorem} tells us that $\partial U$ is a Jordan curve.
 \end{proof}

Having examined the topological properties of $U$, we turn to studying the component function $F_U$.

\begin{proof}[Proof of Lemma  \ref{component-is-inner}] 
We claim that for a.e.~$\zeta \in \partial \mathbb{D}$, the radial boundary value $\varphi(\zeta) := \lim_{r \to 1} \varphi(r\zeta)$ exists and lies inside the unit disk. Once we prove the claim, we are done since at such a point $\zeta$,
$$
\lim_{r \to 1} (\psi^{-1} \circ F \circ \varphi) (r\zeta)
$$
exists and has absolute value 1.

Let $E \subset \partial \mathbb{D}$ be the set of points $\zeta$ on the unit circle at which $\varphi$ and $F \circ \varphi$ have radial limits
with  $\varphi(\zeta)  \in \partial \mathbb{D}$. The above claim reduces to showing $E$ has measure zero.

If $\zeta \in E$, then $F$ has a limit along the path $\varphi([0,\zeta))$. In other words, $F$ has an asymptotic value at $\varphi(\zeta)$. As $F$ is a bounded analytic function, this asymptotic value (which lies in $\overline{V}$) must be the same as its radial boundary value \cite[Theorem 2.2]{cluster-sets}. 
Since $V$ is compactly contained in the unit disk and $F$ is an inner function, $\varphi(\zeta)$ is confined to a measure zero subset of $\partial \mathbb{D}$. In other words, $\varphi(E)$ has measure zero. Loewner's lemma implies that $E$ itself has measure zero.
\end{proof}

\section*{Acknowledgements}
The authors wish to thank the anonymous referee for numerous corrections and remarks that helped improve the exposition of this paper.
This research was supported by the Israeli Science Foundation (grant no.~3134/21).

\bibliographystyle{amsplain}

\end{document}